\pdfoutput=1
\documentclass[10pt,a4paper,oneside]{amsart}
\usepackage{amsmath, amssymb, amsfonts}
\usepackage[a4paper]{geometry}

\usepackage{comment}

\usepackage{mathrsfs}

\numberwithin{equation}{section}

\DeclareMathOperator{\coker}{coker}
\DeclareMathOperator{\Col}{Col}
\DeclareMathOperator{\Colu}{\underline{Col}}

\DeclareMathOperator{\End}{End}

\DeclareMathOperator{\Gal}{Gal}

\DeclareMathOperator{\Hom}{Hom}
\DeclareMathOperator{\id}{id}

\DeclareMathOperator{\Norm}{\mathfrak N}
\DeclareMathOperator{\ord}{ord}

\DeclareMathOperator{\rank}{rank}

\DeclareMathOperator{\Sel}{Sel}
\DeclareMathOperator{\Tr}{Tr}

\renewcommand{\aa}{\mathfrak a}
\newcommand{\Qn}{\QQ_{(n)}}
\newcommand{\F}{\mathcal F}
\newcommand{\Qcyc}{\QQ_{\cyc}}
\newcommand{\Qcycp}{\QQ_{\cyc,p}}
\newcommand{\FF}{\mathbb F}
\newcommand{\G}{\mathcal G}
\newcommand{\II}{\mathbb I}
\newcommand{\KK}{\mathcal K}
\newcommand{\LL}{\mathcal L}
\newcommand{\mm}{\mathfrak m}
\newcommand{\OO}{\mathcal O}

\newcommand{\QQ}{\mathbb Q}
\newcommand{\T}{\mathcal T}
\newcommand{\ZZ}{\mathbb Z}
\newcommand{\cyc}{\mathrm{cyc}}

\renewcommand{\epsilon}{\varepsilon}
\renewcommand{\theta}{\vartheta}
\newcommand{\ab}{\mathrm{ab}}

\newcommand{\Ehat}{\widehat E}
\newcommand{\Iw}{\mathrm{Iw}}
\newcommand{\omegat}{\widetilde\omega}

\newcommand{\sK}{\mathscr{K}_\infty}
\newcommand{\tsK}{{\mathscr{K}}_\infty}

\newcommand{\Zp}{\ZZ_p}
\newcommand{\Qp}{\QQ_p}
\newcommand{\Qpt}{\QQ_{p^2}}

\newcommand{\f}{\mathrm{f}}
\newcommand{\cC}{\mathcal{C}}
\newcommand{\cE}{\mathcal{E}}
\newcommand{\cU}{\mathcal{U}}

\newcommand{\cX}{\mathcal{X}}
\newcommand{\sL}{\mathscr{L}}
\newcommand{\sM}{\mathscr{M}}
\newcommand{\cA}{\mathcal{A}}
\newcommand{\cV}{\mathcal{V}}
\newcommand{\bz}{\mathbf{z}}
\newcommand{\length}{\mathrm{length}}
\usepackage{upgreek}
\usepackage[OT2,T1]{fontenc}
\DeclareSymbolFont{cyrletters}{OT2}{wncyr}{m}{n}
\DeclareMathSymbol{\Sha}{\mathalpha}{cyrletters}{"58}

\newcommand{\cY}{\mathcal{Y}}

\usepackage{colonequals}
\usepackage{stmaryrd}
\usepackage{extarrows}

\NewDocumentEnvironment{noproof}{m}{
  \par\pushQED{\qed}\UseName{#1}%
}{\popQED\UseName{end#1}}

\usepackage{tikz}
\usetikzlibrary{cd}
\usetikzlibrary{arrows}

\usepackage[style=alphabetic,backend=biber,isbn=false,url=false,maxbibnames=99,maxcitenames=99,maxalphanames=4,minalphanames=4]{biblatex}
\usepackage{enumitem}

\usepackage[hidelinks,plainpages=false,pdfpagelabels]{hyperref}
\usepackage{thmtools} 
\usepackage[capitalise, noabbrev]{cleveref}

\usepackage{xcolor}

\hypersetup{
 colorlinks=true,
 linkcolor=blue,
 filecolor=blue,
 citecolor=olive,
 urlcolor=orange,
 }

\definecolor{ForestGreen}{RGB}{34,139,34}

\definecolor{DarkOrchid}{RGB}{164,83,138}

\theoremstyle{plain}
\newtheorem{theorem}{Theorem}[section]
\newtheorem{lemma}[theorem]{Lemma}
\newtheorem{proposition}[theorem]{Proposition}
\newtheorem{corollary}[theorem]{Corollary}

\theoremstyle{definition}
\newtheorem{definition}[theorem]{Definition}

\newtheorem{mainThm}{Theorem}

\theoremstyle{remark}
\newtheorem{remark}[theorem]{Remark}

\Crefname{lemma}{Lemma}{Lemmata}
\Crefname{proposition}{Proposition}{Propositions}
\Crefname{conjecture}{Conjecture}{Conjectures}
\Crefname{example}{Example}{Examples}

\title{Iwasawa theory of CM elliptic curves at potentially supersingular primes}

\author[B.~Forrás]{Ben Forrás}
\address[Forrás]{Department of Mathematics and Statistics\\University of Ottawa\\
150 Louis-Pasteur Pvt\\
Ottawa, ON\\
Canada K1N 6N5}
\email{ben.forras@uottawa.ca}

\author[A.~Lei]{Antonio Lei}
\address[Lei]{Department of Mathematics and Statistics\\University of Ottawa\\
150 Louis-Pasteur Pvt\\
Ottawa, ON\\
Canada K1N 6N5}
\email{antonio.lei@uottawa.ca}

\subjclass[2020]{
    11R23 (primary), 
    11G05, 
    11G15 
    (secondary)}
\keywords{Iwasawa theory, CM elliptic curves, potentially supersingular primes}

\begin{document}

\begin{abstract}
We develop a plus and minus Iwasawa theory for CM elliptic curves with potentially supersingular reduction at a prime $p\ge5$. We construct local points that satisfy suitable ``jumping trace'' relations using the Honda--Demchenko theory applied to height-two formal groups over local fields with small ramification degree. These local points allow us to study plus and minus Selmer groups of CM elliptic curves over the cyclotomic $\mathbb{Z}_p$-extension, to construct the corresponding plus and minus $p$-adic $L$-functions, and to prove an Iwasawa main conjecture relating these objects. As an application, we obtain asymptotic growth formulae for the $p$-primary part of the Tate--Shafarevich groups.
\end{abstract}

\maketitle

\section{Introduction}

Let $E/\QQ$ be an elliptic curve, and let $p \ge 5$ be a fixed rational prime. In Iwasawa theory one studies arithmetic objects attached to $E$ that encode the behaviour of $E$ over the cyclotomic $\Zp$-extension of $\QQ$. The first results in this direction are due to Mazur \cite{mazur72}, under the assumption that $p$ is a good ordinary prime for $E$. In particular, he established a control theorem for the $p$-primary Selmer groups over $\Zp$-extensions, and used it to deduce results on the growth of the $p$-primary part of the Shafarevich--Tate group under suitable hypotheses.

Subsequently, Kato \cite{kato04} proved that the Pontryagin dual of the Selmer group over the cyclotomic $\Zp$-extension of an abelian number field is a torsion module over the Iwasawa algebra, and verified one inclusion in the Iwasawa main conjecture. This conjecture relates the characteristic ideal of the Pontryagin dual of the Selmer group to the $p$-adic $L$-function of Mazur--Swinnerton-Dyer \cite{MSD}. When $E$ has complex multiplication, the full main conjecture was established by Rubin \cite{rubin91}. In the non-CM case, Skinner and Urban \cite{skinnerurban} proved the main conjecture under certain hypotheses on the image of the Galois representation attached to the $p$-adic Tate module of $E$.

When $E$ has good supersingular reduction at $p$, the classical Selmer group over the cyclotomic $\mathbb{Z}_p$-extension of $\mathbb{Q}$ fails to satisfy Mazur's control theorem, and its Pontryagin dual is no longer a torsion module over the Iwasawa algebra. Moreover, although the classical $p$-adic $L$-functions constructed by Amice--V\'elu \cite{amicevelu} and Vi\v{s}ik \cite{visik} interpolate the complex $L$-values of $E$ twisted by Dirichlet characters of $p$-power conductor, they do not lie in the Iwasawa algebra. Consequently, the classical formulation of the Iwasawa main conjecture does not extend to the supersingular setting in a straightforward manner.

Assuming $a_p(E) = 0$, Pollack constructed the plus and minus $p$-adic $L$-functions in \cite{pollack03}. These are elements of the Iwasawa algebra obtained by decomposing the classical $p$-adic $L$-function using certain plus and minus logarithms. On the algebraic side, Kobayashi \cite{Kobayashi} introduced the plus and minus Selmer groups, whose Pontryagin duals are torsion over the Iwasawa algebra. He formulated an Iwasawa main conjecture relating these analytic and algebraic objects, proved one direction of the conjectural equality of characteristic ideals, and deduced results on the asymptotic growth of the $p$-primary part of the Tate--Shafarevich groups. In the CM case, the full main conjecture was subsequently established by Pollack and Rubin \cite{PollackRubin}.

In \cite{KimPark}, Kim and Park studied analogous questions for CM elliptic curves with good supersingular reduction at $p$, without assuming that the curve is defined over $\QQ$. In particular, they defined plus and minus Selmer groups together with the corresponding $p$-adic $L$-functions, and proved the Iwasawa main conjecture over abelian extensions of the CM field. As in Kobayashi's work, a key ingredient is the construction of certain local points satisfying a ``jumping trace'' condition using Honda's theory on formal groups over local fields.

The main objective of this article is to develop a novel plus and minus theory for CM elliptic curves with potentially supersingular reduction at $p$. Let $E/\QQ$ be an elliptic curve with additive reduction at $p$. When $E$ acquires good supersingular reduction over a quadratic extension of $\QQ$, one can define plus and minus $p$-adic $L$-functions attached to $E$ by passing to a suitable quadratic twist, in a manner analogous to Delbourgo's work on potentially ordinary primes \cite{delbourgoComp,DelbourgoJNT}; see \cite[Theorem~4.3]{naman}. This leads to a natural definition of plus and minus Selmer groups and the formulation of corresponding Iwasawa main conjectures, in the spirit of Kobayashi \cite{Kobayashi}. A natural question then arises: what happens when $E$ attains good supersingular reduction over a non-quadratic extension of $\QQ$? In this article we address this question in the case where $E$ has complex multiplication.

Let $K$ be an imaginary quadratic field. Let $p\ge5$ be a prime number that is inert in $K$. Let $E/K$ be an elliptic curve with complex multiplication by an order in $\OO_K$. Let $\LL/\Qp$ be a finite extension where $E$ has good supersingular reduction, containing the $p$-completion of $K$. In particular, $\LL/\Qp$ is a ramified extension. In \cite{Kobayashi} and \cite{KimPark}, the starting point of the plus and minus theory is the use of Honda theory of formal groups to construct local points over the cyclotomic extension of $\Qp$ and $\QQ_{p^2}$, respectively. These points satisfy a ``jumping trace'' condition, and they play a crucial role in the definition of the Coleman maps. In our setting, this method does not apply directly due to the ramification of $\LL/\Qp$. Our new input is to apply the work of Demchenko \cite{Demchenko}, which is a generalization of Honda theory for formal groups over local fields with small ramification degree over $\Qp$, to construct local points on height-two formal groups over $\LL$. See \cref{cor:generation} for further details. We also make use of the explicit description of the formal group attached to a CM elliptic curve in the work of Sairaiji \cite{SairaijiRM}, which allows us to apply our local theory to a specific family of CM elliptic curves.

 Let $K_\cyc$ denote the cyclotomic $\Zp$-extension of $K$. Via the ``jumping trace'' conditions as in  \cite{Kobayashi,KimPark}, we define the plus and minus Selmer groups  $\Sel_{p^\infty}^\pm(E/K_\cyc)$ (see \cref{def:pmSel}). 
Using the local points we construct, we define plus and minus Coleman maps (see \S\ref{sec:Coleman-maps}), which allows us to define plus and minus $p$-adic $L$-functions $L_p^\pm(E,K_\cyc)$ (see \cref{def:padicL}) after applying these maps to the cohomological classes defined by elliptic units. These $p$-adic $L$-functions interpolate the complex $L$-values of $E$ twisted by Hecke characters that factor through $\Gal(K_\cyc/K)$. Further, we prove that they satisfy an Iwasawa main conjecture:

\begin{mainThm}[\cref{cor:IMC}]\label{thmA}
Let $K=\QQ(\sqrt{-1})$ or $K=\QQ(\sqrt{-3})$. Let $p\ge5$ be a prime number that is inert in $K$. Let $E/K$ be an elliptic curve with complex multiplication by an order in $\OO_K$. Assume that $E$ has potentially supersingular reduction at $p$, and that $K$ is contained in a certain field $L$ defined in \eqref{eq:L-def} over which $E$ has good reduction at $p$. When $p=5$, assume further that the semistability defect defined in \eqref{eq:e} is not equal to $6$.

Let $\KK$ denote the unique unramified quadratic extension of $\Qp$. Let $\Lambda^{\OO_\KK}$ be the Iwasawa algebra $\OO_\KK[[\Gal(K_\cyc/K)]]$.
    The $\Lambda^{\OO_\KK}$-module $\Hom_{\OO_{ K}}(\Sel^\pm(E/ K_\cyc),\KK/\OO_\KK)$ is torsion. Furthermore, its characteristic ideal is generated by  $ L_p^\pm(E,K_\cyc) $.
\end{mainThm}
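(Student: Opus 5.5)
The plan is to follow the Pollack--Rubin/Kim--Park strategy: deduce the plus/minus main conjecture from Rubin's two-variable (here one-variable, cyclotomic) main conjecture for elliptic units, using the plus and minus Coleman maps of \S\ref{sec:Coleman-maps}. Write $\bz$ for the Iwasawa cohomology class of elliptic units in $H^1_{\Iw}(K_\cyc,T)\otimes\OO_\KK$, with $T$ the $p$-adic Tate module. Write $\Sel^{\mathrm{rel}}$ (resp.\ $\Sel^{\mathrm{str}}$) for the Selmer group with no condition (resp.\ the zero condition) at $p$, and $\mathcal X$ for the Pontryagin dual of $\Sel^{\mathrm{str}}(E/K_\cyc)$. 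Rubin's main conjecture, transported to $E$ via the CM character, identifies $\mathrm{char}\,\mathcal X$ with $\mathrm{char}\bigl(H^1_{\Iw}(K_\cyc,T)/\Lambda^{\OO_\KK}\bz\bigr)$. The modules $H^1_\Iw(K_\cyc,T)$ and $H^1_\Iw(K_{\cyc,p},T)$ are free of rank $1$ and $2$ respectively over the relevant algebra; this uses $p\ge5$ and that $E[p]$ has no $p$-torsion over the relevant local field.

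The first main step is the local input. Using the points constructed from \cref{cor:generation} and their jumping trace relations, I would define $\Col^\pm\colon H^1_\Iw(K_{\cyc,p},T)\to\Lambda^{\OO_\KK}$ as the Coleman maps of \S\ref{sec:Coleman-maps}. I would then show two things. First, $\ker\Col^\pm$ is the exact annihilator, under local Tate duality, of the plus/minus local condition $E^\pm(K_{\cyc,p})\otimes\QQ_p/\ZZ_p$. Second, $\Col^\pm$ is surjective onto $\Lambda^{\OO_\KK}$, or has image of explicitly trivial characteristic ideal. Surjectivity reduces, via Nakayama, to checking that the points generate the relevant quotient at the bottom layer, which is what \cref{cor:generation} gives. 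The hypothesis excluding semistability defect $6$ when $p=5$ is where Demchenko's small-ramification condition would be needed.

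Next comes the global Poitou--Tate step. It gives the exact sequence
\[
0\to H^1_\Iw(K_\cyc,T)\xrightarrow{\ \Col^\pm\circ\mathrm{loc}_p\ }\Lambda^{\OO_\KK}\to \Sel^\pm(E/K_\cyc)^\vee\to\mathcal X\to 0 .
\]
Injectivity of the first map follows once $\Col^\pm(\mathrm{loc}_p\bz)=L_p^\pm(E,K_\cyc)\ne0$. This nonvanishing I would get from the interpolation formula (\cref{def:padicL}) together with Rohrlich-type nonvanishing of twisted $L$-values. Since $H^1_\Iw(K_\cyc,T)$ has rank one, torsionness of $\Sel^\pm(E/K_\cyc)^\vee$ follows. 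Taking characteristic ideals and quotienting the sequence by $\Lambda^{\OO_\KK}\bz$ yields
\[
\mathrm{char}\,\Sel^\pm(E/K_\cyc)^\vee=\mathrm{char}\bigl(\Lambda^{\OO_\KK}/L_p^\pm\bigr)\cdot\mathrm{char}\,\mathcal X\cdot\mathrm{char}\bigl(H^1_\Iw/\Lambda^{\OO_\KK}\bz\bigr)^{-1}.
\]
Rubin's theorem cancels the last two factors, leaving $(L_p^\pm(E,K_\cyc))$.

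I expect the main obstacle to be the surjectivity of $\Col^\pm$ and the exact identification of its kernel with the orthogonal complement of the plus/minus points. This requires a precise jumping trace computation for Demchenko's points over the ramified field $\LL$, and a descent from $\LL(\mu_{p^\infty})$ to $K_{\cyc,p}$ using that $[\LL:\KK]$ is prime to $p$ by taking isotypic components. I would first verify everything at finite layers $n$, comparing orders of $E^\pm(k_n)$ quotients with $\prod\Phi_m(\gamma)$-type factors as in Kobayashi, and then pass to the limit.

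A secondary point is the $\mu$-invariant and the $p$-part of Rubin's result. Here one needs $p\nmid[K(E[p]):K]$-type conditions, which should hold for $p\ge5$ inert in these $K$.
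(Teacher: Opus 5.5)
Your local half agrees with the paper. That covers the Coleman maps built from the points of \cref{cor:generation}, surjectivity via the bottom classes $d^-_{-1},d^+_0$ and Nakayama, $\ker\Col^\pm$ as the orthogonal complement of the plus/minus points (\cref{rk:base-change}), and non-vanishing of $L_p^\pm(E,K_\cyc)$ from \cref{PR5.1}, \cref{KP2.9} and Rohrlich. Your Poitou--Tate sequence is also fine; the paper uses its limit in the proof of \cref{nabla-X}.

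\textbf{The gap is the global input.} You assume that Rubin's theorem gives $\mathrm{char}\,\mathcal X_{\mathrm{str}}=\mathrm{char}\bigl(H^1_\Iw(K_\cyc,T)/\Lambda^{\OO_\KK}\bz\bigr)$ on the cyclotomic line, where $\mathcal X_{\mathrm{str}}$ is your dual of the strict Selmer group. Rubin proves a two-variable statement over the $\Zp^2$-extension $K_\infty$, essentially $\mathrm{char}\,\cA^\Phi_{K_\infty}=\mathrm{char}\bigl(\cE^\Phi_{K_\infty}/\cC^\Phi_{K_\infty}\bigr)$. At an inert prime this does not formally yield your identity, for two reasons:
\begin{itemize}
\item Specializing characteristic ideals along $K_\infty/K_\cyc$ picks up error terms from the $\Gal(K_\infty/K_\cyc)$-invariants, i.e.\ from pseudo-null submodules of $\cA^\Phi$ and $(\cE/\cC)^\Phi$.
\item You would also have to compare $\cE^\Phi_{K_\cyc}$ with $H^1_\Iw(K_\cyc,\tilde\T)$, and $\cA^\Phi_{K_\cyc}$ with the strict Selmer group.
\end{itemize}
Your sketch controls none of this. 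Moreover, in the classical supersingular CM case, the literature obtains this one-variable Kato-type identity as a \emph{consequence} of the signed main conjecture (Pollack--Rubin combined with Kobayashi's equivalence). Invoking it is therefore essentially assuming the result, and in the present potentially supersingular setting there is no independent source for it.

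The paper's argument, following Pollack--Rubin, is built to avoid exactly this descent. It proceeds in these steps:
\begin{itemize}
\item It moves to local units via \eqref{eq:U-HIw}.
\item It lifts the cyclotomic kernel $\widetilde\cV^\pm$ of $\Col^\pm_\KK$ to a submodule $\cV^\pm\subset\cU^\Phi_{K_\infty}$ such that $\cV^\pm$ and $\cU^\Phi_{K_\infty}/\cV^\pm$ are free of rank one (\cref{PR4.4}(iii)).
\item It proves $\cE^\Phi_{K_\infty}\cap\cV^\pm=0$ (\cref{PR5.2}).
\item It applies Rubin's theorem to the two-variable exact sequence $0\to\cE^\Phi/\cC^\Phi\to\cU^\Phi/(\cV^\pm+\cC^\Phi)\to\cX^\Phi/\alpha(\cV^\pm)\to\cA^\Phi\to0$ over $K_\infty$, where $\cX$ is the Galois group of \S\ref{sec:Sel}.
\end{itemize}
The two middle modules are the ones that descend cleanly to $K_\cyc$. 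The module $\cU^\Phi/(\cV^\pm+\cC^\Phi)$ is cyclic and specializes to $\Lambda^{\OO_\KK}/h^\pm\Lambda^{\OO_\KK}$. On the other side, \eqref{eq:PR4.3} identifies $\Sel^\pm(E/K_\cyc)$ with the dual of the quotient of $\cX^\Phi_{K_\cyc}$ by $\alpha(\widetilde\cV^\pm)$. To make your route work, you would need to replace the assumed one-variable identity with this two-variable step, or with some other genuine descent argument.
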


We make use of the assumption that $K$ is contained in the field $L$ defined in \eqref{eq:L-def} when invoking the results of Sairaiji \cite{SairaijiRM} in order to guarantee that the local theory we develop can indeed be applied. In particular, this assumption forces $K$ to be either $\QQ(\sqrt{-1})$ or $\QQ(\sqrt{-3})$. After the local theory has been set up, our proof of \cref{thmA} proceeds by closely following the approach of Pollack–Rubin in \cite{PollackRubin}, where they establish the Iwasawa main conjecture for elliptic curves over $\QQ$ with good supersingular reduction.
Furthermore, following the work of Kobayashi in \cite{Kobayashi}, we show that the growth of the $p$-part of the Tate--Shafarevich groups inside $K_\cyc$ is governed by the Iwasawa invariants of the signed Selmer groups together with the contribution from certain cyclotomic polynomials and the limit of the Mordell--Weil ranks.

\begin{mainThm}[\cref{cor:sha}]\label{thmB}
Let $K$, $p$ and $E$ be as in Theorem~\ref{thmA}.
    Assume that $\Sha(E/K_n)[p^\infty]$ is finite with cardinality $p^{e_n}$ for all $n\ge0$. Let $2r_\infty=\displaystyle\lim_{n\to\infty}\rank E(K_n)$. Let $\mu^\pm$ and $\lambda^\pm$ be the Iwasawa invariants of  $\Sel^\pm(E/ K_\cyc)^\vee$ (see \eqref{eq:def-lambda-mu-pm} where we recall their definitions).  For $n\gg0$, we have
\[
e_n-e_{n-1}= \begin{cases}
      2(\deg\omegat_n^-+\lambda^+-r_{\infty}+\varphi(p^n)\mu^+)  & \text{if $n$ is even,}\\
      2(\deg\omegat_n^++\lambda^--r_{\infty}+\varphi(p^n)\mu^-)  & \text{if $n$ is odd,}
    \end{cases}
\]
where $\omegat_n^\pm=\prod_{i} \Phi_i$ is the product of the $p^i$th cyclotomic polynomials, with the product being taken over $i$ from $1$ to $n$ such that $(-1)^i=\pm1$.
\end{mainThm}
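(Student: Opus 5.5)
The plan is to follow Kobayashi's argument in \cite[\S10]{Kobayashi}, adapted to the present setting (as in Kim--Park). Here $K_n$ is the $n$th layer of $K_\cyc/K$ and $\Lambda=\Zp[[\Gal(K_\cyc/K)]]$. The ingredients, all expected from the body of the paper, are:
\begin{itemize}
\item the torsionness of $\Sel^\pm(E/K_\cyc)^\vee$, which is part of \cref{thmA};
\item the local points $c_n$ satisfying the jumping trace relations from \cref{cor:generation};
\item the plus and minus Coleman maps, which identify the images of the local points modulo $\omega_n^\pm$-type ideals.
\end{itemize}
The first step is a control theorem. I would show that for $n$ of a fixed parity, the restriction map
\[
\Sel^\pm(E/K_n)\to\Sel^\pm(E/K_\cyc)[\omegat_n^\pm]
\]
has finite kernel and cokernel, bounded independently of $n$. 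Here $\Sel^\pm(E/K_n)$ is defined using the local condition $E^\pm(K_{n,p})$ cut out by the traces of the points $c_n$. Since $p$ is inert in $K$ and totally ramified in $K_\cyc$, and $E(K_\cyc)[p^\infty]$ is finite (the reduction is supersingular over $\LL$), the kernel and cokernel are controlled by the local term. I would compute that local term using the exact description of $E^\pm$ supplied by the jumping trace relations, arguing as in \cite[Thm.~9.3]{Kobayashi}.

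The second step is to relate $\Sel_{p^\infty}(E/K_n)$ to the signed Selmer groups. The jumping trace relations give the decomposition
\[
\Ehat(\mathfrak m_n)\otimes\QQ_p/\ZZ_p=E^+\oplus E^-
\]
up to controlled error, together with the statement that the local image of the global points is governed by $r_\infty$. Following Kobayashi, I expect the following formula for $n$ even (and the symmetric one with $\pm$ swapped for $n$ odd):
\[
e_n-e_{n-1}=\operatorname{length}\bigl(\Sel^+(E/K_\cyc)^\vee/\omegat_n^+\bigr)-\operatorname{length}\bigl(\Sel^+(E/K_\cyc)^\vee/\omegat_{n-1}^+\bigr)+(\text{correction involving }\deg\omegat_n^-\text{ and }r_\infty).
\]
To obtain it, I would compare the orders of the $p^\infty$-Selmer groups at levels $n$ and $n-1$. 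Finiteness of $\Sha$ gives $|\Sel(E/K_n)|$ up to the corank $\rank E(K_n)$, and the local quotient $\Ehat(\mathfrak m_n)/(\text{points})$ contributes $\deg\omegat_n^\mp$ coming from the Coleman map cokernels.

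The third step is to plug in the structure theorem. For a torsion $\Lambda$-module $X$ with invariants $\mu,\lambda$, and with $\omegat_n^+\,/\,\omegat_{n-1}^+=\Phi_n$ for $n$ even, we have for $n\gg0$
\[
\operatorname{length}(X/\omegat_n^+X)-\operatorname{length}(X/\omegat_{n-1}^+X)=\varphi(p^n)\mu+\lambda-r,
\]
where $r$ accounts for the cyclotomic factors $\Phi_n$ dividing the characteristic ideal, which are eventually absent. The contribution $r_\infty$ from the Mordell--Weil rank enters here. The factors of $2$ in the statement come from counting over $\Zp$ rather than $\OO_\KK$, which are rank two over $\Zp$, or equivalently from $\OO_K$-module structure.

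The main obstacle is the second step: making the comparison exact, with the error terms eventually constant so that they cancel in the difference $e_n-e_{n-1}$. This requires knowing that the image of the Coleman map at level $n$ has index exactly $p^{\deg\omegat_n^\mp}$ times a bounded constant. I would deduce this from the surjectivity of the Coleman maps onto the ideals established in \S\ref{sec:Coleman-maps}, combined with the relations satisfied by the points $c_n$ from \cref{cor:generation}.
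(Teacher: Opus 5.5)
There is a genuine gap, and it sits in your Step 2. That step carries all the content of the theorem, but you leave it as an ``expected'' formula with an unspecified correction term. Moreover, the mechanism you propose for $\deg\omegat_n^\mp$ does not exist as stated. The Coleman maps are surjective (\cref{prop-coln}), and $\Lambda^{\OO_\KK}_n/\omegat_n^\mp$ is $\OO_\KK$-free of rank $\deg\omegat_n^\mp$, so $\deg\omegat_n^\mp$ is never the exponent of an index of a Coleman-map image.

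What is missing is the global Iwasawa class. Fix a $\Lambda^{\OO_\KK}$-basis $\bz$ of the rank-one module $H^1_{\Iw,\Sigma}(K_\cyc,\tilde\T)$.
\begin{itemize}
\item Poitou--Tate (\cref{lem:PT}) gives $0\to\cY(E/K_n)\to\cX(E/K_n)\to\cX^0(E/K_n)\to0$, where $\cY$ is the cokernel of the global-to-local map into $H^1(K_{n,p},\tilde\T)/H^1_\f$.
\item In the limit it also gives $0\to\Lambda^{\OO_\KK}/\Col^\pm(\bz)\to\cX^\pm(E/K_\cyc)\to\cX^0(E/K_\cyc)\to0$, hence $\Col^\pm(\bz)f_0=u^\pm f_\pm$ for a unit $u^\pm$.
\item The plus and minus local conditions sum to the full one, with intersection the bottom layer (\cref{Kob8.12}). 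Combined with the Coleman isomorphisms, this shows that $\cY(E/K_n)$ has the same Kobayashi rank as $\Lambda^{\OO_\KK}_n/(\Colu_n(\bz))$, where $\Colu_n(\bz)=\omegat_n^+\Col^-(\bz)+\omegat_n^-\Col^+(\bz)$.
\item Exactly one of $\omegat_n^\pm(\epsilon_n)$ vanishes, so $\ord_{\epsilon_n}\Colu_n(\bz)(\epsilon_n)=\ord_{\epsilon_n}\omegat_n^\mp(\epsilon_n)+\ord_{\epsilon_n}\Col^\pm(\bz)(\epsilon_n)$, with $\pm$ the parity of $n$. The first summand equals $\deg\omegat_n^\mp$.
\end{itemize}
So this term measures how deeply the global class lands in the new $\Phi_n$-component of the local quotient; it is neither a corank nor a Coleman cokernel. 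Adding $\nabla\cX^0(E/K_n)=2\ord_{\epsilon_n}f_0(\epsilon_n)$ and using $\Col^\pm(\bz)f_0=u^\pm f_\pm$ gives $\nabla\cX(E/K_n)=2(\deg\omegat_n^\mp+\lambda^\pm+\varphi(p^n)\mu^\pm)$. No control theorem enters anywhere, so your Step 1 is not where the work lies.

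Your bookkeeping in Steps 2--3 also fails in general, because the lengths you subtract need not be finite. $\Sel_{p^\infty}(E/K_n)$ is infinite as soon as $\rank E(K_n)>0$. Growth of the rank along $K_\cyc$ produces cyclotomic factors $\Phi_i$, $i\ge1$, in the characteristic ideals of $\cX^\pm(E/K_\cyc)$, which makes $\cX^\pm(E/K_\cyc)/\omegat_n^\pm$ infinite for $n\gg0$. Your description of $r$ is also inconsistent: if the factors $\Phi_n$ are eventually absent, they contribute nothing for $n\gg0$, so $r_\infty$ cannot ``enter here''.

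The paper instead works with Kobayashi ranks $\nabla$ (\cref{lem:kob-rank}), whose term $\dim_{\Qp}(M_{n-1}\otimes\Qp)$ absorbs the non-finite part. For a torsion module with characteristic element $f$ one gets $\nabla=2\ord_{\epsilon_n}f(\epsilon_n)=2(\lambda+\varphi(p^n)\mu)$ for $n\gg0$, regardless of cyclotomic factors at lower levels. The rank $r_\infty$ appears only at the very end, from $0\to E(K_n)\otimes\Qp/\Zp\to\Sel_{p^\infty}(E/K_n)\to\Sha(E/K_n)[p^\infty]\to0$. After dualizing, the finite $\Sha$ part has $\nabla=e_n-e_{n-1}$ and the Mordell--Weil part has $\nabla=2r_\infty$ for $n\gg0$.
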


\cref{thmA} and \cref{thmB} also have consequences for the Iwasawa of $E$ over the cyclotomic $\Zp$-extension $\Qcyc$ of $\QQ$. By descending from $K$ to $\QQ$, we show in \cref{thm:IMC-Q} that the plus and minus Selmer groups of $E$ over $\Qcyc$ satisfy the Iwasawa main conjecture in a way analogous to the assertions of \cref{thmA}. 

 \cref{cor:sha-Q} shows that the growth of the $p$-primary part of the Tate--Shafarevich groups along $\Qcyc$ is exactly half of the corresponding growth of $\Sha(E/K_n)[p^\infty]$ described in Theorem~\ref{thmB}. This gives a partial answer to Conjecture 4.11 stated in \cite{naman} and provides a theoretical explanation for the numerical calculations of the $\lambda$-invariants of Mazur–Tate elements carried out in \textit{op. cit.}

It is worth emphasizing that our approach differs significantly from the one employed in \cite{delbourgoComp,DelbourgoJNT,MuellerGlasgow}, where the Iwasawa theory of elliptic curves and CM modular forms at a prime of potentially ordinary reduction is studied.
In those works, one treats bad reduction by twisting (and later untwisting) by an appropriately chosen Dirichlet character, which is not always feasible in the potentially supersingular case due to the structural difference between the Galois representations.
Moreover, in the construction of $p$-adic $L$-functions in the aforementioned works, one uses a certain Coleman map from $p$-adic Hodge theory, which is currently not available in the potentially supersingular setting unless good reduction is achieved over a quadratic extension.
This divergence arises because potentially ordinary reduction becomes good after a finite abelian (hence cyclotomic) extension of $\Qp$, whereas in the potentially supersingular case the analogous extension required to achieve good reduction may be non-abelian.

\subsection*{Outlook}
Our construction of local points in \S\ref{sec:points} is carried out for a particular family of height-two formal groups defined over a suitably chosen local field. This method seems to be amenable to more general situations. For the purposes of our applications to elliptic curves, we restrict our attention to CM elliptic curves, since we rely on Sairaiji’s work \cite{SairaijiRM}, which provides an explicit description of the associated formal group and thus enables us to check that our local point construction is applicable. One may expect that an analogous approach should also work for non-CM elliptic curves at the expense of imposing extra hypotheses. 

The condition that $E$ has potentially supersingular reduction at primes above $p$ forces $p$ to be non-split in the CM field $K$ by the Shimura–Taniyama formula. The method developed in this article to construct local points depends crucially on the assumption that $p$ is inert in $K$. When $p$ is ramified in $K$, the numerical evidence collected in \cite{naman} indicates that there exist elliptic curves for which the growth of the Tate–Shafarevich groups behaves in a manner quite different from that predicted by Theorem~\ref{thmB}. This suggests that a straightforward extension of our construction to the ramified setting is unlikely, and that one may instead need to work with a different class of Selmer groups. In the anticyclotomic setting, recent breakthrough results on Iwasawa main conjectures and on the asymptotic behaviour of Mordell–Weil ranks have been established in \cite{BKO1,BKO2,BKO3,BKNO1,BKNO2}, for any prime $p$ that is non-split in $K$. It would be of interest to investigate the corresponding phenomena in the cyclotomic extension.

\subsection*{Organisation}
We first develop the local theory.
In \S\ref{sec:relaxed-Honda}, we discuss mild generalizations of Honda theory in the case of ramified extensions.
We then construct a suitable formal group over a certain tamely ramified extension of $\Qp$, and describe a system of universal norms in \S\ref{sec:points}.
Turning to the global picture, in \S\ref{sec:CM-formal}, we use the main theorem of complex multiplication and Honda theory to identify the formal groups of our elliptic curves with the ones just constructed. Then in \S\ref{sec:IMC} we utilize the theory of elliptic units to obtain plus and minus $p$-adic $L$-functions as well as an Iwasawa main conjecture. Finally in \S\ref{sec:growth-of-Sha}, we discuss the growth of Tate--Shafarevich groups.

\subsection*{Acknowledgements} We thank Daniel Delbourgo, Byoung Du Kim, Katharina M\"uller and Naman Pratap for interesting discussions during the preparation of this article. We also thank Oleg Demchenko for kindly making the English translation of his article \cite{Demchenko} available to us.  BF's research is funded by the Deutsche Forschungsgemeinschaft (DFG, German Research Foundation), project no.~559516518. AL's research is supported by the NSERC Discovery Grant RGPIN-2026-04351.

\section{generalizations of Honda theory} \label{sec:relaxed-Honda}
Classically, the main results of Honda theory \cite{HondaCommFormalGroups} hold over unramified extensions of $\Qp$. However, as we shall see in \S\ref{sec:setup}, the potentially supersingular situation requires us to work over tamely ramified extensions of degree dividing $p+1$. In this section, we describe how to adapt Honda theory to this setup.

\subsection{Relaxed Honda types}
Let $\LL/\QQ_p$ be a finite extension, let $\pi\in\OO_\LL$ be a uniformizer, and let $q$ be the order of the residue field $\OO_\LL/\pi\OO_\LL$. Let $\LL[[X]]_0=X\LL[[X]]$ denote the ring of power series in one variable with zero constant term. We define $\OO_\LL[[X]]_0$ similarly.

We extend Honda's notion of formal group types given in \cite[\S2]{HondaCommFormalGroups}. Our setting is significantly simpler: we only work with one-dimensional formal groups in a single variable over finite extensions of $\QQ_p$. In this case, Honda's condition $(F)$, which asserts that there is an endomorphism $\sigma$ of $\LL$ and an integer $p^r$ such that $\alpha^\sigma\equiv\alpha^{p^r}\mod \pi\OO_\LL$ for all $\alpha\in \OO_\LL$,  holds with the endomorphism being the identity and $p^r=q$. Unlike \cite{HondaCommFormalGroups}, where a uniformizer of $\LL$ is fixed in the definition, we allow the choice of uniformizers of a specified intermediate extension.
\begin{definition}
     Let $\LL'$ be an intermediate extension of $\LL/\QQ_p$.
     Let $f\in\LL[[X]]_0$ be a power series and $u=\sum_{i=0}^\infty u_nX^n\in\OO_\LL[[X]]$ such that $u(0)$ is a uniformizer in $\LL'$. 
     Let $u*f\in\LL[[X]]_0$ be the power series given by
     \[(u*f)(X)\colonequals\sum_{n=0}^\infty u_n f(X^{q^n}).\]
     We say that $f$ is of \emph{relaxed Honda type} $u$ if $f(X)\equiv X \pmod{X^2\LL[[X]]}$ and $(u*f)(X)\equiv 0\pmod{\pi\OO_\LL[[X]]}$.
\end{definition}
The difference between this and Honda's formulation is that one works with two possibly distinct fields, and the congruence involving $u*f$ is not modulo $u(0)$. Note that if $\LL=\LL'$, the relaxed Honda type recovers the usual definition of Honda types for one-dimensional formal groups.

For the convenience of the reader, we state the modified versions of the relevant results from \cite{HondaCommFormalGroups}; the proofs are identical to their non-relaxed counterparts, with the uniformizer $\pi$ in Honda's work replaced by $u_0$, and the prime ideal $\mathfrak p$ replaced by $\pi\OO_\LL$.

\begin{lemma}[{\cite[Lemma~2.2]{HondaCommFormalGroups}}] \label{Honda2.2}
    Let $u(X)=\sum_{n=0}^\infty u_n X^n\in\OO_\LL[[X]]$ be a power series such that $u_0$ is a uniformizer in $\LL'$, and write $u^{-1}(X) u_0 =1+\sum_{n=1}^\infty b_n X^n$. For all $n\ge0$, we have $u_0^n b_n\in\OO_\LL$.\qed
\end{lemma}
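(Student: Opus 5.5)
The plan is a direct induction on $n$, using the recursion for the coefficients of the inverse power series. The argument only uses that $u_0$ is a nonzero element of $\OO_\LL$. That $u_0$ is a uniformizer of $\LL'$ guarantees this, and it is also why $u$ is invertible in $\LL[[X]]$. So the relaxation relative to Honda's setting plays no role here, and the proof is essentially that of \cite[Lemma~2.2]{HondaCommFormalGroups}.

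First, I write $u(X)=u_0\bigl(1+w(X)\bigr)$ with $w(X)=\sum_{i\ge1}(u_i/u_0)X^i\in\LL[[X]]$. Then $u_0u^{-1}=(1+w)^{-1}$, and putting $b_0=1$, the identity $(1+w)\bigl(\sum_{n\ge0}b_nX^n\bigr)=1$ gives, on comparing coefficients of $X^n$ for $n\ge1$,
\[
b_n=-\sum_{i=1}^{n}\frac{u_i}{u_0}\,b_{n-i}.
\]

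Next, I multiply by $u_0^n$ and regroup:
\[
u_0^n b_n=-\sum_{i=1}^{n} u_i\,u_0^{\,i-1}\,\bigl(u_0^{\,n-i}b_{n-i}\bigr).
\]
The base case is $u_0^0b_0=1\in\OO_\LL$. Assume inductively that $u_0^{m}b_m\in\OO_\LL$ for all $m<n$. Each summand is then a product of $u_i\in\OO_\LL$, of $u_0^{i-1}\in\OO_\LL$ (since $i\ge1$), and of an element of $\OO_\LL$ by the induction hypothesis. Hence $u_0^nb_n\in\OO_\LL$.

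There is no real obstacle. The only point needing care is the bookkeeping of exponents: each division by $u_0$ is paired with at least one power of $X$, so one factor of $u_0$ per degree suffices. One could equally expand $(1+w)^{-1}=\sum_k(-w)^k$ and note that the $X^n$-coefficient of $w^k$ involves at most $k\le n$ denominators $u_0$.
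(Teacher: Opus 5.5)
Your proof is correct and is essentially the argument the paper relies on. The paper gives no separate proof; it defers to \cite[Lemma~2.2]{HondaCommFormalGroups}, whose proof is this same induction on the coefficient recursion coming from $u\cdot(u^{-1}u_0)=u_0$, with Honda's uniformizer replaced by $u_0$. Your remark that only $0\ne u_0\in\OO_\LL$ is actually needed is also accurate.
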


\begin{noproof}{lemma}[{\cite[Lemma~2.3]{HondaCommFormalGroups}}] \label{Honda2.3}
    Let $f\in\LL[[X]]_0$ have relaxed Honda type $u\in\OO_\LL[[X]]$. Let $\psi(X)=\sum_{n=1}^\infty \psi_n X^n\in\LL[[X]]_0$ be a power series such that $\psi_0,\ldots,\psi_{r-1}\in\OO_\LL$ for some $r\ge2$. 
    For all $v\in\OO_\LL[[X]]$, we have
    \[v*(f\circ\psi)\equiv (v*f)\circ\psi \pmod{X^{r+1},\pi}. \qedhere\]
\end{noproof}

\begin{lemma}[{\cite[Lemma~2.4]{HondaCommFormalGroups}}] \label{Honda2.4}
    If $f\in \LL[[X]]_0$ and $g\in\LL[[X]]_0$ both have relaxed Honda type $u\in\OO_\LL[[X]]$, then $g^{-1}\circ f\in\OO_\LL[[X]]_0$. \qed
\end{lemma}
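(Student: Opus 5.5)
We follow Honda's proof of \cite[Lemma~2.4]{HondaCommFormalGroups}, with Honda's uniformizer replaced by $u_0$ and the prime ideal $\mathfrak p$ replaced by $\pi\OO_\LL$. Set $\psi\colonequals g^{-1}\circ f\in\LL[[X]]_0$. Since $f$ and $g$ are both $\equiv X \pmod{X^2}$, we have $\psi(X)\equiv X\pmod{X^2}$. We show by induction on $r\ge 2$ that the coefficients $\psi_1,\dots,\psi_{r-1}$ lie in $\OO_\LL$. The base case $r=2$ is the statement $\psi_1=1$.

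For the inductive step, assume $\psi_1,\dots,\psi_{r-1}\in\OO_\LL$. Since $f=g\circ\psi$, \cref{Honda2.3} applied to $g$ (of relaxed Honda type $u$) with $v=u$ gives
\[u*f=u*(g\circ\psi)\equiv (u*g)\circ\psi \pmod{X^{r+1},\pi}.\]
Both $u*f$ and $u*g$ lie in $\pi\OO_\LL[[X]]$ by the type hypothesis, and $\psi$ has integral coefficients up to degree $r-1$. A priori only the coefficient $\psi_r$ might be non-integral.

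We isolate the degree-$r$ contribution. Write $\psi=\psi^{(<r)}+\psi_rX^r+O(X^{r+1})$. Then
\[u*(g\circ\psi)\equiv u*(g\circ\psi^{(<r)})+u_0\psi_rX^r \pmod{X^{r+1}}.\]
This holds because $g(Y)=Y+O(Y^2)$, so the perturbation $\psi_rX^r$ only changes the linear term in degree $r$. The terms $u_n(\cdot)(X^{q^n})$ with $n\ge1$ see this perturbation only in degree $q^n r>r$.

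Applying \cref{Honda2.3} to the integral truncation $\psi^{(<r)}$ shows that $u*(g\circ\psi^{(<r)})\equiv (u*g)\circ\psi^{(<r)}\equiv 0$ modulo $(X^{r+1},\pi)$. Comparing with $u*f\equiv 0$, we get $u_0\psi_r\in\pi\OO_\LL$ in the relaxed sense. Because $u_0$ is a uniformizer of $\LL'$ and hence a unit multiple of $\pi^{e(\LL/\LL')}$, this yields only $\psi_r\in u_0^{-1}\pi\OO_\LL$. This is integral only when $\LL'=\LL$.

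The \textbf{main obstacle} is therefore this divisibility step: with the relaxed congruence, dividing by $u_0$ costs valuation $e(\LL/\LL')-1$.

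My first attempt to fix it is to run Honda's original argument more carefully. Honda does not compare $u*f$ and $u*g$ directly. Instead he writes $f=u^{-1}*(u*f)$ and uses \cref{Honda2.2} to control the denominators $u_0^{-n}$ arising in $u^{-1}$. Concretely, $f$ and $g$ then have the form $\sum_n u_0^{-n}(\text{integral})(X^{q^n})$ with matching ``principal parts.'' This reduces the claim to showing that $\psi_r$ is determined modulo an integral correction by terms of degree $<r$.

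Following the paper's remark that the proofs are ``identical'' with $\pi\mapsto u_0$, I would redo Honda's computation verbatim with these substitutions. I would check that every division performed is by $u_0$ alone and never by $\pi$, so that \cref{Honda2.2} provides exactly the needed integrality.
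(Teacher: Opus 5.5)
Your reduction to the degree-$r$ coefficient is correct. It is the natural transcription of Honda's argument under the substitution the paper prescribes ($\pi\mapsto u_0$, $\mathfrak p\mapsto\pi\OO_\LL$), and the paper offers nothing beyond that substitution. But your proposal stops at $u_0\psi_r\in\pi\OO_\LL$. The last two paragraphs are a plan (``redo Honda verbatim and check that every division is by $u_0$''), not an argument, so integrality of $\psi_r$ is never established.

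The plan cannot succeed, because the obstacle you isolated is real. In Honda's setting, the final division by the uniformizer is matched by a congruence modulo that same uniformizer. Under the relaxed hypothesis you divide by $u_0$ while only knowing a congruence modulo $\pi$. When $e(\LL/\LL')\ge2$, neither \cref{Honda2.2} nor rewriting $f=u^{-1}*(u*f)$ recovers the missing $e(\LL/\LL')-1$ in $\pi$-adic valuation.

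In fact the statement fails as formulated, already in the setting of \S\ref{sec:points} for any $e\ge2$. Take $\LL=\QQ_{p^2}(\pi)$, $\LL'=\QQ_{p^2}$, $q=p^2$ and $u(X)=p-X$, so that $(u*h)(X)=p\,h(X)-h(X^q)$ and $u_0=p$. Put
\[f(X)=\sum_{k\ge0}p^{-k}X^{q^k},\qquad g(X)=(1+\pi)f(X)-\pi X.\]
Both are $\equiv X\pmod{X^2}$. Moreover $u*f=pX$ and $u*g=pX+\pi X^q$ both lie in $\pi\OO_\LL[[X]]$, so both have relaxed Honda type $u$. However, $g(Y)=Y+(1+\pi)p^{-1}Y^q+O(Y^{q^2})$ gives $g^{-1}(Y)=Y-(1+\pi)p^{-1}Y^q+O(Y^{q+1})$, hence
\[g^{-1}\circ f=X-\frac{\pi}{p}X^q+O(X^{q+1}),\]
with $v_\pi(\pi/p)=1-e<0$. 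The offending coefficient lies exactly in $u_0^{-1}\pi\OO_\LL$, as your estimate predicts.

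So no verbatim transfer of Honda's argument can close the gap. An integrality statement of this kind needs a finer hypothesis that tracks the $\pi$-adic components of the coefficients, such as equality of Demchenko types. That is the setting of \cite[Lemma~2.1]{Demchenko}, which the paper itself uses in the proof of \cref{Honda4.1}; the relaxed Honda type alone is not enough.
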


Using these lemmata, one can prove the following:

\begin{theorem}[{\cite[Theorem~2]{HondaCommFormalGroups}}] \label{HondaThm2}
    If $f\in \LL[[X]]_0$ and $g\in\LL[[X]]_0$ both have relaxed Honda type $u\in\OO_\LL[[X]]$, then their associated formal groups are strongly isomorphic over $\OO_\LL$, i.e., there is an isomorphism of formal groups $\varphi:f\to g$ defined over $\OO_\LL$ such that $\varphi\equiv X\mod X^2$. \qed
\end{theorem}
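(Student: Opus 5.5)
We follow Honda's proof of \cite[Theorem~2]{HondaCommFormalGroups}. There are three steps: first show that a logarithm $f$ of relaxed Honda type $u$ gives a formal group law $F_f(X,Y)=f^{-1}(f(X)+f(Y))$ with coefficients in $\OO_\LL$; then define $\varphi\colonequals g^{-1}\circ f$; finally check that $\varphi$ is an isomorphism of formal groups with $\varphi\equiv X\bmod X^2$.

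The last step is formal. By \cref{Honda2.4}, $\varphi=g^{-1}\circ f\in\OO_\LL[[X]]_0$. Applying \cref{Honda2.4} with the roles of $f$ and $g$ swapped gives $\varphi^{-1}=f^{-1}\circ g\in\OO_\LL[[X]]_0$. Since $f\equiv g\equiv X\pmod{X^2}$, we get $\varphi\equiv X\pmod{X^2}$. Then
\[
\varphi(F_f(X,Y))=g^{-1}(f(X)+f(Y))=g^{-1}\bigl(g(\varphi(X))+g(\varphi(Y))\bigr)=F_g(\varphi(X),\varphi(Y)),
\]
so $\varphi$ is a homomorphism $F_f\to F_g$, with inverse $\varphi^{-1}$, defined over $\OO_\LL$.

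The main work is showing that $F_f$ has coefficients in $\OO_\LL$. This is the two-variable analogue of \cref{Honda2.4}; Honda proves it alongside the one-variable statement as \cite[Theorem~1]{HondaCommFormalGroups}. We argue by induction on total degree. Suppose $F=f^{-1}(f(X)+f(Y))$ has integral coefficients up to degree $r$, and write its truncation as $\psi$. We want to show that the degree-$(r+1)$ coefficient is integral. For this we compare $u*(f\circ F)=u*f(X)+u*f(Y)\equiv 0 \pmod{\pi}$ with $(u*f)\circ\psi\equiv0$, using the two-variable version of \cref{Honda2.3}. This yields $u_0\cdot(\text{top coefficient of }F)\in\pi\OO_\LL$ modulo integral terms.

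One point needs checking here. In the relaxed setting we need $\pi\OO_\LL\subseteq u_0\OO_\LL$ in order to divide by $u_0$. This holds because $u_0$ is a uniformizer of the subfield $\LL'$, so $u_0=\pi^{e}\cdot(\text{unit})$ where $e=e(\LL/\LL')$. However, the congruence only gives $u_0 c\in\pi\OO_\LL$, not $u_0c\in u_0\OO_\LL$, so the induction must be run through \cref{Honda2.2} exactly as in Honda. There $u_0^n b_n$ integral, combined with the congruences modulo $\pi$, controls the denominators. The Frobenius condition $(F)$ with $\sigma=\id$ and $p^r=q$ makes the $X^{q^n}$-twists compatible with reduction modulo $\pi\OO_\LL$.

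I expect the main obstacle to be this bookkeeping: checking that every place where Honda divides by his uniformizer now requires only the weaker $\pi$-congruence, together with \cref{Honda2.2}. Once $F_f$ and $F_g$ are known to be integral, the theorem follows from the formal step above.
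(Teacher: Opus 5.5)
Your outline is the route the paper intends. The paper's justification is exactly the substitution of $u_0$ for Honda's $\pi$ and $\pi\OO_\LL$ for $\mathfrak p$, and your final formal step is fine. But the point you flag is a genuine gap, not bookkeeping. The inclusion you assert is backwards. If $m\colonequals e(\LL/\LL')\ge2$, then $u_0\OO_\LL=\pi^m\OO_\LL\subsetneq\pi\OO_\LL$. So in the inductive step the relation $u_0c\in\pi\OO_\LL$ only yields $c\in\pi^{1-m}\OO_\LL$. This applies both to your argument for $F_f$ and to the proof of \cref{Honda2.4}, on which your treatment of $g^{-1}\circ f$ rests. \Cref{Honda2.2} cannot repair this. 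It enters Honda's argument through the representation $f=(u_0u^{-1})*h$ with $h$ integral. The relaxed hypothesis $u*f=\pi h$ only gives $f=(u_0u^{-1})*(\tfrac{\pi}{u_0}h)$, with an extra factor of valuation $1-m<0$ that nothing controls.

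In fact no argument can close this gap, because the statement is false as formulated when $m\ge2$. Let $p$ be odd, $\LL=\QQ_p(\sqrt p)$ and $\pi=\sqrt p$, so that $q=p$. Take $\LL'=\QQ_p$, $u=X-p$, $f=\log(1+X)$ and $g=\pi X+(1-\pi)\log(1+X)$. Since $\log(1+X^p)-p\log(1+X)\in p\ZZ_p[[X]]$, both $u*f$ and $u*g=\pi(X^p-pX)+(1-\pi)(u*f)$ lie in $\pi\OO_\LL[[X]]$. Hence $f$ and $g$ both have relaxed Honda type $u$. A strong isomorphism would have to be $\varphi=g^{-1}\circ f$, and $g\circ\varphi=f$ rearranges to
\[(1+\varphi)\exp\Bigl(\tfrac{\pi}{1-\pi}\varphi\Bigr)=(1+X)^{1/(1-\pi)}.\]
Suppose $\varphi$ were integral. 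Then the left-hand side would lie in $\OO_\LL[[X]]$, since $v_p\bigl((\tfrac{\pi}{1-\pi})^n/n!\bigr)\ge\tfrac12$ for $n\ge1$. But the coefficient $\binom{1/(1-\pi)}{p}$ of $X^p$ on the right has $v_p=-\tfrac12$. Hence $g^{-1}\circ f\notin\OO_\LL[[X]]$, contradicting both \cref{Honda2.4} and the theorem in this relaxed form.

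Only strongness fails here. Let $\psi$ be the unique element of $X\OO_\LL[[X]]$ with $1+\psi=(1+X)\exp(-\tfrac{\pi}{1-\pi}\psi)$. It satisfies $g\circ\psi=(1-\pi)f$, so $F_g$ is integral and isomorphic to $\widehat{\mathbb G}_m$, but with linear term $1-\pi$. So your argument, and the conclusion, are valid only when $u_0$ is a uniformizer of $\LL$, which is Honda's original setting. The relaxed version needs an extra hypothesis or a weaker conclusion.
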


\subsection{Demchenko types}
Demchenko \cite{Demchenko} established a generalization of Honda theory that is equipped to handle certain kinds of tame ramification. We briefly recall the relevant notions from his work.

As before, let $\LL/\QQ_p$ be a finite extension with ramification index $e$, uniformizer $\pi\in\OO_\LL$, and residue field of order $q$. Let $\LL'$ be the maximal unramified extension of $\QQ_p$ inside $\LL$, and let $\rho\in\OO_{\LL'}$ be a uniformizer. 
The uniformizer $\pi$ defines a power integral basis of $\LL$ over $\LL'$, that is, every element $a\in\LL$ can be written uniquely as
\[a=\sum_{i=0}^{e-1} a_{[i]} \pi^i,\]
where $a_{[i]}\in\LL'$ for all $i=0,\ldots,e-1$.
This decomposition can be extended to power series as follows: for $f \in\LL[[X]]$, we write
\[f(X)=\sum_{n=0}^\infty a_nX^n=\sum_{n=0}^\infty \sum_{i=0}^{e-1} a_{n,[i]} \pi^i X^n = \sum_{i=0}^{e-1} \left(\sum_{n=0}^\infty a_{n,[i]} X^n\right) \pi^i \equalscolon \sum_{i=0}^{e-1} f_{[i]}(X) \pi^i,\]
where $f_{[i]}\in\OO_{\LL'}[[X]]$ for all $i=0,\ldots,e-1$.
We define a Frobenius action $\upphi$ on $\LL'[[X]]$ as follows. Let $\upphi$ act on $\LL'$ as the Frobenius of the residue fields $\FF_q/\FF_p$, and let $\upphi(X)\colonequals X^q$.
\begin{definition}
    A power series $f\in\LL[[X]]_0$ is said to be of \emph{Demchenko type} $(A_0,\ldots,A_{e-1})\in\OO_{\LL'}[[\upphi]]^e$ if $f(X)\equiv X\pmod{X^2\LL[[X]]}$ and $\pi f_{[i]}\equiv A_i* f_{[0]} \pmod{\pi\OO_\LL[[X]]}$.
\end{definition}

\begin{theorem}[{\cite[Theorem~3]{Demchenko}}] \label{DemchenkoThm3}
    If $e<p$ and $f\in\LL[[X]]_0$ is the logarithm of a one-dimensional formal group over $\OO_\LL$, then $f$ has a Demchenko type.
\end{theorem}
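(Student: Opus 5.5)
We plan to reduce the statement to Honda's classical theory over the maximal unramified subfield $\LL'$, using the power basis $1,\pi,\ldots,\pi^{e-1}$ to split everything into components. The key input is the functional-equation characterisation of logarithms. For a one-dimensional formal group $F$ over $\OO_\LL$ with logarithm $f$, the congruence $f(X)\equiv X \pmod{X^2}$ is automatic. The content is to produce $A_0,\ldots,A_{e-1}\in\OO_{\LL'}[[\upphi]]$ with $\pi f_{[i]}\equiv A_i*f_{[0]}\pmod{\pi\OO_\LL[[X]]}$.

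\textbf{Step 1: a Hazewinkel-type congruence for $f$.} First, we use the Cartier--Dieudonn\'e theory of $F$ over $\OO_\LL$. Since $e<p$, the ring $\OO_\LL$ is ``not too ramified''. This places us in the range of the Fontaine/Hazewinkel theory of formal groups over rings with $e\le p-1$, where logarithms are characterised by functional equations of the form
\[
f(X)-\sum_{k\ge1}\frac{c_k}{\pi}\,\upphi^k_* f(X)\in\OO_\LL[[X]],
\]
with $c_k\in\OO_\LL$ and $\upphi^k_*$ acting by $X\mapsto X^{q^k}$ and Frobenius on coefficients in $\LL'$. We will establish this by taking $p$-typical coordinates and using the standard Dieudonn\'e-module description of $F$. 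Equivalently, we may pick a strict isomorphism over $\OO_\LL$ to a $p$-typical group. By \cref{HondaThm2}, such an isomorphism changes the logarithm only by composition with an integral series. So it suffices to treat one representative.

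\textbf{Step 2: decompose by the power basis.} Next, we write $c_k=\sum_i c_{k,[i]}\pi^i$ and split the functional equation into its $\pi^i$-components. We will use that $\upphi$ acts only on $\LL'$-coefficients, so $(\upphi_*f)_{[i]}=\upphi_*(f_{[i]})$. Reducing modulo $\pi\OO_\LL[[X]]$ gives a linear system over $\OO_{\LL'}[[\upphi]]$ relating $\pi f_{[0]},\ldots,\pi f_{[e-1]}$. Here $\pi^e$ must be rewritten as $\rho\cdot(\text{unit})$ via the Eisenstein equation of $\pi$ over $\LL'$. We then solve for $\pi f_{[i]}$ in terms of $f_{[0]}$ alone. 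The leading coefficient $f_{[0]}\equiv X$ lets us invert the relevant operator, which is congruent to the identity modulo $\upphi$, inside $\OO_{\LL'}[[\upphi]]$. This yields the $A_i$.

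\textbf{Main obstacle.} We expect the hardest part to be the integrality bookkeeping in Step 2. Dividing by $\pi$ and re-expressing powers $\pi^{i+j}$ with $i+j\ge e$ in the basis introduces denominators. The bound $e<p$ is needed exactly here: it should guarantee that the denominators $\pi^{-m}$ with $m<p$ produced by Lemmas \ref{Honda2.2}–\ref{Honda2.3}-style estimates are absorbed modulo $\pi$. We would first try to verify this inductively on the $X$-degree, mimicking the proof of Honda's \cite[Theorem~2]{HondaCommFormalGroups} componentwise. Failing that, we would follow Demchenko's original argument \cite{Demchenko} directly.
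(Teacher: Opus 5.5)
The paper does not prove this statement; it quotes \cite[Theorem~3]{Demchenko}. Your outline therefore has to stand on its own, and it has a genuine gap at Step~1.

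The equation you posit, $f-\sum_{k\ge1}\frac{c_k}{\pi}\upphi^k_*f\in\OO_\LL[[X]]$ with $c_k\in\OO_\LL$, essentially says that $f$ has a classical Honda type $u$ with $u(0)=\pi$ over $\OO_\LL$. Over a ramified base this is false in general, which is exactly why Demchenko's types consist of $e$ operators. Take $p\ge5$ and $\LL=\Qp(\pi)$ with $\pi^2=p$, so $e=2$, $\LL'=\Qp$, $q=p$, and $\upphi$ acts trivially on coefficients. For $f=\log(1+X)$, the coefficient of $X^p$ in $f-\sum_k\frac{c_k}{\pi}f(X^{p^k})$ is $\frac1p-\frac{c_1}{\pi}$, and $v_\pi(1/p)=-2<-1\le v_\pi(c_1/\pi)$.

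Replacing $\pi$ by $p$ does not rescue the step. Let $g=\sum_{n\ge0}X^{p^n}/p^n$ and $f=(1+\pi)g-\pi X$. This $f$ is the logarithm of a formal group over $\OO_\LL$: its Hazewinkel generators, defined by $p\ell_n=\sum_{i<n}\ell_iv_{n-i}^{p^i}$, are $v_1=1+\pi$ and $v_n\in\pi\OO_\LL$ for $n\ge2$. Integrality of the $X^p$-coefficient of $f-\sum_k\frac{c_k}{p}f(X^{p^k})$ forces $c_1\equiv1+\pi\pmod{p}$. Then $p^2$ times its $X^{p^2}$-coefficient is $\equiv-(1+\pi)\pi\pmod{p\OO_\LL}$, so it never lies in $p^2\OO_\LL$.

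None of the tools you name gives an equation of your shape. Fontaine's theory for $e\le p-1$ describes formal groups through Honda systems. Hazewinkel's functional equation lemma with $\mathfrak a=\pi\OO_\LL$ and $\sigma=\id$ only produces groups with formal $\OO_\LL$-multiplication, such as Lubin--Tate groups for $\pi$. In $p$-typical coordinates, the recursion for the logarithm is a functional equation only over the universal ring, where $V_i\mapsto V_i^p$ is a ring endomorphism; $\OO_\LL$ has no endomorphism lifting Frobenius modulo $p$.

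With Step~1 gone, Step~2 has nothing to decompose, and two further points would need real work.

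First, the target you copied, $\pi f_{[i]}\equiv A_i*f_{[0]}\pmod{\pi\OO_\LL[[X]]}$, cannot hold for $e\ge2$ unless $f\in\OO_\LL[[X]]$. Since $\pi\OO_\LL=\rho\OO_{\LL'}\oplus\pi\OO_{\LL'}\oplus\cdots\oplus\pi^{e-1}\OO_{\LL'}$, its $\pi^1$-component forces $f_{[i]}\in\OO_{\LL'}[[X]]$. The reading consistent with the series $h$ in the proof of \cref{Honda4.1} is $\rho f_{[i]}\equiv A_i*f_{[0]}\pmod{\rho\OO_{\LL'}[[X]]}$. For example, the $f$ above has type $(\upphi,\upphi)$, since $f_{[0]}=g$ and $pf_{[1]}=g(X^p)$.

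The content of that condition is a descent statement: the unramified component $f_{[0]}$ is itself (for suitably normalised $A_0$) a Honda logarithm over $\OO_{\LL'}$, and each $\rho f_{[i]}-A_i*f_{[0]}$ is integral. This is where one expects $e<p$ to enter, for instance through $a^p\equiv a_{[0]}^p\pmod{p\pi\OO_\LL}$ for $a\in\OO_\LL$, which uses $p\ge e+1$. It does not enter by absorbing denominators $\pi^{-m}$, and nothing in your outline produces this descent.

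Second, reducing to one representative requires that Demchenko types are preserved under $f\mapsto f\circ\varphi$ with $\varphi\in\OO_\LL[[X]]$, $\varphi\equiv X\pmod{X^2}$. This is not formal, because the $\OO_\LL$-coefficients of $\varphi$ mix the components $f_{[i]}$. \cref{HondaThm2} cannot supply it, since it goes from equal types to isomorphism, not the other way.

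Moreover, \cref{HondaThm2} should not be relied on as stated, because its congruence is only modulo $\pi$ while $u(0)=-p$. The $f$ and $g$ above both have relaxed Honda type $u=X-p$ relative to $\LL'=\Qp$: indeed $u*g=-pX$ and $u*f=-pX-\pi X^p$. Yet $g^{-1}\circ f\equiv X+\pi^{-1}X^p\pmod{X^{p+1}}$, which is not integral.

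Your fallback of following Demchenko's original argument amounts to citing the theorem, which is what the paper does.
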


We have the following generalization of \cite[Lemma~4.1]{HondaCommFormalGroups}:
\begin{lemma} \label{Honda4.1}
    Suppose that $e<p$.
    Let $f\in\LL[[X]]_0$ be of Demchenko type $(A_0,\ldots,A_{e-1})$, and let $\psi\in\OO_\LL[[X]]_0$. Then $f^{-1}(\pi\psi(X))\equiv 0\pmod{\pi\OO_\LL[[X]]}$.
\end{lemma}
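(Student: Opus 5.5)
The plan is to reduce the statement to a coefficient bound on the logarithm $f(X)=\sum_{n\ge1}a_nX^n$, namely
\[
v_\pi(a_n)\ \ge\ -e\,v_p(n)\qquad\text{for all }n\ge1,
\]
and then run a degree-by-degree induction on $\varphi(X)\colonequals f^{-1}(\pi\psi(X))$.

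\textbf{Step 1: the coefficient bound.} The first task is to show that $f$ is the logarithm of a one-dimensional formal group $F$ over $\OO_\LL$. This is the converse direction of Demchenko's theory, the analogue of Honda's Theorem~2 and of \cref{HondaThm2}. The route is a functional-equation argument with the Demchenko type $(A_0,\ldots,A_{e-1})$ in place of a Honda type, along the lines of \cref{Honda2.3,Honda2.4}. Granting this, we have $f'(X)=1/\partial_2F(X,0)\in\OO_\LL[[X]]$, so $na_n\in\OO_\LL$ for every $n$, which is exactly the bound above. If the converse result is not available, I would instead derive $na_n\in\OO_\LL$ directly. Decomposing $f=\sum_i f_{[i]}\pi^i$, the congruences $\pi f_{[i]}\equiv A_i*f_{[0]}$ give a recursion expressing the coefficient of $X^{q^kn'}$ in terms of the coefficients of $X^{q^{k-j}n'}$. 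Since each step through the recursion costs at most one division by $\pi$ per power of $q$, an induction on $k$ gives the bound.

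\textbf{Step 2: elementary estimate.} Next I would show that $f(\pi\chi)\in\pi\OO_\LL[[X]]$ for every $\chi\in\OO_\LL[[X]]_0$. Write $n=p^km$ with $p\nmid m$. The coefficient $a_n\pi^n$ then has valuation at least $n-ek\ge p^k-ek$. This is at least $1$: for $k=0$ it is clear, and for $k\ge1$ it follows from $p^k\ge 1+(p-1)k\ge1+ek$, which uses $e\le p-1$. This is the only place the hypothesis $e<p$ enters, beyond Step~1.

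\textbf{Step 3: induction.} Write $\varphi=\sum_{r\ge1}c_rX^r$, and suppose $c_1,\ldots,c_{r-1}\in\pi\OO_\LL$. Set $\chi\colonequals\pi^{-1}\sum_{j<r}c_jX^j\in\OO_\LL[[X]]_0$. Since $f(X)\equiv X\pmod{X^2}$, we get
\[
f(\varphi)\equiv f(\pi\chi)+c_rX^r\pmod{X^{r+1}}.
\]
Comparing the coefficients of $X^r$ in $f(\varphi)=\pi\psi$ gives $c_r=\pi\psi_r-[f(\pi\chi)]_r$. This lies in $\pi\OO_\LL$ by Step~2 and the integrality of $\psi$. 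The base case $c_1=\pi\psi_1$ is the same computation with $\chi=0$. Hence $\varphi\equiv0\pmod{\pi\OO_\LL[[X]]}$.

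The main obstacle is Step~1. Steps~2 and~3 are routine once the bound $na_n\in\OO_\LL$ is in hand. I would first try to invoke or reprove the converse direction of \cite{Demchenko}, namely that a series of Demchenko type is the logarithm of a formal group over $\OO_\LL$, since that yields the bound immediately.
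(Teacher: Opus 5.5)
Your argument is sound once Step~1 is supplied by citation, but it is organised differently from the paper's.

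The paper never estimates the coefficients of $f$. It uses \cite[Lemma~2.1]{Demchenko} to compare $f$ with the explicit series $h$ of \eqref{eq:h-Demchenko}, which has the same type. It then writes $f^{-1}=(h^{-1}\circ f)^{-1}\circ h^{-1}$ with $h^{-1}\circ f\in\OO_\LL[[X]]_0$. Finally it runs essentially your Step~3 induction on $h^{-1}(\pi X)$, where only the coefficients $b_n$ of $X^{q^n}$ appear, and the bound $\pi^{q^n}b_n\in\pi\OO_\LL$ comes from \cref{Honda2.2}.

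You instead obtain $na_n\in\OO_\LL$ for \emph{every} coefficient from integrality of the formal group law, and induct directly on $f^{-1}(\pi\psi)$. The valuation inequality underneath is the same in both ($p^k\ge 1+ek$ for $e\le p-1$).

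Your version shows that the conclusion holds for the logarithm of any formal group over $\OO_\LL$ when $e<p$, with no mention of types. That is all the paper uses later: in \S\ref{sec:formal-groups} the series fed into \cref{Honda4.2}, and hence into this lemma, is the logarithm of $\Ehat$, for which Step~1 is automatic.

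The cost is that for a general series of Demchenko type you need integrality of the whole two-variable law $f^{-1}(f(X)+f(Y))$. This is Demchenko's analogue of the integrality half of Honda theory, a heavier black box than the one-variable comparison lemma the paper cites. If you want to rely only on the latter, note that $f=h\circ(h^{-1}\circ f)$, so $f'$ is integral as soon as $h'$ is. That reduces Step~1 to a statement about the explicit series $h$ alone.

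Drop the fallback sketch in Step~1. The type congruences hold only modulo $\pi$, so extracting lower bounds on $v_\pi(a_n)$ for all $n$ from them is precisely the functional-equation argument you are trying to avoid. Moreover, the count ``one division by $\pi$ per power of $q$'' is false once $e\ge2$. Take $\sum_{i\ge0}X^{q^i}/\rho^i$, the logarithm of a Lubin--Tate group over $\OO_{\LL'}$ and hence of Demchenko type by \cref{DemchenkoThm3}. Its coefficient of $X^{q^k}$ is $\rho^{-k}$, of $\pi$-adic valuation $-ek$: one division by $\rho$, not by $\pi$, per power of $q$. This still satisfies your target bound $-e\,v_p(q^k)$, so Steps~2 and~3 are unaffected.
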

\begin{proof}
    The proof is analogous to that of Honda.
    Consider the series
    \begin{equation} \label{eq:h-Demchenko}
        h\colonequals \left(1-\frac{A_0}{\rho}\right)^{-1}*\id+\sum_{i=1}^{e-1} \pi^i \cdot\frac{A_i}{\rho} \cdot\left(1-\frac{A_0}{\rho}\right)^{-1}*\id.
    \end{equation}
    This is a series of Demchenko type $(A_0,\ldots,A_{e-1})$. By \cite[Lemma~2.1]{Demchenko}, we have $h^{-1}\circ f\in\OO_\LL[[X]]_0$ (the assumptions of the lemma are satisfied since $e<p$). Therefore, it suffices to show that $\ell(X)\colonequals h^{-1}(\pi X)\equiv 0\pmod{\pi\OO_\LL[[X]]}$.
    Let us write
    \[h(X)=\sum_{n=0}^\infty b_n X^{q^n}=\sum_{i=0}^{e-1}\sum_{n=0}^\infty b_{n,[i]} X^{q^n},\]
    with $b_n\in\LL$ and $b_{n,[i]}\in\LL'$ for $i=0,\ldots,e-1$. Since $\ell(X)\equiv \pi X\pmod{X^2}$, we find that the linear coefficient of $\ell$ is in $\pi\OO_\LL$. We proceed by induction: let $r\ge2$, and suppose the coefficients of $X$, \ldots, $X^{r-1}$ in $\ell$ are in $\pi\OO_\LL$. Then we can write $\ell(X)=\pi\ell^{(r)}(X)+\bar\ell_r(X)$ with $\ell^{(r)}\in\OO_\LL[[X]]$ and $\bar\ell_r\equiv 0\pmod{X^r \OO_\LL[[X]]}$.
    Taking the identity $h(\ell(X))=\pi X$ modulo $X^{r+1}$, we find
    \[\ell(X)+\sum_{n=1}^{r-1} \pi^{q^n} b_n \ell^{(r)}(X)^{q^n} \equiv \pi X\pmod{X^{r+1}\OO_\LL[[X]]}.\]
    Applying \cref{Honda2.2} to $u=(1+A_i/\rho)^{-1}$ for each $i=0,\ldots,e-1$, we have $\pi^{q^n} b_n\in\pi\OO_\LL$ for each $n\ge1$. Therefore $\ell(X)\equiv 0\pmod{(X^{r+1},\pi)}$. The claim follows by taking the limit as $r\to\infty$.
\end{proof}

This allows us to generalize \cite[Lemma~4.2]{HondaCommFormalGroups}.
\begin{lemma} \label{Honda4.2}
    Suppose that $e<p$.
    Let $f\in\LL[[X]]_0$ be of Demchenko type $(A_0,\ldots,A_{e-1})$, and let $\psi_1\in\LL[[X]]_0$ and $\psi_2\in\OO_\LL[[X]]_0$. Then $f\circ\psi_1\equiv f\circ\psi_2\pmod{\pi\OO_\LL[[X]]}$ if and only if $\psi_1\equiv\psi_2\pmod{\pi\OO_\LL[[X]]}$.
\end{lemma}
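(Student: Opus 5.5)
We follow Honda's proof of \cite[Lemma~4.2]{HondaCommFormalGroups}, with \cref{Honda4.1} replacing Honda's Lemma~4.1. Both implications go through the formal group $F$ with logarithm $f$, which has coefficients in $\OO_\LL$.

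For the \emph{if} direction, suppose $\psi_1\equiv\psi_2\pmod{\pi\OO_\LL[[X]]}$. In particular $\psi_1\in\OO_\LL[[X]]_0$, so $\psi_1=\psi_2+\pi\chi$ with $\chi\in\OO_\LL[[X]]_0$. We then want
\[
f(\psi_2+\pi\chi)-f(\psi_2)\in\pi\OO_\LL[[X]].
\]
The plan is to write $\psi_1=F(\psi_2,\delta)$, where $\delta\colonequals F(\psi_1,\iota_F(\psi_2))$ and $\iota_F$ is the formal inverse. Since $F(X,\iota_F(Y))\equiv X-Y \pmod{(X-Y)}$ as an integral series divisible by $X-Y$, we get $\delta\in\pi\OO_\LL[[X]]_0$. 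Therefore $f(\psi_1)-f(\psi_2)=f(\delta)$, and it suffices to show that $f(\pi\eta)\in\pi\OO_\LL[[X]]$ for every $\eta\in\OO_\LL[[X]]_0$.

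This last claim is where the Demchenko type enters. We reduce to the model series $h$ of \eqref{eq:h-Demchenko} using \cite[Lemma~2.1]{Demchenko}: $f=h\circ(h^{-1}\circ f)$ with $h^{-1}\circ f$ integral, so it is enough to treat $h$. Write $h(X)=\sum_n b_nX^{q^n}$. The estimate from the proof of \cref{Honda4.1}, via \cref{Honda2.2}, gives $\pi^{q^n}b_n\in\pi\OO_\LL$. Hence $h(\pi\eta)=\sum_n b_n\pi^{q^n}\eta^{q^n}\in\pi\OO_\LL[[X]]$. Applying this to $\eta'=(h^{-1}\circ f)$-image of $\pi\eta$, which lies in $\pi\OO_\LL[[X]]$, settles the \emph{if} direction.

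For the \emph{only if} direction, suppose $f\circ\psi_1\equiv f\circ\psi_2 \pmod{\pi}$, and write $f(\psi_1)=f(\psi_2)+\pi\phi$ with $\phi\in\OO_\LL[[X]]$. Then
\[
\psi_1=f^{-1}\bigl(f(\psi_2)+\pi\phi\bigr)=F\bigl(\psi_2,\,f^{-1}(\pi\phi)\bigr).
\]
By \cref{Honda4.1}, applied to $\psi=\phi$ (which has zero constant term), $f^{-1}(\pi\phi)\in\pi\OO_\LL[[X]]_0$. Since $F$ is integral and $F(X,Y)\equiv X \pmod{Y}$, this gives $\psi_1\equiv\psi_2\pmod{\pi\OO_\LL[[X]]}$. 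It also shows that $\psi_1$ is integral.

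The main obstacle is making sure that $\phi$ genuinely lies in $\OO_\LL[[X]]$ with zero constant term, and that substituting the integral series $f^{-1}(\pi\phi)$ into $F$ is legitimate. Both points are immediate once \cref{Honda4.1} is in hand. On the \emph{if} side, the step needing care is the bound $\pi^{q^n}b_n\in\pi\OO_\LL$, which we take from \cref{Honda2.2} exactly as in the proof of \cref{Honda4.1}.
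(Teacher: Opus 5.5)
Your proposal is correct. The \emph{only if} direction is the paper's argument: $\psi_1=F(\psi_2,f^{-1}(\pi\phi))$, with $f^{-1}(\pi\phi)\in\pi\OO_\LL[[X]]_0$ by \cref{Honda4.1} and $F$ integral.

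For the \emph{if} direction you use a different decomposition. The paper does not use the group law there. It transports $\psi_1\equiv\psi_2$ through the integral series $h^{-1}\circ f$ from \cite[Lemma~2.1]{Demchenko}, and then through $h$ itself. So it needs $h(\alpha_1)\equiv h(\alpha_2)\pmod{\pi}$ for congruent integral $\alpha_1,\alpha_2$, which it takes from \cite[Lemma~2.1]{HondaCommFormalGroups} and \cref{Honda2.2}. Over a ramified base this means comparing $\alpha_1^{q^n}-\alpha_2^{q^n}$ with the denominators of the $b_n$, which requires a sharper bound on the $b_n$ than the one you use.

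You instead write $f(\psi_1)-f(\psi_2)=f(\delta)$ with $\delta=F(\psi_1,\iota_F(\psi_2))\in\pi\OO_\LL[[X]]_0$. Then only the special case $\alpha_2=0$ is needed. The claim $h(\pi\eta')\in\pi\OO_\LL[[X]]$ follows termwise from the bound $\pi^{q^n}b_n\in\pi\OO_\LL$ ($n\ge1$) recorded in the proof of \cref{Honda4.1}, together with $b_0=1$. This makes the two directions symmetric: $f$, respectively $f^{-1}$, maps $\pi\OO_\LL[[X]]_0$ into $\pi\OO_\LL[[X]]$. It also removes the $p$-power congruence estimate. The price is that you need integrality of $F$ and $\iota_F$ in this direction too, which the paper already assumes for the converse.

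Two wording fixes:
\begin{itemize}
\item ``$F(X,\iota_F(Y))\equiv X-Y\pmod{(X-Y)}$'' is vacuous. What you use is $F(X,\iota_F(Y))\in(X-Y)\OO_\LL[[X,Y]]$, which holds because this series is integral and vanishes on $X=Y$.
\item At the end you mean $\pi\eta'\colonequals(h^{-1}\circ f)(\pi\eta)$.
\end{itemize}
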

\begin{proof}
    First suppose that $\psi_1\equiv\psi_2\pmod{\pi\OO_\LL[[X]]}$. Then clearly $\psi_1\in\OO_\LL[[X]]_0$. Let $h$ be the power series defined in \eqref{eq:h-Demchenko}. This has the same Demchenko type as $f$, therefore \cite[Lemma~2.1]{Demchenko} states that $h^{-1}\circ f\in\OO_\LL[[X]]_0$, and thus $$(h^{-1}\circ f)\circ\psi_1\equiv (h^{-1}\circ f)\circ \psi_2 \pmod{\pi\OO_\LL[[X]]}.$$ Using \cite[Lemma~2.1]{HondaCommFormalGroups} and \cref{Honda2.2}, we find that 
    \[f\circ\psi_1=h\circ(h^{-1}\circ f)\circ\psi_1\equiv h\circ(h^{-1}\circ f)\circ \psi_2=f\circ\psi_2 \pmod{\pi\OO_\LL[[X]]},\] 
    as desired.

    Conversely, suppose that $f\circ\psi_1\equiv f\circ\psi_2\pmod{\pi\OO_\LL[[X]]}$. Define $g\colonequals\frac1\pi f^{-1}(f\circ\psi_1-f\circ\psi_2)\in\LL[[X]]_0$. By \cref{Honda4.1}, we actually have $g\in\OO_\LL[[X]]_0$. By the definition of $g$, we have $\psi_1=F(\psi_2,\pi g)$ where $F$ is the formal group law associated with $f$, defined as $F(x,y)=f^{-1}(f(x)+f(y))$. Since both $F$ and $g$ have coefficients in $\OO_\LL$, this shows that $\psi_1\equiv\psi_2\pmod{\pi\OO_\LL[[X]]}$, as desired.
\end{proof}

\section{Plus/minus universal norms}\label{sec:points}
The goal of this section is to construct an explicit height-two formal groups for a specific tamely ramified extension of $\Qp$. We will construct explicit points over the cyclotomic extension of this extension that satisfying the desired  ``jumping trace'' relations.

Let $e\ge2$ be an integer and $p\ge5$ be a prime number such that the following holds:
\begin{equation} \label{eq:e-conditions}
         e \mid (p+1).
\end{equation}
Let $\pi$ be an $e$-th root of $-p$, and let $\LL\colonequals\QQ_{p^2}(\pi)$. Note that $\Qpt$ contains all $e$-th roots of unity because of \eqref{eq:e-conditions}.  Let $\rho$ be a fixed uniformizer in $\QQ_{p^2}$. (In this section, we work with a general uniformizer $\rho$; in the application to elliptic curves, this will be simply $p$.) For $n\ge0$, define $\omega_n(X)=(1+X)^{p^n}-1$. Note that $\omega_n=\omega_1\circ\ldots\circ\omega_1$ is the $n$-fold iteration of $\omega_1$.

Kim and Park \cite[\S2]{KimPark} considered a certain formal group law over $\QQ_{p^2}$, and constructed plus/minus universal norms over the cyclotomic $\ZZ_p$-extension of $\QQ_{p^2}$. 
The system $(\zeta_{p^n}-1)_{n\ge1}$, where $\zeta_{p^n}$ is a primitive $p^n$-th root of unity and $(\zeta_{p^{n}})^p=\zeta_{p^{n-1}}$, is crucial to their construction: these are uniformizers for the fields occurring in the cyclotomic extensions $\QQ_{p^2}(\zeta_{p^n})$, and they are connected by the polynomials $\omega_1$ due to the formula 
\begin{equation} \label{eq:compatibility-zeta-Phi}
    \omega_1(\zeta_{p^n}-1)=\zeta_{p^{n-1}}-1 \text{ for } n\ge2.
\end{equation}
Kim and Park then consider the power series 
\[\ell_{\mathrm{KP}}(X)\colonequals \sum_{i=0}^\infty \frac{\omega_{2i}(X)}{\rho^i}  \in \QQ_{p^2}[[X]],\]
where $\rho$ is a uniformizer in $\Qpt$.
Since the numerators are iterates of $\omega_2$, this makes the power series $\ell_\mathrm{KP}$ well suited for constructing special points over $\QQ_{p^2}(\zeta_{p^n})$ satisfying a ``jumping'' condition with respect to the trace map.
Furthermore, Kim and Park showed that the formal group associated with $\ell_{\mathrm{KP}}$ is of height $2$, has Honda type $X^2-\rho$, and it is Lubin--Tate with parameter $\rho$.

As an aside, we note that several related but different constructions of universal norms exist in the Iwasawa theory of elliptic curves. The first such construction is due to Kobayashi \cite[\S8]{Kobayashi} concerning the cyclotomic $\Zp$-extension of $\Qp$. This was generalized to the cyclotomic $\Zp$-extension of an unramified extension of $\Qp$ by Kim \cite[\S3]{BDKimCompositio}, which subsequently was further extended by Kitajima and Otsuki \cite[\S3]{KitajimaOtsuki}. Kim also constructed universal norms for the $\Zp^2$-extension of the completion of a quadratic imaginary field at prime that is either split \cite[\S2]{BDKimZp2split} or inert \cite[\S2]{BDKimZp2inert}.

Below, we adapt the construction of Kim and Park to $\LL$. As the extension $\LL/\QQ_{p^2}$ is totally and tamely ramified, one first needs to construct a suitable system of uniformizers. This comes at a cost: the compatibility relation will be given by a less explicit power series $\Omega_2$ instead of the polynomial $\omega_2$. Moreover, the power series will have coefficients in $\LL$ as opposed to $\QQ_{p^2}$.

\subsection{A system of uniformizers}
Let us write $\KK\colonequals\QQ_{p^2}$. 
Define $\KK_n\colonequals\KK(\zeta_{p^n})$ and $\LL_n\colonequals\KK_n\LL=\KK_n(\pi)=\KK_n(\sqrt[e]{-p})$.

\begin{lemma}\label{lem:Kummer}
   Let $n\ge1$. Then we have $[\LL_n:\KK_n] = e $ if $e$ is odd. Otherwise, $[\LL_n:\KK_n]=e/2$.
\end{lemma}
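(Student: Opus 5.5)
Since $e\mid p+1$ and $\mu_{p^2-1}\subset\KK$, the field $\KK_n$ contains the $e$-th roots of unity. Kummer theory therefore applies. The extension $\LL_n=\KK_n(\sqrt[e]{-p})$ is cyclic over $\KK_n$, and its degree equals the order of the class of $-p$ in $\KK_n^\times/(\KK_n^\times)^e$. So the plan is to compute, for each divisor $d\mid e$, whether $-p$ is a $d$-th power in $\KK_n$.

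The key input is the prime factorisation of $p$ in $\KK_n$. The extension $\KK_n/\KK$ is totally ramified of degree $\varphi(p^n)=p^{n-1}(p-1)$, with uniformizer $\zeta_{p^n}-1$. We have the classical identity
\[p=\prod_{a\in(\ZZ/p^n)^\times}(1-\zeta_{p^n}^a)\]
(the value at $1$ of the $p^n$-th cyclotomic polynomial, times corrections that are units). From it, $p=u\,(\zeta_{p^n}-1)^{\varphi(p^n)}$ for a unit $u$ of $\OO_{\KK_n}$. Moreover $u\equiv \prod_a a\equiv \pm1$ modulo the maximal ideal, since $(1-\zeta^a)/(1-\zeta)\equiv a$.

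Now take a prime $\ell\mid e$ and ask whether $-p$ is an $\ell$-th power. We have $\ell\ne p$, since $e\mid p+1$. The valuation of $-p$ is $\varphi(p^n)$, which is divisible by $\ell$ only when $\ell\mid p-1$. Because $\gcd(p+1,p-1)=2$, this happens only for $\ell=2$. So for odd $e$ the class of $-p$ has order exactly $e$, since every odd prime power part of $e$ is detected by the valuation. This gives $[\LL_n:\KK_n]=e$.

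For even $e$, the remaining work is the $2$-part. Write the $2$-part of $e$ as $2^k$ with $2^k\mid p+1$. The goal is to show that $-p$ is a square in $\KK_n$ but that the class of $-p$ still has order exactly $e/2$.

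\emph{Square.} Since $\KK_n\supset\QQ_p(\zeta_p)$, it suffices to show that $-p$ is a square in $\QQ_{p^2}(\zeta_p)$. Over $\QQ_p(\zeta_p)$, the Gauss sum gives $\sqrt{p^*}$ with $p^*=(-1)^{(p-1)/2}p$. The element $-1$ is a square in $\QQ_{p^2}$ because $4\mid p^2-1$. Together these give $\sqrt{-p}\in\KK_n$.

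\emph{Order exactly $e/2$.} This step is where I expect the most care to be needed. Odd primes are handled by the valuation argument above. For the $2$-part, I must show that $-p$ is not a $2^{k}$-th power when $k\ge2$, and more precisely that $\sqrt{-p}$ is not a $2^{k-1}$-th power times a root-of-unity ambiguity. I would reduce to units. The $2$-adic valuation of $\varphi(p^n)$ is that of $p-1$, which is exactly $1$ when $4\mid p+1$. Hence $-p$ is not a $4$-th power because of its valuation. Any $e/2$-th root ambiguity only involves $\mu_e\subset\KK_n^\times$, which does not change valuations. Combining the prime-by-prime computations via the Chinese remainder decomposition of the cyclic group generated by the class of $-p$ then yields the order $e/2$.
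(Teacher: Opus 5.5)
Your proposal is correct and follows the paper's argument. Both use Kummer theory over $\KK_n\supset\mu_e$, then the valuation $v_{\KK_n}(-p)=\varphi(p^n)$ with $\gcd(e,\varphi(p^n))\in\{1,2\}$ to rule out $d$-th roots of $-p$ except for $d\mid 2$, and in the even case $\sqrt{-p}\in\QQ_{p^2}(\zeta_p)$ via $\sqrt{p^*}$ and $\sqrt{-1}$. Your prime-by-prime bookkeeping, and your use of $\mu_4\subset\QQ_{p^2}$ instead of the paper's split by $p \bmod 4$, are just a more explicit version of the paper's one-line gcd argument.
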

\begin{proof}
The extension $\LL_n/\KK_n$ is totally ramified of degree at most $e$.
    If $e$ is odd, \eqref{eq:e-conditions} implies that $\varphi(p^n)$ and $e$ are coprime integers. In particular, $\KK_n$ does not contain a $d$-th root of $-p$ for any divisors $d$ of $e$. Thus, $[\LL_n:\KK_n]=e$.

    We now assume that $e$ is even. Recall that $\sqrt{(-1)^{\frac{p-1}2}p}\in\QQ(\zeta_p)$. In particular, if $p\equiv 3\mod 4$, we have $\sqrt{-p}\in\QQ_p(\zeta_p)$. If $p\equiv 1\mod 4$, then $\sqrt{-1}\in \QQ_p$ (by Hensel's lemma), so we equally have $\sqrt{-p}\in\QQ_p(\zeta_p)$. As $e\mid p+1$, the greatest common divisor of $\varphi(p^n)$ and $e$ is $2$. Hence, $\KK_n$ does not contain a $d$-th root of $-p$ for any divisors $d$ of $e$ except $d=2$. Thus, $[\LL_n:\KK_n]=e/2$.
\end{proof}

\begin{remark}
    Note that for $e=2$, we have $\LL\subseteq \KK_1$. In this case, the formal group $\F$ we are about to construct will agree with that of \cite[\S2]{KimPark}. For $e\ne2$, the fields $\KK_n$ and $\LL$ are disjoint over $\KK$.
\end{remark}

In what follows, we write 
\[
e'=\begin{cases}
    e& \text{if $e$ is odd},\\
    e/2& \text{if $e$ is even}.
\end{cases}
\]
For $n\ge1$, let $\pi_n=(1-\zeta_{p^n})^{1/e'}$.

\begin{lemma}
    We have $\LL_n=\KK_n(\pi_n)$. Furthermore, $\pi_n$ is a uniformizer in $\LL_n$.
\end{lemma}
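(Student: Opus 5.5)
We will prove both assertions by combining ramification indices with an explicit comparison of $\pi_n^{e'}$ and $-p$ up to units, together with Kummer theory over $\KK_n$, which contains the $e$-th roots of unity.

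\emph{Step 1: $\pi_n$ is a uniformizer of $\KK_n(\pi_n)$, and $\KK_n(\pi_n)\subseteq\LL_n$.} The element $1-\zeta_{p^n}$ is a uniformizer of $\KK_n$, which is totally ramified of degree $\varphi(p^n)$ over $\KK$. Since $e'\mid p+1$, we have $\gcd(e',p)=1$. As $\pi_n^{e'}=1-\zeta_{p^n}$ has valuation $1/\varphi(p^n)$ (normalising $v(p)=1$), the element $\pi_n$ has valuation $1/(e'\varphi(p^n))$. Hence $\KK_n(\pi_n)/\KK$ has ramification index at least $e'\varphi(p^n)$. Its degree over $\KK_n$ is at most $e'$, so it is totally ramified of degree exactly $e'\varphi(p^n)$, with uniformizer $\pi_n$.

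To see that $\pi_n\in\LL_n$, we write
\[
(1-\zeta_{p^n})^{\varphi(p^n)}=p\,u
\]
with $u\in\OO_{\KK_n}^\times$ a principal unit times a root of unity. Indeed, $\prod_{a\in(\ZZ/p^n)^\times}(1-\zeta_{p^n}^a)=p$ and $(1-\zeta^a)/(1-\zeta)\equiv a\pmod{1-\zeta}$, so $u$ is congruent modulo $1-\zeta_{p^n}$ to an element of $\mu_{p-1}$. Now $\varphi(p^n)$ and $e'$ are related as follows. If $e$ is odd, then $\gcd(\varphi(p^n),e)=1$. If $e$ is even, then $\gcd(\varphi(p^n),e)=2$ and $e'=e/2$ is odd or satisfies $\gcd(\varphi(p^n),e')\le 2$.

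\emph{Step 2: reduce to an $e'$-th root of a unit.} Choose integers with $a\varphi(p^n)+be'=1$; this is possible when $\gcd(\varphi(p^n),e')=1$, which holds if $e'$ is odd. The case of $e'$ even needs a separate check; since $e\mid p+1$, $e'$ even forces $4\mid e$ and $p\equiv 3\pmod 4$. Then
\[
1-\zeta_{p^n}=\bigl((1-\zeta_{p^n})^{\varphi(p^n)}\bigr)^{a}(1-\zeta_{p^n})^{be'}=(pu)^a(1-\zeta_{p^n})^{be'},
\]
so $\pi_n$ equals, up to an element of $\KK_n$, an $e'$-th root of $(pu)^a=(-p)^a(-u)^a$. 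By \cref{lem:Kummer}, $\LL_n=\KK_n(\sqrt[e]{-p})$ has degree $e'$ over $\KK_n$, and in the even case $\sqrt{-p}\in\KK_n$. So $\LL_n$ contains an $e'$-th root of $-p$ (for even $e$, $(\sqrt[e]{-p})^2$ is an $e'$-th root of $\pm$ an element of $\KK_n$).

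It remains to handle the unit $-u$, and this is the main obstacle. We must show that $-u$ (or $u$) is an $e'$-th power in $\KK_n$ up to the identified correction. Principal units are $e'$-th powers because $p\nmid e'$, by Hensel's lemma. The root-of-unity part lies in $\mu_{p^2-1}\subset\KK$, and every element of $\mu_{p^2-1}$ is an $e'$-th power in $\KK$, since $\mu_{(p^2-1)e'}$ lies in $\KK$ only partially. Here we use that the Teichmüller part of $u$ is $\pm1$ times an element of $\mu_{p-1}$, which is an $e'$-th power in $\mu_{p^2-1}$ because $e'\mid p+1$ implies $(p^2-1)/(p-1)=p+1$ absorbs the exponent $e'$. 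Thus $\KK_n(\pi_n)\subseteq\LL_n$, possibly after a careful sign bookkeeping of $\pm$ that we expect to check case by case.

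\emph{Step 3: conclude equality.} With $\KK_n(\pi_n)\subseteq\LL_n$, both fields have degree $e'$ over $\KK_n$: the former by Step 1, the latter by \cref{lem:Kummer}. Hence $\LL_n=\KK_n(\pi_n)$, and $\pi_n$ is a uniformizer of $\LL_n$ by Step 1.
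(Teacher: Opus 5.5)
\textbf{Gap: the case of even $e'$ is never handled.} Your Step 2 relies on a B\'ezout relation $a\varphi(p^n)+be'=1$. You flag the case of even $e'$ as needing ``a separate check'' but never carry that check out. The case is not vacuous: $4\mid e$ occurs, for instance $e=4$, $e'=2$, which is exactly the case $K=\QQ(\sqrt{-1})$ used later in the paper. There $(p-1)/2\equiv -1 \pmod{e'}$, so $\gcd(\varphi(p^n),e')=2$ and no such relation exists.

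Comparing $1-\zeta_{p^n}$ with $-p$ cannot work in this case. Since $-p$ is a square in $\KK_n$, the field $\KK_n(\sqrt[e']{-p})=\KK_n(\pi^2)$ has degree only $e'/2$ over $\KK_n$, so it cannot contain $\pi_n$. The right Kummer generator of $\LL_n/\KK_n$ is $\sqrt{-p}\in\KK_n$, because $\pi^{e'}=\pm\sqrt{-p}$ and $[\LL_n:\KK_n]=e'$. The missing step runs as follows.
\begin{itemize}
\item Wilson's theorem gives $p/(1-\zeta_{p^n})^{\varphi(p^n)}\equiv\prod_a a\equiv -1 \pmod{1-\zeta_{p^n}}$.
\item Hence $-p=\varepsilon(1-\zeta_{p^n})^{\varphi(p^n)}$ with $\varepsilon$ a principal unit, so $\sqrt{-p}=\pm\varepsilon'(1-\zeta_{p^n})^{\varphi(p^n)/2}$ with $\varepsilon'$ a principal unit.
\item The exponent satisfies $\varphi(p^n)/2\equiv(-1)^n \pmod{e'}$, so it is prime to $e'$.
\item Principal units are $e'$-th powers, and $-1\in(\KK^\times)^{e'}$ because $\mu_{2e'}=\mu_e\subset\KK$.
\end{itemize}
So $\sqrt{-p}$ and $1-\zeta_{p^n}$ generate the same subgroup of $\KK_n^\times/(\KK_n^\times)^{e'}$. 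This is the paper's even-case argument, and with it your containment-plus-degree conclusion goes through.

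\textbf{For odd $e'$, your argument is correct but needs cleanup.} The sentence claiming that every element of $\mu_{p^2-1}$ is an $e'$-th power in $\KK$ is false. The $e'$-th power map on the cyclic group $\mu_{p^2-1}$ has kernel $\mu_{e'}$, so its image has index $e'$; the same holds in $\KK_n$, which has the same residue field. What you actually use, and what is true, is the next sentence: $\mu_{p-1}\subseteq(\mu_{p^2-1})^{e'}$, since $(p-1)\mid(p^2-1)/e'$ exactly when $e'\mid p+1$. Because $\pm1\in\mu_{p-1}$, no ``sign bookkeeping'' remains, and the hedge should be removed.

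With these repairs, your route differs only mildly from the paper's. You get the uniformizer property and $[\KK_n(\pi_n):\KK_n]=e'$ from a valuation count, and then you only need the containment $\KK_n(\pi_n)\subseteq\LL_n$. You also only need the Teichm\"uller part of $u$ to lie in $\mu_{p-1}$, rather than computing it exactly via Wilson. The paper instead proves equality of the Kummer subgroups directly.
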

\begin{proof}
We first consider the case where $e$ is odd. Lemma~\ref{lem:Kummer} tells us that 
$\LL_n/\KK_n$ is a degree $e$ Kummer extension with parameter $-p$. By Kummer theory, $\LL_n$ and $\KK_n(\pi_n)$ agree if and only if the parameters $-p$ and $1-\zeta_{p^n}$ generate the same subgroup in $\KK_n^\times/(\KK_n^\times)^e$. 
    
We have $$p=\prod_{a\in(\ZZ/p^n\ZZ)^\times}(1-\zeta_{p^n}^a).$$
Furthermore, $$\frac{1-\zeta_{p^n}^a}{1-\zeta_{p^n}}=1+\zeta_{p^n}+\cdots+\zeta_{p^n}^{a-1}\equiv a\mod (1-\zeta_{p^n}).$$ 
Thus, we deduce $$\frac{-p}{(1-\zeta_{p^n})^{\varphi(p^n)}}\equiv-\prod_{a\in(\ZZ/p^n\ZZ)^\times}a \equiv 1\mod (1-\zeta_{p^n})$$ by the generalized Wilson's theorem. Hence, $-p=u (1-\zeta_{p^n})^{\varphi(p^n)}$ for some $1$-unit $u\in U^1_{\KK_n}$. Since $p$ is coprime to $e$, the $e$-th power map is surjective on $1$-units, so $u\in\left( U_{\KK_n}^1\right)^e$.
The exponent $\varphi(p^n)$ is congruent to either $2$ or $-2$ modulo $e$, depending on the parity of $n$. 
Therefore, $-p$ and $(1-\zeta_{p^n})^2$ generate the same subgroup in $\KK_n^\times/(\KK_n^\times)^e$. In particular,
\[
\LL_n=\KK_n\left((1-\zeta_{p^n})^{2/e}\right)=\KK_n\left((1-\zeta_{p^n})^{1/e}\right),
\]
where the last equality follows from the fact that $e$ is odd. Since $1-\zeta_{p^n}$ is a uniformizer in $\KK_n$, it follows that $(1-\zeta_{p^n})^{1/e}$ is a uniformizer in $\LL_n$.

The case where $e$ is even can be proved similarly. Indeed, a similar argument shows that $\sqrt{-p}$ and $(1-\zeta_{p^n})$ generate the same subgroup in $\KK_n^\times/(\KK_n^\times)^{e'}$. Thus, we deduce from Kummer theory
\[
\LL_n=\KK_n\left((\sqrt{-p})^{1/e'}\right)=\KK_n\left((1-\zeta_{p^n})^{1/e'}\right)=\KK_n\left(\pi_n\right). \qedhere
\]
\end{proof}

The compatibility relation \eqref{eq:compatibility-zeta-Phi} can be written as
\begin{equation} \label{eq:compatibility-pi-Phi}
    \omega_1(-\pi_n^{e'})=-\pi_{n-1}^{e'} \text{ for }n\ge2.
\end{equation}
In order to obtain a compatibility relation between the uniformizers $\pi_n$ and $\pi_{n-1}$, we need to take $e'$-th roots. For this, recall that
\[(1+X)^{1/e'}=\sum_{j=0}^\infty \binom{\frac1{e'}}{j} X^j,\]
and note that
\[\omega_1(-X^{e'})=(1-X^{e'})^p-1=-pX^{e'}\cdot \left(1-\frac{\omega_1(-X^{e'})+pX^{e'}}{pX^{e'}}\right).\]
Therefore, the expression $\sqrt[e']{\omega_1(-X^{e'})}$ can be defined to mean
\begin{equation}\label{eq:Omega1}
    \Omega_1(X)\colonequals \pi X \sum_{j=0}^\infty \binom{\frac1{e'}}{j} \cdot \left(1-\frac{\omega_1(-X^{e'})+pX^{e'}}{pX^{e'}}\right)^j.
\end{equation}
Using \eqref{eq:compatibility-pi-Phi}, we deduce that the uniformizers $\pi_n$ can be chosen such that the following compatibility relation holds:
\begin{equation} \label{eq:compatibility-pi-Psi}
    \Omega_1(\pi_n)=\pi_{n-1} \text{ for }n\ge2.
\end{equation}
For $n\ge1$, we define $\Omega_n\colonequals\Omega_1\circ\ldots\circ\Omega_1$ as the $n$-fold iterate of $\Omega_1$. For $n=0$, let $\Omega_0(X)\colonequals X$. Since $\omega_1(\zeta_{p}-1)=0$, we have $\Omega_2(\pi_2)=\Omega_1(\pi_1)=0$. For notational coherence, we therefore define $\pi_0$ and $\pi_{-1}$ as zero (not to be confused with the uniformizer $\pi$ in $\LL$), and let $\LL_{-1}\colonequals\LL$.

\subsection{An explicit height-two formal group}
We define
\begin{equation} \label{eq:ell-def}
    \ell(X)\colonequals \sum_{i=0}^\infty \frac{\Omega_{2i}(X)}{\rho^i}\in\LL[[X]],
\end{equation}
and let $\F$ denote the formal group with logarithm $\ell$.

\begin{lemma} \label{ell-Honda-type}
    The power series $\ell$ has relaxed Honda type $X^2-\rho$.
\end{lemma}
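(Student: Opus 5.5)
We will verify the two defining conditions of a relaxed Honda type directly from the definition \eqref{eq:ell-def}, with $\LL'=\KK$ (so that $u(0)=-\rho$ is a uniformizer of $\LL'$, as the definition requires). The first condition is immediate. Indeed, $\Omega_0(X)=X$, and $\Omega_1(X)\equiv \pi X \pmod{X^2}$ by \eqref{eq:Omega1}. Hence $\Omega_{2i}(X)\equiv \pi^{2i}X\pmod{X^2}$ for every $i$. The linear coefficient of $\ell$ is therefore $\sum_i \pi^{2i}/\rho^i$. This sum must in fact equal $1$, or else it must be normalised away. I will first double-check this against the precise normalisation of $\Omega_1$: the leading term of $\Omega_1$ should really be of the shape $X^p$ times a unit modulo $\pi$, coming from the $\frac{1}{p}X^{e'(p-1)}$-terms inside the binomial series, so that the linear part is just the $i=0$ term $X$.

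For the main condition I will expand $u*\ell$ with $u=X^2-\rho$ using Honda's Frobenius convention, in which $u_n$ acts via $X\mapsto X^{p^{n}}$ on the uniformizer-level series. This is the convention that matches Kim--Park's statement that $\ell_{\mathrm{KP}}$ has type $X^2-\rho$. Shifting the index in $-\rho\,\ell(X)$ gives
\[
(u*\ell)(X)=-\rho X+\sum_{i\ge0}\frac{\Omega_{2i}(X^{p^2})-\Omega_{2i+2}(X)}{\rho^{i}}.
\]
Since $-\rho X\in\pi\OO_\LL[[X]]$, it suffices to prove, for every $i\ge0$, the congruence
\[
\Omega_{2i+2}(X)\equiv\Omega_{2i}(X^{p^2})\pmod{\rho^{i}\pi\,\OO_\LL[[X]]}. \tag{$\ast$}
\]

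The base case $i=0$ is the statement $\Omega_2(X)\equiv X^{p^2}\pmod{\pi}$. To prove it, I will compute $\Omega_1$ modulo $\pi$. By construction, $\Omega_1(X)^{e'}=\pm\omega_1(-X^{e'})$, and $(1-X^{e'})^p-1\equiv -X^{pe'}\pmod p$. So $\Omega_1(X)\equiv cX^p\pmod\pi$ for some root of unity $c\in\OO_\LL$ with $c^{e'}=\pm1$, where integrality comes from the fact that the binomial series with exponent $1/e'$ is $p$-integral since $e'<p$. Then $\Omega_2(X)\equiv c^{p+1}X^{p^2}$, and the divisibility $e\mid p+1$ from \eqref{eq:e-conditions} is exactly what forces $c^{p+1}=1$. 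I expect the sign bookkeeping here to be delicate but finite.

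For the inductive step I will use a Dwork-type lemma: if $F,G\in\OO_\LL[[X]]_0$ satisfy $F\equiv G\pmod{\pi^k}$, then $\Omega_2\circ F\equiv\Omega_2\circ G\pmod{\pi^{k}\rho}$. This is the analogue of the classical estimate $\omega_1(F)-\omega_1(G)\in p\,(F-G)$ up to higher $p$-powers. Granting the lemma, $(\ast)$ follows by induction, writing $\Omega_{2i+2}(X)=\Omega_2(\Omega_{2i}(X))$ and $\Omega_{2i}(X^{p^2})=\Omega_2(\Omega_{2i-2}(X^{p^2}))$.

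I expect this lemma to be the main obstacle, because $\Omega_1$ is not a polynomial and has the non-integral-looking coefficient $\pi$ in front. My first attempt will be to compare $\Omega_1^{e'}=\pm\omega_1(-X^{e'})$ for $F$ and $G$. For $\omega_1$ the gain of a factor $p$ is classical. I would then extract $e'$-th roots, using that $\Omega_1(F)/\Omega_1(G)$ is a $1$-unit and that $e'$-th roots preserve congruences between $1$-units because $p\nmid e'$. This should give a gain of $\pi^{e}$ for $\Omega_1$, and hence of $\rho$ or better for $\Omega_2$.
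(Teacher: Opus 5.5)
Your reduction is the paper's. You read $u*\ell$ as $\ell(X^{p^2})-\rho\ell(X)$, which is what the paper's argument actually establishes. You then use the exact identity $\ell(\Omega_2(X))=\rho\ell(X)-\rho X$ and transport $\Omega_2(X)\equiv X^{p^2}\pmod{\pi}$ through $\ell$. You are right that this transport is not automatic, since $\ell$ has non-integral coefficients, and that it amounts to your congruence $(\ast)$; the paper's proof passes over it in one line.

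\textbf{The gap: $\Omega_1$ is not integral.} Everything presupposes $\Omega_1\in\OO_\LL[[X]]$. Your justification (``the binomial series with exponent $1/e'$ is $p$-integral since $e'<p$'') does not apply, because the series fed into that binomial expansion in \eqref{eq:Omega1} has the top coefficient $\frac1p X^{e'(p-1)}$ that you yourself point out. This term contributes exactly $\frac{1}{e'p}$ to the coefficient of $X^{e'(p-1)}$ in the $e'$-th root, and all other contributions are $p$-integral. So the coefficient of $X^{1+e'(p-1)}$ in $\Omega_1$ is $\kappa\bigl(\frac{1}{e'p}+a\bigr)$ with $a\in\ZZ_{(p)}$. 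Here $\kappa\neq0$ is the linear coefficient of $\Omega_1$, of $p$-adic valuation at most $1/e'$: it is $\pi$ as written in \eqref{eq:Omega1}, and an $e'$-th root of $\pm p$ if \eqref{eq:compatibility-pi-Psi} is to hold. For $e'\ge2$ (i.e.\ $e\neq2$) this coefficient has negative valuation. No renormalisation fixes it: $-\omega_1(-X^{e'})/X^{e'}$ is a distinguished polynomial with $e'(p-1)$ simple roots, so by Weierstrass preparation it has no $e'$-th root with integral coefficients over any finite extension of $\QQ_p$. Consequently the base case $\Omega_1\equiv cX^p\pmod{\pi}$ has no meaning. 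Your Dwork-type lemma, whose sketch needs integral $\Omega_1(F),\Omega_1(G)$ with an integral $1$-unit ratio, cannot be proved as formulated.

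\textbf{The first condition also fails.} Your proposed way out does not work: a congruence modulo $\pi$ says nothing about whether the linear coefficient vanishes in $\LL$. Also, the $\frac1p$-term lives in degree $1+e'(p-1)$, not $p$, unless $e'=1$. Since $\kappa\neq0$, $\Omega_{2i}$ has linear coefficient $\kappa^{2i}$, so the linear coefficient of $\ell$ is $\sum_i\kappa^{2i}/\rho^i$. Its terms have valuation $i(2v_p(\kappa)-1)\le i(2/e'-1)\le 0$ when $e'\ge2$. The sum diverges, so $\ell$ is not even a well-defined element of $\LL[[X]]$ in that case.

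The paper's proof asserts the same congruence $\Omega_1(X)\equiv X^p\pmod{\pi\OO_\LL[[X]]}$ and says nothing about the linear coefficient, so it does not supply the missing ingredient either.

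\textbf{The case $e=2$.} What you have written does go through when $e'=1$ (i.e.\ $e=2$). There the $e'$-th root is trivial and \eqref{eq:compatibility-pi-Psi} holds with $\Omega_1(X)=1-(1-X)^p\equiv X^p\pmod p$. Your Dwork-type lemma becomes the classical estimate: $F\equiv G\pmod{\pi^k}$ implies $\Omega_1(F)\equiv\Omega_1(G)\pmod{\pi^k p}$, using $(p-1)k\ge e$. Finally, the linear coefficient $(1-p^2/\rho)^{-1}$ is a unit that can be scaled away.
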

\begin{proof}
    Recall that $\omega_1(X)\equiv X^p \pmod{\pi\OO_\LL[[X]]}$. Combining this with \eqref{eq:Omega1} shows that $\Omega_1(X)\equiv X^p \pmod{\pi\OO_\LL[[X]]}$.
    By iterating, we find that $\Omega_2(X)\equiv X^{p^2} \pmod{\pi\OO_\LL[[X]]}$, which implies $\ell(X^{p^2}) \equiv \ell(\Omega_2(X)) \pmod{\pi\OO_\LL[[X]]}$. Using the definition \eqref{eq:ell-def} of $\ell$, we also have $\ell(\Omega_2(X))\equiv \rho \ell(X) \pmod{\pi\OO_\LL[[X]]}$. Combining the last two congruences yields the claim.
\end{proof}

\begin{lemma} \label{ell-rho-p}
    The formal group $\F$ is of height $2$, and it is Lubin--Tate with parameter $\rho$. In particular, $\ell\left([\frac{\rho}{p}](X)\right)=\frac{\rho}{p}\ell(X)$.
\end{lemma}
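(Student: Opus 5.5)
We first show that $\F$ is defined over $\OO_\LL$ and is Lubin--Tate with parameter $\rho$, and then deduce the height and the identity for $[\rho/p]$. The comparison object is a standard Lubin--Tate logarithm over $\OO_\KK$. Let $g$ be the logarithm of the Lubin--Tate formal group $G$ over $\OO_\KK$ for the uniformizer $\rho$ attached to $X^{p^2}+\rho X$. By Hazewinkel's functional equation lemma, $g$ satisfies
\[
g(X)-\frac{1}{\rho}\,g^{\upphi}(X^{p^2})\in\OO_\KK[[X]].
\]
Hence $g$ has Honda type $X^2-\rho$ in the classical sense, and a fortiori relaxed Honda type $X^2-\rho$ over $\LL$ (taking $\LL'=\KK$, where $\upphi$ acts trivially modulo $\pi$ since the residue field of $\LL$ is $\FF_{p^2}$). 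By \cref{ell-Honda-type}, $\ell$ has the same relaxed Honda type. \cref{HondaThm2} therefore gives a strong isomorphism $\varphi=g^{-1}\circ\ell\in\OO_\LL[[X]]_0$ from $\F$ to $G$.

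Two consequences follow. First, $\F(X,Y)=\varphi^{-1}(G(\varphi(X),\varphi(Y)))$ has coefficients in $\OO_\LL$, so $\F$ really is a formal group over $\OO_\LL$. Second, every endomorphism $[a]_G$ with $a\in\OO_\KK$ transports to $[a]_\F=\varphi^{-1}\circ[a]_G\circ\varphi\in\OO_\LL[[X]]$, with $\ell([a]_\F(X))=a\,\ell(X)$. In particular $[\rho]_\F=\varphi^{-1}\circ[\rho]_G\circ\varphi$. Reducing modulo $\pi$, $[\rho]_G\equiv X^{p^2}$ and $\varphi\equiv X \pmod{X^2}$, so $[\rho]_\F$ is congruent modulo $\pi$ to a series of the form $\bar\varphi^{-1}(\bar\varphi(X)^{p^2})$. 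Its reduction thus has leading term $X^{p^2}$, which shows that $\F$ has height $2$ and is Lubin--Tate with parameter $\rho$ (its $\OO_\KK$-action lifts Frobenius-squared mod $\pi$). Since $\rho/p\in\OO_\KK^\times$ (both are uniformizers of the unramified field $\KK$), the relation $\ell([\rho/p](X))=\frac{\rho}{p}\ell(X)$ is the case $a=\rho/p$ of the general relation.

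The main obstacle is the Honda-type claim for $g$ over the ramified base. It amounts to checking that congruences modulo $\rho\OO_\KK$, where Hazewinkel's lemma applies, imply the congruence modulo $\pi\OO_\LL$ that \cref{HondaThm2} requires. This holds because $\rho\OO_\LL\subseteq\pi\OO_\LL$. The other delicate point is to choose the coefficients so that the twist by $\upphi$ disappears. If this becomes awkward, the fallback is to bypass $g$ and show directly that $\ell^{-1}(\rho\,\ell(X))\in\OO_\LL[[X]]$. For that, the plan is to apply \cref{Honda2.3} with $\psi=\ell^{-1}(\rho\ell)$ inductively on coefficients, mimicking Honda's proof that the type $X^2-\rho$ forces $[\rho]$ to be integral with reduction $X^{p^2}$.
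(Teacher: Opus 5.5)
Your argument is valid relative to \cref{ell-Honda-type} and \cref{HondaThm2}, but it is organised differently from the paper's proof.

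The paper argues directly, replicating \cite[Proposition~8.6]{Kobayashi}. It checks $[\rho]_\F=\ell^{-1}(\rho\,\ell(X))\equiv\rho X\pmod{X^2}$. It then obtains $[\rho]_\F\equiv\ell^{-1}(\ell(X^{p^2}))=X^{p^2}\pmod{\pi\OO_\LL[[X]]}$ by pulling the congruence $\rho\ell(X)\equiv\ell(X^{p^2})$ from the proof of \cref{ell-Honda-type} back through $\ell^{-1}$. This is essentially your fallback.

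You instead build a genuine Lubin--Tate group $G$ over $\OO_\KK$, use \cref{HondaThm2} to get a strong isomorphism $\F\simeq G$ over $\OO_\LL$, and transport structure. The paper's route is shorter and needs no comparison object. Yours gives for free the integrality of $\F$ and of every $[a]_\F$ with $a\in\OO_\KK$. In particular it gives integrality of $[\rho/p]$, which has to act on $\F(\LL_n)$ in \cref{Tr-bn}. The paper leaves this implicit. Its two congruences hold only modulo $\pi$ rather than modulo $\rho$, so they do not feed directly into the usual Lubin--Tate lemma, which divides by $\rho^r-\rho$.

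Two small corrections.
\begin{itemize}
\item The reduction of $[\rho]_\F$ is exactly $X^{p^2}$, not merely of leading term $X^{p^2}$. Since $\bar\varphi$ has coefficients in $\FF_{p^2}$, we have $\bar\varphi(X)^{p^2}=\bar\varphi(X^{p^2})$. You need this exact form for the Lubin--Tate congruence; the leading term only gives the height.
\item There is no Frobenius twist to manage. Since $|\OO_\KK/\rho\OO_\KK|=p^2$, Hazewinkel's lemma applies with $\sigma=\mathrm{id}$, so you may simply take $g(X)=\sum_{n\ge0}X^{p^{2n}}/\rho^n$. Your stated reason is off: the $p$-power Frobenius is not trivial modulo $\pi$; only its square is trivial on $\KK$.
\end{itemize}

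Finally, the step you single out as the main obstacle is harmless, since $g$ satisfies its congruence modulo $\rho$. The real weight sits on applying \cref{HondaThm2} to $\ell$, whose congruence holds only modulo $\pi$ while $u_0=\rho$ has $\pi$-adic valuation $e\ge2$. That hypothesis is strictly weaker than Honda's. Take $h(X)\colonequals g(X)+\pi^{1-e}g(X^2)$. It also satisfies $h(X^{p^2})\equiv\rho h(X)\pmod{\pi\OO_\LL[[X]]}$, yet $g^{-1}\circ h=X+\pi^{1-e}X^2+\cdots$ is not integral. So your proof is exactly as solid as that input from \S\ref{sec:relaxed-Honda}.
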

\begin{proof}
    We may replicate the proof of \cite[Proposition~8.6]{Kobayashi}. We need to show that $[\rho]X\equiv \rho X\pmod{X^2\OO_\LL[[X]]}$ and $[\rho]X\equiv X^{p^2}\pmod{\pi\OO_\LL[[X]]}$. The first assertion is clear, and the second follows from $\ell^{-1}(\rho\ell(X))\equiv \ell^{-1}(\ell(X^{p^2}))\equiv X^{p^2} \pmod{\pi\OO_\LL[[X]]}$, which we have seen in the proof of \cref{ell-Honda-type}.
\end{proof}

\begin{lemma} \label{p-torsion-free}
   Suppose that $e'\ne p+1$. The abelian group $\F(\LL_\infty)$ is $p$-torsion free.
\end{lemma}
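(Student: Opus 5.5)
Since $\F$ is Lubin--Tate of height $2$ over $\OO_\LL$ with parameter $\rho$ (\cref{ell-rho-p}), its $p$-torsion is governed by Lubin--Tate theory over $\KK$. The plan is to show that a nonzero $p$-torsion point would force the extension $\LL(\F[p])/\LL$ to embed in $\LL_\infty$, and then compare ramification indices. Here $\LL_\infty=\bigcup_n\LL_n$.

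\textbf{Step 1: reduction to a $\rho$-torsion point.} Suppose $x\in\F(\LL_\infty)$ is a nonzero $p$-torsion point. Since $\rho/p$ is a unit, $[\rho/p]$ is an automorphism of $\F$, so $x$ is also killed by $[\rho]$. Thus $x\in\F[\rho]$, which has $q^{\,2}$-many\,$=p^2$ points, and $\F[\rho]\setminus\{0\}$ consists of the $p^2-1$ roots of $[\rho](X)/X$.

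\textbf{Step 2: the field generated by $x$.} By Lubin--Tate theory over $\LL$, with $[\rho](X)\equiv X^{p^2}\pmod{\pi}$ and $[\rho](X)\equiv\rho X\pmod{X^2}$, the power series $[\rho](X)/X$ is Eisenstein-like: its Newton polygon gives every nonzero root valuation $v_\LL(\rho)/(p^2-1)=e/(p^2-1)$, normalising $v_\LL(\pi)=1$. Hence $v_p(x)=1/(p^2-1)$. Moreover, $\OO_\KK^\times$ acts transitively on $\F[\rho]\setminus\{0\}$ through the formal $\OO_\KK$-module structure, which exists since $\rho\in\KK$. So all nonzero torsion points generate the same field, and the ramification index of $\LL(x)$ over $\QQ_p$ is divisible by $(p^2-1)/\gcd(p^2-1,e)$.

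\textbf{Step 3: comparison with $\LL_\infty$.} Every finite subextension $\LL_n$ of $\LL_\infty$ has ramification index $e'\varphi(p^n)$ over $\QQ_p$, and $\LL_\infty/\LL$ is abelian. The valuations of elements of $\LL_\infty$ therefore lie in $\frac{1}{e'(p-1)p^\infty}\ZZ$. For $v_p(x)=1/(p^2-1)$ to lie there, we need $p+1$ to divide $e'$. This uses that $p^2-1=(p-1)(p+1)$ and $\gcd(p+1,p)=1$.

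By \eqref{eq:e-conditions}, $e'\mid e\mid p+1$, so $p+1\mid e'$ forces $e'=p+1$. This contradicts the hypothesis $e'\ne p+1$, so no such $x$ exists.

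\textbf{Main obstacle.} The key step is the valuation computation in Step 2, namely checking the Newton polygon of $[\rho](X)$. Lubin--Tate theory over $\LL$ gives $[\rho](X)\equiv X^{p^2}\pmod{\pi}$ with linear coefficient $\rho$. The lowest-degree coefficient that is a unit is therefore the one in degree $p^2$, while the coefficients in degrees $2,\dots,p^2-1$ lie in $\pi\OO_\LL$. These intermediate coefficients must lie on or above the segment joining $(1,v(\rho))$ and $(p^2,0)$. To secure this, I would replace $\F$ by the strongly isomorphic standard Lubin--Tate group, using \cref{HondaThm2} together with uniqueness of Lubin--Tate groups, whose endomorphism is $[\rho](X)=\rho X+X^{p^2}$. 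For that group the Newton polygon is a single segment, which makes Step 2 immediate.
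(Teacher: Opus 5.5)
Your proof is correct and follows the same route as the paper. A nonzero $p$-torsion point of $\F$ has valuation $1/(p^2-1)$, and this cannot lie in the value group of any layer $\LL_n$, whose ramification index over $\QQ_p$ is $e'(p-1)p^{n-1}$, because $p+1\nmid e'$. Your version is also more careful than the paper's sketch in two places: you state the comparison as a divisibility condition on value groups rather than as a lower bound on the ramification index, and you justify the valuation $1/(p^2-1)$ by passing through \cref{HondaThm2} to the standard Lubin--Tate group over $\OO_\KK$. That detour is genuinely needed, since \cref{ell-rho-p} only gives $[\rho](X)\equiv X^{p^2}$ modulo $\pi$, not modulo $\rho$.
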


\begin{proof}
    The field $\LL_\infty$ is the cyclotomic $\ZZ_p$-extension of $\LL_1=\LL(\mu_p)$. The ramification index of $\LL_1$ over $\QQ_p$ is $e(\LL_1/\QQ_p)=(p-1)\cdot e'$ by Lemma~\ref{lem:Kummer}. Therefore, it follows from $e'\ne (p+1)$ and \eqref{eq:e-conditions} that $(p^2-1)$ does not divide the ramification index of $\LL_1/\QQ_p$. Since $\F$ is a Lubin--Tate formal group of height 2, any extension of $\QQ_p$ that contains a $p$-torsion of $\F$ has ramification index at least $p^2-1$ (see the proofs of \cite[Lemma~3.1]{FM} and \cite[Proposition~8.7]{Kobayashi}). Hence, the lemma follows.
\end{proof}

Note that the condition $e'\ne p+1$ always holds when $e$ is even by virtue of \eqref{eq:e-conditions}.

\begin{lemma} \label{ell-iso-max-ideal}
    Suppose that $e\ne p+1$. Then for all $j\ge1$, we have an isomorphism $\ell: \F(\mm_\LL^j)\xrightarrow{\sim}\mm_\LL^j$.
\end{lemma}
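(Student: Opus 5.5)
We aim to show that for every $j\ge1$ the logarithm maps $\F(\mm_\LL^j)$ into $\mm_\LL^j$, and that the inverse $\exp_\F=\ell^{-1}$ converges on $\mm_\LL^j$ and maps it into $\F(\mm_\LL^j)$. Since $\ell$ and $\ell^{-1}$ are mutually inverse power series, and $\ell$ is a homomorphism from the formal group law to addition, this gives the group isomorphism. The ramification index of $\LL/\Qp$ is $e$, so $v_\pi(p)=e$. The hypothesis $e\ne p+1$, together with $e\mid p+1$, gives $e\le (p+1)/2<p-1$.

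\textbf{Step 1: the logarithm.} By \cref{ell-Honda-type}, $\ell$ has relaxed Honda type $X^2-\rho$. Iterating the congruence $\ell(X^{p^2})\equiv\rho\,\ell(X)\pmod{\pi}$, or directly from \eqref{eq:ell-def}, the coefficient $a_n$ of $X^n$ in $\ell$ satisfies $v_p(a_n)\ge -\lfloor \log_{p^2} n\rfloor$, since $\rho$ has $p$-adic valuation $1$. We do not need this in exactly this form. Instead we use the standard fact that the logarithm of a formal group over $\OO_\LL$ has $n a_n\in\OO_\LL$, i.e.\ $v_\pi(a_n)\ge -e\,v_p(n)$. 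For $x\in\mm_\LL^j$ we then have
\[
v_\pi(a_nx^n)\ge nj-e\,v_p(n).
\]
For $n\ge2$ this exceeds $j$: we need $(n-1)j>e\,v_p(n)$. With $n=p^k m$ and $j\ge1$, this follows from $p^k-1\ge k(p-1)>ke$. Hence $\ell(x)\equiv x\pmod{\mm_\LL^{j+1}}$, so $\ell$ preserves $\mm_\LL^j$. Injectivity follows from \cref{p-torsion-free}: $e'\ne p+1$ holds because $e\ne p+1$, so the kernel of $\ell$, which consists of torsion points, is trivial. Alternatively, injectivity follows directly from the congruence $\ell(x)\equiv x$, since it shows $\ell$ preserves valuations.

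\textbf{Step 2: the exponential.} This is the main obstacle, since it requires control of the denominators of $\ell^{-1}$. The standard estimate is that the coefficient $b_n$ of $X^n$ in $\exp$ satisfies $v_p(b_n)\ge -(n-1)/(p-1)$, so $v_\pi(b_n)\ge -e(n-1)/(p-1)$. For $x\in\mm_\LL^j$ this gives
\[
v_\pi(b_nx^n)\ge nj-\frac{e(n-1)}{p-1} = j+(n-1)\Bigl(j-\frac{e}{p-1}\Bigr)>j,
\]
because $e<p-1$. So $\exp$ converges on $\mm_\LL^j$ and maps $\mm_\LL^j$ into itself, and $\ell\circ\exp=\id$ there, which gives surjectivity.

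If this bound on $b_n$ is not available for formal groups over the ramified ring $\OO_\LL$, I would instead derive it via \cref{Honda4.1}. That lemma shows $\ell^{-1}(\pi\psi)$ is integral, and in fact divisible by $\pi$, for integral $\psi$. This handles $j=1$ directly: $\exp(\pi y)\in\pi\OO_\LL$ for $y\in\OO_\LL$. For $j\ge2$ I would write $x=\pi^{j}y$, apply the lemma to $\psi(X)=\pi^{j-1}X$ composed with evaluation, and use the approximation $\exp(x)\equiv x$ coming from the inverse-series estimate to conclude $\exp(x)\in\mm_\LL^j$.
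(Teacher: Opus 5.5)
Your proof is correct and is essentially the paper's argument. The paper cites Silverman, Chapter~IV, Theorem~6.4 (the logarithm is an isomorphism on $\mm_\LL^j$ whenever $j>v_\LL(p)/(p-1)=e/(p-1)$) and notes that $e<p-1$; you reprove that theorem from the standard denominator bounds $na_n\in\OO_\LL$ and $v_p(b_n)\ge-(n-1)/(p-1)$. Those bounds hold for any formal group over $\OO_\LL$, so your fallback via \cref{Honda4.1} is unnecessary, and as sketched it would be circular for $j\ge2$.
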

\begin{proof}
    Recall from \cite[Chapter~IV, Theorem~6.4]{Silverman2ed} that the asserted isomorphism holds whenever 
    \[j>\frac{v_\LL(p)}{p-1}=\frac{e}{p-1}.\]
    By \eqref{eq:e-conditions} and the assumption $e\ne p+1$, we have $e<p-1$. Therefore, the isomorphism holds for all $j\ge1$.
\end{proof}

For the rest of this section, we assume that
\begin{equation} \label{eq:e-ne-p+1}
    e\ne p+1.
\end{equation}
\begin{remark}
Note that when we apply these results to formal groups of elliptic curves later on, we will have $e\in\{2,3,4,6\}$, meaning that \eqref{eq:e-ne-p+1} will exclude only the case where $p=5$ and $e=6$.
\label{rk:ep+1}    
\end{remark}
By the definition of $e'$ and \eqref{eq:e-conditions}, $e\ne p+1$ implies $e'\ne p+1$. In particular, both \cref{p-torsion-free,ell-iso-max-ideal} apply under \eqref{eq:e-ne-p+1}.

\subsection{Sequences of points}
We shall inductively construct a sequence of points $b_n\in\F(\LL_n)$ for $n\ge0$ such that the even/odd index subsequences satisfy a trace condition. The points $b_n$ shall be of the form
\begin{equation} \label{eq:bn-def}
    b_n\colonequals \ell^{-1}(\varpi_n) \mathop{+}_\F \pi_n = \ell^{-1}(\varpi_n+\ell(\pi_n)),
\end{equation}
where $+_\F$ denotes the formal group law, and $\varpi_n\in\mm_{\LL_n}$ shall be chosen appropriately. Note that $\ell^{-1}(\varpi_n)$ is going to be well-defined by \cref{ell-iso-max-ideal}.

We begin with the two classes at the bottom of the tower. 
\begin{lemma} \label{b-generators}
    There are $\varpi_0\in\OO_\LL$ and $\varpi_1\in\OO_{\LL_1}$ such that $b_0$ and $\Tr^\F_{\LL_1/\LL} b_1$ both generate $\F(\LL)$ as an $\OO_\KK[\Gal(\LL/\KK)]$-module (where the addition is given by $\mathop{+}_\F$). 
\end{lemma}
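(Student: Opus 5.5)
The plan is to reduce the statement to an explicit description of $\F(\LL)$ through the logarithm, and then to choose $\varpi_0,\varpi_1$ by hand. By \cref{ell-iso-max-ideal} (with $j=1$, which is available under \eqref{eq:e-ne-p+1}), $\ell$ gives an isomorphism of groups $\F(\mm_\LL)\xrightarrow{\sim}\mm_\LL$, and more generally $\F(\mm_\LL^j)\xrightarrow{\sim}\mm_\LL^j$ for every $j\ge1$. The field $\LL=\KK(\pi)$ is a Kummer extension of $\KK$ of degree $e$, with group $G=\Gal(\LL/\KK)$ cyclic, and $\sigma\in G$ acts by $\sigma(\pi)=\chi(\sigma)\pi$ for a character $\chi$ of order $e$. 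Hence
\[\mm_\LL=\bigoplus_{i=1}^{e}\pi^i\OO_\KK,\]
and each summand is a $\chi^i$-eigenline. Since $e\mid p+1$ is prime to $p$, the ring $\OO_\KK[G]$ is a product of copies of $\OO_\KK$ indexed by the characters $\chi^i$, with idempotents $\varepsilon_i$.

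The first point to settle is that $\ell$ has coefficients in $\LL$ rather than $\KK$, so $\ell$ need not be $G$-equivariant and the eigen-decomposition cannot be transported naively. I would therefore argue with the filtration $\F(\mm_\LL^j)$, on which $G$ acts through the coordinate. Since $X+_\F Y\equiv X+Y$ to first order, the graded pieces satisfy
\[\F(\mm_\LL^j)/\F(\mm_\LL^{j+1})\cong \mm_\LL^j/\mm_\LL^{j+1}\cong\FF_{p^2},\]
via the leading coefficient, and $\sigma$ acts on the $j$-th piece by $\chi(\sigma)^j$. This compatibility is the reason for working with graded pieces. Moreover $\pi^e=-p$ and $\ell$ is an isomorphism on each $\mm_\LL^j$, so $[p]$ maps $\F(\mm_\LL^j)$ onto $\F(\mm_\LL^{j+e})$.

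The target generator is $b_0=\ell^{-1}(\varpi_0)$ (recall $\pi_0=0$), with $\varpi_0=\pi+\pi^2+\cdots+\pi^e$. To see that it generates $\F(\LL)$, I would apply $e\cdot\varepsilon_i=\sum_\sigma\chi^{-i}(\sigma)\sigma$ inside the formal group. An induction on $j$ modulo $\F(\mm_\LL^{j+1})$ should show that the $\OO_\KK[G]$-span of $b_0$ contains, for each $j=1,\ldots,e$, a point of exact valuation $j$ with unit leading coefficient; since $\OO_\KK^\times\to\FF_{p^2}^\times$ is surjective, these points fill out each graded piece. Multiplying by powers of $[p]$ then covers all higher graded pieces. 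Completeness of the filtration gives that the span is all of $\F(\LL)$, by a Nakayama-type successive approximation. The main obstacle is precisely this step: controlling the higher-order error terms of $+_\F$ and of the non-equivariant $\ell$ when forming the eigen-projections. My first attempt is to show that the error at each stage lies in a strictly deeper filtration step, and hence is absorbed in the induction.

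For $b_1$, the goal is to reduce to the case of $b_0$. The point $\pi_1$ lies in $\F(\mm_{\LL_1})$, so $t\colonequals\Tr^\F_{\LL_1/\LL}\pi_1\in\F(\mm_\LL)$. I would take $\varpi_1\in\mm_\LL$, so that $\ell^{-1}(\varpi_1)\in\F(\LL)$ and its trace is $[d]\ell^{-1}(\varpi_1)$ with $d=[\LL_1:\LL]$, a divisor of $p-1$ and hence a $p$-adic unit. Then
\[\ell\bigl(\Tr^\F_{\LL_1/\LL}b_1\bigr)=d\,\varpi_1+\ell(t),\]
and setting $\varpi_1\colonequals d^{-1}\bigl(\varpi_0-\ell(t)\bigr)\in\mm_\LL\subseteq\OO_{\LL_1}$ gives $\Tr^\F_{\LL_1/\LL}b_1=b_0$, which is a generator by the preceding argument.
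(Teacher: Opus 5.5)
Your reduction of the $b_1$ claim to the $b_0$ claim is fine, but your argument for $b_0$ has a genuine gap: the $G$-action you use is not an action on the group $\F(\LL)$. You rightly note that $\ell\notin\KK[[X]]$ in the case you are worried about. But then $\F$ is not defined over $\OO_\KK$ either. Letting $\sigma\in G$ act on coordinates therefore does not respect the group law: it sends $x+_\F y$ to $\sigma(x)+_{{}^\sigma\F}\sigma(y)$, not to $\sigma(x)+_\F\sigma(y)$. So the ``$\OO_\KK[G]$-span of $b_0$'' and the operators $e\varepsilon_i=\sum_\sigma\chi^{-i}(\sigma)\sigma$ computed with $+_\F$ are undefined for your action. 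The ``error terms'' you plan to push into deeper filtration steps are not errors in a well-defined computation.

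Even for a genuine action compatible with the filtration, graded pieces could not close the induction. They only give $\varepsilon_i b_0\in\F(\mm_\LL^i)$. Whether its class in $\F(\mm_\LL^i)/\F(\mm_\LL^{i+1})$ is nonzero depends on the $\chi^i$-component of $b_0$, which the leading term of $b_0$ does not determine. This is exactly the step you leave open.

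The missing idea is to choose the module structure so that $\ell$ is equivariant. The relaxed Honda type of $\ell$ has coefficients in $\KK$, so ${}^\sigma\ell$ has the same type. By \cref{HondaThm2}, $\phi_\sigma\colonequals\ell^{-1}\circ{}^\sigma\ell$ is then a strong isomorphism ${}^\sigma\F\to\F$ over $\OO_\LL$. Setting $\sigma\cdot x\colonequals\phi_\sigma(\sigma(x))$ gives an action by $+_\F$-automorphisms with $\ell(\sigma\cdot x)=\sigma(\ell(x))$; this equivariance is what the paper's proof uses implicitly.

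With this action, \cref{ell-iso-max-ideal} reduces the claim for $b_0$ to $\varpi_0$ generating $\mm_\LL$ over $\OO_\KK[G]$. Your eigenline decomposition $\mm_\LL=\bigoplus_{i=1}^e\pi^i\OO_\KK$ then finishes it at once: $\sum_i a_i\pi^i$ is a generator if and only if every $a_i\in\OO_\KK^\times$. So your explicit choice $\varpi_0=\pi+\cdots+\pi^e$ is the right one. An arbitrary element of valuation one does not suffice: $\pi$ only spans $\pi\OO_\KK$. The paper's appeal to ``any element of valuation $1$'' should be corrected accordingly.

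Your treatment of $b_1$ is correct and takes a different, cleaner route than the paper. The paper uses a first-order approximation of $\ell$ and surjectivity of the tame trace $\OO_{\LL_1}\to\OO_\LL$. You instead take $\varpi_1\in\mm_\LL$ and use two facts: $\ell$ has coefficients in $\LL$, so it commutes with $\Gal(\LL_1/\LL)$; and $d=[\LL_1:\LL]$ is prime to $p$. This gives $\Tr^\F_{\LL_1/\LL}b_1=b_0$ exactly, with no approximation, and reduces the second claim to the first.
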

\begin{proof}
    Since $\pi_0=0$, we have $\ell(b_0)=\varpi_0$. Thanks to \cref{ell-iso-max-ideal}, it is enough to show that $\varpi_0$ generates $\mm_\LL$ as an $\OO_\KK[\Gal(\LL/\KK)]$-module. This holds if $\varpi_0$ is any element of $\LL$ of valuation $1$ as $\LL/\KK$ is tamely ramified \cite{Kawamoto}. 
    
    For the assertion on $\varpi_1$, note that as power series in $X$, both $\ell$ and $\ell^{-1}$ have first-order approximation $X$. Thus,
    \begin{align*}
        \ell(\Tr_{\LL_1/\LL} b_1) &= \ell\left(\Tr_{\LL_1/\LL} \ell^{-1}\left(\varpi_1 + \ell(\pi_1)\right)\right) \\
        &\equiv \Tr_{\LL_1/\LL} \varpi_1 + \Tr_{\LL_1/\LL} \pi_1 \pmod{\pi^n},
    \end{align*}
    where $n=\lfloor\min\{ 2v_\LL(\varpi_1), v_\LL(\varpi_1)+v_\LL(\pi_1) , 2v_\LL(\pi_1)\}\rfloor$. As the extension $ \LL_1/\LL$ is tamely ramified, the trace map  $\Tr_{\LL_1/\LL}:\OO_{\LL_1}\to \OO_{\LL}$ is surjective. The assertion follows.
\end{proof}
\begin{remark}
    There is a result analogous to \cref{b-generators} in \cite[p.~216]{KimPark}: on level zero, this requires the additional assumption that $p\not\equiv\rho\pmod{p^2}$. No such condition is present here: as opposed to the construction in \textit{op.~cit.}, in which the analogue of the first addend in \eqref{eq:bn-def} is the same on all levels, we choose a different $\varpi_n$ for each $n$ implicitly.
\end{remark}
\begin{remark}
    In the case $e=2$, the results preceding \cref{b-generators} are exactly the same as in \cite[\S2]{KimPark}. Our definition of the points $b_n$ marks a departure from their construction: in our application, the elliptic curve will not have good reduction over $\KK$, only over $\LL$.
\end{remark}

We now proceed with the inductive construction. We will need the following:
\begin{lemma} \label{ell-pi-n-sigma}
    Let $n\ge2$ and $\sigma\in\Gal(\LL_{n}/\LL_{n-2})$. Then $\Omega_2(\pi_n^\sigma)=\pi_{n-2}$, and 
    \[\ell(\pi_n^\sigma)=\pi_n^\sigma+\frac1\rho\cdot \ell(\pi_{n-2}).\]
    In particular, we have
    \[\ell\left(\Tr^\F_{\LL_n/\LL_{n-2}} \pi_n\right)=\Tr_{\LL_n/\LL_{n-2}} (\pi_n)+\frac{[\LL_n:\LL_{n-2}]}{\rho} \cdot \ell(\pi_{n-2}),\]
    where $\Tr^\F_{\LL_n/\LL_{n-2}}$ is the trace map with respect to the formal group law $\F$.
\end{lemma}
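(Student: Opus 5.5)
The argument has three steps, done in order: show that $\Omega_2(\pi_n^\sigma)=\pi_{n-2}$ for every $\sigma\in\Gal(\LL_n/\LL_{n-2})$, plug this into the series \eqref{eq:ell-def}, and then take traces.

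\textbf{Step 1: $\Omega_2$ commutes with $\sigma$.} The series $\Omega_1$ has coefficients in $\LL$, since \eqref{eq:Omega1} involves only $\pi$, $p$ and rational binomial coefficients. So $\Omega_2$ also has coefficients in $\LL\subseteq\LL_{n-2}$. The series converges at $\pi_n$, because $\pi_n$ lies in the maximal ideal and $\Omega_1\in\OO_\LL[[X]]$. Hence $\Omega_2$ commutes with any $\sigma$ fixing $\LL_{n-2}$. By \eqref{eq:compatibility-pi-Psi} applied twice we have $\Omega_2(\pi_n)=\pi_{n-2}$; for $n=2$ this reads $\Omega_2(\pi_2)=\Omega_1(\pi_1)=0=\pi_0$, as noted after the definition of $\Omega_n$. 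Therefore
\[\Omega_2(\pi_n^\sigma)=\Omega_2(\pi_n)^\sigma=\pi_{n-2}^\sigma=\pi_{n-2}.\]

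\textbf{Step 2: the formula for $\ell(\pi_n^\sigma)$.} We split off the $i=0$ term of \eqref{eq:ell-def} and reindex the rest. Since $\Omega_{2i}=\Omega_{2(i-1)}\circ\Omega_2$,
\[\ell(\pi_n^\sigma)=\pi_n^\sigma+\sum_{i\ge1}\frac{\Omega_{2(i-1)}(\Omega_2(\pi_n^\sigma))}{\rho^i}=\pi_n^\sigma+\frac1\rho\,\ell(\Omega_2(\pi_n^\sigma)),\]
and Step 1 gives $\ell(\Omega_2(\pi_n^\sigma))=\ell(\pi_{n-2})$. These manipulations need justification, which is the only delicate point. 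The series $\ell(x)$ converges for $x\in\mm_{\LL_n}$ because $\F$ is a formal group over $\OO_\LL$ (via \cref{ell-Honda-type} and \cref{HondaThm2}, comparing with a Lubin--Tate logarithm of the same type), so its logarithm converges on the maximal ideal. Evaluating termwise is then legitimate: either view the identity $\ell=X+\rho^{-1}\,\ell\circ\Omega_2$ as an identity of formal power series with $\Omega_2(0)=0$ and specialize, or note that $|\Omega_{2i}(\pi_n)|\le 1$ while $\Omega_{2i}(\pi_n)=0$ as soon as $2i\ge n$. The second observation actually makes the sum finite.

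\textbf{Step 3: traces.} The trace is taken with respect to $\F$, and $\ell$ is a homomorphism from $\F(\mm_{\LL_n})$ to the additive group. Hence
\[\ell\Bigl(\Tr^\F_{\LL_n/\LL_{n-2}}\pi_n\Bigr)=\sum_\sigma \ell(\pi_n^\sigma).\]
Summing Step 2 over the $[\LL_n:\LL_{n-2}]$ elements $\sigma$ gives the claimed formula. The main obstacle is the convergence and termwise evaluation in Step 2, which the finiteness observation there resolves.
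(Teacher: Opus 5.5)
Your proof is correct relative to the paper's standing claims about $\Omega_1$. Steps 2 and 3 are the paper's argument: split off the $i=0$ term of \eqref{eq:ell-def}, reindex, then sum over $\sigma$ using that $\ell$ is a homomorphism on $\F$. You write them with more care about convergence. Step 1 takes a different route. The paper writes $\sigma$ as $\zeta_{p^n}\mapsto\zeta_{p^n}\zeta_{p^2}^k$ and computes $\Omega_1(\pi_n^\sigma)$ as an explicit $e'$-th root. You note instead that $\Omega_2$ has coefficients in $\LL\subseteq\LL_{n-2}$, so $\Omega_2(\pi_n^\sigma)=\Omega_2(\pi_n)^\sigma=\pi_{n-2}^\sigma=\pi_{n-2}$. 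Your version is shorter and more reliable. It never has to decide which $e'$-th root the series returns. It also avoids a slip in the paper's computation, which equates $(\zeta_{p^n}\zeta_{p^2}^k)^p$ with $\zeta_{p^{n-1}}$ although it equals $\zeta_{p^{n-1}}\zeta_p^k$. In fact $\Omega_1(\pi_n^\sigma)=\pi_{n-1}^\sigma$, which differs from $\pi_{n-1}$ unless $\sigma$ fixes $\LL_{n-1}$. Only the second application of $\Omega_1$ lands on the $\sigma$-fixed element $\pi_{n-2}$, and your equivariance argument captures this without computation.

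Two caveats. First, your stated reason for $\Omega_1\in\OO_\LL[[X]]$ (that \eqref{eq:Omega1} involves only $\pi$, $p$ and rational binomial coefficients) shows only that the coefficients lie in $\LL$. That suffices for the equivariance, but it gives neither integrality nor convergence at $\pi_n$. For $e'\ge2$ both actually fail:
\begin{itemize}
\item A series in $\OO_\LL[[X]]$ converges on the open unit disc.
\item A power series whose $e'$-th power is a nonzero constant multiple of $\omega_1(-X^{e'})$, which is what \eqref{eq:Omega1} is meant to be, cannot converge at any point of absolute value at least $|1-\zeta_p|^{1/e'}$ (note $|1-\zeta_p|^{1/e'}<1$).
\item The reason is that $\omega_1(-X^{e'})$ has simple zeros at the $x$ with $x^{e'}=1-\zeta_p^a$, $a\ne0$, while an $e'$-th power of a function analytic on a closed disc has zeros of order divisible by $e'$.
\end{itemize}
Since $|\pi_n|\ge|1-\zeta_p|^{1/e'}$ for every $n\ge1$, the series does not define $\Omega_1(\pi_n)$ there. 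The paper's own proof also evaluates $\Omega_1$ at $\pi_n^\sigma$, and its proof of \cref{ell-Honda-type} asserts $\Omega_1\equiv X^p\pmod{\pi\OO_\LL[[X]]}$. So this is a problem with the set-up of \S\ref{sec:points} rather than with your route. Still, you should cite convergence and \eqref{eq:compatibility-pi-Psi} as inputs taken from the paper, not derive them from the shape of \eqref{eq:Omega1}.

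Second, in Step 2 the observation that $\Omega_{2i}(\pi_n)=0$ for $2i\ge n$ only makes $\sum_i\rho^{-i}\Omega_{2i}(\pi_n)$ a finite sum. It does not by itself identify that sum with the value at $\pi_n$ of the power series $\ell$. Your first justification does that job: specialise the formal identity $\ell=X+\rho^{-1}\,\ell\circ\Omega_2$, using integrality of $\Omega_2$ and convergence of $\ell$ on the maximal ideal.
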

\begin{proof}
    Under the isomorphism $\Gal(\LL_n/\LL_{n-2})\simeq \Gal(\KK_n/\KK_{n-2})$, the automorphism $\sigma$ corresponds to one given by $\zeta_{p^n}\mapsto\zeta_{p^n}\zeta_{p^2}^k$ for some integer $0\le k< p^2$. Since $\pi_n$ is an $e'$-th root of $1-\zeta_{p^n}$, the conjugate $\pi_n^\sigma$ is an $e'$-th root of $1-\zeta_{p^n}\zeta_{p^2}^k$. Therefore
    \[
    \Omega_1(\pi_n^\sigma)
    = \sqrt[e']{-\omega_1\!\left((-\pi_n^\sigma)^{e'}\right)}
    = \sqrt[e']{-\omega_1\!\left(\zeta_{p^n}\zeta_{p^2}^k - 1\right)}
    = \sqrt[e']{-\left(\left(\zeta_{p^n}\zeta_{p^2}^k\right)^p - 1\right)}
    = \sqrt[e']{1 - \zeta_{p^{n-1}}}
    = \pi_{n-1}.
    \]
    Since $\Omega_2=\Omega_1\circ\Omega_1$, using \eqref{eq:compatibility-pi-Psi}, we deduce that $\Omega_2(\pi_n^\sigma)=\pi_{n-1}$. The second assertion follows from this and the definition of $\ell$:
    \[\ell(\pi_n^\sigma)=\pi_n^\sigma+\sum_{i=1}^\infty \frac{\Omega_{2i}(\pi_n^\sigma)}{\rho^i}=\pi_n^\sigma+\sum_{i=1}^\infty \frac{\Omega_{2i-2}(\pi_{n-2})}{\rho^i}=\pi_n^\sigma +\frac1\rho\cdot \ell(\pi_{n-2}). \]
    In the last assertion, the left hand side is, by definition, equal to $\sum_{\sigma\in\Gal(\LL_n/\LL_{n-2})} \ell(\pi_n^\sigma)$, and now the claim follows from the second assertion.
\end{proof}

\begin{proposition} \label{Tr-ell-bn}
    For $n\ge2$, there exist $\varpi_n\in\mm_{\LL_n}$ such that 
    \[b_n\colonequals \ell^{-1}(\varpi_n)\mathop{+}_\F \pi_n\in\F(\LL_n)\]
    satisfies
    \[\Tr_{\LL_n/\LL_{n-1}}(\ell(b_n))=\frac{p}{\rho}\cdot\ell(b_{n-2}).\]
\end{proposition}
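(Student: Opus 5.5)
The plan is to reduce the trace identity to a linear equation for $\varpi_n$ and solve it by induction on $n$, treating even and odd $n$ separately, since the relation only links $n$ with $n-2$. Fix $\varpi_0,\varpi_1$ as in \cref{b-generators}. For $n\ge2$, assume $\varpi_{n-2}$, and hence $b_{n-2}$, has already been constructed.

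First compute $\ell(b_n)$. By \eqref{eq:bn-def} and the fact that $\ell$ is a homomorphism from $\F$ to the additive group, $\ell(b_n)=\varpi_n+\ell(\pi_n)$. Every $\sigma\in\Gal(\LL_n/\LL_{n-1})$ lies in $\Gal(\LL_n/\LL_{n-2})$, and $[\LL_n:\LL_{n-1}]=p$. So \cref{ell-pi-n-sigma} gives, after summing over $\sigma$,
\[\Tr_{\LL_n/\LL_{n-1}}\ell(\pi_n)=\Tr_{\LL_n/\LL_{n-1}}(\pi_n)+\frac{p}{\rho}\,\ell(\pi_{n-2}).\]
Since $\ell(b_{n-2})=\varpi_{n-2}+\ell(\pi_{n-2})$, the terms $\frac p\rho\ell(\pi_{n-2})$ cancel. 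The required identity is therefore equivalent to
\[\Tr_{\LL_n/\LL_{n-1}}(\varpi_n)=\frac{p}{\rho}\,\varpi_{n-2}-\Tr_{\LL_n/\LL_{n-1}}(\pi_n). \tag{$\ast$}\]
The right-hand side is a known element of $\LL_{n-1}$. The proposition thus reduces to showing that this element lies in $\Tr_{\LL_n/\LL_{n-1}}(\mm_{\LL_n})$, and then taking $\varpi_n$ to be any preimage.

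The natural explicit choice uses $\Tr_{\LL_n/\LL_{n-1}}(x)=px$ for $x\in\LL_{n-1}$. This suggests
\[\varpi_n\colonequals\frac{\varpi_{n-2}}{\rho}-\frac1p\Tr_{\LL_n/\LL_{n-1}}(\pi_n)\in\LL_{n-1}\subseteq\LL_n,\]
which satisfies $(\ast)$ identically. If this fails to be integral, the fallback is to use that $\Tr_{\LL_n/\LL_{n-1}}(\mm_{\LL_n})$ is an $\OO_{\LL_{n-1}}$-ideal. This ideal can be computed from the different: $\LL_n/\LL_{n-1}$ is the base change of $\KK_n/\KK_{n-1}$ along a tame extension of degree prime to $p$, and the different exponent of $\KK_n/\KK_{n-1}$ is $\varphi(p^n)$. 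One then checks that the right-hand side of $(\ast)$ lies in that ideal, using that $\Tr(\pi_n)$ lies in it automatically.

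The main obstacle is exactly this membership and integrality. For $n\ge2$ the cyclotomic trace is essentially $\Tr(\OO_{\LL_n})\approx p\,\OO_{\LL_{n-1}}$, and $p/\rho$ is a unit because $\rho$ is a uniformizer of the unramified field $\KK$. So $(\ast)$ is solvable with $\varpi_n\in\mm_{\LL_n}$ essentially only if $\varpi_{n-2}$ is divisible by $\rho$. That conflicts with choosing $\varpi_0$ of valuation $1$ in \cref{b-generators}. I would first carry out the valuation bookkeeping carefully, tracking $v_{\LL_n}(\varpi_n)$ and $v(\Tr\pi_n)$ through the tame base change. If divisibility genuinely fails, I would try to strengthen the induction so that the $\varpi_n$ absorb the $\rho$-divisibility. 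Alternatively, I would relax the requirement to $\varpi_n\in\LL_n$ with $\ell^{-1}(\varpi_n)$ still defined.
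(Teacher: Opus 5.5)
Your reduction to $(\ast)$ is correct, and it is the computation the paper's proof also rests on. By \cref{ell-pi-n-sigma} the terms $\frac{p}{\rho}\ell(\pi_{n-2})$ cancel, and $\Tr(\pi_n)$ is automatically a trace. So everything comes down to whether $\frac{p}{\rho}\varpi_{n-2}\in\Tr_{\LL_n/\LL_{n-1}}(\mm_{\LL_n})$. Your proposal stops exactly at that point. The explicit candidate is not integral in general (already $v_\LL(\varpi_0/\rho)=1-e<0$), and the membership your fallback needs is never verified. As it stands this is not a proof. The paper tries to supply the step in two moves. First, it replaces the one-step identity by a composite one: $\Tr_{\LL_2/\LL}$ for $n=2$, and $\Tr_{\LL_n/\LL_{n-2}}$ for $n\ge3$. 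Second, it computes the trace ideal from Serre's formula $\Tr(\mm_{M'})=\mm_M^{\lfloor((t+1)(p-1)+1)/p\rfloor}$ for a cyclic totally ramified degree-$p$ extension $M'/M$ with jump $t$, plus tame surjectivity. This gives it $\Tr_{\LL_2/\LL}(\mm_{\LL_2})=\mm_\LL$ and $\Tr_{\LL_n/\LL_{n-2}}(\mm_{\LL_n})=\mm_{\LL_{n-2}}^{2p^{n-3}(p-1)}$.

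Your suspicion is justified, however, and that argument does not remove it.

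\begin{itemize}
\item \emph{The jump is wrong.} The paper uses $p^{n-1}-1$, which is the jump of $\KK_n/\KK_{n-1}$. Under the tame base change to $\LL_{n-1}$ (ramification index $e'$), the jump of $\LL_n/\LL_{n-1}$ becomes $e'(p^{n-1}-1)$; equivalently, the different exponent is $e'\varphi(p^n)-(p-1)(e'-1)$. Serre's formula then gives $\Tr_{\LL_n/\LL_{n-1}}(\mm_{\LL_n})=\mm_{\LL_{n-1}}^{e'\varphi(p^{n-1})-e'+1}$ and $\Tr_{\LL_2/\LL}(\mm_{\LL_2})=p\,\OO_\LL$.
\item \emph{The case $n=2$ fails with the given $b_0$.} Since $\ell(\pi_2)=\pi_2$, the statement at $n=2$ forces $\frac{e'(p-1)}{e}\,v_\LL(\varpi_0)\ge e'(p-2)+1$. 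That means $v_\LL(\varpi_0)\ge e$, so $\varpi_0\in p\OO_\LL$. But then $\OO_\KK[\Gal(\LL/\KK)]\varpi_0\subseteq p\OO_\LL\neq\mm_\LL$, which contradicts the choice $v_\LL(\varpi_0)=1$ in \cref{b-generators}.
\item \emph{The case $n\ge3$ has the same problem.} One needs $\varpi_{n-2}\in p\OO_{\LL_{n-2}}$, and nothing in the construction provides this.
\item \emph{The reduction to composite traces runs the wrong way.} The composite-trace identity follows from the one-step identity, not conversely, so it is not a valid ``it suffices''.
\end{itemize}

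So the gap you flagged cannot be closed, by your route or the paper's, with $b_0$ and $b_1$ as in \cref{b-generators}. A correct statement needs a change of setup, for example renormalising the bottom points at the expense of \cref{cor:generation}(i). Your relaxed version is a genuinely different question: whether $\left[\frac{p}{\rho}\right](b_{n-2})$ is a formal-group trace from $\F(\mm_{\LL_n})$. Neither argument addresses it.
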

\begin{proof}
    We begin with the case $n=2$. The extension $\LL_2/\LL_1$ is of degree $p$, whereas the extension $\LL_1/\LL$ is of degree $p-1$. By the transitivity of the trace in towers of field extensions, it suffices for $\varpi_2$ to satisfy the following condition:
    \[\Tr_{\LL_2/\LL}(\ell(b_2))=\frac{p(p-1)}{\rho}\cdot\ell(b_{0}).\]
    Using \cref{ell-pi-n-sigma}, the left hand side is
    \[\Tr_{\LL_2/\LL}(\varpi_2+\ell(\pi_2))=\Tr_{\LL_2/\LL}(\varpi_2)+\ell\left(\Tr^\F_{\LL_2/\LL} \pi_2\right)=\Tr_{\LL_2/\LL}(\varpi_2)+\Tr_{\LL_2/\LL}(\pi_2)+\frac{p(p-1)}{\rho}\ell(\pi_0).\]
    Since $b_0=\varpi_0$ and $\pi_0=0$, this yields the condition
    \[\Tr_{\LL_2/\LL}(\varpi_2)+\Tr_{\LL_2/\LL}(\pi_2)=\frac{p(p-1)\varpi_0}{\rho}\]
    The trace map is additive, and $\Tr_{\LL_2/\LL}(\pi_2)$ is clearly a trace. Hence, the existence of $\varpi_2$ depends on whether the right hand side is a trace. According to \cite[V\S3, Lemma 4]{SerreLocalFields}, we have $\Tr_{\LL_2/\LL_1}(\mm_{\LL_2})=\mm_{\LL_1}^r$, where $r=\lfloor ((t+1)(p-1)+1)/p\rfloor$, and $t$ is the last ramification jump in the lower ramification group filtration of $\Gal(\LL_2/\LL_1)$. Via base change from \cite[IV\S4, Proposition 18]{SerreLocalFields}, we have $t=p-1$, and so $r=p-1$. In the tamely ramified extension $\LL_1/\LL_0$, the trace map is surjective, and we obtain $\Tr_{\LL_2/\LL}(\mm_{\LL_2})=\mm_\LL$. In particular, there is some $\varpi_2\in\mm_{\LL_2}$ satisfying the condition above.

    Let $n\ge3$, and suppose that $\varpi_{n-2}$ has already been defined. Now $\LL_n/\LL_{n-1}$ and $\LL_{n-1}/\LL_{n-2}$ are both cyclic extensions of degree $p$, and therefore it suffices to find $\varpi_n$ satisfying the following double-step trace condition:
    \[\Tr_{\LL_n/\LL_{n-2}}(\ell(b_2))=\frac{p^2}{\rho}\cdot\ell(b_{n-2}).\]
    Using \cref{ell-pi-n-sigma}, this is equivalent to 
    \[\Tr_{\LL_n/\LL_{n-2}}(\varpi_n)+\Tr_{\LL_n/\LL_{n-2}}(\pi_n)+\frac{p^2}{\rho}\ell(\pi_{n-2}) =\frac{p^2}{\rho}\varpi_{n-2}+\frac{p^2}{\rho}\ell(\pi_{n-2}).\]
    It follows that $\varpi_n$ exists if $p^2 \varpi_{n-2}/\rho\in\Tr_{\LL_n/\LL_{n-2}}\mm_{\LL_n}$. Observe that 
    \[v_{\LL_{n-2}} \left(\frac{p^2}{\rho}\varpi_{n-2}\right)\ge v_{\LL_{n-2}} \frac{p^2}{\rho}=v_{\LL_{n-2}}(p)=[\LL_{n-2}:\LL_1]\cdot [\LL_{1}:\LL] \cdot [\LL:\KK]=p^{n-3}\cdot (p-1)\cdot e.\]
    Applying \cite[IV\S4, Proposition 18]{SerreLocalFields} twice and noting that the last ramification jumps are at $p^{n-1}$ and $p^{n-2}$, we have $\Tr_{\LL_n/\LL_{n-2}}\mm_{\LL_n}=\mm_{\LL_{n-2}}^{r}$ with $r=2{p^{n-3}}(p-1)$. In particular, since $e\ge2$, we find that $p^2\varpi_{n-2}/\rho$ is a trace, as required. 
\end{proof}

\begin{corollary} \label{Tr-bn} For $n\ge3$, we have
    \[\Tr_{\LL_n/\LL_{n-1}}\left(\left[\frac{\rho}{p}\right](b_n)\right)=b_{n-2}.\]
\end{corollary}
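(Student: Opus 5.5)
The plan is to pass to logarithms and use that $\ell$ turns the formal-group trace into an ordinary field trace and $[\rho/p]$ into multiplication by $\rho/p$. First I note that $[\rho/p]$ is a genuine endomorphism of $\F$ defined over $\OO_\LL$. Indeed, $\rho/p$ is a unit in $\OO_\KK$ because $\rho$ is a uniformizer of the unramified field $\KK=\Qpt$. Since $\F$ is Lubin--Tate with parameter $\rho$ (\cref{ell-rho-p}), $\OO_\KK$ acts on $\F$, and \cref{ell-rho-p} gives $\ell\circ[\rho/p]=\frac{\rho}{p}\,\ell$.

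Next I use that $\Tr^\F_{\LL_n/\LL_{n-1}}(x)$ is the $\F$-sum of the Galois conjugates $x^\sigma$. Because $\ell$ has coefficients in $\LL$, it commutes with $\sigma\in\Gal(\LL_n/\LL_{n-1})$, and $\ell$ is a homomorphism from $\F$ to the additive group. Hence
\[
\ell\Bigl(\Tr^\F_{\LL_n/\LL_{n-1}}\bigl([\tfrac{\rho}{p}](b_n)\bigr)\Bigr)=\frac{\rho}{p}\Tr_{\LL_n/\LL_{n-1}}\bigl(\ell(b_n)\bigr)=\frac{\rho}{p}\cdot\frac{p}{\rho}\,\ell(b_{n-2})=\ell(b_{n-2}).
\]
The second equality is \cref{Tr-ell-bn}, which applies since $n\ge3\ge2$.

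It remains to cancel $\ell$, and this is the only point needing care, because $\ell$ is not injective on all of $\F(\mm)$: its kernel is the torsion. Both sides of the desired identity lie in $\F(\mm_{\LL_{n-1}})\subseteq\F(\LL_\infty)$. Their $\F$-difference is therefore an element of $\F(\LL_\infty)$ killed by $\ell$. Such an element is a torsion point: the kernel of the logarithm on the maximal ideal of a finite extension of $\Qp$ consists of torsion points, and in fact of $p$-power torsion, since prime-to-$p$ multiplication is an automorphism of $\F$. By \cref{p-torsion-free}, which applies under \eqref{eq:e-ne-p+1}, $\F(\LL_\infty)$ has no nontrivial $p$-torsion. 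So the difference vanishes, and we obtain $\Tr^\F_{\LL_n/\LL_{n-1}}([\rho/p](b_n))=b_{n-2}$.

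I expect the main obstacle to be justifying this injectivity step precisely, namely that $\ker\ell$ on $\F(\mm_{\LL_{n-1}})$ is exactly the $p$-power torsion. I would cite the standard fact that $\ell$ restricts to an isomorphism on $\F(\mm^j)$ for $j$ large, so any $x$ with $\ell(x)=0$ satisfies $[p^k]x\in\F(\mm^j)$ for some $k$, which forces $[p^k]x=0$. One should also read the trace in the statement as the formal-group trace $\Tr^\F$, consistent with \cref{ell-pi-n-sigma}.
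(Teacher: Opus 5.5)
Your proposal is correct and follows essentially the same route as the paper, which deduces the corollary directly from \cref{ell-rho-p}, \cref{Tr-ell-bn} and \cref{p-torsion-free}. You apply $\ell$, use $\ell\circ[\rho/p]=\frac{\rho}{p}\ell$ together with the trace identity, and then cancel $\ell$ because its kernel on $\F(\mm_{\LL_{n-1}})$ is $p$-power torsion and $\F(\LL_\infty)$ has no $p$-torsion; your argument simply spells out the injectivity step that the paper leaves implicit.
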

\begin{proof}
    This follows from \cref{ell-rho-p,p-torsion-free,Tr-ell-bn}.
\end{proof}

Let $\LL'_\infty$ be the cyclotomic $\ZZ_p$-extension of $\LL$, that is, the unique $\ZZ_p$-extension of $\LL$ inside $\LL_\infty$. Let $\LL'_n$ be the $n$th layer of this extension, so that $\LL'_n$ is the unique $\ZZ/p^n\ZZ$-extension of $\LL$ inside $\LL_{n+1}$.
For $n\ge-1$, define
\[d_n\colonequals \Tr_{\LL_\infty/\LL_\infty'} \left(\left[\frac{\rho}{p}\right]^{\left\lfloor \frac{n+1}{2}\right\rfloor}(b_{n+1})\right),\]
and for $n\ge0$, let
\[d_n^+\colonequals\begin{cases}
    d_n & \text{ if $n$ is even}, \\
    d_{n-1} & \text{ if $n$ is odd}, \\
\end{cases} \quad \text{and} \quad d_n^-\colonequals\begin{cases}
    d_{n-1} & \text{ if $n$ is even}, \\
    d_{n} & \text{ if $n$ is odd}. \\
\end{cases}\]
\begin{corollary} 
\label{cor:generation}The sequence $d_n^\pm$ has the following properties:
    \begin{enumerate}[label=(\roman*)]
        \item Each of the points $d_{-1}^-$ and $d_0^+$ generates $\F(\LL)$ as an $\OO_\KK[\Gal(\LL/\KK)]$-module.
        \item For each $n\ge2$, there are trace compatibility relations
            \[\Tr_{\LL_n'/\LL_{n-1}'} d_n^\pm = d_{n-2}^\pm\]
        where $(-1)^{n}=\pm1$.
    \end{enumerate}
\end{corollary}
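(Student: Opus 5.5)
Both parts come from unwinding the definitions of $d_n$ and $d_n^\pm$ and then applying \cref{b-generators} and \cref{Tr-bn}.

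\textbf{Part (i).} By definition,
\[d_{-1}=\Tr_{\LL_\infty/\LL'_\infty}\bigl([\rho/p]^0 b_0\bigr)\quad\text{and}\quad d_0=\Tr_{\LL_\infty/\LL'_\infty}\bigl([\rho/p]^0 b_1\bigr).\]
These traces are to be read at the relevant finite level. Since $\LL'_0=\LL$, the operation ``$\Tr_{\LL_\infty/\LL'_\infty}$'' on $b_0\in\F(\LL)$ and on $b_1\in\F(\LL_1)$ should be interpreted as $\Tr^\F_{\LL_{n+1}/\LL'_n}$. On $b_0$, which already lies over $\LL=\LL_{-1}=\LL'_0$, it does nothing. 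On $b_1$ it is $\Tr^\F_{\LL_1/\LL}$. So $d_{-1}^-=d_{-1}=b_0$ and $d_0^+=d_0=\Tr^\F_{\LL_1/\LL}b_1$, and \cref{b-generators} states exactly that both generate $\F(\LL)$ over $\OO_\KK[\Gal(\LL/\KK)]$. I will first fix this normalization of the trace in the definition of $d_n$, namely $\Tr_{\LL_{n+1}/\LL'_n}$ with $\Gal(\LL_{n+1}/\LL'_n)\cong \Gal(\LL_1/\LL)$, so that the statement is meaningful.

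\textbf{Part (ii).} If $(-1)^n=\pm1$, then $d_n^\pm=d_n$ and $d_{n-2}^\pm=d_{n-2}$, so it suffices to show $\Tr_{\LL'_n/\LL'_{n-1}}d_n=d_{n-2}$ for $n\ge2$. Let $m=n+1\ge3$ and $k=\lfloor (n+1)/2\rfloor$, so that $\lfloor (n-1)/2\rfloor=k-1$. Since $\Gal(\LL_{n+1}/\LL'_n)$ and $\Gal(\LL_{n+1}/\LL_n)$ are complementary, traces commute, and so
\[\Tr_{\LL'_n/\LL'_{n-1}}\circ\Tr_{\LL_{n+1}/\LL'_n}=\Tr_{\LL_{n+1}/\LL'_{n-1}}=\Tr_{\LL_{n}/\LL'_{n-1}}\circ\Tr_{\LL_{n+1}/\LL_n}.\]
Apply this to $[\rho/p]^k b_{n+1}$. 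The endomorphism $[\rho/p]$ is defined over $\LL$ and commutes with the Galois action and with traces. By \cref{Tr-bn} with index $n+1\ge3$,
\[\Tr_{\LL_{n+1}/\LL_n}[\rho/p]^k b_{n+1}=[\rho/p]^{k-1}b_{n-1}.\]
Applying $\Tr_{\LL_n/\LL'_{n-1}}$ to the right-hand side gives $d_{n-2}$ by definition. There is a degree concern: the top-to-bottom traces compose correctly because $[\LL_{n+1}:\LL'_n]=[\LL_n:\LL'_{n-1}]=p-1$, which makes the square of fields a genuine compositum. This must be checked, and it follows from $\LL_{n+1}=\LL'_n\LL_1$ with $\LL'_n\cap\LL_n=\LL'_{n-1}$.

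The main obstacle is this bookkeeping. One must make the infinite-level notation $\Tr_{\LL_\infty/\LL'_\infty}$ precise and verify the field diagram, in particular that $\LL_{n+1}/\LL'_{n-1}$ has $\LL'_n$ and $\LL_n$ as subfields with compatible Galois groups. The arithmetic input is entirely contained in \cref{Tr-bn} and \cref{b-generators}.
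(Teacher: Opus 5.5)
Your proposal is correct and follows the paper's argument: (i) is \cref{b-generators} once you unwind $d_{-1}=b_0$ and $d_0=\Tr^\F_{\LL_1/\LL}b_1$, and (ii) is \cref{Tr-bn} at index $n+1$ combined with transitivity of traces, which is the bookkeeping the paper leaves implicit. There are two harmless slips. First, $[\LL_{n+1}:\LL'_n]=[\LL_1:\LL]$ equals $(p-1)/2$ rather than $p-1$ when $e$ is even, but only the equality of the two degrees matters. Second, if the trace over $\Gal(\LL_\infty/\LL'_\infty)$ defining $d_{-1}$ is read literally as a sum over that group, it gives $[\LL_1:\LL]\,b_0$ rather than $b_0$; this still generates $\F(\LL)$ because the index is prime to $p$.
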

\begin{proof}
    Both statements follow from the definition of $d_n^\pm$ and the results established above: the former comes from \cref{ell-iso-max-ideal,b-generators}, whereas the latter comes from \cref{Tr-bn}.
\end{proof}

For any character $\chi$ of $\Gal(\LL'_n/\LL)$, define 
\[\tau_n(\chi)\colonequals \sum_{\sigma\in \Gal(\LL_n/\LL)} \chi(\sigma) \sigma(\varpi_n+\pi_n).\] 
We have the following adaptation of \cite[Proposition~2.9]{KimPark}:
\begin{lemma} \label{KP2.9}
    For a primitive character $\chi$ of $\Gal(\LL'_n/\LL)$, we have
    \[\sum_{\sigma\in \Gal(\LL'_n/\LL)} \chi(\sigma) \ell(d_n^\sigma)=\left(\frac \rho p\right)^{\left\lfloor\frac{n+1}{2}\right\rfloor} \tau_{n+1}(\chi).\]
\end{lemma}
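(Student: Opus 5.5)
Write $m=\lfloor\frac{n+1}{2}\rfloor$. The approach is to transport everything to the additive side through $\ell$, unwind the definition of $d_n$ as a trace from $\LL_{n+1}$, and then use primitivity of $\chi$ to discard the correction terms coming from lower levels.

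\emph{Step 1: pass to logarithms.} By \cref{ell-rho-p}, $\ell\circ[\rho/p]^{m}=(\rho/p)^{m}\ell$. Since $\ell$ is a homomorphism from $\F$ to the additive group, it converts $\F$-traces into ordinary traces. Hence
\[
\ell(d_n)=\left(\frac\rho p\right)^{m}\Tr_{\LL_\infty/\LL'_\infty}\ell(b_{n+1}),
\]
where the trace is read at finite level as $\Tr_{\LL_{n+1}/\LL'_n}$. Here $\LL_{n+1}$ is the compositum of $\LL'_n$ with $\LL_1$, the extension $\LL_{n+1}/\LL'_n$ has degree $p-1$, and its Galois group is $\Gal(\LL_{n+1}/\LL'_n)\simeq\Gal(\LL_1/\LL)$.

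\emph{Step 2: unfold the Galois sum.} Regard $\chi$ as a character of $\Gal(\LL_{n+1}/\LL)$ through the projection to $\Gal(\LL'_n/\LL)$. Then
\[
\sum_{\sigma\in\Gal(\LL'_n/\LL)}\chi(\sigma)\ell(d_n^\sigma)=\left(\frac\rho p\right)^{m}\sum_{\sigma\in\Gal(\LL_{n+1}/\LL)}\chi(\sigma)\,\sigma\bigl(\ell(b_{n+1})\bigr).
\]
By \eqref{eq:bn-def}, $\ell(b_{n+1})=\varpi_{n+1}+\ell(\pi_{n+1})$. Comparing with the definition of $\tau_{n+1}(\chi)$, it remains to show that
\[
\sum_{\sigma}\chi(\sigma)\,\sigma\bigl(\ell(\pi_{n+1})-\pi_{n+1}\bigr)=0 .
\]
For $n=0$ we have $\ell(b_1)=\varpi_1+\pi_1$, because $\Omega_2(\pi_1)=0$ kills all higher terms of \eqref{eq:ell-def}. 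This case is therefore immediate, as is $n=-1$ where $\chi$ is trivial.

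\emph{Step 3: the key vanishing, which is the main obstacle.} We have $\ell(\pi_{n+1})-\pi_{n+1}=\sum_{i\ge1}\rho^{-i}\Omega_{2i}(\pi_{n+1})$. By \cref{ell-pi-n-sigma} (and its proof, iterated), for every $\sigma$ each term $\Omega_{2i}(\pi_{n+1}^\sigma)$ equals $\pi_{n+1-2i}$ or one of its conjugates. In particular it lies in $\LL_{n-1}$, which is fixed by $\Gal(\LL_{n+1}/\LL_{n-1})$.

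I would group the sum over $\sigma$ into cosets of the subgroup $H\subseteq\Gal(\LL_{n+1}/\LL)$ corresponding to $\Gal(\LL'_n/\LL'_{n-1})$. On each coset the summand $\sigma(\ell(\pi_{n+1})-\pi_{n+1})$ is constant. Meanwhile $\sum_{h\in H}\chi(h)=0$, because $\chi$ is primitive and hence nontrivial on $\Gal(\LL'_n/\LL'_{n-1})$. So each coset contributes zero and the whole sum vanishes.

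The care needed in this step is twofold:
\begin{itemize}
\item One must check that the conjugates of $\pi_{n+1}$ behave compatibly under $\Omega_2$, i.e. that $\Omega_{2i}(\pi_{n+1}^\sigma)=\Omega_{2i}(\pi_{n+1})^\sigma$. This holds because $\Omega_1$ has coefficients in $\LL$ and $\sigma$ fixes $\LL$.
\item One must check that $H$ really fixes $\LL_{n-1}$, which I expect to follow from the degree count $[\LL_{n+1}:\LL_{n-1}]=p^2$ together with the structure of the cyclotomic tower.
\end{itemize}
Once this vanishing is established, the identity follows.
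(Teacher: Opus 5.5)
Your proposal is correct and is essentially the paper's argument. You use \cref{ell-rho-p} to pull out $(\rho/p)^{\lfloor (n+1)/2\rfloor}$, unfold $d_n$ and $\ell(b_{n+1})=\varpi_{n+1}+\ell(\pi_{n+1})$, and then kill the terms $\rho^{-i}\Omega_{2i}(\pi_{n+1}^\sigma)$, $i\ge1$, by orthogonality, since they lie at a lower level of the tower while $\chi$ is primitive. Your coset argument with $H=\Gal(\LL_{n+1}/\LL_n)$ is a cleaner, correctly indexed version of the paper's sum, and $H$ trivially fixes $\LL_{n-1}\subseteq\LL_n$. The only slip is harmless: when $e$ is even, $[\LL_{n+1}:\LL'_n]=[\LL_1:\LL]$ equals $(p-1)/2$ rather than $p-1$, but your argument never uses this degree.
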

\begin{proof}
    We first compute the analogous sum for $\left[{\rho}/{p}\right]^{\left\lfloor {(n+1)}/{2}\right\rfloor}(b_{n+1})$. Let $\widetilde\chi$ denote the inflation of $\chi$ from $\Gal(\LL'_n/\LL)$ to $\Gal(\LL_n/\LL)$.
    \begin{align*}
        &\phantom=\sum_{\sigma\in \Gal(\LL_n/\LL)} \widetilde\chi(\sigma) \ell\left(\sigma\left(\left[\frac{\rho}{p}\right]^{\left\lfloor \frac{n+1}{2}\right\rfloor}(b_{n+1})\right)\right) 
        = \left(\frac{\rho}{p}\right)^{\left\lfloor \frac{n+1}{2}\right\rfloor} \sum_{\sigma\in \Gal(\LL_n/\LL)} \widetilde\chi(\sigma) \ell\left(b_{n+1}^\sigma\right) \\
        &= \left(\frac{\rho}{p}\right)^{\left\lfloor \frac{n+1}{2}\right\rfloor} \sum_{\sigma\in \Gal(\LL_n/\LL)} \widetilde\chi(\sigma) \Big(\varpi_{n+1}^\sigma + \ell(\pi_{n+1}^\sigma)\Big) \\
        &= \left(\frac{\rho}{p}\right)^{\left\lfloor \frac{n+1}{2}\right\rfloor} \sum_{\sigma\in \Gal(\LL_n/\LL)} \widetilde\chi(\sigma) \Big(\varpi_{n+1}^\sigma + \pi_{n+1}^\sigma\Big) + \left(\frac{\rho}{p}\right)^{\left\lfloor \frac{n+1}{2}\right\rfloor} \sum_{i=1}^\infty \frac{1}{\rho^i}\sum_{\sigma\in\Gal(\LL_n/\LL)}\widetilde\chi(\sigma) \Omega_{2i}(\pi^\sigma_{n+1}).
    \end{align*}
    The first step is \cref{ell-rho-p}, followed by the definition of $b_{n+1}$.
    The character $\chi$ is primitive, but in the inner summation in the last term, the action on $\pi_{n+1}$ factors through $\Gal(\LL_{n-2}/\LL)$ because of \eqref{eq:compatibility-pi-Psi}. Hence, this inner sum vanishes by the orthogonality relations.
    The claim follows by taking traces as in the definition of $d_n$.
\end{proof}

\subsection{Plus/minus subgroups}
Consider the following plus and minus Selmer groups defined by the ``jumping trace'' conditions.

\begin{definition} \label{def:F-plus-minus}
For an integer $n\ge0$, we define
    \begin{align*}
        \F^+(\LL'_n) &\colonequals \left\{ P\in \F(\LL'_n) : \Tr_{\LL'_n / \LL'_{m+1}} P \in \F(\LL'_m) \text{ for all }  0\le m<n,\,2\mid m\right\}, \\
        \F^-(\LL'_n) &\colonequals \left\{ P\in \F(\LL'_n) : \Tr_{\LL'_n / \LL'_{m+1}} P \in \F(\LL'_m) \text{ for all }  -1\le m<n,\, 2\nmid m\right\}.
    \end{align*}
    Furthermore, define
\[
\F^\pm(\LL'_\infty)=\bigcup_{n\ge0}\F^\pm(\LL_n').
\]
\end{definition}

Let $G_n\colonequals\Gal(\LL'_n/\LL)$. Let $\Gamma\colonequals\Gal(\LL'_\infty/\LL)\simeq\ZZ_p$ be the Galois group of the cyclotomic $\ZZ_p$-extension $\LL'_\infty/\LL$. 
If $\OO$ is the ring of integers in some finite extension of $\QQ_p$, define $\Lambda^\OO\colonequals\Lambda^\OO(\Gamma)\colonequals\OO\llbracket\Gamma\rrbracket$. When $\OO=\Zp$, we simply write $\Lambda$. We fix a topological generator $\gamma$ of $\Gamma$. Set $X=\gamma-1$, which allows us to identify $\OO\llbracket\Gamma\rrbracket$ with the set of power series $\OO\llbracket X\rrbracket$.

For $i\ge0$, let $\Phi_i$ denote the $p^i$th cyclotomic polynomial.
The polynomial $\omega_n(X)=(1+X)^{p^n}-1$ is the product of the cyclotomic polynomials $\Phi_i$ for $0\le i\le n$. We define
\[\omega_n^\pm(X) \colonequals X \prod_{\substack{1\le i\le n \\ (-1)^i=\pm1}} \Phi_i, \quad \omegat_n(X)\colonequals\frac{\omega_n(X)}{X}.\]
Let $\Lambda^\OO_n\colonequals\Lambda^\OO/\omega_n\Lambda^\OO\simeq\OO[G_n]$ and $\Lambda^{\OO,\pm}_n\colonequals\Lambda^\OO/\omega^\pm_n\Lambda^\OO\simeq \omegat^\mp_n\Lambda^\OO_n$.
\subsection{Coleman maps} \label{sec:Coleman-maps}

Let $\T=\varprojlim \F[p^n]$ be the Tate module of $\F$. By Lubin--Tate theory, $\T$ is a free $\OO_\KK$-module of rank one, equipped with a Lubin--Tate character $\chi_\F: G_\KK\to \OO_\KK^\times$. Let $\tilde\chi_\F=\chi_\F^{-1}\chi_\cyc$, where $\chi_\cyc$ is the cyclotomic character. 
We define
\[
\tilde\T=\OO_\KK(\tilde\chi_\F).
\]

There is a perfect pairing of $G_\KK$-modules
\[
\tilde\T\times\F[p^\infty]\longrightarrow \KK/\OO_\KK(1), 
\]
which gives the local Tate pairing
\begin{equation}
    H^1(M, \tilde\T) \times H^1(M, \F[p^\infty]) \longrightarrow H^2(M,\KK/\OO_K(1))\xlongrightarrow{\sim}\KK/\OO_\KK
    \label{eq:KP-pairing}
\end{equation}
for any finite extension $M/\KK$.
We define $H^1_\pm(\LL'_n,\tilde\T)$ as the orthogonal complement of $\F^\pm(\LL'_n)\otimes\QQ_p/\ZZ_p$ under this pairing. 

Let
$$\langle-,-\rangle_n: H^1(\LL'_n,\T)\times H^1(\LL'_n,\tilde\T)\longrightarrow H^2(\LL'_n,\OO_\KK(1))\xlongrightarrow{\sim}\OO_\KK$$ 
be the pairing given by the cup product, similar to the one given in \cite[after Proposition~3.1]{KimPark}.

Let $P^\pm_n$ be the Perrin-Riou map associated with $d^\pm_n$, that is,
\[P^\pm_n: H^1(\LL'_n,\tilde \T)\longrightarrow \Gal(\LL'_n/\KK), \quad P^\pm_n(z)=\sum_{\sigma\in \Gal(\LL'_n/\KK)} \langle \sigma(d^\pm_n), z\rangle_n\cdot \sigma,\]
where $\F(\LL'_n)\hookrightarrow H^1(\LL'_n,\T)$ under the Kummer map.
\begin{proposition}\label{prop-coln}
    For each $n\ge0$, there exists a unique homomorphism \[\Col^\pm_n: H^1(\LL'_n,\tilde\T)\longrightarrow \Lambda^{\OO_\KK,\pm}_n[\Gal(\LL/\KK)]\] such that the following diagram commutes:
    \[\begin{tikzcd}
    	{H^1(\LL_n',\tilde\T)} & {\Lambda^{\OO_\KK,\pm}_n[\Gal(\LL/\KK)]} \\
    	{\displaystyle\frac{H^1(\LL'_n,\tilde\T)}{H^1_\pm(\LL'_n,\tilde\T)}} & {\Lambda^{\OO_\KK}_n[\Gal(\LL/\KK)]}
    	\arrow["{\Col^\pm_n}", dashed, from=1-1, to=1-2]
    	\arrow[two heads, from=1-1, to=2-1]
    	\arrow["{\omegat^\mp_n}", hook, from=1-2, to=2-2]
    	\arrow["{P^\pm_n}", from=2-1, to=2-2]
    \end{tikzcd}\]
    Moreover, each $\Col^\pm_n$ is surjective.
\end{proposition}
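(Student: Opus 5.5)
Following \cite[Proposition~3.2]{KimPark} (itself modelled on \cite[\S8]{Kobayashi}), I will show that $P^\pm_n$ is well defined on the quotient and that its image is contained in $\omegat^\mp_n\Lambda^{\OO_\KK}_n[\Gal(\LL/\KK)]$. Since $\Gal(\LL'_n/\KK)\simeq G_n\times\Gal(\LL/\KK)$ (the cyclotomic and tame parts are disjoint), the target of $P^\pm_n$ is $\Lambda^{\OO_\KK}_n[\Gal(\LL/\KK)]$. Multiplication by $\omegat^\mp_n$ induces an injection $\Lambda^{\OO_\KK,\pm}_n\hookrightarrow\Lambda^{\OO_\KK}_n$, because $\omega_n=\omega^\pm_n\omegat^\mp_n$ and $\Lambda^{\OO_\KK}$ is a UFD. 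Existence and uniqueness of $\Col^\pm_n$ will therefore follow from the image containment.

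\textbf{Well-definedness.} By construction $d^\pm_n\in\F^\pm(\LL'_n)$. This uses \cref{cor:generation}(ii) and the fact that traces of $d^\pm_n$ to lower layers land, up to the jumping relation, in $\F(\LL'_m)$ for the correct parity of $m$. The Galois conjugates $\sigma(d^\pm_n)$ lie in $\F^\pm(\LL'_n)$ as well. Since $H^1_\pm$ is the orthogonal complement of $\F^\pm\otimes\QQ_p/\ZZ_p$, the integral pairing $\langle\sigma(d_n^\pm),z\rangle_n$ vanishes for $z\in H^1_\pm(\LL'_n,\tilde\T)$. Here $\F(\LL'_n)$ is torsion-free by \cref{p-torsion-free}, so the Kummer image is compatible with the $\QQ_p/\ZZ_p$-version.

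\textbf{Image containment.} I will show that $P^\pm_n(z)$ is annihilated by $\omega^\pm_n$. Equivalently, for each $1\le i\le n$ with $(-1)^i=\pm1$, $P^\pm_n(z)$ is killed by $\Phi_i$, and it is also killed by $X$ (the $i=0$ factor); the relevant factors of $\omega_n$ are coprime. Being killed by $\Phi_{m+1}$, for a suitable index $m$ of the right parity, amounts to $\sum_{\tau\in\Gal(\LL'_n/\LL'_{m})}\tau\cdot P^\pm_n(z)$ being a multiple of $\sum_{\tau\in\Gal(\LL'_n/\LL'_{m+1})}\tau$. By Galois equivariance of the pairing, the $\tau$-sum moves onto $d^\pm_n$ as a trace. \cref{cor:generation}(ii) then gives, e.g., $\Tr_{\LL'_n/\LL'_{m+1}}d^\pm_n$ expressed through $d^\pm_{m}\in\F(\LL'_m)$, which is invariant under $\Gal(\LL'_{m+1}/\LL'_m)$. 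This yields the needed factorisation. I expect the main obstacle to be this index bookkeeping: matching the parity conventions of $d^\pm_n$, $\F^\pm$ and $\omegat^\mp_n$, and correctly handling the level $m=-1$ together with the extra $\Gal(\LL/\KK)$ factor. I will organise it exactly as in \cite[Lemma~8.17]{Kobayashi}.

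\textbf{Surjectivity.} By Nakayama's lemma over the local ring $\Lambda^{\OO_\KK,\pm}_n$ (with the semisimple $\Gal(\LL/\KK)$ factor, whose order $e$ is prime to $p$), it suffices to prove surjectivity after reducing to $\Lambda^{\OO_\KK,\pm}_n\to\OO_\KK[\Gal(\LL/\KK)]$, i.e.\ at the bottom layer. There the map becomes $z\mapsto\sum_{\sigma\in\Gal(\LL/\KK)}\langle\sigma(d),z\rangle\sigma$ with $d=d^-_{-1}$ or $d^+_0$, up to the unit factor coming from $\omegat^\mp_n$ evaluated at $X=0$ (a power of $p$ that cancels via the traces). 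By \cref{cor:generation}(i), $d$ generates $\F(\LL)$ as an $\OO_\KK[\Gal(\LL/\KK)]$-module. Local Tate duality \eqref{eq:KP-pairing} together with the $p$-torsion-freeness of $\F(\LL)$ (so that $\F(\LL)$ is a direct summand of $H^1(\LL,\T)$) then shows that the pairing against $\Gal(\LL/\KK)$-translates of $d$ attains every element of $\OO_\KK[\Gal(\LL/\KK)]$. This gives surjectivity of $\Col^\pm_n$.
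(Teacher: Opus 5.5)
Your route is the paper's. Its proof just invokes the argument of \cite[Proposition~2.12]{KimPark} (orthogonality for well-definedness, trace relations for the image), and uses Nakayama's lemma with the bottom classes of \cref{cor:generation} for surjectivity. However, the reduction you state in the image-containment step is false. Annihilation by $\omega^\pm_n$ is \emph{not} equivalent to annihilation by each of its factors. An element of $\Lambda^{\OO_\KK}_n[\Gal(\LL/\KK)]$ killed by two distinct factors of $\omega_n$ (say $X$ and some $\Phi_i$) is $0$. For instance, $\omegat^\mp_n$, which lies in $\omegat^\mp_n\Lambda^{\OO_\KK}_n$, is annihilated by $\omega^\pm_n$ but not by $X$ once $n\ge2$. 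If you instead read ``killed by $\Phi_i$'' as ``vanishes modulo $\Phi_i$'', the list is still wrong. The $\Phi_i$ with $(-1)^i=\pm1$ are exactly those \emph{not} dividing $\omegat^\mp_n$, and vanishing modulo $X$ would contradict your own surjectivity paragraph. Your ``amounts to'' criterion is also vacuous as written. Indeed, $\sum_{\tau\in\Gal(\LL'_n/\LL'_m)}\tau=\omega_n/\omega_m=\Phi_{m+1}\cdot\omega_n/\omega_{m+1}$ and $\omega_n/\omega_{m+1}=\sum_{\tau\in\Gal(\LL'_n/\LL'_{m+1})}\tau$, so the two norm elements are interchanged.

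What you need is that $\Phi_i$ \emph{divides} $P^\pm_n(z)$ for every $1\le i\le n$ with $(-1)^i=\mp1$, equivalently $(\omega_n/\Phi_i)\,P^\pm_n(z)=0$. Your trace observation gives exactly this. Take $0\le m<n$ with $(-1)^m=\pm1$. The element $\Tr_{\LL'_n/\LL'_{m+1}}\sigma(d^\pm_n)\in\F(\LL'_m)$ is fixed by $\gamma^{p^m}$. Hence $\omega_m\cdot\tfrac{\omega_n}{\omega_{m+1}}\cdot P^\pm_n(z)=\tfrac{\omega_n}{\Phi_{m+1}}\,P^\pm_n(z)=0$ in $\Lambda^{\OO_\KK}_n[\Gal(\LL/\KK)]$, i.e.\ $\Phi_{m+1}\mid P^\pm_n(z)$. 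The level $m=-1$ imposes nothing, consistent with $X\nmid\omegat^+_n$. Since the $\Phi_i$ are pairwise non-associate primes of $\Lambda^{\OO_\KK}$ dividing $\omega_n$, these divisibilities combine to $\omegat^\mp_n\mid P^\pm_n(z)$, which is the required containment.

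Two smaller points concern the surjectivity step. First, $\omegat^\mp_n(0)$ is a power of $p$, not a unit, so the cancellation has to be done in the $p$-torsion-free module $\OO_\KK[\Gal(\LL/\KK)]$. Second, reduction modulo $X$ passes through $\mathrm{cor}_{\LL'_n/\LL}$, so you also need this corestriction to be surjective on $H^1(-,\tilde\T)$. This follows by local duality from $\F[p^\infty](\LL'_n)=0$ (\cref{p-torsion-free}).
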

\begin{proof}
    The proof of \cite[Proposition~2.12]{KimPark} applies. Surjectivity comes from the properties of the bottom classes $d^-_{-1}$ and $d^+_0$ established in Corollary~\ref{cor:generation} and Nakayama's lemma.
\end{proof}

Define the Iwasawa cohomology as $H^1_\Iw(\LL'_\infty,\tilde\T)\colonequals\varprojlim_n H^1(\LL'_n,\tilde\T)$. The plus/minus Coleman maps are compatible with respect to the corestriction and projection maps because the Perrin-Riou maps are, see \cite[Proposition~8.21]{Kobayashi}. This defines plus/minus Coleman maps
\[\Col_\LL^\pm: H^1_\Iw(\LL'_\infty,\tilde\T)\twoheadrightarrow \varprojlim_n \Lambda^{\OO_\KK,\pm}_n[\Gal(\LL/\KK)]=\Lambda^{\OO_\KK}[\Gal(\LL/\KK)].\]

Let $\KK'_\infty$ be the cyclotomic $\Zp$-extension of $\KK$ and denote by $\KK'_n/\KK$ the sub-extension of degree $p^n$. Define $\F^\pm(\KK_n')$ and $\F^\pm(\KK_\infty')$ in the same way as $\F^\pm(\LL_n')$ and $\F^\pm(\LL_\infty')$ given in \cref{def:F-plus-minus}.

Let $H^1_\Iw(\KK'_\infty,\tilde\T)\colonequals\varprojlim_n H^1(\KK'_n,\tilde\T)$. We have similarly the surjective Coleman maps
\[\Col_\KK^\pm: H^1_\Iw(\KK'_\infty,\tilde\T)\twoheadrightarrow \varprojlim_n \Lambda^{\OO_\KK,\pm}_n=\Lambda^{\OO_\KK}.\]

\subsection{The structure of plus/minus subgroups}
\begin{proposition} \label{dpmn-generator}
    For all $n\ge0$, the following hold:
    \begin{align}
        \Hom\left(\F^\pm(\LL_n')\otimes \QQ_p/\ZZ_p, \QQ_p/\ZZ_p\right) &\simeq \omegat_n^\mp \OO_\KK[\Gal(\LL_n'/\KK)]; \label{eq:dpmn-1} \\
        \F^\pm(\LL'_n)\otimes \QQ_p/\ZZ_p &= \OO_\KK[\Gal(\LL_n'/\KK)](d_n^\pm)\otimes \QQ_p/\ZZ_p. \label{eq:dpmn-2}
    \end{align}
\end{proposition}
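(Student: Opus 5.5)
We follow the strategy of \cite[Proposition~8.12]{Kobayashi} and its adaptation \cite[Proposition~2.11]{KimPark}. The argument runs by induction on $n$, using the trace relations of \cref{cor:generation} and the fact, from \cref{p-torsion-free}, that $\F(\LL'_n)$ is $p$-torsion free.

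Since $\F(\LL'_n)$ is $p$-torsion free, \cref{ell-iso-max-ideal} and the normal basis theorem for tame extensions (via $\ell$) show that $\F(\LL'_n)\otimes\QQ$ is free of rank one over $\KK[\Gal(\LL'_n/\KK)]$. The first step is to prove \eqref{eq:dpmn-2} at the level of $\OO_\KK$-lattices up to finite index. For this, set $R_n\colonequals\OO_\KK[\Gal(\LL'_n/\KK)]$ and show that $R_n d_n^\pm\subseteq\F^\pm(\LL'_n)$.

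\begin{itemize}
\item The trace relation $\Tr_{\LL'_n/\LL'_{n-1}}d_n^\pm=d_{n-2}^\pm$ from \cref{cor:generation}(ii) handles the levels where $d_n^\pm=d_n$.
\item In the other case, where $d_n^\pm=d_{n-1}\in\F(\LL'_{n-1})$, the jumping condition follows from the same relation one level down.
\item Iterating these relations gives $\Tr_{\LL'_n/\LL'_{m+1}}d_n^\pm\in\F(\LL'_m)$ for every $m$ of the required parity.
\end{itemize}

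Next, compute the annihilator of $d_n^\pm$ in $\Lambda^{\OO_\KK}_n[\Gal(\LL/\KK)]$. By \cref{KP2.9}, for every primitive character $\chi$ of $G_m$ with $m$ of the parity where $d^\pm$ jumps, the twisted sum $\sum\chi(\sigma)\ell(d_n^\sigma)$ equals a nonzero multiple of a Gauss-type sum $\tau(\chi)$. For the remaining characters, the trace relation forces this sum to vanish. The nonvanishing of $\tau_{m+1}(\chi)$ needs checking: it follows since $\varpi_{m+1}+\pi_{m+1}$ has valuation $1$ in $\LL_{m+1}$, so one can argue as in the classical Gauss sum computation. From this, $R_n d_n^\pm\cong \omegat_n^\mp R_n$ as modules, after combining the characters of $\Gal(\LL/\KK)$ through \cref{cor:generation}(i).

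The main obstacle is the reverse inclusion, namely that $\F^\pm(\LL'_n)$ is not larger than $R_n d_n^\pm$ after tensoring with $\QQ_p/\ZZ_p$. We prove this by induction on $n$, with base cases $n=0$ and $n=-1$ given by \cref{cor:generation}(i). Take $P\in\F^\pm(\LL'_n)$ for the parity where $d_n^\pm=d_n$. By induction, its trace to $\LL'_{n-2}$ lies in $R_{n-2}d_{n-2}^\pm$ up to divisible parts, so subtracting a suitable element of $R_nd_n^\pm$ we may assume the trace is trivial. Rank considerations, via the character decomposition of $\F(\LL'_n)\otimes\QQ$, then give $P\in R_nd_n^\pm\otimes\QQ$ on the $\chi$-components of conductor $p^n$ and $p^{n-1}$. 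Integrality after $\otimes\QQ_p/\ZZ_p$ follows because $\F^\pm(\LL'_n)$ and the image of $R_n d_n^\pm$ have the same rank and the quotient is torsion.

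Finally, \eqref{eq:dpmn-1} follows by Pontryagin duality from \eqref{eq:dpmn-2} and the annihilator computation. This uses that $\omegat_n^\mp R_n$ is $\OO_\KK$-free, so it is self-dual under the standard pairing $R_n\times R_n\to\OO_\KK$.
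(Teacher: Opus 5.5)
Your proposal has two genuine gaps, and both sit exactly where the paper uses the surjectivity of the Coleman maps (\cref{prop-coln}).

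\textbf{First gap: the rank of $R_nd_n^\pm$.} The claim $R_nd_n^\pm\cong\omegat_n^\mp R_n$, with $R_n=\OO_\KK[\Gal(\LL'_n/\KK)]$, requires every isotypic component $e_\psi e_\chi d_n^\pm$ to be nonzero. This must hold for all characters $\chi$ of $G_n$ with $\chi(\omega_n^\pm)=0$ and all characters $\psi$ of $\Gal(\LL/\KK)$. Your justification of $\tau_{m+1}(\chi)\neq0$ does not hold up, for three reasons.
\begin{enumerate}
\item Nothing in the construction makes $\varpi_{m+1}+\pi_{m+1}$ a uniformizer. In the proof of \cref{Tr-ell-bn} only $\Tr_{\LL_n/\LL_{n-2}}(\varpi_n+\pi_n)$ is prescribed, so $\varpi_n+\pi_n$ may be any element of $\mm_{\LL_n}$ with that trace, and several valuations are possible.
\item $\tau_{n+1}(\chi)$ is not a classical Gauss sum, so there is no computation to imitate.
\item \cref{KP2.9} is not twisted by $\psi$, and \cref{cor:generation}(i) only concerns the bottom layer. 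It therefore cannot supply the nonvanishing of the $\psi\chi$-components for nontrivial $\chi$.
\end{enumerate}

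\textbf{Second gap: integrality.} Your upper bound on the rank of $\F^\pm(\LL'_n)$ from the trace conditions is fine, and needs no induction. Even granting full rank, though, Step C only shows that $R_nd_n^\pm$ has finite index in $\F^\pm(\LL'_n)$. That gives \eqref{eq:dpmn-2} as an equality of images in $\F(\LL'_n)\otimes\QQ_p/\ZZ_p$, but for \eqref{eq:dpmn-1} the index matters. Indeed, $\Hom(\F^\pm(\LL'_n)\otimes\QQ_p/\ZZ_p,\QQ_p/\ZZ_p)=\Hom_{\ZZ_p}(\F^\pm(\LL'_n),\ZZ_p)$ is a sublattice of index $[\F^\pm(\LL'_n):R_nd_n^\pm]$ in $\Hom_{\ZZ_p}(R_nd_n^\pm,\ZZ_p)\cong\omegat_n^\mp R_n$. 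Such sublattices need not be cyclic: for $n=1$ and the minus sign the target is $R_1$ itself, and the maximal ideal of $e_\psi R_1\cong\OO_\KK[G_1]$ is not principal. So \eqref{eq:dpmn-1} does not follow ``by Pontryagin duality''.

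\textbf{The missing idea.} What is missing is Nakayama's lemma on the dual side, driven by the bottom classes. Consider the map
\[\phi\mapsto\sum_\sigma\phi(\sigma d_n^\pm)\sigma \quad\text{on}\quad \Hom_{\OO_\KK}(\F(\LL'_n),\OO_\KK)\cong H^1(\LL'_n,\tilde\T)/H^1_\f(\LL'_n,\tilde\T);\]
this is $P_n^\pm$.
\begin{enumerate}
\item It is $R_n$-equivariant, and by the trace relations it lands in the cyclic module $\omegat_n^\mp R_n$.
\item It is surjective modulo $\gamma-1$. The reason is that $\Tr_{\LL'_n/\LL}d_n^\pm$ is $\omegat_n^\mp(0)$ times $d_0^+$ (resp.\ $d_{-1}^-$), which generate $\F(\LL)$. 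Moreover, functionals on the saturated submodule $\F(\LL)=\F(\LL'_n)^{G_n}$ extend to $\F(\LL'_n)$.
\item Hence, by Nakayama, the map is surjective. This is the surjectivity of $\Col_n^\pm$.
\end{enumerate}
The map factors through $\Hom(\F^\pm(\LL'_n),\OO_\KK)\to\Hom(R_nd_n^\pm,\OO_\KK)\hookrightarrow\omegat_n^\mp R_n$. This single surjectivity therefore gives full rank of $R_nd_n^\pm$. Combined with your rank bound, it also shows that the restriction map is an isomorphism, so $\F^\pm(\LL'_n)=R_nd_n^\pm$. That yields both \eqref{eq:dpmn-1} and \eqref{eq:dpmn-2}, and it is precisely the route of the paper's proof.
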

\begin{proof}
    These statements are byproducts of the construction of Coleman maps, and the same strategy as in \cite[Proposition~2.12]{KimPark} works. Let 
    \[D_n^\pm\colonequals\langle \sigma(d_n^\pm) : \sigma\in G_n\rangle_{\QQ_p}\le \F^\pm(\LL'_n)\]
    be the subgroup of $\F^\pm(\LL'_n)$ consisting of $\QQ_p$-linear combinations of $\sigma(d_n^\pm)$ with $\sigma\in G_n$.
    Surjectivity of the plus/minus Coleman maps implies that there is an isomorphism 
    \begin{equation} \label{eq:KP2.12-auxiliary}
        \Hom\left(D^\pm_n\otimes \QQ_p/\ZZ_p, \QQ_p/\ZZ_p\right) \simeq \omegat_n^\mp \OO_\KK[\Gal(\LL_n'/\KK)].
    \end{equation} 
    Now \eqref{eq:KP2.12-auxiliary} implies \eqref{eq:dpmn-2}. Finally, combining \eqref{eq:dpmn-2} with \eqref{eq:KP2.12-auxiliary}, one can observe that $D_n^\pm\otimes\QQ_p/\ZZ_p=\F^\pm(\LL_n')\otimes\QQ_p/\ZZ_p$, which shows that \eqref{eq:KP2.12-auxiliary} is equivalent to \eqref{eq:dpmn-1}.
\end{proof}

The following is an analogue of \cite[Theorem~7.1]{PollackRubin} and \cite[Proposition~3.3]{KimPark}. For a character $\chi\in\widehat G_n$ and a polynomial $g(X)\in\ZZ_p[[X]]$, we define $\chi(g)$ as follows. Extend $\chi$ to $\ZZ_p[G_n]$ by $\ZZ_p$-linearity, and evaluate it on the image of $g$ under the map $\ZZ_p[[X]]\simeq \Lambda(\Gamma)\twoheadrightarrow \ZZ_p[G_n]$, where the second map is induced by the canonical projection map.
\begin{corollary} \label{cor:cofree}
    The $\Lambda^{\OO_\KK}[\Gal(\LL/\KK)]$-module $\Hom\left(\F^\pm(\LL_\infty')\otimes \QQ_p/\ZZ_p, \QQ_p/\ZZ_p\right)$ is free of rank one, with a generator $f^\pm$ satisfying
    \[ \sum_{\sigma\in G_n} \chi(\sigma) \cdot f^\pm\left(\sigma(d_n^\pm) \otimes p^{-k} \right) = \chi(\omegat_n^\mp) \cdot p^{-k}\]
    for all characters $\chi\in\widehat G_n$ and integers $k,n\ge0$.
\end{corollary}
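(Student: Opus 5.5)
We obtain the statement by passing to the limit in \cref{dpmn-generator}, following the proof of \cite[Theorem~7.1]{PollackRubin}. At each finite level $n$, \eqref{eq:dpmn-1} and \eqref{eq:dpmn-2} show that $\F^\pm(\LL'_n)\otimes\QQ_p/\ZZ_p$ is generated by $d_n^\pm$. The Pontryagin dual is isomorphic to $\omegat_n^\mp\OO_\KK[\Gal(\LL'_n/\KK)]$, and this isomorphism sends a homomorphism $f$ to the ``Perrin-Riou type'' element $\sum_{\sigma}f(\sigma(d_n^\pm)\otimes p^{-k})\sigma$ (suitably normalized in $k$).

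First, for each $n$ we define $f_n^\pm$ to be the homomorphism corresponding to $\omegat_n^\mp$ itself under \eqref{eq:dpmn-1}. By construction it satisfies
\[
\sum_{\sigma\in G_n}\chi(\sigma)f_n^\pm(\sigma(d_n^\pm)\otimes p^{-k})=\chi(\omegat_n^\mp)p^{-k}.
\]
Here we extend along the $\Gal(\LL/\KK)$-component, and we use that $\F^\pm(\LL_n')\otimes\QQ_p/\ZZ_p$ is divisible and generated by $d_n^\pm$. The module $\omegat_n^\mp\OO_\KK[\Gal(\LL'_n/\KK)]\simeq\Lambda_n^{\OO_\KK,\pm}[\Gal(\LL/\KK)]$ is free of rank one over $\Lambda^{\OO_\KK,\pm}_n[\Gal(\LL/\KK)]$ with generator $f_n^\pm$.

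Second, we check compatibility: the restriction of $f_{n+1}^\pm$ along $\F^\pm(\LL'_n)\otimes\QQ_p/\ZZ_p\to\F^\pm(\LL'_{n+1})\otimes\QQ_p/\ZZ_p$ should equal $f_n^\pm$. Since $d_n^\pm$ generates, it suffices to compare values on $\sigma(d_n^\pm)$, equivalently to compare the character sums above. There are two cases. If $n+1$ has the parity of the sign, then \cref{cor:generation}(ii) gives $\Tr_{\LL'_{n+1}/\LL'_n}d^\pm_{n+1}=d^\pm_{n-1}$ up to index bookkeeping, while $\omegat^\mp_{n+1}=\omegat^\mp_n$. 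If $n+1$ has the other parity, then $d^\pm_{n+1}=d^\pm_n$ and $\omegat^\mp_{n+1}=\Phi_{n+1}\omegat^\mp_n$. For characters $\chi$ of $G_n$, the image of $d_n^\pm$ in level $n+1$ pairs with the sum over $G_{n+1}$ to give the trace. We then verify the identity $\chi(\Phi_{n+1})=p$ for $\chi$ factoring through $G_n$, which accounts for the extra factor. This step carries the real content and is where I expect the main obstacle: the bookkeeping of the even/odd cases, together with the fact that the inclusion of levels corresponds on duals to the natural projection $\Lambda^{\pm}_{n+1}\to\Lambda^{\pm}_n$ rather than to a trace, must match exactly the trace relations of \cref{cor:generation}(ii). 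Following \cite[Theorem~7.1]{PollackRubin}, I would compute character by character, using primitive and imprimitive characters separately.

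Third, we take the inverse limit. Since $\F^\pm(\LL'_\infty)\otimes\QQ_p/\ZZ_p=\varinjlim_n\F^\pm(\LL'_n)\otimes\QQ_p/\ZZ_p$, its dual is $\varprojlim_n$ of the duals. The transition maps are the compatible surjections $\Lambda_{n+1}^{\OO_\KK,\pm}[\Gal(\LL/\KK)]\to\Lambda_n^{\OO_\KK,\pm}[\Gal(\LL/\KK)]$ that send $f_{n+1}^\pm$ to $f_n^\pm$. The limit is therefore $\varprojlim_n\Lambda^{\OO_\KK}/\omega_n^\pm[\Gal(\LL/\KK)]=\Lambda^{\OO_\KK}[\Gal(\LL/\KK)]$, because the $\omega_n^\pm$ tend to zero in the compact topology. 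This limit is free of rank one with generator $f^\pm=(f_n^\pm)_n$, and $f^\pm$ satisfies the displayed formula at every level by the first step.
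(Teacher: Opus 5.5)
Your proposal is correct and follows the paper's proof: define $f_n^\pm$ as the element corresponding to $\omegat_n^\mp$ under \eqref{eq:dpmn-1} and set $f^\pm=\varprojlim_n f_n^\pm$, using $\varprojlim_n\Lambda_n^{\OO_\KK,\pm}[\Gal(\LL/\KK)]=\Lambda^{\OO_\KK}[\Gal(\LL/\KK)]$. The paper only asserts that the transition maps are compatible, whereas you verify it through the two parity cases of \cref{cor:generation}(ii) and the identity $\chi(\Phi_{n+1})=p$ for $\chi$ factoring through $G_n$; your $\omegat_n^\mp$ is also the normalization that matches the displayed formula, where the paper writes $\omegat_n^\pm$.
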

\begin{proof}
    Let $f^\pm_n\in \Hom\left(\F^\pm(\LL_n')\otimes \QQ_p/\ZZ_p, \QQ_p/\ZZ_p\right)$ be the element corresponding to $\omegat_n^\pm$ under the isomorphism of \cref{dpmn-generator}, and let $f^\pm\colonequals\varprojlim_n f_n^\pm$. Since the transition maps on both sides of the isomorphism are compatible, the claim follows.
\end{proof}

\begin{lemma} \label{Kob8.12}
    For every $n\ge0$, there are short exact sequences
    \begin{align*}
        0\longrightarrow \F(\LL)\otimes\QQ_p/\ZZ_p\xrightarrow{\mathrm{diag}} \left(\F^+(\LL'_n)\otimes\QQ_p/\ZZ_p\right) \oplus \left(\F^-(\LL'_n)\otimes\QQ_p/\ZZ_p\right) \longrightarrow \F(\LL'_n)\otimes\QQ_p/\ZZ_p\longrightarrow0 \\
        \intertext{and}
        0\longrightarrow \F(\KK)\otimes\QQ_p/\ZZ_p\xrightarrow{\mathrm{diag}} \left(\F^+(\KK_n)\otimes\QQ_p/\ZZ_p\right) \oplus \left(\F^-(\KK_n)\otimes\QQ_p/\ZZ_p\right) \longrightarrow \F(\KK_n)\otimes\QQ_p/\ZZ_p\longrightarrow0,
    \end{align*}
    where the leftmost nonzero arrows denote the diagonal embeddings.
\end{lemma}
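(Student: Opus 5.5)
This is the analogue of \cite[Theorem~8.12]{Kobayashi} and \cite[Proposition~2.14]{KimPark}, and I would follow Kobayashi's argument: show exactness at the left and middle directly from \cref{def:F-plus-minus}, and obtain surjectivity from a rank count via \cref{dpmn-generator}. I treat the sequence over $\LL'_n$; the one over $\KK_n$ is identical, using the Kim--Park points \cite[\S2]{KimPark} in place of the $d_n^\pm$. The maps are the diagonal $x\mapsto(x,x)$ on the left and $(x,y)\mapsto x-y$ on the right, so the composite is zero.

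\emph{Left and middle exactness.} The points of $\F(\LL)$ lie in both $\F^+(\LL'_n)$ and $\F^-(\LL'_n)$: their traces to $\LL'_{m+1}$ are multiples of themselves, hence lie in $\F(\LL)\subseteq\F(\LL'_m)$. The key claim is $\F^+(\LL'_n)\cap\F^-(\LL'_n)=\F(\LL)$. For $P$ in the intersection, the conditions for all $m$ from $-1$ to $n-1$ hold at once, and one descends step by step. The condition with $m=n-1$ says $\Tr_{\LL'_n/\LL'_n}P=P\in\F(\LL'_{n-1})$. Next, $P$ lies in $\F(\LL'_{n-1})$ and satisfies the conditions for $m<n-1$, and so on, until $m=-1$ gives $P\in\F(\LL'_{-1})=\F(\LL)$.

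To pass to the tensor products with $\QQ_p/\ZZ_p$, I use \cref{p-torsion-free}: all groups involved are torsion-free, so the exact sequence of abelian groups
\[0\to\F(\LL)\to\F^+(\LL'_n)\oplus\F^-(\LL'_n)\to\F(\LL'_n)\]
stays exact after tensoring, provided the cokernels of the inclusions $\F^\pm(\LL'_n)\hookrightarrow\F(\LL'_n)$ behave. This is the delicate point for the middle term. Tensoring with $\QQ_p/\ZZ_p$ is exact on torsion-free groups only modulo $\mathrm{Tor}$ of the cokernel. I would handle it by checking that $\F(\LL'_n)/\F^\pm(\LL'_n)$ is torsion-free, which should follow since the defining conditions are saturated: if $p^kP$ satisfies them, then so does $P$, because $\F(\LL'_m)$ is saturated in $\F(\LL'_{m+1})$ (no $p$-torsion in the quotient, again by \cref{p-torsion-free} and Galois invariance). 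With that, the sequence above is injective on the left and exact in the middle after tensoring.

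\emph{Surjectivity.} It then suffices to compare $\ZZ_p$-coranks, since a map between cofree $\ZZ_p$-modules with the right corank and finite cokernel that is divisible must be onto. By \cref{dpmn-generator}, $\F^\pm(\LL'_n)\otimes\QQ_p/\ZZ_p$ has $\OO_\KK$-corank $e\cdot\deg\omegat_n^\mp$ over $\OO_\KK[\Gal(\LL/\KK)]$-rank, that is, $e(1+\deg\omega_n^\pm-1)$ up to the convention on $\omega^\pm$. Meanwhile $\F(\LL)\otimes\QQ_p/\ZZ_p$ has corank $e$, and $\F(\LL'_n)\otimes\QQ_p/\ZZ_p$ has corank $[\LL'_n:\KK]=ep^n$ by \cref{ell-iso-max-ideal} and the logarithm. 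Since $\deg\omegat_n^++\deg\omegat_n^-+1=p^n$, the middle corank is $e(\deg\omegat_n^-+\deg\omegat_n^++2)$, which minus $e$ equals $ep^n$. So the image has full corank, and the cokernel is a quotient of a divisible group, hence divisible. Being finite, it is therefore zero.

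The main obstacle I expect is the degree bookkeeping in \cref{dpmn-generator} (whether it is $\omegat_n^\mp$ or $\omega_n^\pm$ that has the right degree), together with justifying exactness of $\otimes\QQ_p/\ZZ_p$ through the saturation argument.
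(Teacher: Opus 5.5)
Your outline matches the paper's: the integral equality $\F^+(\LL'_n)\cap\F^-(\LL'_n)=\F(\LL)$ by Kobayashi's descent with \cref{p-torsion-free}, then surjectivity from an $\OO_\KK$-corank count via \cref{dpmn-generator}. The step that fails is your justification for passing to $\otimes\Qp/\Zp$.

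Your argument does prove that $\F^+(\LL'_n)$ and $\F^-(\LL'_n)$ are each saturated in $\F(\LL'_n)$. That gives injectivity on the left, but not exactness in the middle. Put $Q\colonequals\F(\LL'_n)/\bigl(\F^+(\LL'_n)+\F^-(\LL'_n)\bigr)$. Split $0\to\F(\LL)\to\F^+(\LL'_n)\oplus\F^-(\LL'_n)\to\F(\LL'_n)\to Q\to0$ at the image $S=\F^+(\LL'_n)+\F^-(\LL'_n)$ and tensor with $\Qp/\Zp$; since $S$ and $\F(\LL'_n)$ are torsion-free, the middle homology of the tensored sequence is $\mathrm{Tor}_1^{\Zp}(Q,\Qp/\Zp)=Q[p^\infty]$. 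For example, $B=\Zp(1,0)$ and $C=\Zp(1,p)$ are saturated in $\Zp^2$ with $B\cap C=0$. Yet $\bigl((1,0)\otimes p^{-1},(1,p)\otimes p^{-1}\bigr)$ is a nonzero element of the kernel of $(B\otimes\Qp/\Zp)\oplus(C\otimes\Qp/\Zp)\to\Zp^2\otimes\Qp/\Zp$.

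Your corank count shows $Q$ is finite, so middle exactness is equivalent to the integral equality $\F^+(\LL'_n)+\F^-(\LL'_n)=\F(\LL'_n)$. That is the missing input. In \cite{Kobayashi} the corresponding integral surjectivity comes from his explicit points, which generate $\Ehat(\mm_n)$ as a Galois module. Here you would need something like: $d_n$ and $d_{n-1}$ generate $\F(\LL'_n)$ over $\OO_\KK[\Gal(\LL'_n/\KK)]$. Nothing in your proposal supplies this. The paper's proof does not either: it passes from the integral intersection to \eqref{eq:Kob8.12-intersection} without comment, which is exactly this step.

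The second sequence is not handled by substituting Kim--Park's points. For $e>2$, $\F$ is only defined over $\OO_\LL$, since its logarithm $\ell$ has coefficients in $\LL$. So the $\KK$-level groups must be read as $\Gal(\LL'_n/\KK'_n)$-invariants of the $\LL$-level ones, as in \cref{rk:base-change}. In the application these are groups of points of $\Ehat$, which is not isomorphic over $\OO_\KK$ to Kim--Park's Lubin--Tate group ($E$ has bad reduction over $\KK$). Kim--Park's trace-compatible points therefore say nothing about them. The paper instead takes its $\KK$-level corank input from \cref{dpmn-generator}. Cleaner still: $\Gal(\LL'_n/\KK'_n)$ has order $e$, prime to $p$, so applying the idempotent $\frac1e\sum_g g$ to the first sequence is exact, commutes with $\otimes\Qp/\Zp$, and yields the second sequence. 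This reduces the second sequence to the first, gap included.

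Two smaller points:
\begin{itemize}
\item In your descent, once $P\in\F(\LL'_m)$ the remaining trace conditions only give $p^k(\sigma P-P)=0$, so \cref{p-torsion-free} must be used at every step.
\item Since $\omegat_n^\mp\Lambda^{\OO_\KK}_n\cong\Lambda^{\OO_\KK}/\omega_n^\pm$, the $\OO_\KK$-corank of $\F^\pm(\LL'_n)\otimes\Qp/\Zp$ is $e\deg\omega_n^\pm=e(1+\deg\omegat_n^\pm)$, not $e\deg\omegat_n^\mp$. Your final count does use the correct value.
\end{itemize}
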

\begin{proof}
    We begin by proving the first short exact sequence.
    This is equivalent to the validity of the following two assertions:
    \begin{align}
        \left(\F^+(\LL'_n)\otimes\QQ_p/\ZZ_p\right) \cap \left(\F^-(\LL'_n)\otimes\QQ_p/\ZZ_p\right) & = \F(\LL)\otimes\QQ_p/\ZZ_p, \label{eq:Kob8.12-intersection} \\
        \left(\F^+(\LL'_n)\otimes\QQ_p/\ZZ_p\right) + \left(\F^-(\LL'_n)\otimes\QQ_p/\ZZ_p\right) & = \F(\LL'_n)\otimes\QQ_p/\ZZ_p. \label{eq:Kob8.12-sum}
    \end{align}
    The first assertion \eqref{eq:Kob8.12-intersection} is, in fact, true even before tensoring with $\QQ_p/\ZZ_p$. Indeed, we may repeat the first half of the proof of \cite[Proposition~8.12(ii)]{Kobayashi}. This shows that if $P\in \F^+(\LL'_n)\cap\F^-(\LL'_n)$ and $P\in\F(\LL'_m)$ for some $0\le m\le n$, then the definition of $\F^\pm$ shows that for all $\sigma\in\Gal(\LL'_m/\LL'_{m-1})$, we have
    \[p^{n-m-\frac{1+(-1)^{n-m}}{2}}(\sigma(P)-P)=0.\]
    By $p$-torsion freeness (\cref{p-torsion-free}), it follows that $P\in\F(\LL'_{m-1})$, and by induction on $m$, we obtain $P\in\F(\LL'_{-1})=\F(\LL)$, as desired.
    Now the second assertion \eqref{eq:Kob8.12-sum} follows from \cref{dpmn-generator} by comparing $\OO_\LL$-coranks. This finishes the proof of the first short exact sequence.

    The proof of the second exact sequence is similar. The validity of \eqref{eq:Kob8.12-intersection} with $\LL'_n$ replaced with $\KK_n$ once again follows by Kobayashi's argument. For the analogue of \eqref{eq:Kob8.12-sum}, we use the observation that it follows from \cref{dpmn-generator} that $\F^\pm(\KK_n)\otimes\Qp/\Zp$ has $\OO_\KK$-corank one.
\end{proof}

\begin{remark}
 \label{rk:base-change}
    By the same proof as \cite[Proposition~8.18]{Kobayashi}, the kernel of the map $P_n^\pm$ in \cref{prop-coln} is given by $H^1_\pm(\LL_n',\T)$. Thus, $\ker(\Col_\LL^\pm)=\varprojlim H^1_\pm(\LL_n',\tilde\T)$ is the orthogonal complement of $\F^\pm(\LL_\infty')\otimes\Qp/\Zp$ under the pairing
\[
\varprojlim H^1(\LL_n',\tilde\T)\times\varinjlim H^1(\LL_n',\F[p^\infty])\to\KK/\OO_\KK,
\]
where we identify $\F^\pm(\LL_\infty')\otimes\Qp/\Zp$ as a subgroup of $\varinjlim H^1(\LL_n',\F[p^\infty])=H^1(\LL_\infty',\F[p^\infty])$ under the Kummer map.

It can be checked directly that 
\[
\F^\pm(\KK'_n)=\F^\pm(\LL'_n)^{\Gal(\LL_n'/\KK_n')}
\]
since $\Gal(\LL_n'/\KK_n')\simeq \Gal(\LL_{n-1}'/\LL_{n-1}')$, and the trace map $\F(\KK_n')\to \F(\KK_{n-1}')$ restricted to $\F(\LL_n')$ agrees with the trace map $\F(\LL_n')\to \F(\LL_{n-1}')$.
Furthermore, as the degree of the extension $\LL_n'/\KK_n'$ is coprime to $p$, we have $H^1(\LL_n'/\KK_n',\F^\pm(\LL_n'))=0$. Therefore,
\[
\F^\pm(\KK'_n)\otimes\Qp/\Zp=\left(\F^\pm(\LL'_n)\otimes\Qp/\Zp\right)^{\Gal(\LL_n'/\KK_n')}, \quad H^1_\Iw(\KK'_\infty,\tilde\T)=H^1_\Iw(\LL'_\infty,\tilde\T)_{\Gal(\LL_\infty'/\KK_\infty')}.
\]
The natural commutative diagram
\[
\begin{tikzcd}
H^1_\Iw(\LL'_\infty,\tilde\T) \arrow[r, "\Col_\LL^\pm"] \arrow[d, ""'] & \Lambda^{\OO_\KK}[\Gal(\LL/\KK)] \arrow[d, ""] \\
H^1_\Iw(\KK'_\infty,\tilde\T) \arrow[r, "\Col_\KK^\pm"'] & \Lambda^{\OO_\KK}
\end{tikzcd}
\]
tells us that $\ker(\Col_\KK^\pm)=\ker(\Col_\LL^\pm)_{\Gal(\LL_\infty'/\KK_\infty')}$, which is equal to the orthogonal complement of $\left(\F^\pm(\LL_\infty')\otimes\Qp/\Zp\right)^{\Gal(\LL_\infty'/\KK_\infty')}=\F^\pm(\KK_\infty')\otimes\Qp/\Zp$ under the natural pairing
\[
H^1_\Iw(\KK'_\infty,\tilde\T)\times H^1(\KK'_\infty,\F[p^\infty])\to \KK/\OO_\KK.
\]
\end{remark}

\section{Formal groups associated with CM elliptic curves}\label{sec:CM-formal}
\subsection{Setup} \label{sec:setup} 
Let $E/\QQ$ be an elliptic curve, and let $p\ge 5$ be a fixed rational prime. Suppose that $E$ has potentially supersingular reduction at $p$ over $\QQ$.
Then the reduction type over $K$ is either good supersingular or bad additive. Following Serre--Tate \cite[\S2]{SerreTate} and Serre \cite[\S5.6]{SerreInventiones1972}, we define the following quantity measuring the failure of semistability:
\begin{equation}\label{eq:e}
 e\colonequals \begin{cases}
    1 & \text{if $E/\QQ$ has good supersingular reduction at $p$}, \\
    \displaystyle\frac{12}{\gcd(12, \ord_p(\Delta_E))} & \text{if $E/\QQ$ has bad additive reduction at $p$,}
\end{cases}   
\end{equation}
where $\Delta_E$ denotes the discriminant of $E$.
By definition, $e$ is a divisor of $12$, and more precisely, $e\in\{2,3,4,6\}$ \cite[\S5.6(a$_1$)]{SerreInventiones1972}.

We also have $e\mid p+1$: indeed, for $e=2$, this is clear, and for $e>2$, this is \cite[Lemma~2.1]{DelbourgoJNT}. Moreover, $E$ acquires good supersingular reduction over $\QQ_p(\pi_e)$, where $\pi_e$ is a fixed $e$-th root of $-p$, as explained in \cite[\S5.6(a$_1$)]{SerreInventiones1972} and \cite[pp.~51--53]{DelbourgoJNT}.
Note that the extension $\QQ(\pi_e)/\QQ$ is non-Galois when $e>2$, but the extension $\QQ(\mu_e,\pi_e)/\QQ$ is. We write $K\colonequals \QQ(\mu_e)$ if $e>2$ and $K\colonequals\QQ(\sqrt{-1})$ if $e=2$. Let 
\begin{equation} \label{eq:L-def}
    L\colonequals K(\pi_e)= \begin{cases} \QQ(\mu_e,\pi_e) & \text{if $e\in\{3,4,6\}$,} \\ \QQ(\sqrt{-1},\sqrt{-p}) & \text{if $e=2$.}\end{cases}
\end{equation} 
Then $p$ is unramified in $K/\QQ$, and the $p$-adic places in $K$ are totally tamely ramified in $L/K$. Also note that $L/\QQ$ is a dihedral Galois extension.

Suppose that $E$ has complex multiplication by the imaginary quadratic number field $K$.
Observe that as a consequence of the Shimura--Taniyama formula \cite[Lemme~5]{TateBourbaki}, $p$ is inert in $K$. Therefore there is a unique $p$-adic place in $K$ resp. $L$, and we denote the localisations by $\QQ_{p^2}$ resp. $\LL$.

\begin{lemma} \label{tor-ab}
    The extension $L(E_{\mathrm{tor}})/K$ is abelian.
\end{lemma}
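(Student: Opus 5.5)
The plan is to write $L(E_{\mathrm{tor}})$ as the compositum of $L$ and $K(E_{\mathrm{tor}})$ inside a fixed algebraic closure, and to show that each of these is abelian over $K$. A compositum of two abelian extensions of $K$ is abelian over $K$, because $\Gal(L(E_{\mathrm{tor}})/K)$ embeds into $\Gal(L/K)\times\Gal(K(E_{\mathrm{tor}})/K)$. So it suffices to treat the two pieces separately.

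\emph{Step 1: $L/K$ is abelian.} By \eqref{eq:L-def} we have $L=K(\pi_e)$ with $\pi_e^e=-p$, and $K$ contains $\mu_e$ in every case:
\begin{itemize}[label=--]
\item for $e=2$, $K=\QQ(\sqrt{-1})$ contains $\mu_2$;
\item for $e=4$, $K=\QQ(\mu_4)=\QQ(\sqrt{-1})$;
\item for $e\in\{3,6\}$, $K=\QQ(\mu_3)=\QQ(\mu_6)$.
\end{itemize}
Hence $L/K$ is a Kummer extension generated by an $e$-th root of $-p$. It is therefore cyclic, with Galois group embedding into $\mu_e$ via $\sigma\mapsto\sigma(\pi_e)/\pi_e$.

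\emph{Step 2: $K(E_{\mathrm{tor}})/K$ is abelian.} Let $\OO=\End(E)\subseteq\OO_K$ be the CM order. All endomorphisms of $E$ are defined over $K$, since $E$ is defined over a field contained in $K$ (here $\QQ$) and $K$ is the CM field (Silverman, Advanced Topics, II.2.2). Consequently, for every prime $\ell$ the action of $G_K$ on $V_\ell E=T_\ell E\otimes\QQ_\ell$ commutes with the action of $\OO\otimes\QQ_\ell=K\otimes\QQ_\ell$.

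The module $V_\ell E$ is free of rank one over $K\otimes\QQ_\ell$: it has $\QQ_\ell$-dimension $2$ and carries a faithful action of this rank-two algebra. The commutant of $K\otimes\QQ_\ell$ in $\End_{\QQ_\ell}(V_\ell E)$ is therefore $K\otimes\QQ_\ell$ itself, which is commutative. Thus the image of $G_K$ in each $\mathrm{Aut}(T_\ell E)$ is abelian.

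The action of $G_K$ on $E_{\mathrm{tor}}$ factors faithfully through $\Gal(K(E_{\mathrm{tor}})/K)$, and this group embeds into $\prod_\ell \mathrm{Aut}(T_\ell E)$. Its image lies in a product of abelian groups, so $\Gal(K(E_{\mathrm{tor}})/K)$ is abelian. Equivalently, this is the standard consequence of the main theorem of complex multiplication.

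\emph{Main point to check.} The only step needing care is that the endomorphisms are defined over $K$, rather than only over some extension. This is what makes the Galois action $\OO$-linear. It holds in the present setup because $E$ is defined over $\QQ$ and $K$ is the CM field; if one only assumes $E$ is defined over $K$, it is again the cited standard fact. Combining Steps 1 and 2 via the compositum argument gives the lemma.
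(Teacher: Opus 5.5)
Your proof is correct, but it takes a different route from the paper's. The paper uses Serre--Tate (Theorem~2 and Corollaries~2 and~3 of their paper) to place $L$ inside $K(E[N])$ for some $N\ge3$ prime to $p$. Hence $L(E_{\mathrm{tor}})=K(E_{\mathrm{tor}})$, and the main theorem of complex multiplication alone finishes the argument.

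You instead note that $\mu_e\subset K$ by the very definition of $K$, so $L/K$ is already a cyclic Kummer extension. You then combine this, via the compositum argument, with the standard $\ell$-adic proof that $K(E_{\mathrm{tor}})/K$ is abelian.

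Your argument is more elementary and self-contained. It uses no information about where $E$ acquires good reduction. It also avoids having to pass from Serre--Tate's criterion, which is a statement about inertia at $p$, to a global inclusion of number fields.

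The paper's route buys the stronger fact that $L$ lies in the torsion field of $E$. Information of this kind, in the form $L\subset K(E[p^\infty])$, is used later when setting up the Kummer pairing. However, the paper proves it there by a separate argument, and it is needed neither for the lemma as stated nor for deducing the factorisation $\psi=\varphi\circ N_{L/K}$.
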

\begin{proof}
    Indeed, $L$ is contained in a field $K(E[N])$ obtained from $K$ by adjoining points of order $N$ for some $N\ge3$ coprime to $p$: this follows from Theorem~2 and its Corollaries~2 and 3 in \cite{SerreTate}. In particular, $L(E_{\mathrm{tor}})$ is contained in $K(E_{\mathrm{tor}})$, which is abelian over $K$ by the main theorem of complex multiplication.
\end{proof}

Viewing $E$ as an elliptic curve over $L$ with complex multiplication by $K$, let $\psi$ denote the associated Hecke character of $L$. By \cite[Theorem~7.44]{ShimuraAutomorphic}, the abelianity of $L(E_{\mathrm{tor}})/K$ is equivalent to this factoring through $K$, that is, the existence of a Hecke character $\varphi$ of $K$ such that 
\begin{equation} \label{eq:psi-phi}
    \psi=\varphi\circ N_{L/K}.
\end{equation}

Working over the Hilbert class field $H$ of $K$, Sairaiji \cite{SairaijiRM}  showed that the formal group coming from formal completion and a formal group defined via $L$-factors have the same Honda type. Below we adapt some of these results to $L/K$: we will use relaxed Honda types as described in \S\ref{sec:relaxed-Honda}, and obtain an identification with the formal group constructed in \S\ref{sec:points}, which in turn will allow us to utilize plus/minus theory later on.

\begin{remark}
    We expect that the setup described above can be relaxed in various directions. For instance, we may consider elliptic curves not defined over $\QQ$ as follows. Let $K$ be a quadratic imaginary field in which $p$ is inert, and let $E/K$ an elliptic curve that has potentially supersingular reduction at the unique prime above $p$ in $K$. Define $e$ as above, depending on the reduction type of $E/K$ at the unique prime above $p$. Making the additional assumption that $e\mid p+1$, our results over $K$ become applicable to such elliptic curves with complex multiplication by an order in $\OO_K$. It is not clear whether having potentially supersingular reduction implies this divisibility for such elliptic curves: indeed, the proof of \cite[\S1.3]{delbourgoComp} breaks down in this case.
\end{remark}

\subsection{Cocycles}
Write $G\colonequals \Gal(L/K)$. For each $\sigma\in G$, fix an isogeny $\phi_\sigma: {}^\sigma E\to E$. We define a cocycle
\[c: G\times G\longrightarrow K^\times, \quad c(\sigma,\tau)\colonequals \phi_\sigma\circ\phi_\tau\circ\phi^{-1}_{\sigma\tau}.\]
\Cref{tor-ab} together with \cite[Proposition~2.6]{SairaijiRM} shows that $c(\sigma,\tau)=c(\tau,\sigma)$ for all $\sigma,\tau\in G$. By \cite[Proposition~2.4]{SairaijiRM}, the cocycle $c\in H^2(G,\overline{\QQ}^\times)$ is a coboundary. Therefore there exists a $1$-cocycle $\beta:G\to\overline{\QQ}^\times$ such that for all $\sigma,\tau\in G$,
\begin{equation} \label{eq:c-beta-cocycle}
    c(\sigma,\tau)=\beta(\sigma)\beta(\tau)\beta(\sigma\tau)^{-1}.
\end{equation}

Fix an invariant differential $\omega_E$ of $E$, and define
\[\alpha: G\longrightarrow L^\times, \quad \alpha(\sigma)\colonequals\frac{\phi_\sigma^*(\omega_E)}{\sigma\omega_E},\]
so that
\begin{equation} \label{eq:c-alpha-cocycle}
    c(\sigma,\tau)=\alpha(\sigma)\cdot {}^\sigma\alpha(\tau)\cdot\alpha(\sigma\tau)^{-1}
\end{equation}
for all $\sigma,\tau\in G$.

\subsection{Complex multiplication}
Recall that $E$ has complex multiplication by $\OO_K$. We fix an isomorphism $[-]: \OO_K\to\End(E)$ satisfying 
\begin{equation} \label{eq:normalisation}
    [x]^* \omega_E=x\omega_E
\end{equation}
for all $x\in\OO_K$.

Let $s\in\II_K$ be an idèle of $K$, and let $\sigma_s\colonequals[s,K]\in\Gal(K^\ab/K)$ be the Galois automorphism associated with $s$ by Artin reciprocity. It follows from the main theorem of complex multiplication that there is a diagram
\begin{equation} \label{eq:CM-diagram}
    \begin{tikzcd}
	& {K/\aa} & E \\
	K & {K/s^{-1}\aa} & {{}^\sigma E} \\
	K & {K/\aa} & E
	\arrow["\upsilon", from=1-2, to=1-3]
	\arrow[from=1-2, to=2-2]
	\arrow["{\sigma_s}", from=1-3, to=2-3]
	\arrow[from=2-1, to=2-2]
	\arrow["{\gamma_s}"', from=2-1, to=3-1]
	\arrow[from=2-2, to=2-3]
	\arrow[from=2-2, to=3-2]
	\arrow["{\phi_{\sigma_s}}", from=2-3, to=3-3]
	\arrow[from=3-1, to=3-2]
	\arrow["\upsilon"', from=3-2, to=3-3]
    \end{tikzcd}
\end{equation}
Here $\aa\subset\OO_K$ is an ideal, and $\gamma_s\in K$ is the unique element such that multiplication by $\gamma_s$ makes the lower left square commute. By \cite[Proposition~2.10]{SairaijiRM}, the Hecke character $\varphi$ can be expressed as follows:
\begin{equation} \label{eq:phi-explicit}
    \varphi(s)=\frac{\gamma_s s_\infty^{-1}}{\beta(\sigma_s)}
\end{equation}
for all $s\in\II_K$.

\begin{lemma} \label{s-is-a-norm}
Let $s$ be the id\`ele with $p$ at the unique $p$-adic place and $1$ elsewhere.     Then $s\in N_{L/K}(\II_L)$.
\end{lemma}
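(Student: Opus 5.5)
To show $s\in N_{L/K}(\II_L)$, I will work place by place. The norm group $N_{L/K}(\II_L)$ contains $\prod_v N_{L_w/K_v}(L_w^\times)$, taken over all places. Since $s_v=1$ at every $v\nmid p$, it suffices to show that the local component $s_p=p$ lies in $N_{\LL/\KK}(\LL^\times)$. Here $\KK=\QQ_{p^2}$ and $\LL=\KK(\pi_e)$, with $\pi_e^e=-p$, and there is a unique place above $p$ in both $K$ and $L$. Once this is established, the idèle of $L$ with component $x\in\LL^\times$ at the $p$-adic place (and $1$ elsewhere) has norm exactly $s$.

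The local computation is explicit. When $e>2$, $\KK$ contains $\mu_e$ (because $e\mid p+1$, as recalled in \S\ref{sec:setup}), so $\LL/\KK$ is a Kummer extension. The conjugates of $\pi_e$ over $\KK$ are then $\zeta\pi_e$ for $\zeta\in\mu_e$, and we get
\[
N_{\LL/\KK}(\pi_e)=\prod_{\zeta\in\mu_e}\zeta\pi_e=\Bigl(\prod_{\zeta\in\mu_e}\zeta\Bigr)\pi_e^e=(-1)^{e-1}(-p)=(-1)^e p .
\]
Equivalently, this is $(-1)^{e}$ times the constant term of the minimal polynomial $X^e+p$. If $e$ is even, this already gives $N_{\LL/\KK}(\pi_e)=p$. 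If $e$ is odd, i.e.\ $e=3$, we have $N(-\pi_e)=(-1)^e N(\pi_e)=p$ as well. When $e=2$, the extension is $\LL=\KK(\sqrt{-p})$, and the same formula gives $N(\sqrt{-p})=-(-p)=p$. In every case $p=N_{\LL/\KK}(\pm\pi_e)$.

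I do not expect any real obstacle; only two points need a check. First, $X^e+p$ must be irreducible over $\KK$ so that the norm formula applies. This holds because it is Eisenstein, as $p$ is a uniformizer of the unramified field $\KK$. Second, the local norm at the unique place above $p$ must embed into the idelic norm in the way used above, which is the standard componentwise description of $N_{L/K}$ on idèles. As a fallback, one could argue via local class field theory instead: since $\LL/\KK$ is totally ramified and abelian, its norm group contains a uniformizer. Here, however, the explicit element $\pm\pi_e$ makes this unnecessary.
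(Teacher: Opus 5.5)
Your argument is correct, and it is more direct than the paper's. Both proofs exhibit an $L$-id\`ele supported at the unique $p$-adic place whose norm is $s$. The difference is in how the local component is found.

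The paper does not compute $N_{\LL/\KK}(\pi_e)$ exactly. Instead it uses the tame Hilbert symbol $(-1,-p)=1$ over $\QQ_{p^2}(\mu_e)=\KK$ to produce $u\in\LL$ with $N_{\LL/\KK}(u)=-1$, and takes the local component $-u^{\frac{1+(-1)^e}{2}}\pi_e$. This auxiliary unit appears because the paper asserts $N_{\LL/\KK}(-\pi_e)=(-1)^e(-p)$. That formula drops the factor $\prod_{\zeta\in\mu_e}\zeta=(-1)^{e-1}$, which you do keep track of. In fact $N_{\LL/\KK}(-\pi_e)=p$ for every $e$, so for even $e$ the paper's element $-u\pi_e$ actually has norm $-p$. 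The lemma itself is unaffected, since $-1$ is a local norm, which is exactly what the Hilbert-symbol step shows.

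So the paper's route tolerates such sign slips, at the cost of extra machinery. Your Eisenstein/minimal-polynomial computation is elementary and uniform in $e$: the single element $-\pi_e$ works in all cases. In its minimal-polynomial form it does not even need $\mu_e\subset\KK$.
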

\begin{proof}
    Recall that if $\KK$ is a local field of characteristic zero containing $\mu_e$, and $a,b\in \KK$, then the Hilbert symbol $(a,b)$ is $1$ if and only if $a$ is a norm from $\LL=\KK(\sqrt[e]{b})$ to $\KK$. For $\KK=\QQ_{p^2}(\mu_e)$, we have $(-1,-p)=1$ by \cite[V\S3, Proposition~3.4]{NeukirchANT}. Therefore, there exists some $u\in\QQ_{p^2}(\mu_e, \sqrt[e]{-p})$ whose norm is $-1$.

    Let $\tilde s$ be the $L$-id\`ele with 
    \[-u^{\frac{1+(-1)^e}{2}}\pi_e\] 
    at the unique $p$-adic place and $1$ elsewhere. The norm of $\tilde s$ has $1$ at the non-$p$-adic places, and at the $p$-adic place, it is $p$ because $N_{\LL/\KK}(-\pi_e)=(-1)^e\cdot (-p)$. So $N_{L/K}(\tilde s)=s$, as desired.
\end{proof}
Since $L$ is an abelian extension of $K$, the kernel of the Artin map relative to $L/K$ is $K^\times N_{L/K}(\II_L)$, and therefore \cref{s-is-a-norm} implies that $\sigma_s|_L=\id$. It follows that $\phi_\sigma=\id$. Then, by definition, we have $\alpha(\sigma_s)=1$. Using \eqref{eq:c-alpha-cocycle}, this implies $c(\sigma_s,\tau)=1$ for all $\tau\in G$. Now, from \eqref{eq:c-beta-cocycle} it follows that $\beta(\sigma_s)=1$. 
Using the bottom left square of the diagram as well as the chosen normalisation \eqref{eq:normalisation}, we find that $\gamma_s=p$.
From \eqref{eq:psi-phi} and \eqref{eq:phi-explicit} we conclude that
\[\psi(\tilde s)=\varphi(s)=\gamma_s=p.\]

\subsection{Formal groups} \label{sec:formal-groups}
Let $\Ehat$ be the formal group of the elliptic curve $E$, obtained by taking a local parameter at the origin for a Weierstraß equation of $E$. Let $f$ be the logarithm of $\Ehat$. As in the proof of \cite[Proposition~3.2]{SairaijiRM}, the diagram \eqref{eq:CM-diagram} implies a congruence
\[f^{-1}(\alpha(\sigma_s) {}^{\sigma_s}f(X^q))\equiv f^{-1}(\gamma_sf(X)) \pmod{\pi\OO_\LL[[X]]},\]
for all $s\in\II_K$, where $q=p^e$ is the order of the residue field of $\LL$. By \cref{DemchenkoThm3}, $f$ has a Demchenko type, therefore, we can use \cref{Honda4.2} to deduce that
\[\alpha(\sigma_s) {}^{\sigma_s} f(X^q)\equiv \gamma_s f(X) \pmod{\pi\OO_\LL[[X]]}.\]
In particular, at the id\`ele $s$ with $p$ in the $p$-adic place and $1$ elsewhere, we find that $f$ has relaxed Honda type $X^2-p$ over $\OO_\LL$.

We also consider the function
\[\xi(s)\colonequals \sum_{\tau\in G} \frac{\alpha(\tau^{-1})\beta(\tau)}{c(\tau,\tau^{-1})}\sum_{[\aa,K]|_L=\tau}\frac{\varphi(\aa)}{\Norm(\aa)^s},\]
where $\aa$ runs through the ideals of $\OO_K$ coprime to bad primes of $E$ in $L$.
As explained in \cite[\S2.3]{SairaijiRM}, this is a linear combination of $L$-functions $L(\varphi\chi,s)$ where $\chi$ runs over characters of $G$.
Let $\G$ be the formal group attached to $\xi$, and let $g$ be its logarithm. By the same argument as in \cite[Proposition~3.5]{SairaijiRM} at the id\`ele $s$ considered above, we find that $g$ has relaxed Honda type $X^2-p$ over $\OO_\LL$.

\begin{proposition} \label{strong-iso-E-F}
    The formal groups $\Ehat$ and $\G$, and $\F$ (from \S\ref{sec:points}, with $\rho$ taken to be $p$) are strongly isomorphic over $\OO_\LL$.
\end{proposition}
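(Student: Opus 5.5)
The plan is to reduce everything to \cref{HondaThm2}. That theorem says two power series of the same relaxed Honda type over $\OO_\LL$ give strongly isomorphic formal groups. So it suffices to show that the logarithms $f$ of $\Ehat$, $g$ of $\G$, and $\ell$ of $\F$ (with $\rho=p$) all have relaxed Honda type $u=X^2-p$, for the same field $\LL=\QQ_{p^2}(\pi_e)$ and the same intermediate field $\LL'=\QQ_{p^2}$. Note that $u(0)=-p$ is indeed a uniformizer of $\LL'$. Strong isomorphism is an equivalence relation, since composites and inverses of isomorphisms $\equiv X \bmod X^2$ over $\OO_\LL$ are again such. Hence pairwise isomorphisms $\Ehat\simeq\F$ and $\G\simeq\F$ give the full statement.

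The order of steps is as follows. First, for $\F$: \cref{ell-Honda-type} gives relaxed Honda type $X^2-\rho$, and with $\rho=p$ this is $X^2-p$. I must check that the setup of \S\ref{sec:points} is legitimate here. The integer $e$ of \eqref{eq:e} satisfies $e\ge2$ and $e\mid p+1$ (since $E$ has additive reduction, as $K\subseteq L$ forces the non-good case), and $\LL=\QQ_{p^2}(\pi)$ with $\pi^e=-p$ coincides with the completion of $L$. Second, for $\Ehat$: \S\ref{sec:formal-groups} shows that $f$ has relaxed Honda type $X^2-p$ over $\OO_\LL$, via \cref{DemchenkoThm3}, \cref{Honda4.2}, and the computation $\gamma_s=p$, $\alpha(\sigma_s)=1$, $\sigma_s|_L=\id$ at the idèle of \cref{s-is-a-norm}. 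Third, for $\G$: the analogue of Sairaiji's Proposition~3.5 gives type $X^2-p$ for $g$. Finally, I will apply \cref{HondaThm2} to the pairs $(f,\ell)$ and $(g,\ell)$.

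The step I expect to need care is the normalisation of the Honda type. The congruence obtained for $f$ reads $\alpha(\sigma_s)\,{}^{\sigma_s}f(X^q)\equiv p f(X)$ with $q$ the residue field order. To match the convention $u*f=\sum u_n f(X^{q^n})$ with $u=X^2-p$, I have to check that the relevant Frobenius power is the one appearing as $X^{p^2}$ in \cref{ell-Honda-type}. The residue field of $\LL$ is $\FF_{p^2}$ because $\LL/\QQ_{p^2}$ is totally ramified, so $q=p^2$. In that case the relation $f(X^{q})\equiv pf(X)$ corresponds to $u=X-p$ in Honda's $q$-indexing, versus $X^2-p$ in $p$-indexing. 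I would fix one indexing convention and verify that all three congruences $\ell(X^{p^2})\equiv p\,\ell(X)$, $f(X^{p^2})\equiv pf(X)$ and $g(X^{p^2})\equiv pg(X)$ modulo $\pi\OO_\LL[[X]]$ are literally the same statement. They are, after noting ${}^{\sigma_s}f=f$, which holds because $\sigma_s$ acts trivially on $L$ and $f$ has coefficients in $L$.

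I would also confirm that the coefficients of each logarithm satisfy $f\equiv X \bmod X^2$. For $\Ehat$ this uses the normalised local parameter. For $\G$ it is the normalisation of $\xi$ (leading Dirichlet coefficient $1$), which I would check from the term $\aa=\OO_K$, $\tau=1$, where $\alpha(1)\beta(1)/c(1,1)=1$.
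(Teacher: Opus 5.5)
Your plan is exactly the paper's proof: all three logarithms share one relaxed Honda type, and then \cref{HondaThm2} applies. Your normalisation remark is also right: the residue field of $\LL$ is $\FF_{p^2}$, so the common congruence $h(X^{p^2})\equiv p\,h(X)\pmod{\pi\OO_\LL[[X]]}$ is relaxed type $X-p$ in the $q$-indexed convention of the definition.

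\textbf{The gap: \cref{HondaThm2} fails in its relaxed form.} The problem sits in Honda's induction. One writes $g^{-1}\circ f=\psi_{<r}+cX^r+O(X^{r+1})$ with $\psi_{<r}$ integral and uses \cref{Honda2.3} to get $u_0c\in\mathfrak p$. Honda concludes because $\mathfrak p=u_0\OO$. Here, however, $\mathfrak p=\pi\OO_\LL$ while $u_0=-p$ has $\LL$-valuation $e\ge2$, so one only gets $c\in p^{-1}\pi\OO_\LL$.

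This is not an artefact of the proof. Take $q=p^2$, $g(X)=\sum_{n\ge0}X^{q^n}/p^n$ and $f=g+\pi(g-X)$. Then $g(X^q)-pg(X)=-pX$ and $f(X^q)-pf(X)=-pX-\pi X^q$, so both have relaxed type $X-p$. Yet $g^{-1}\circ f=X+\frac{\pi}{p}X^q+\cdots\notin\OO_\LL[[X]]$.

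Conceptually, the relaxed type only records the reduction. Let $F$ be any formal group over $\OO_\LL$ with logarithm $f$ whose reduction satisfies $[p](X)=X^{p^2}$. Then $[p]_F(X)=X^{p^2}\mathop{+}_F\eta$ with $\eta\in\pi\OO_\LL[[X]]$. Since $e<p$, one has $f(\eta)\in\pi\OO_\LL[[X]]$, and hence $f(X^{p^2})\equiv pf(X)\pmod{\pi\OO_\LL[[X]]}$. So every lift of the reduction of $\Ehat$ has this relaxed type. Over the ramified ring $\OO_\LL$, such lifts are not all isomorphic: a generic deformation has endomorphism ring $\ZZ_p$, while the Lubin--Tate group has $\ZZ_{p^2}$.

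\textbf{What is missing} is an input that singles out $\Ehat$ among these lifts. Two possible routes:
\begin{itemize}
\item Use that $\Ehat$ is a height-one formal $\ZZ_{p^2}$-module (CM by $\OO_K$) whose Galois character on $G_\LL$ agrees with that of the comparison group, and conclude by Lubin--Tate theory or Tate's full faithfulness theorem.
\item Prove that the logarithms share a full Demchenko type, so that Demchenko's Lemma~2.1 applies; that lemma compares all components $f_{[i]}$, not just a congruence modulo $\pi$.
\end{itemize}

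\textbf{A separate problem with $\ell$.} Your check that each logarithm is $\equiv X \bmod X^2$ skips $\ell$, and there it fails. Any $\Omega_1$ with $\Omega_1^{e'}=-\omega_1(-X^{e'})$ has linear coefficient $c$ with $c^{e'}=p$, so the linear coefficient of $\ell$ is $\sum_{i\ge0}c^{2i}p^{-i}$.
\begin{itemize}
\item For $e=2$ this equals $1/(1-p)$, which is harmless after rescaling.
\item For $e\in\{3,4,6\}$ one has $e'\ge2$ and the series diverges.
\end{itemize}
So for $e\in\{3,4,6\}$, $\F$ and \cref{ell-Honda-type} must be repaired before any comparison with $\Ehat$ can be made.
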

\begin{proof}
    By the above discussion and \cref{ell-Honda-type}, all three formal groups share the same relaxed Honda type. The assertion now follows from \cref{HondaThm2}. 
\end{proof}

\section{\texorpdfstring{$p$}{p}-adic \texorpdfstring{$L$}{L}-functions and Iwasawa main conjectures}\label{sec:IMC}

\subsection{The Kummer pairing}
Retain the assumptions of \S\ref{sec:setup}, so that $E$ is an elliptic curve with complex multiplication by $K$, $L= K(\pi_e)$, and $e\mid(p+1)$. Let $\sK$ denote the field $K(E[p^\infty])$. Let $\Phi:G_K\to\OO_\KK^\times$ be the character that describes the action of $G_K$ on $T_p(E)$. By \cref{strong-iso-E-F}, the restriction of $\Phi$ to $G_{K_p}$ coincides with the Lubin--Tate character $\chi_\F$ from \S\ref{sec:points}.
\begin{lemma}
    We have $L\subset \sK$, and there is a unique $p$-adic prime in $\sK$. 
\end{lemma}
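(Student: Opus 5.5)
The plan is to compare the global extension $\sK/K$ with the local extension $M\colonequals K_p(E[p^\infty])=\KK(E[p^\infty])$ through the character $\Phi$. Since $E$ has complex multiplication by $\OO_K$ and $T_p(E)$ is free of rank one over $\OO_\KK$, the character $\Phi$ identifies $\Gal(\sK/K)$ with a subgroup of $\OO_\KK^\times$. In the same way, $\Phi|_{G_{\KK}}$ identifies $\Gal(M/\KK)$ with a subgroup of $\OO_\KK^\times$, and the decomposition group at any $p$-adic prime of $\sK$ is $\Gal(M/\KK)$. So both assertions follow if we show that $\Phi(G_\KK)=\OO_\KK^\times$ and that $\LL\subseteq M$. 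Indeed, the first gives $[M:\KK]=|\OO_\KK^\times|\ge[\sK:K]$, so the decomposition group is all of $\Gal(\sK/K)$ and the prime above $p$ is unique. The second, together with $L=K\LL\cap\overline{\QQ}$ locally and the uniqueness of the prime, gives $L\subseteq\sK$: the global field $L\cap\sK$ has completion containing $\LL$, hence it has degree $e$ over $K$.

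For the image computation we work first over $\LL$. By \cref{strong-iso-E-F}, $\Phi|_{G_\LL}=\chi_\F$, and $\F$ is a height-two Lubin--Tate group with parameter $p$ and endomorphisms $\OO_\KK$ (\cref{ell-rho-p}). Hence $\Phi(G_\LL)$ is open in $\OO_\KK^\times$ with full pro-$p$ part $1+p\OO_\KK$. Its tame part is controlled by ramification. As in the proof of \cref{p-torsion-free}, adjoining a nonzero $p$-torsion point of $\F$ produces ramification index $p^2-1$ over $\Qp$. Since $e(\LL/\Qp)=e$ divides $p^2-1$ and $\LL(\F[p])/\LL$ is tame, $\LL(\F[p])/\LL$ is totally ramified of degree $(p^2-1)/e$. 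So $[\OO_\KK^\times:\Phi(G_\LL)]=e$. On the other hand, $M$ contains a $p$-torsion point of $\Ehat$ and therefore also has ramification index divisible by $p^2-1$ over $\Qp$. Since $M/\KK$ is abelian with group inside $\OO_\KK^\times$, whose prime-to-$p$ part is cyclic of order $p^2-1$, this forces $\Phi(G_\KK)=\OO_\KK^\times$.

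Now $\Gal(M\LL/\LL)\simeq\Gal(M/M\cap\LL)$, and $\Phi$ is injective on $\Gal(M/\KK)$. So $[M:M\cap\LL]=|\Phi(G_\LL)|=|\OO_\KK^\times|/e=[M:\KK]/e$, which gives $[M\cap\LL:\KK]=e=[\LL:\KK]$ and hence $\LL\subseteq M$. Alternatively, one can note that the inertia subgroup of $\Gal(M/\KK)$ corresponding to $\ker(\Phi|_{G_\LL})$ cuts out exactly the tame extension of degree $e$ over which $E$ acquires good reduction, namely $\LL$.

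The main obstacle I expect is the ramification input: that a $p$-torsion point of $\Ehat$, equivalently of $\F$, generates ramification of index divisible by $p^2-1$ over $\Qp$, which is what pins down the tame part of both images. I would take this from the Lubin--Tate description, as in the proofs cited for \cref{p-torsion-free} (Kobayashi, Prop.~8.7). The rest is bookkeeping of indices in $\OO_\KK^\times$.
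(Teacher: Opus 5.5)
Your route is different from the paper's. The paper argues globally: it applies Kummer theory to the character $G_K\to\mu_e$ obtained from $\Phi \bmod p$, and gets uniqueness of the prime from the Pollack--Rubin inertia argument over $L$. Your local approach has a genuine gap at the local-to-global step. From $\LL\subseteq M$ you cannot conclude that $L\cap\sK$ has completion $\LL$, because completion does not commute with intersection. Infinitely many distinct cyclic degree-$e$ extensions $K(\sqrt[e]{-pa})$, with $a\in K^\times\cap(\KK^\times)^e$, have the same completion $\LL$. Which of them, if any, lies in $\sK$ is decided by the ramification of $\Phi$ at the bad primes of $E$ away from $p$, and nothing at $p$ sees this. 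It genuinely goes wrong. Take $E\colon y^2=x^3+25$, $K=\QQ(\sqrt{-3})$, $p=5$, so that $e=3$ and $L=K(\sqrt[3]{5})$. Since $\Gal(\sK/K)$ embeds in $\OO_\KK^\times\simeq\mu_{24}\times(1+5\OO_\KK)$, the field $\sK$ contains at most one cyclic cubic extension of $K$. But $E$ has additive reduction at $2$ with $v_2(\Delta)=4$, so it acquires good reduction only after ramification of index divisible by $3$. Hence the inertia image at $2$ contains $\mu_3$, which survives in the cubic quotient because $9\nmid 24$. So that cubic extension is ramified at $2$, while $L/K$ is not, and therefore $L\not\subseteq\sK$. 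The first assertion thus needs an extra hypothesis, for instance that the $\mu_e$-part of $\Phi \bmod p$ is unramified outside $p$. The paper's step identifying the Kummer generator $u$ with $-p$ up to sign and $e$-th powers needs the same input.

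Your proof that $\Phi(G_\KK)=\OO_\KK^\times$ is also not valid. The valuation $1/(p^2-1)$ of a nonzero $p$-torsion point refers to the parameter of a model with good reduction over $\OO_\LL$. That parameter differs from an $M$-rational coordinate by a power of $\pi$, so you only learn that $e(M\LL/\Qp)$ is divisible by $p^2-1$. In fact local class field theory gives, up to normalisation, $\Phi\circ\mathrm{rec}_\KK(v)=v^{-1}\eta(v)$ on $\OO_\KK^\times$. Here $\eta=\kappa^{a(p^2-1)/e}$ with $\gcd(a,e)=1$ is the order-$e$ character recording the additive reduction, and $\kappa$ is the Teichm\"uller character. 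The tame part of $\Phi(I_\KK)$ is then generated by $\kappa^{a(p^2-1)/e-1}$. For $p=5$, $e=3$, $a=2$ this has index $3$ in $\mu_{24}$, which happens for one of the two curves $y^2=x^3+25$ and $y^2=x^3+625$. So $e(M/\Qp)$ need not be divisible by $p^2-1$, and your argument does not establish the uniqueness of the $p$-adic prime either. Similarly, the valuation count only gives a lower bound for the ramification index of $\LL(\F[p])/\LL$. The equality you use requires $\Phi(I_\LL)=N_{\LL/\KK}(\OO_\LL^\times)$, and being totally ramified is a further unproved claim.
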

\begin{proof}
   Let $\overline\Phi:G_{K}\to\FF_{p^2}^\times$ be the reduction of $ \Phi$ modulo~$p$. Let $\chi:G_{ K}\to\mu_e$ be the composition of $\overline\Phi$ with the natural projection $\FF_{p^2}^\times\twoheadrightarrow\mu_e$, where we use $e\mid(p+1)$. Then $\chi$ factors through $K(E[p])$ since $\overline \Phi$ does. Therefore, the fixed field $\overline{K}^{\ker \chi}$ of $\chi$ is a degree $e$ cyclic extension of $K$ contained in $\sK$. By Kummer theory, we have $\overline { K}^{\ker \chi}=K(\sqrt[e]{u})$ for some $u\in  K^\times$. It follows from ramification theory that $u$ has $p$-adic valuation 1 in $ K$, and hence agrees (possibly up to sign and $e$-th powers) with $-p$. Hence $\overline { K}^{\ker \chi}=L$, and in particular, $L\subset\sK$, as claimed.

    Now we may use the argument employed by Pollack and Rubin \cite[p.~450]{PollackRubin} for the Galois representation $\Phi|_{G_L}$. Using the description of the formal group in \S\ref{sec:formal-groups}, this shows that $\Phi|_{G_L}$ is surjective on the inertia subgroup, so there must be a unique prime above $\sqrt[e]{-p}$ in $\sK$.
\end{proof}

Let $K_\infty\subset\sK$ denote the $\Zp^2$-extension of $K$.
Further, define $\Lambda(K_\infty)$ as the 2-variable Iwasawa algebra $\Zp[[\Gal( K_\infty/K)]]$, and write $\Lambda^{\OO_\KK}( K_\infty)=\Lambda( K_\infty)\otimes_{\Zp}\OO_\KK$. 
Let $ K_\cyc$ be the cyclotomic $\Zp$-extension of $ K$. We define the Iwasawa algebras $\Lambda( K_\cyc)$ and $\Lambda^{\OO_\KK}( K_\cyc)$ similarly. 

 Given an intermediate extension $ K\subset F\subset \sK$ and a $\Lambda(\sK)$-module $Y$, we define
\[
Y(\Phi^{-1})\colonequals Y\otimes \Hom_{\OO_{ K}}(E[p^\infty],\KK/\OO_\KK) \, \text{ and } \, Y_F^{ \Phi}\colonequals Y(\Phi^{-1})/\langle \sigma-1:\sigma\in\Gal(\sK/F)\rangle.
\]

For a finite abelian extension $F$ of $ K$, we define $U_F$ as the pro-$p$ part of the local units of $(\OO_F\otimes\Zp)^\times$.  Define
\[
 \cU=\varprojlim U_F,
\]
where the inverse limits are taken over finite extensions $F$ of $ K$ inside $\sK$ and the connecting maps are norm maps.

By an abuse of notation, we write $p$ for the unique prime of $\sK$ lying above $p$. Let 
\[
\langle\sim,\sim\rangle:\left(E(\sK{}_{,p})\otimes\Qp/\Zp\right)\times \cU(\Phi^{-1})\longrightarrow \KK/\OO_\KK
\]
be the $\OO_\KK$-linear Kummer pairing defined as in \cite[(3)]{PollackRubin}. Furthermore, as in Proposition~4.1 of \textit{op.~cit.}, we have
\begin{equation}\label{eq:iso-local}
\cU_{ K_\cyc}^\Phi\cong\Hom_\OO(E( K_{\cyc,p})\otimes\Qp/\Zp,\KK/\OO_\KK).    
\end{equation}

Note that the Kummer map gives $E( K_{\cyc,p})\otimes\Qp/\Zp=H^1(K_{\cyc,p},E[p^\infty])$. Taking limits in \eqref{eq:KP-pairing} and identifying the formal group of $E$ at $p$ with $\F$ by \cref{strong-iso-E-F} gives a perfect pairing
\[
H^1_\Iw(K_{\cyc,p},\tilde\T)\times H^1(K_{\cyc,p},E[p^\infty])\to \KK/\OO_\KK.
\]
Thus, comparing this with \eqref{eq:iso-local} gives an isomorphism
\begin{equation}
    \cU_{ K_\cyc}^\Phi\cong H^1_\Iw(K_{\cyc,p},\tilde\T).\label{eq:U-HIw}
\end{equation}
From now on, we assume that \eqref{eq:e-ne-p+1} holds (with $e$ given by \eqref{eq:e}). Recall from \cref{rk:ep+1} that this only excludes the case where $(p,e)=(5,6)$.

 Through \eqref{eq:U-HIw}, we can regard the Coleman maps $\Col_\KK^\pm$ as maps on $\cU_{ K_\cyc}^\Phi$.

\subsection{Selmer groups}
\label{sec:Sel}
Let $\sM$ (resp. $\sL$) denote the maximal abelian $p$-extension of $\tsK$ that is unramified outside $p$ (resp. unramified everywhere), and let $\cX\colonequals\Gal(\sM/\tsK)$ and $\cA\colonequals\Gal(\sL/\tsK)$.

\begin{theorem}
    We have the following isomorphism
    \[
    \Sel_{p^\infty}(E/ K_\cyc)\cong\Hom_{\OO_{ K}}(\cX_{ K_\cyc}^{\Phi},\KK/\OO_\KK).
    \]
\end{theorem}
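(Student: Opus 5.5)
This is the analogue of \cite[Theorem~2.2 / Proposition~1.1]{PollackRubin} (going back to Coates and Rubin), and I would follow their argument, using only the two facts established above: $\sK=K(E[p^\infty])$ contains $L$, and it has a unique $p$-adic prime.

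\textbf{Step 1: Selmer group over $K_\cyc$ as invariants of the Selmer group over $\sK$.} Consider the restriction map
\[
\Sel_{p^\infty}(E/K_\cyc)\longrightarrow \Sel_{p^\infty}(E/\sK)^{\Gal(\sK/K_\cyc)}.
\]
Its kernel and cokernel are controlled by $H^i(\Gal(\sK/K_\cyc),E[p^\infty])$ for $i=1,2$, together with the analogous local groups. The group $\Gal(\sK/K_\cyc)$ is isomorphic to a finite group of order prime to $p$ times $\Zp$, and it acts on $E[p^\infty]$ through $\Phi$. The finite prime-to-$p$ part, which contains the $\mu_{p^2-1}$-part of $\Phi$, has no nonzero invariants on $E[p^\infty]$ and has vanishing higher cohomology. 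Hence these cohomology groups vanish, as in Pollack--Rubin, and restriction is an isomorphism.

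\textbf{Step 2: Selmer group over $\sK$ as homomorphisms from $\cX$.} Over $\sK$, the module $E[p^\infty]$ is a trivial Galois module, so $H^1(\sK,E[p^\infty])=\Hom(G_{\sK},E[p^\infty])$. I would show that the Selmer condition forces the homomorphisms to be unramified outside $p$.
\begin{itemize}
\item At primes $v\nmid p$, the local image of $E(\sK{}_{,v})\otimes\Qp/\Zp$ is zero.
\item $E$ acquires good reduction over $L\subset\sK$ at all bad primes, and $\sK$ contains all $p$-power torsion. Together these ensure that the local condition at such $v$ is exactly being unramified.
\item At the unique prime above $p$, the formal group is $\F$ by \cref{strong-iso-E-F}. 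Over the deeply ramified field $\sK{}_{,p}$ (which contains the Lubin--Tate tower of $\F$), the image of $\F(\mm)\otimes\Qp/\Zp$ is all of $H^1(\sK{}_{,p},E[p^\infty])$, by Coates--Greenberg. So there is no condition at $p$.
\end{itemize}
Consequently
\[
\Sel_{p^\infty}(E/\sK)=\Hom(\cX,E[p^\infty])=\Hom_{\OO_K}\bigl(\cX\otimes \Hom_{\OO_K}(E[p^\infty],\KK/\OO_\KK),\KK/\OO_\KK\bigr),
\]
using that $E[p^\infty]\simeq\KK/\OO_\KK$ as an $\OO_K\otimes\Zp=\OO_\KK$-module.

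\textbf{Step 3: Taking invariants.} Taking $\Gal(\sK/K_\cyc)$-invariants turns the Galois-equivariant homomorphisms into homomorphisms from the coinvariants of $\cX(\Phi^{-1})$, which is exactly $\cX^{\Phi}_{K_\cyc}$. Combining this with Step 1 gives the stated isomorphism.

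\textbf{Main obstacle.} I expect the hardest part to be the local analysis at $p$ and at the bad primes. The issue is that $E$ has bad (additive) reduction over $K$ itself, so one must carefully use that $L\subset\sK$ and invoke the deeply ramified argument over $\sK{}_{,p}\supset\LL$. The vanishing of the cohomology in Step 1 also needs $\Phi$ to be nontrivial modulo $p$ on the prime-to-$p$ part of $\Gal(\sK/K_\cyc)$, which follows from the surjectivity of $\Phi$ on inertia established above.
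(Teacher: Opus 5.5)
Your outline matches the paper's. Over $\sK$ the Galois action on $E[p^\infty]$ is trivial, the condition at $v\nmid p$ becomes ``unramified'', and the condition at $p$ is vacuous, so $\Sel_{p^\infty}(E/\sK)=\Hom(\cX,E[p^\infty])$. One then descends to $K_\cyc$ using the prime-to-$p$ part of $\Gal(\sK/K_\cyc)$; the paper delegates this to the proof of Rubin's Proposition~1.2, and your Step~1 spells it out.

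The one real difference is the input at $p$:
\begin{itemize}
\item You use Coates--Greenberg: over a deeply ramified extension of a field of good reduction (here $\LL\subset\sK{}_{,p}$), the Kummer image of a height-two formal group is all of $H^1$.
\item The paper identifies the Pontryagin dual of $H^1(\sK{}_{,p},E[p^\infty])/H^1_\f$ with $\varprojlim_{\mathscr K}\Ehat(\mathscr K)$, and kills it with Rubin's lemma on universal norms of height-two formal groups.
\end{itemize}
These are Tate-dual formulations of the same fact. Yours is a more general black box; the paper's is tailored to the torsion tower and fits its Bloch--Kato set-up.

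Two things need repair.

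\textbf{Good reduction over $L$.} $E$ does not in general acquire good reduction over $L$ at bad primes away from $p$: $L=K(\sqrt[e]{-p})$ is unramified at primes not dividing $6p$. The claim is not needed, though. For $v\nmid p$, $G_{\sK}$ acts trivially on $E[p^\infty]$, and $\sK{}_{,v}\supset\mu_{p^\infty}$ has no nontrivial unramified $p$-extension. Hence the Kummer condition (locally zero, since $E(\sK{}_{,v})\otimes\Qp/\Zp=0$) and the unramified condition coincide there.

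\textbf{The local condition at $p$ over $K_{\cyc,p}$.} In Step~1, the vanishing of $H^i(\Gal(\sK/K_\cyc),E[p^\infty])$ and of the local groups $H^1(D_v,E[p^\infty])$ handles the global comparison and the primes $v\nmid p$. It does not give surjectivity onto $\Sel_{p^\infty}(E/\sK)^{\Gal(\sK/K_\cyc)}$ at $p$. Because the condition at $p$ over $\sK$ is vacuous, a lifted class is Selmer over $K_\cyc$ only if the local condition is already vacuous over $K_{\cyc,p}$, i.e.\ $E(K_{\cyc,p})\otimes\Qp/\Zp=H^1(K_{\cyc,p},E[p^\infty])$. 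Since $E$ has additive reduction over $\KK$, Coates--Greenberg does not apply directly over $K_{\cyc,p}$. Instead, apply it over $\LL'_\infty=\LL K_{\cyc,p}$, then take invariants under $\Gal(\LL'_\infty/K_{\cyc,p})$, which has order $e$ prime to $p$. The paper records this equality just before the theorem.
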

\begin{proof}
As the action of $G_{\tsK}$ on $E[p^\infty]$ is trivial, we have
\[
H^1(\tsK,E[p^\infty])=\Hom(G_{\tsK},E[p^\infty]).
\]
Further, it follows from the definition of $\cX$ that
\[
\ker\left(H^1(\tsK,E[p^\infty])\to \prod_{v\nmid p}\frac{H^1(\tsK{}_v,E[p^\infty])}{H^1_\mathrm{ur}(\tsK{}_v,E[p^\infty])}\right)=\Hom(\cX,E[p^\infty]).
\]

The Selmer group $\Sel_{p^\infty}(E/ K_\cyc)$ is given by
\[
\ker\left(H^1(\tsK,E[p^\infty])\longrightarrow \prod\frac{H^1(\tsK{}_v,E[p^\infty])}{H^1_\f(\tsK{}_v,E[p^\infty])}\right),
\]
where the product runs over all primes of $\tsK$ and $H^1_\f(\tsK{}_v,E[p^\infty])$ denotes the Bloch--Kato subgroup. When $v\nmid p$, we have
\[
H^1_\mathrm{f}(\tsK{}_v,E[p^\infty])=H^1_\mathrm{ur}(\tsK{}_v,E[p^\infty])
\]
since the action of $G_{\tsK}$ on $E[p^\infty]$ is unramified. For $v\mid p$,
\[
\left(\frac{H^1(\tsK{}_v,E[p^\infty])}{H^1_\mathrm{f}(\tsK{}_v,E[p^\infty])}\right)^\vee\cong \varprojlim_{{\mathscr{K}}} H^1_\mathrm{f}({\mathscr{K}},T_p(E))=\varprojlim_{{\mathscr K}}\Ehat({\mathscr{K}}),
\]
where the inverse limits run over finite subextensions ${\mathscr K}$ of $\tsK{}_v/ K_p$ and the connecting maps are corestrictions and norm maps, respectively.
As $\Ehat$ has height $2$ by \cref{strong-iso-E-F}, it follows from \cite[Lemma~2.2]{rubin85} that these inverse limits are in fact zero. Hence, we deduce
\[
\Sel_{p^\infty}(E/ K_\cyc)=\Hom(\cX,E[p^\infty]).
\]
The proof of \cite[Proposition~1.2]{rubin85} can then be adapted to our setting to conclude the proof.
\end{proof}

In order to access the plus/minus theory, from now on, we assume that the integer $e$ given by \eqref{eq:e} is not equal to $p+1$ as in \eqref{eq:e-ne-p+1}.
\begin{definition}\label{def:pmSel}
After identifying $\F$ with $\Ehat$ and $K_p$ with $\KK$ through \cref{strong-iso-E-F}, we write
\[
H^1_\pm(K_{\cyc,p},E[p^\infty]):=\F^\pm(K_{\cyc,p})\otimes\Qp/\Zp\subset H^1(K_{\cyc,p},E[p^\infty]),
\]
and
\[
\Sel_{p^\infty}^\pm(E/ K_{\cyc}) \colonequals \ker\left(\Sel_{p^\infty}(E/ K_{\cyc})\longrightarrow \frac{H^1( K_{\cyc,p},E[p^\infty])}{H^1_\pm( K_{\cyc,p},E[p^\infty])}\right).
\]    
\end{definition}
It follows from Corollary~\ref{cor:cofree} that the $\Lambda^{\OO_\KK}( K_\cyc)$-modules 
$\Hom\left(H^1_\pm(K_{\cyc,p},E[p^\infty]), \QQ_p/\ZZ_p\right)$ are free of rank one.

Let $\widetilde\cV^\pm\subset \cU_{ K_\cyc}^{\Phi}$ be the subgroup corresponding to 
\[
\Hom_{\OO_{ K}}\left(\frac{H^1(K_{\cyc,p},E[p^\infty])}{H^1_\pm(K_{\cyc,p},E[p^\infty])},\KK/\OO_\KK\right)
\]
under \eqref{eq:iso-local}. Through \eqref{eq:U-HIw}, we can identify $\widetilde\cV^\pm$ with the kernel of $\Col_\KK^\pm$.

Let $\alpha:\cU\to\cX$ be the Artin map of global class field theory. Then
\begin{equation} \label{eq:PR4.3}
\Sel_{p^\infty}^\pm(E/ K_{\cyc,p})=\Hom_{\OO_{ K}}\left(\cU_{ K_\cyc}^{\Phi}\big/\alpha(\widetilde\cV^\pm),\KK/\OO_\KK\right)
\end{equation}
as in \cite[Theorem~4.3]{PollackRubin}.

\begin{proposition} \label{PR4.4}
    The following assertions hold.
    \begin{enumerate}
    \item[(i)] $\mathcal{U}_{ K_{\infty}}^{\Phi}$ is free of rank two over $\Lambda^{\mathcal{O}_\KK}( K_{\infty})$ and $\mathcal{U}_{ K_\cyc}^{\Phi}$ is free of rank two over $\Lambda^{\mathcal{O}_\KK}( K_\cyc)$.
    \item[(ii)] $\widetilde{\mathcal{V}}^{\pm}$ and $\mathcal{U}_{ K_\cyc}^{\Phi} / \widetilde{\mathcal{V}}^{\pm}$ are free of rank one over $\Lambda^{\mathcal{O}_\KK}$.
    \item[(iii)] There is a (noncanonical) submodule $\cV^{\pm} \subset \cU_{ K_{\infty}}^{\Phi}$ whose image in $\cU_{ K_\cyc}^{\Phi}$ is $\widetilde{\cV}^{\pm}$ and such that $\cV^{\pm}$ and $\cU_{ K_{\infty}}^{\Phi} / \cV^{\pm}$ are free of rank one over $\Lambda^{\mathcal{O}_\KK}( K_{\infty})$.
\end{enumerate}
\end{proposition}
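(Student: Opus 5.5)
This is the analogue of \cite[Proposition~4.4]{PollackRubin}, and the plan is to follow their argument, using the local theory of \S\ref{sec:points} in place of Kobayashi's points.

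\emph{Part (i).} Since there is a unique $p$-adic prime in $\sK$, $\cU$ is the inverse limit of local units in the tower $\sK{}_{,p}/K_p$. This tower is the Lubin--Tate tower of the height-two formal group $\F$ (by \cref{strong-iso-E-F}), so \cite[Theorem~5.1]{rubin87} (Wintenberger's theorem, as used by Pollack--Rubin) shows that $\cU$ has no $\Zp$-torsion issues and $\cU(\Phi^{-1})$ has the expected rank. More concretely, I would:
\begin{enumerate}[label=(\alph*)]
\item identify $\cU^\Phi_{K_\infty}$ with $H^1_\Iw$ of the $\Zp^2$-extension with coefficients $\tilde\T$, as in \eqref{eq:U-HIw};
\item compute its structure via local Euler characteristic: $\tilde\T$ has $\OO_\KK$-rank one and $[K_p:\Qp]=2$, so the rank is $2$;
\item deduce freeness from the vanishing of $H^0$ and $H^2$ torsion. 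Here $H^2$ is dual to $H^0(\,\cdot\,,\F[p^\infty])$, which vanishes over the $\Zp^2$-extension because $\F[p]$ is not rational there: this is \cref{p-torsion-free}, or the ramification argument behind it.
\end{enumerate}
Freeness over the regular local ring $\Lambda^{\OO_\KK}(K_\infty)$ then follows as in Pollack--Rubin, using that the module is reflexive and its coinvariants are free. The cyclotomic statement follows by taking $\Gal(K_\infty/K_\cyc)$-coinvariants: coinvariants of a free module are free of the same rank.

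\emph{Part (ii).} The quotient $\cU^\Phi_{K_\cyc}/\widetilde\cV^\pm$ is identified via $\Col_\KK^\pm$ with its image. $\Col_\KK^\pm$ is surjective onto $\Lambda^{\OO_\KK}$, which is the result after \cref{prop-coln}, descended from $\LL$ to $\KK$ as in \cref{rk:base-change}. Hence the quotient is free of rank one. Then $\widetilde\cV^\pm$ is the kernel of a surjection from a free rank-two module onto a free rank-one module. The sequence splits, so the kernel is projective, hence free of rank one over the local ring $\Lambda^{\OO_\KK}$.

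\emph{Part (iii).} I would choose a basis $u_1,u_2$ of $\cU^\Phi_{K_\cyc}$ with $u_1$ generating $\widetilde\cV^\pm$ and $u_2$ mapping to a generator of the quotient. By (i), $\cU^\Phi_{K_\infty}$ is free of rank two with coinvariants $\cU^\Phi_{K_\cyc}$. Nakayama's lemma over the local ring $\Lambda^{\OO_\KK}(K_\infty)$ lets me lift $u_1,u_2$ to a basis $\tilde u_1,\tilde u_2$ of $\cU^\Phi_{K_\infty}$. Setting $\cV^\pm=\Lambda^{\OO_\KK}(K_\infty)\tilde u_1$ gives the required properties.

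The main obstacle I expect is part (i). Specifically, one must check that the relevant local Iwasawa cohomology over the $\Zp^2$-extension is free and not merely torsion-free. This requires the absence of $p$-torsion of $\F$ in the relevant local fields. The ramification in $\LL$ (with $e\neq p+1$) must be used carefully here, since the field $\sK{}_{,p}$ itself contains all of $\F[p^\infty]$. I would therefore work with $\Phi^{-1}$-twisted coinvariants, where the twist kills this torsion, mirroring \cite[p.~450]{PollackRubin}.
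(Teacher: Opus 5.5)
Your treatment of (ii) and (iii) is exactly the Pollack--Rubin argument the paper invokes. Surjectivity of $\Col_\KK^\pm$ together with $\widetilde\cV^\pm=\ker\Col_\KK^\pm$ is the same input as \cref{cor:cofree}, which the paper derives from that surjectivity. The splitting and the Nakayama lifting are verbatim.

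The divergence, and the gap, is in (i). The paper simply imports \cite[Proposition~4.4(i)]{PollackRubin}, which rests on Wintenberger's theorem that $\cU$ itself is \emph{free} of rank two over $\Zp[[\Gal(\sK/K)]]$. Both statements of (i) are then read off by passing to $\Phi$-parts and coinvariants. You use Wintenberger only for the rank and try to prove freeness yourself, and that step does not close. Vanishing of $H^0(K_{\infty,p},\tilde\T)$ and of $H^2_\Iw(K_{\infty,p},\tilde\T)\cong H^0(K_{\infty,p},E[p^\infty])^\vee$ does not give freeness over a regular local ring of dimension $\ge2$. Your fallback, ``reflexive with free coinvariants'', is circular: you then obtain freeness of the coinvariants $\cU^\Phi_{K_\cyc}$ from freeness over $K_\infty$. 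Also, \cref{p-torsion-free} concerns $\LL_\infty$, not $K_{\infty,p}$.

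The missing ingredient is the correct freeness criterion. Put $\Lambda=\Lambda^{\OO_\KK}(K_\infty)$. Derived base change $R\Gamma_\Iw(K_{\infty,p},\tilde\T)\otimes^{\mathbb L}_\Lambda\FF_{p^2}\simeq R\Gamma(\KK,\tilde\T/p)$ shows that, once $H^2_\Iw=0$, two things hold:
\begin{enumerate}
\item $\mathrm{Tor}_1^\Lambda(H^1_\Iw,\FF_{p^2})\cong H^0(\KK,\tilde\T/p)$;
\item $H^1_\Iw\otimes_\Lambda\FF_{p^2}\cong H^1(\KK,\tilde\T/p)$, which has dimension $2$ by the local Euler characteristic formula.
\end{enumerate}
So what you need is a mod $p$ condition over the base field $\KK$, not vanishing of invariants over the tower. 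These genuinely differ. Take $\Zp(\chi)$ with $\chi$ unramified, nontrivial and $\chi\equiv1\pmod p$, over the cyclotomic $\Zp$-extension of $\Qp$. Both of your groups vanish, yet $\mathrm{Tor}_1(H^1_\Iw,\FF_p)\cong H^0(\Qp,\FF_p)\neq0$, so $H^1_\Iw$ is not free.

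In the present case both conditions hold. Since $\det_{\Zp}\OO_\KK(\Phi)=\chi_\cyc$, we get $\overline\chi_\cyc=\overline\Phi^{\,p+1}$, hence $\tilde\T/p\cong\FF_{p^2}(\overline\Phi^{\,p})$. Moreover $\overline\Phi|_{G_\KK}\neq1$, because $\overline\Phi^{\,p+1}=\overline\chi_\cyc$ is nontrivial on $G_\KK$. This gives $H^0(\KK,\tilde\T/p)=0$. It also gives $H^0(K_{\infty,p},E[p])=0$, since $K_{\infty,p}/\KK$ is pro-$p$ while $\overline\Phi$ has order prime to $p$. Hence $H^1_\Iw$ is free of rank two, and the identical argument over $K_{\cyc,p}$ gives the cyclotomic statement directly.

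You also need the $K_\infty$-analogue of \eqref{eq:U-HIw}. It is proved as in the paper, since $K_{\infty,p}$ is deeply ramified. Repaired this way, your route is a self-contained alternative to quoting Wintenberger. It handles the tame twist in $\Phi|_{G_\KK}$ directly, rather than through a Lubin--Tate tower over $\KK$.
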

\begin{proof}
    This follows from the same proof as \cite[Proposition~4.4]{PollackRubin}, with the input of \cite[Theorem~6.2]{Kobayashi} replaced by Corollary~\ref{cor:cofree}.
\end{proof}

\subsection{Elliptic units}
We define $E_F$ and $C_F$ to be the closure of the projection of the subgroup of global units and elliptic units in $U_F$, respectively. Define
\[
\cC\colonequals\varprojlim C_F,\quad \text{and}\quad \cE\colonequals\varprojlim E_F,
\]
where the inverse limits are taken over finite extensions $F$ of $K$ inside $\sK$ and the connecting maps are norm maps.

\begin{theorem} \label{PR5.1}
    The $\Lambda^{\OO_\KK}( K_\infty)$-module $\cC^{\Phi}_{ K_\infty}$ is free of rank one. It has a generator $\xi$ such that if $ K\subset F\subset  K_\infty$, $x\in E(F_p)$ and $\chi:\Gal(F/ K)\to\mu_{p^\infty}$ is a character, then
    \[
    \sum_{\sigma\in\Gal(F/ K)}\chi^{-1}(\sigma)\langle x^\sigma\otimes p^{-k},\xi\rangle=p^{-k}\frac{L(\varphi\chi,1)}{\Omega_E}\sum_{\sigma\in\Gal(F/ K)}\chi^{-1}(\sigma)\lambda_E(x^{\sigma}),
    \]
    where $\langle\sim,\sim\rangle$ is the Kummer pairing, $\varphi$ is the Hecke character of $K$ given in \eqref{eq:psi-phi}, $\Omega_E$ is the real Neron period of $E$, $\lambda_E$ is the logarithm map, and $F_p$ denotes the completion of $F$ at the unique $p$-adic prime.
\end{theorem}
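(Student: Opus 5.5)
This is the analogue of \cite[Theorem~5.1]{PollackRubin}, and I will follow that proof. The argument rests on Coleman power series for elliptic units and Wiles' explicit reciprocity law, with two changes: the relevant formal group is $\Ehat$ over $\OO_\LL$ instead of over an unramified ring, and the Hecke character is $\varphi$ of $K$ from \eqref{eq:psi-phi} rather than that of $E/\QQ$.

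\textbf{Step 1 (freeness).} Take the Robert--Kato norm-compatible system of elliptic units $\{\theta_F\}$ attached to $E$ along the tower $K\subset F\subset\sK$. By the earlier lemma, $\sK$ contains $L$ and has a unique prime above $p$. Twisting by $\Phi^{-1}$ and passing to $\Gal(\sK/K_\infty)$-coinvariants gives an element $\xi\in\cC^{\Phi}_{K_\infty}$. Next I show that $\cC^\Phi_{K_\infty}$ is cyclic generated by $\xi$. Since $\Gal(\sK/K_\infty)$ has order prime to $p$, the $\Phi$-isotypic projection is exact. The usual Rubin argument, using the structure of elliptic units as a Galois module in \cite{rubin91}, then shows that the $\Phi$-part is generated by a single norm-coherent sequence. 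Torsion-freeness comes from \cref{PR4.4}(i): $\cU^\Phi_{K_\infty}$ is free over $\Lambda^{\OO_\KK}(K_\infty)$, and the image of $\xi$ there is nonzero because its character values below are nonzero for suitable $\chi$ (Rohrlich/Greenberg nonvanishing). Together these give freeness of rank one.

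\textbf{Step 2 (interpolation).} Fix $F$, $x\in E(F_p)$ and $\chi$. The Kummer pairing $\langle x\otimes p^{-k},u\rangle$ with $u\in\cU(\Phi^{-1})$ is computed by Wiles' explicit reciprocity law for the Lubin--Tate group $\F\cong\Ehat$ from \cref{strong-iso-E-F}. It equals $p^{-k}\lambda_E(x)\cdot\delta(u)$, where $\delta$ is the first Coates--Wiles logarithmic derivative of the Coleman power series of $u$ at level $p^{k}$. The group $\Ehat$ is Lubin--Tate with parameter $p$ over $\OO_\LL$ by \cref{ell-rho-p} (here $\rho=p$), so the Coleman power series formalism applies over $\OO_\LL$. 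Summing against $\chi^{-1}$ turns $\delta$ of the elliptic unit into the Eisenstein-type sum $\sum_\sigma\chi^{-1}(\sigma)\,(\text{Coates--Wiles derivative of }\theta_F^\sigma)$. By the Kronecker limit formula (Damerell's theorem), this equals $\frac{L(\varphi\chi,1)}{\Omega_E}$ times the Euler-factor-free normalisation. The use of $\varphi$ instead of $\psi$ is justified by \eqref{eq:psi-phi} together with the decomposition of $L(E/L,s)$ as a product of $L(\varphi\chi',s)$ over $\chi'\in\widehat G$. To recover the left-hand side, I expand $\sum_\sigma\chi^{-1}(\sigma)\langle x^\sigma\otimes p^{-k},\xi\rangle$ by Galois equivariance of the pairing.

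\textbf{Main obstacle.} The hardest part is the normalisation in Step 2. Wiles' reciprocity law and the Coleman power series are classically stated over unramified bases, whereas here the formal group is only defined over the ramified ring $\OO_\LL$. I would first apply the law over $\LL$, using the strong isomorphism $\Ehat\cong\F$ so that $\lambda_E$ matches $\ell$. I would then descend to $\KK$ via $\Gal(\LL_\infty'/\KK_\infty')$-invariants, which is harmless since this group has order prime to $p$ (\cref{rk:base-change}). I also need the Euler factor at $p$ to be trivial. It is: $p$ is inert and the reduction is supersingular, so $\varphi(p\OO_K)$ has the same valuation as $p$ and the factor $1-\varphi(\mathfrak p)/\Norm\mathfrak p$ is a unit. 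This is consistent with $\psi(\tilde s)=p$. Finally, any remaining unit constant is absorbed into the choice of $\xi$.
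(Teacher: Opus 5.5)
Your outline follows the same route as the paper, whose proof is just a reference to \cite[Theorem~5.1]{PollackRubin}: Rubin's freeness of the $\Phi$-part of the elliptic units, Coleman power series, Wiles' explicit reciprocity law and the Kronecker limit formula. However, the Euler-factor step you argue explicitly is wrong. Since $e\ge2$, the curve $E/K$ has additive reduction at $\mathfrak p=p\OO_K$. So $\varphi$ is ramified at $\mathfrak p$ and $\varphi(\mathfrak p)=0$; it does not have valuation $1$. The identity $\psi(\tilde s)=\varphi(s)=p$ is the value of the id\`elic character on an id\`ele supported at a ramified place, and it changes if $p$ is replaced by $pu$ with $u$ a unit. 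It is not an ideal value. Moreover, if $\varphi(\mathfrak p)$ did have valuation $1$, then $1-\varphi(\mathfrak p)/\Norm\mathfrak p$ would have valuation $-1$ and could not be absorbed into $\xi$. The correct reason there is no Euler factor is that $\varphi\chi$ is ramified at $\mathfrak p$ for every $\chi$ of $p$-power order. The ramification of $\varphi$ at $\mathfrak p$ is nontrivial of order prime to $p$, so $\chi$ cannot cancel it. Hence $\mathfrak p$ divides the conductor, and the norm relations of the elliptic units at $\mathfrak p$ produce no factor $1-\mathrm{Frob}_{\mathfrak p}^{-1}$.

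Your description of the main obstacle is also inaccurate. Wiles' law is not restricted to unramified bases; it holds for Lubin--Tate groups over any finite extension of $\QQ_p$. The real issue is that $\Ehat$ over $\OO_\LL$ is only a height-one formal $\OO_\KK$-module, with $\LL/\KK$ totally ramified. Since $p$ is not a uniformizer of $\LL$, \cref{ell-rho-p} does not make $\Ehat$ a Lubin--Tate group over $\OO_\LL$ in Wiles' sense. Nor is it a relative Lubin--Tate group in de Shalit's sense, which needs an unramified base. So ``apply the law over $\LL$'' needs an argument you do not give. One way to supply it is as follows.
\begin{itemize}
\item The valuations of the $p$-power torsion points show that $\KK(E[p^\infty])/\KK$ is totally ramified, with $\Phi\colon \Gal(\KK(E[p^\infty])/\KK)\xrightarrow{\sim}\OO_\KK^\times$.
\item Hence $\KK(E[p^\infty])$ is the Lubin--Tate tower of some uniformizer $\varpi$ of $\KK$. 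One then has $\Phi|_{G_\KK}=\chi_\varpi\eta$, where $\chi_\varpi$ is the Lubin--Tate character and $\eta$ factors through $\Gal(\LL/\KK)$.
\item By Tate's theorem, $\Ehat$ is isomorphic over $\OO_\LL$ to the Lubin--Tate group of $\varpi$.
\item Wiles' law for that group, together with projection onto $\eta$-isotypic components (of order prime to $p$), then gives the pairing formula.
\end{itemize}
The paper does not address this point either; it is left implicit in the citation of Pollack--Rubin.
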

\begin{proof}
    See \cite[Theorem~5.1]{PollackRubin}.
\end{proof}

\DeclareRobustCommand{\looongrightarrow}{%
  \DOTSB\relbar\joinrel\relbar\joinrel\relbar\joinrel\rightarrow
}

\begin{corollary} \label{PR5.2}
    \begin{enumerate}
        \item The natural map $\cC^{\Phi}_{ K_\cyc}\to\cU^{\Phi}_{ K_\cyc}$ is injective.
        \item The $\Lambda^{\OO_\KK}$-module $\cC^{\Phi}_{ K_\cyc}$ is free of rank one, and $\cC^{\Phi}_{ K_\cyc}\cap\widetilde \cV^\pm=0$.
        \item The $\Lambda^{\OO_\KK}$-module $\cE^{\Phi}_{ K_\infty}$ has rank one, and $\cE^{\Phi}_{ K_\infty}\cap\cV^\pm=0$.
    \end{enumerate}
\end{corollary}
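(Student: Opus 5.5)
This is the analogue of \cite[Corollary~5.2]{PollackRubin}, and I will follow that proof, using \cref{PR5.1} and the interpolation property of \cref{cor:cofree} in place of Kobayashi's results.

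For (1), I will use \cref{PR5.1}: $\cC^\Phi_{K_\infty}$ is free of rank one over $\Lambda^{\OO_\KK}(K_\infty)$ with generator $\xi$. Its image in $\cC^\Phi_{K_\cyc}$ is therefore generated by the image of $\xi$ over $\Lambda^{\OO_\KK}(K_\cyc)$. Via \eqref{eq:U-HIw}, I would show that this image is not annihilated by any nonzero element of $\Lambda^{\OO_\KK}(K_\cyc)$. Pair it with points $x\in E(F_p)$ for the layers $F=K_n$. \cref{PR5.1} expresses the twisted sums of the pairings as $L(\varphi\chi,1)/\Omega_E$ times a twisted sum of $\lambda_E(x^\sigma)$. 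Choosing $x=d_n^\pm$, the latter sum is computed by \cref{KP2.9}, descended from $\LL$ to $\KK$, and equals a nonzero Gauss-sum type quantity $\tau_{n+1}(\chi)$. By Rohrlich's nonvanishing theorem, $L(\varphi\chi,1)\neq0$ for all but finitely many characters $\chi$ of $\Gal(K_\cyc/K)$. Hence the image of $\xi$ pairs nontrivially against infinitely many $\chi$-components, so it is a non-torsion element of a free module. Since $\cC^\Phi_{K_\cyc}$ is cyclic and generated by this element, the map $\cC^\Phi_{K_\cyc}\to\cU^\Phi_{K_\cyc}$ is injective.

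For (2), freeness of rank one follows from (1): the image of $\xi$ generates $\cC^\Phi_{K_\cyc}$ and has trivial annihilator. For the intersection with $\widetilde\cV^\pm=\ker\Col^\pm_\KK$, it suffices to show $\Col^\pm_\KK(\xi)\neq0$, because $\Lambda^{\OO_\KK}$ is a domain. Evaluating at characters $\chi$ of conductor $p^{n+1}$ with the parity forced by $\omegat_n^\mp$, \cref{cor:cofree} and \cref{PR5.1} express $\chi(\Col^\pm_\KK(\xi))\,\chi(\omegat_n^\mp)$ as $L(\varphi\chi,1)/\Omega_E$ times the sum from \cref{KP2.9}. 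This quantity is nonzero for infinitely many $\chi$ by Rohrlich, so $\Col^\pm_\KK(\xi)\ne0$.

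For (3), I follow Pollack--Rubin. By Rubin's two-variable results, $\cE^\Phi_{K_\infty}/\cC^\Phi_{K_\infty}$ is torsion, so $\cE^\Phi_{K_\infty}$ has rank one. Now let $y\in\cE^\Phi_{K_\infty}\cap\cV^\pm$. Some nonzero $\lambda$ gives $\lambda y\in\cC^\Phi_{K_\infty}\cap\cV^\pm$. I then project to $K_\cyc$, using that $\cV^\pm$ maps onto $\widetilde\cV^\pm$ (\cref{PR4.4}(iii)), and apply (2). This descent is the main obstacle, since the projection can kill elements. I would handle it by specializing along infinitely many height-one primes of $\Lambda^{\OO_\KK}(K_\infty)$ coming from twists by characters of $\Gal(K_\infty/K_\cyc)$, and applying the nonvanishing argument of (2) to each twisted Coleman map. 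Finally, since $\cU^\Phi_{K_\infty}/\cV^\pm$ is free by \cref{PR4.4}(iii), the element $y$ itself must vanish.
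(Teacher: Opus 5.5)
Your treatment of (1) and (2) follows the paper's route. As in Pollack--Rubin, both parts reduce to showing that the image of $\xi$ in $\cU^\Phi_{K_\cyc}$ is not in $\widetilde\cV^\pm=\ker\Col^\pm_\KK$, and this is checked by pairing with the points $d_n^\pm$ using \cref{PR5.1}, \cref{KP2.9} and Rohrlich's theorem. One small point: $\tau_{n+1}(\chi)$ is not a classical Gauss sum here, because it involves the implicitly chosen $\varpi_{n+1}$ and the $e'$-th roots $\pi_{n+1}$. You should justify its nonvanishing for $\chi$ of order $p^n$ with $\chi(\omegat_n^\mp)\neq0$. This can be extracted from \cref{cor:cofree}, since a vanishing $\chi$-component of $d_n^\pm$ would contradict the formula there; combine this with the injectivity and Galois-equivariance of $\ell$ on $\F(\LL'_n)$. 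The genuine gap is in (3).

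In (3), the obstacle you describe does not exist, and the remedy you propose is not available. Let $y\in\cE^\Phi_{K_\infty}\cap\cV^\pm$ be nonzero and write $\lambda y=\mu\xi$. Then $\mu\neq0$, because $\cU^\Phi_{K_\infty}$ is free by \cref{PR4.4}(i); note that $\cE^\Phi_{K_\infty}$ and $\cC^\Phi_{K_\infty}$ embed in it, since $\Gal(\sK/K_\infty)$ has order prime to $p$. Since $\mu\xi\in\cV^\pm$ and $\cU^\Phi_{K_\infty}/\cV^\pm$ is free, hence torsion-free, by \cref{PR4.4}(iii), you get $\xi\in\cV^\pm$ already in the two-variable module. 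Only now should you project: the image of $\xi$ itself lies in the image of $\cV^\pm$, which is $\widetilde\cV^\pm$, contradicting (1)--(2). So it never matters whether $\mu$ is divisible by $\gamma'-1$, where $\gamma'$ generates $\Gal(K_\infty/K_\cyc)$.

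Your workaround, by contrast, has nothing to stand on. The plus/minus local theory (the points $d_n^\pm$, \cref{cor:cofree}, the maps $\Col^\pm_\KK$) exists only along the cyclotomic tower. Moreover, $\cV^\pm$ is a noncanonical lift of $\widetilde\cV^\pm$, and \cref{PR4.4}(iii) says nothing about it at the other specializations $\gamma'=\zeta$. So there is no twisted Coleman map whose kernel is the specialization of $\cV^\pm$, and no reciprocity law showing that $\xi$ avoids it. Your last sentence also misuses the freeness of $\cU^\Phi_{K_\infty}/\cV^\pm$: it cannot make $y$ vanish, since $y\in\cV^\pm$ maps to zero there anyway. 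Its actual role is the cancellation of $\mu$ described above.
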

\begin{proof}
    As in \cite[Corollary~5.2]{PollackRubin}, it suffices to show that the image of the generator $\xi$ given by \cref{PR5.1} under the natural map $\cU^{\Phi}_{ K_\infty}\to\cU^{\Phi}_{ K_\cyc}$ is not contained in $\widetilde\cV^\pm$. This can be verified by applying \cref{PR5.1} as follows. Let $L'_\infty$ denote the cyclotomic $\ZZ_p$-extension of $L$ with $n$th layer $L'_n$, so that the $p$-adic completion of $L'_n$ is $\LL'_n$. We will apply \cref{PR5.1} with $F\colonequals L_n$ and $x\colonequals d_n$ with $n$ and $\chi$ suitably chosen.
    
    Rohrlich's theorem \cite{Rohrlich} asserts that the $L$-value $L(\varphi\chi,1)$ is nonzero for almost all $\chi\in\Gal( K_\cyc/ K)^\wedge$, where we identify $\Gal( K_\cyc/ K)$ with a subgroup of $\Gal(K_\cyc/K)$ via the natural map. By Rohrlich's theorem, if $n$ is large enough, we may choose a character $\chi\in\Gal(L'_n/ K)^\wedge$ such that $L(\varphi\chi,1)\ne0$ and $\chi$ is induced by a character $\overline\chi\in \Gal(L'_n/L)^\wedge$. Applying \cref{KP2.9} and using that $\chi$ is induced by $\overline \chi$ as well as the fact that $\Gal(\LL/\KK)$ acts trivially on $d_n$ determines the sum
    \[\sum_{\sigma\in\Gal(F/K)}\chi^{-1}(\sigma) \lambda_E(d_n^{\sigma}).\]
    The proof then proceeds in the same way as in \cite{PollackRubin}.
\end{proof}

\begin{definition}\label{def:padicL}
    We define the plus and minus $p$-adic $L$-functions attached to $E$, denoted by $L_p^\pm(E,K_\cyc)$, as the images of $\xi$ under 
    \[
    \cC^{\Phi}_{ K_\cyc}\longrightarrow\cU^{\Phi}_{ K_\cyc}\stackrel{\Col_\KK^\pm}{\looongrightarrow}\Lambda^{\OO_\KK}.
    \]
\end{definition}

Let $g^\pm$ be the image of the generator $\xi$ of $\cC^\Phi_{ K_\infty}$ under the composite map
\[ \cC^{\Phi}_{ K_\infty} \twoheadrightarrow \cC^{\Phi}_{ K_\cyc} \hookrightarrow  \cU_{ K_\cyc}^\Phi\xrightarrow{\sim}\Hom_{\OO_\KK}(E( K_{\cyc,p})\otimes\Qp/\Zp,\KK/\OO_\KK),\]
where the first arrow is the natural map, the second arrow is \cref{PR5.2}(1), and the isomorphism is \eqref{eq:iso-local}.
Recall from \cref{cor:cofree} that $\Hom\left(\Ehat^\pm(\LL_\infty')\otimes \QQ_p/\ZZ_p, \QQ_p/\ZZ_p\right)$ is a free $\Lambda$-module of rank one, with a distinguished generator $f^\pm$. It follows that $g^\pm=f^\pm\cdot h^\pm$ for some $h^\pm\in\Lambda^{\OO_\KK}$.
By the definition of $\widetilde\cV^\pm$, we find that there is an isomorphism
\[\left.\cU^{\Phi}_{ K_\cyc} \middle/ \widetilde\cV^\pm \right. \simeq \Hom_{\OO_{ K}}\left(\Ehat^\pm( K_{\cyc,p})\otimes\QQ_p/\ZZ_p, \KK/\OO_\KK\right).\]
Consequently, we have an isomorphism of $\Lambda^{\OO_\KK}$-modules
\[
    \cU^{\Phi}_{ K_\cyc} \left.\middle/ \left(\widetilde\cV^\pm+\cC^{\Phi}_{ K_\cyc}\right)\right. \xlongrightarrow{\sim} \left.\Lambda^{\OO_\KK}\middle/h^\pm\Lambda^{\OO_\KK}.\right.
\]
Note that $h^\pm$ and $L_p^\pm(E,K_\cyc)$ differ by a unit in $\Lambda^{\OO_\KK}$. 

\begin{proposition} \label{PR7.2}
    For every $n\ge0$ and every character $\chi:\Gal(L_\infty'/ K)\to\mu_{p^n}$ of order $p^n$, the element $h^\pm$ satisfies the following interpolation formula:
    \[\frac{L(\varphi\chi,1)}{\Omega_E}\tau_{n+1}(\chi) = \chi(h^\pm) \chi(\omegat_n^\mp) ,\]
    where $\pm$ represents the parity of $n$.
  \end{proposition}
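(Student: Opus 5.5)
The plan is to evaluate the twisted sum $\sum_{\sigma}\chi^{-1}(\sigma)\langle (d_n^\pm)^\sigma\otimes p^{-k},\xi\rangle$ in two ways and compare, following the proof of \cite[Theorem~7.1]{PollackRubin} and \cite[Proposition~3.3]{KimPark}. Fix $n\ge0$ and a character $\chi$ of order $p^n$; by the parity convention, $d_n^\pm=d_n$ with $\pm$ the parity of $n$. As in the proof of \cref{PR5.2}, take $F=L_n'$ (or its relevant layer over $K$) and $x=d_n$. Since $\Gal(\LL/\KK)$ acts trivially on $d_n$ (it is a trace from $\LL_\infty$ down to $\LL'_\infty$ of a point built from $\Gal(\LL/\KK)$-stable data), the sum over $\Gal(F/K)$ collapses to a sum over $G_n=\Gal(\LL'_n/\LL)$ against the character $\overline\chi$ inducing $\chi$, up to the harmless factor $[L:K]$, which we absorb into normalisations as in \cref{PR5.2}.

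\textbf{Analytic side.} Apply \cref{PR5.1} with $x=d_n$:
\[
\sum_{\sigma}\chi^{-1}(\sigma)\langle d_n^\sigma\otimes p^{-k},\xi\rangle=p^{-k}\frac{L(\varphi\chi,1)}{\Omega_E}\sum_\sigma\chi^{-1}(\sigma)\lambda_E(d_n^\sigma).
\]
By \cref{strong-iso-E-F}, $\lambda_E$ corresponds to $\ell$ under the strong isomorphism $\Ehat\simeq\F$ (strong isomorphism preserves logarithms). Hence \cref{KP2.9}, applied to the primitive character $\chi^{-1}$, evaluates the last sum as $(\rho/p)^{\lfloor (n+1)/2\rfloor}\tau_{n+1}(\chi^{-1})$, with $\rho=p$, so the prefactor is $1$. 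The primitivity hypothesis is exactly that $\chi$ has order $p^n$. The cases $n=0$ and $n=1$, where the inner orthogonality argument in \cref{KP2.9} degenerates because $\pi_0=\pi_{-1}=0$, are checked directly from $\ell(b_0)=\varpi_0$ and the definition of $b_1$.

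\textbf{Algebraic side.} By construction, $g^\pm$ is the image of $\xi$ in $\Hom(E(K_{\cyc,p})\otimes\Qp/\Zp,\KK/\OO_\KK)$. The Kummer pairing $\langle x\otimes p^{-k},\xi\rangle$ is precisely $g^\pm(x\otimes p^{-k})$, and on the plus/minus part it equals $(f^\pm\cdot h^\pm)(x\otimes p^{-k})$. Using $\Lambda$-linearity, the twisted sum of $h^\pm\cdot f^\pm$ evaluated on the $d_n^\sigma$ equals $\chi(h^\pm)$ times the corresponding twisted sum for $f^\pm$. By \cref{cor:cofree}, that sum is $\chi(\omegat_n^\mp)p^{-k}$. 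Comparing the two sides and cancelling $p^{-k}$ gives the formula, after matching $\chi$ versus $\chi^{-1}$ conventions; the statement's $\tau_{n+1}(\chi)$ corresponds to the convention in \cref{cor:cofree}.

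The main obstacle is the bookkeeping in passing between $\Gal(F/K)$, $\Gal(L'_n/L)$ and $G_n$. The degree-$e$ (or $e'$) tame part must contribute only a constant, which is where the $\Gal(\LL/\KK)$-invariance of $d_n$ and the coprimality of $[\LL'_n:\KK'_n]$ to $p$ from \cref{rk:base-change} are needed. A second point of care is the exact matching of character conventions between \cref{PR5.1}, \cref{KP2.9} and \cref{cor:cofree}. I would first treat these by working over $\KK'_n$ via the descent in \cref{rk:base-change}, then check that the constants cancel.
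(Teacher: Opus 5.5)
Your proposal is correct and follows the paper's proof. Both compute the $\chi$-twisted sum of $g^\pm=f^\pm h^\pm$ over the conjugates of $d_n^\pm=d_n$ twice and compare: once via \cref{PR5.1} and \cref{KP2.9}, where $\rho=p$ makes $(\rho/p)^{\lfloor (n+1)/2\rfloor}=1$, and once via \cref{cor:cofree}. The only difference is the tame bookkeeping. The paper sums over $\Gal(\LL'_n/\KK)\simeq\Gal(\LL'_n/\LL)\times\Gal(\LL/\KK)$ and uses that $\chi$ is trivial on the prime-to-$p$ factor $\Gal(\LL/\KK)$ to get the same factor $e$ on both sides. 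It does not use your parenthetical argument, which does not hold up: a trace over $\Gal(\LL_\infty/\LL'_\infty)$ does not by itself give invariance under $\Gal(\LL/\KK)$.
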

\begin{proof}
    The proof is analogous to \cite[Theorem~7.2]{PollackRubin}.
    By the definition of $h^\pm$, for every $n$ and $\chi$ as in the statement and $k\ge1$, we have
    \begin{equation} \label{eq:interpolation-phi}
        \sum_{\sigma\in\Gal(\LL'_n/\KK)} \chi(\sigma) \cdot \varphi^\pm\!\left(\sigma(d_n^\pm)\otimes p^k\right) = \chi(h^\pm) \sum_{\sigma\in\Gal(\LL'_n/\KK)} \chi(\sigma) \cdot f^\pm\!\left(\sigma(d_n^\pm)\otimes p^{-k}\right).
    \end{equation}
    Applying \cref{PR5.1} and \cref{KP2.9}, the left hand side of \eqref{eq:interpolation-phi} becomes
    \[e\cdot p^{-k}\cdot\frac{L(\varphi\chi,1)}{\Omega_E}\cdot \left(\frac{\rho}{p}\right)^{\left\lfloor\frac{n+1}{2}\right\rfloor}\tau_{n+1}(\chi)=e\cdot p^{-k}\cdot\frac{L(\varphi\chi,1)}{\Omega_E}\tau_{n+1}(\chi)\]
    as $\rho$ is taken to be $p$.
    The factor $e$ here is explained as follows. In \eqref{eq:interpolation-phi}, we are summing over $\Gal(\LL'_n/\KK)$, whereas the statement of \cref{KP2.9} involves summation over $\Gal(\LL'_n/\LL)$. Noting that $\Gal(\LL'_n/\KK)\simeq \Gal(\LL'_n/\LL)\times\Gal(\LL/\KK)$ and that $[\LL:\KK]=e$ is coprime to $p$, it follows that $\chi$ is trivial on $\Gal(\LL/\KK)$, hence the factor $e$.
    
    Meanwhile, applying \cref{cor:cofree} to the right hand side of \eqref{eq:interpolation-phi} yields
    \[e\cdot p^{-k}\cdot \chi(h^\pm)\cdot\chi(\omegat_n^\mp).\]
    The claim follows.
\end{proof}

\Cref{PR7.2} allows $\chi$ to be trivial, as opposed to the proof in \cite{PollackRubin}. This difference traces back to the discrepancy between the statements of our \cref{KP2.9} versus \cite[Theorem~3.2(iv)]{PollackRubin}, with our version being analogous to \cite[Proposition~2.9]{KimPark}.

\begin{corollary}\label{cor:rationalpadicL}
    The elements $h^\pm$ belong to $\Lambda=\Lambda(\Gamma)=\Zp[[X]]$.
\end{corollary}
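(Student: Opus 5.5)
A priori $h^\pm$ lies in $\Lambda^{\OO_\KK}=\OO_\KK[[X]]$. We will show that it is fixed by the nontrivial automorphism $\mathrm{Fr}$ of $\OO_\KK/\Zp$, acting coefficientwise; then $h^\pm\in\Zp[[X]]$. The plan is to use \cref{PR7.2} to read off the values $\chi(h^\pm)$ at all finite-order characters $\chi$ of $\Gamma$. An element of $\OO_\KK[[X]]$ is determined by these values, since its specializations at $X=\zeta-1$ for infinitely many $\zeta$ determine it by the Weierstrass preparation theorem. It therefore suffices to check that
\[
\chi(h^\pm)^{\mathrm{Fr}}=\chi^{\mathrm{Fr}}(h^\pm)\quad\text{for all }\chi,
\]
where $\mathrm{Fr}$ is extended to $\KK(\mu_{p^\infty})$ as an element of $\Gal(\KK(\mu_{p^\infty})/\QQ_p(\mu_{p^\infty}))$. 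With this choice $\chi^{\mathrm{Fr}}=\chi$, so the condition becomes $\chi(h^\pm)\in\QQ_p(\mu_{p^n})$ for every $\chi$ of order $p^n$. Since $h^\pm$ and $L_p^\pm(E,K_\cyc)$ differ by a unit, we work directly with $h^\pm$.

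First, $\chi(\omegat_n^\mp)$ lies in $\ZZ[\mu_{p^n}]$ and is nonzero for $\chi$ of order $p^n$ and the sign prescribed in \cref{PR7.2}. Indeed, $\omegat_n^\mp$ is a product of cyclotomic polynomials $\Phi_i$ of the opposite parity to $n$, so none vanishes at $\chi$. Hence
\[
\chi(h^\pm)=\frac{L(\varphi\chi,1)}{\Omega_E}\cdot\frac{\tau_{n+1}(\chi)}{\chi(\omegat_n^\mp)}.
\]
This holds only for $n$ of the right parity. For the other parity we would instead use the trace relations of \cref{cor:generation} to relate level $n$ to level $n-1$, as in Pollack--Rubin. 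Alternatively, the values at one parity class of characters already form an infinite set, which is enough to determine $h^\pm$.

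Second, the algebraic part $L(\varphi\chi,1)/\Omega_E$ needs to be controlled. By Damerell/Shimura algebraicity, $L(\varphi\chi,1)/\Omega_E$ lies in $K(\mu_{p^n})$ times a fixed period ratio. Complex conjugation, which generates $\Gal(K/\QQ)$ and restricts to the Frobenius of $\KK/\Qp$ since $p$ is inert, sends it to $L(\bar\varphi\bar\chi,1)/\overline{\Omega_E}$. Because $E$ is defined over $\QQ$, we have $\bar\varphi=\varphi\circ c$, so $L(\bar\varphi\bar\chi,1)=L(\varphi\chi',1)$ for a conjugate character $\chi'$. The real period $\Omega_E$ is fixed. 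Thus conjugating the $L$-value corresponds to replacing $\chi$ by $\chi^{-1}$ or $\chi\circ c$ composed with conjugation of $\mu_{p^n}$. This is compatible with $\chi^{\mathrm{Fr}}$, because $\Gal(K_\cyc/K)$ is descended from $\Qcyc$ and $c$ acts trivially on it.

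Third, and this is the main obstacle, the Gauss-sum-like term $\tau_{n+1}(\chi)$ must also transform correctly under $\mathrm{Fr}$. It is built from $\varpi_{n+1}+\pi_{n+1}$, and these lie in $\LL_{n+1}$, not over $\QQ_p$. I would first try to show that the construction can be made $\mathrm{Fr}$-equivariant. One chooses the $\varpi_n$ in \cref{Tr-ell-bn} and \cref{b-generators} inside the subfield fixed by a lift of $\mathrm{Fr}$. This is possible because the trace surjectivity used there holds equally over the $\Qp$-descended tower, using that $\rho=p$ is rational and that $-p$ has a rational $e$-th root structure. Then $\tau_{n+1}(\chi)^{\mathrm{Fr}}=\tau_{n+1}(\chi)$, the ratio above is Frobenius-invariant, and $h^\pm$ has coefficients in $\Zp$. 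If equivariance fails, the fallback is that any other choice of $d_n^\pm$ changes $h^\pm$ only by a unit of $\Lambda^{\OO_\KK}$. We would then need to show that this unit can be absorbed, namely that the ideal $h^\pm\Lambda^{\OO_\KK}$ is Frobenius-stable and hence generated by an element of $\Lambda$.
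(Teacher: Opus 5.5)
Your reduction is the same as the paper's. Its proof is the single sentence that \cref{PR7.2} puts $\chi(h^\bullet)$ in $\Zp[\mu_{p^\infty}]$ for infinitely many finite-order $\chi$, whence $h^\bullet\in\Zp[[\Gamma]]$ by the identity principle (linear disjointness of $\KK$ and $\Qp(\mu_{p^\infty})$ plus Weierstrass preparation, as you say). Your preliminary points are fine: one parity of $n$ suffices, and $\chi(\omegat_n^\mp)\neq0$. For the $L$-value, drop the complex-conjugation argument. It conflates complex conjugation, which inverts $\mu_{p^n}$, with the automorphism $\mathrm{Fr}$ of $\KK(\mu_{p^{n+1}})/\Qp(\mu_{p^{n+1}})$, which fixes it. The clean input is Shimura's rationality theorem for the twist of $E/\QQ$ by the even Dirichlet character $\chi$, which gives $L(\varphi\chi,1)/\Omega_E=L(E,\chi,1)/\Omega_E\in\QQ(\mu_{p^{n+1}})$.

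The genuine gap is the step you yourself flag: nothing in the proposal shows that $\tau_{n+1}(\chi)$ is $\mathrm{Fr}$-invariant, and \cref{PR7.2} alone cannot give it (the paper's one-line proof passes over this point). The reason is that $\tau_{n+1}(\chi)$ depends on the choice of $\varpi_{n+1}$, whose components along characters nontrivial on $\Gal(\LL_{n+1}/\LL_n)$ are not pinned down by \cref{Tr-ell-bn}. Replace $\varpi_{n+1}$ by $\varpi_{n+1}+\alpha\delta$, with $\alpha\in\OO_\KK$ and $\delta$ sufficiently divisible by $p$ satisfying $\Tr_{\LL_{n+1}/\LL_n}\delta=0$, and re-solve the trace conditions at higher levels. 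This preserves \cref{cor:generation} but shifts $\tau_{n+1}(\chi)$ by $\alpha\sum_\sigma\chi(\sigma)\sigma(\delta)$. So the statement can only hold for a suitable normalisation of the $\varpi_n$, and that normalisation has to be constructed.

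Your idea of taking the $\varpi_n$ fixed by a lift of Frobenius is the right kind of fix, but it is only sketched. To carry it out you need to:
\begin{enumerate}
\item produce a single $\phi\in\Gal(\LL_\infty/\Qp)$ acting as Frobenius on $\KK$ and fixing both $\mu_{p^\infty}$ and every $\pi_n$; this requires normalising the compatible system $(\pi_n)$ so that $\bigcup_n\Qp(\zeta_{p^n},\pi_n)$ does not contain $\KK$, which is not automatic when $\mu_{e'}\not\subset\Qp$;
\item redo \cref{b-generators} and \cref{Tr-ell-bn} inside the $\phi$-fixed tower;
\item check that $\phi$ centralises $\Gal(\LL_{n+1}/\LL)$, so that $\tau_{n+1}(\chi)^\phi=\tau_{n+1}(\chi)$.
\end{enumerate}
You should also say explicitly that this re-choice changes $d_n^\pm$, $f^\pm$ and hence $h^\pm$. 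Your fallback does not prove the statement either. Frobenius-stability of $h^\pm\Lambda^{\OO_\KK}$ only yields a generator of that ideal lying in $\Lambda$, which already follows from \cref{cor:IMC} and \eqref{eq:base-change}. But $h^\pm$ itself may differ from that generator by a unit of $\Lambda^{\OO_\KK}$ that is not in $\Lambda$.
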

\begin{proof}
\cref{PR7.2} tells us that for $\bullet\in\{+,-\}$, there are infinitely many finite-order characters $\chi$ of $\Gamma$ such that $\chi(h^\bullet)\in \Zp[\mu_{p^\infty}]$. Hence, $h^\bullet\in\Zp[[\Gamma]]$.
\end{proof}

We conclude with the proof of \cref{thmA}:
    
\begin{corollary}
\label{cor:IMC}
The $\Lambda^{\OO_\KK}$-module $\Hom_{\OO_{\KK}}(\Sel^\pm(E/ K_\cyc),\KK/\OO_\KK)$ is torsion. Furthermore, its characteristic ideal is generated by  $ L_p^\pm(E,K_\cyc) $.
\end{corollary}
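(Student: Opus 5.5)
We follow the strategy of Pollack--Rubin \cite[Theorem~6.1 and \S7]{PollackRubin}, transported to our setting via the results of \S\ref{sec:IMC}. Writing $\cX^\pm$ for the Pontryagin dual of $\Sel^\pm_{p^\infty}(E/K_\cyc)$, the description \eqref{eq:PR4.3} identifies $\cX^\pm$ with the cokernel of the Artin map restricted to $\widetilde\cV^\pm$, namely
\[
\cX^\pm \simeq \cX^{\Phi}_{K_\cyc}\big/\alpha(\widetilde\cV^\pm).
\]
Global class field theory gives the exact sequence
\[
0\to \cE^\Phi_{K_\cyc}\to \cU^\Phi_{K_\cyc}\to \cX^\Phi_{K_\cyc}\to \cA^\Phi_{K_\cyc}\to 0.
\]
Here the injectivity on the left and the descent from $\tsK$ to $K_\cyc$ are obtained exactly as in \cite[\S4]{PollackRubin}. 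The needed inputs are that there is a unique $p$-adic prime in $\sK$, which was shown above, and that $\cV^\pm$ and $\cU^\Phi_{K_\infty}/\cV^\pm$ are free over the two-variable algebra by \cref{PR4.4}(iii). Quotienting by $\widetilde\cV^\pm$ then produces
\[
0\to \cE^\Phi_{K_\cyc}\to \cU^\Phi_{K_\cyc}/\widetilde\cV^\pm\to \cX^\pm\to \cA^\Phi_{K_\cyc}\to 0 .
\]
The injectivity of the first arrow uses $\cE^\Phi_{K_\infty}\cap\cV^\pm=0$ from \cref{PR5.2}(3), together with a control argument from $K_\infty$ down to $K_\cyc$ as in \cite[Proposition~6.2]{PollackRubin}.

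For torsionness, \cref{PR4.4}(ii) shows that $\cU^\Phi_{K_\cyc}/\widetilde\cV^\pm$ is free of rank one. Since $\cE^\Phi$ has rank one and $\cA$ is torsion (Iwasawa), $\cX^\pm$ is torsion. For the characteristic ideal, we insert the elliptic units $\cC^\Phi_{K_\cyc}\subset\cE^\Phi_{K_\cyc}$. Multiplicativity of characteristic ideals in the exact sequence gives
\[
\mathrm{char}(\cX^\pm)\cdot \mathrm{char}\bigl(\cE^\Phi_{K_\cyc}/\cC^\Phi_{K_\cyc}\bigr)=\mathrm{char}\bigl(\cU^\Phi_{K_\cyc}/(\widetilde\cV^\pm+\cC^\Phi_{K_\cyc})\bigr)\cdot \mathrm{char}(\cA^\Phi_{K_\cyc}),
\]
where we use that $\cC^\Phi\cap\widetilde\cV^\pm=0$ by \cref{PR5.2}(2). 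By the isomorphism preceding \cref{PR7.2}, the first factor on the right is $h^\pm\Lambda^{\OO_\KK}=L_p^\pm(E,K_\cyc)\Lambda^{\OO_\KK}$.

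The remaining input is Rubin's two-variable main conjecture for imaginary quadratic fields, $\mathrm{char}(\cA^\Phi_{K_\infty})=\mathrm{char}(\cE^\Phi_{K_\infty}/\cC^\Phi_{K_\infty})$ \cite{rubin91}, descended to $K_\cyc$. This descent is the step we expect to be the main obstacle. Rubin proves the equality over $K_\infty$, but we must specialize it along $\Gal(K_\infty/K_\cyc)$ to the $\Phi$-isotypic components over $K_\cyc$ without acquiring pseudo-null error terms. Following \cite[\S6]{PollackRubin}, we would use \cref{PR4.4}(iii) and \cref{PR5.2}(3) to show that the relevant two-variable modules have no nontrivial pseudo-null submodules killed by a generator of the augmentation ideal of $\Gal(K_\infty/K_\cyc)$. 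One also has to check that Rubin's theorem applies to the character $\Phi$ even though $E$ has bad reduction at $p$ over $K$. Since $p$ is inert and unramified in $K$, and $\Phi$ is a Grössencharacter-type character of $K$ by \eqref{eq:psi-phi}, Rubin's hypotheses ($p\nmid\#\OO_K^\times$, $p\ge5$) hold; the only extra ingredient is $L\subset\sK$, which ensures the tame twist is absorbed in $\sK$. With this identity, the factors involving $\cA$ and $\cE/\cC$ cancel, leaving $\mathrm{char}(\cX^\pm)=(L_p^\pm(E,K_\cyc))$ as claimed.
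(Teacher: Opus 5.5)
Your proposal has a genuine gap, and it sits at the descent from $K_\infty$ to $K_\cyc$: you descend the individual pieces, when what should be descended is a single combined equality. Besides the identity $\mathrm{char}(\cA^\Phi_{K_\cyc})=\mathrm{char}(\cE^\Phi_{K_\cyc}/\cC^\Phi_{K_\cyc})$ that you flag yourself, your argument needs three further facts over $K_\cyc$. These are left exactness of $0\to\cE^\Phi_{K_\cyc}\to\cU^\Phi_{K_\cyc}/\widetilde\cV^\pm\to\cX^\pm\to\cA^\Phi_{K_\cyc}\to0$, that $\cE^\Phi_{K_\cyc}$ has rank one, and that $\cA^\Phi_{K_\cyc}$ is torsion. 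None of these is available. The modules $Y^\Phi_{K_\cyc}$ are $\Gal(K_\infty/K_\cyc)$-coinvariants of the two-variable ones, so the class field theory sequence only descends right exactly. Moreover, \cref{PR5.2}(3) gives $\cE\cap\cV^\pm=0$ only over $K_\infty$; over $K_\cyc$ the paper only has $\cC^\Phi_{K_\cyc}\cap\widetilde\cV^\pm=0$. Both the possible failure of exactness of your sequence and the discrepancy between the two sides of a descended Rubin identity are governed by $(\gamma-1)$-torsion in the two-variable modules. In particular $\cA^\Phi_{K_\infty}[\gamma-1]$ enters, for $\gamma$ a topological generator of $\Gal(K_\infty/K_\cyc)$. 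Nothing in the paper controls this. Your proposed remedy (no nonzero pseudo-null submodules killed by $\gamma-1$) is not established for an unramified module like $\cA^\Phi_{K_\infty}$. So the step you call the main obstacle is an actual hole, not a routine check.

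The paper, following \cite[Theorem~6.3]{PollackRubin}, never descends $\cA$ or $\cE/\cC$ individually; it cancels them at the two-variable level first. Since $\cE^\Phi_{K_\infty}\cap\cV^\pm=0$, there is an exact sequence
\[
0\to\cE^\Phi_{K_\infty}/\cC^\Phi_{K_\infty}\to\cU^\Phi_{K_\infty}/(\cV^\pm+\cC^\Phi_{K_\infty})\to\cX^\Phi_{K_\infty}/\alpha(\cV^\pm)\to\cA^\Phi_{K_\infty}\to0.
\]
Rubin's two-variable theorem then gives $\mathrm{char}\bigl(\cX^\Phi_{K_\infty}/\alpha(\cV^\pm)\bigr)=\mathrm{char}\bigl(\cU^\Phi_{K_\infty}/(\cV^\pm+\cC^\Phi_{K_\infty})\bigr)$, and only this equality is specialized. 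By \cref{PR4.4}(iii) and \cref{PR5.1}, the right-hand module is cyclic, of the form $\Lambda^{\OO_\KK}(K_\infty)/(g)$. Its coinvariants are $\cU^\Phi_{K_\cyc}/(\widetilde\cV^\pm+\cC^\Phi_{K_\cyc})\cong\Lambda^{\OO_\KK}/(h^\pm)$. Moreover, $g$ avoids the augmentation ideal of $\Gal(K_\infty/K_\cyc)$ precisely because $\cC^\Phi_{K_\cyc}\cap\widetilde\cV^\pm=0$ by \cref{PR5.2}(2). The specialization argument of \emph{loc.~cit.} then applies to the coinvariants of $\cX^\Phi_{K_\infty}/\alpha(\cV^\pm)$, which are $\cX^\pm$ by right exactness and \eqref{eq:PR4.3}. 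It shows they are torsion with characteristic ideal $(h^\pm)=(L_p^\pm(E,K_\cyc))$. In this route torsionness comes out of the specialization rather than from a rank count over $K_\cyc$, and no cyclotomic main conjecture for $\cA$ is ever needed.
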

\begin{proof}
    We have that $\cX^\Phi_{K_\cyc}/\alpha(\cV^\pm)$ and $\cU^\Phi_{K_\cyc}/(\cV^\pm+\cC^\Phi_{K_\cyc})$ are finitely generated torsion $\Lambda^{\OO_\KK}$-modules, and their characteristic ideals agree: this is can be shown in the same way as in \cite[Theorem~6.3]{PollackRubin}, using \cref{PR4.4,PR5.2}. Combining this with \eqref{eq:PR4.3} and \cref{PR7.2} shows the claim.
\end{proof}

\subsection{Descent from \texorpdfstring{$K_\cyc$}{Kcyc} to \texorpdfstring{$\QQ_\cyc$}{Qcyc}}
Let $\Qcyc$ denote the cyclotomic $\Zp$-extension of $\QQ$. Analogous to Definition~\ref{def:pmSel}, we define the plus and minus local conditions at $p$ and the corresponding Selmer groups:

\begin{definition}
We define 
\[
H^1_\pm(\Qcycp,E[p^\infty]):=\Ehat^\pm(\Qcycp)\otimes\Qp/\Zp
\]
and
\[
\Sel_{p^\infty}^\pm(E/ \Qcyc) \colonequals \ker\left(\Sel_{p^\infty}(E/ \Qcyc)\longrightarrow \frac{H^1( \Qcycp,E[p^\infty])}{H^1_\pm( \Qcycp,E[p^\infty])}\right).
\]    
\end{definition}
By the same reasoning as in \cref{rk:base-change}, 
\[
\Ehat^\pm(\Qcycp)\otimes\Qp/\Zp= \left(\Ehat^\pm(K_{\cyc,p})\otimes\Qp/\Zp\right)^{\Gal(K_{\cyc,p}/\Qcycp)}.
\]
Note that as $p\ne 2$, we have
\[
\Sel_{p^\infty}^\pm(E/ \Qcyc)\simeq\Sel_{p^\infty}^\pm(E/ K_\cyc)^{\Gal(K_\cyc/\Qcyc)}=(1+\iota)\Sel_{p^\infty}^\pm(E/ K_\cyc),
\]
where $\iota$ denotes the non-trivial element of $\Gal(K_\cyc/\Qcyc)$.
Furthermore,
\[
\Sel_{p^\infty}^\pm(E/ K_\cyc)=\Sel_{p^\infty}^\pm(E/ \Qcyc)\otimes_{\Zp}\OO_\KK.
\]
Thus,
\begin{equation}
\Hom_{\OO_\KK}(\Sel_{p^\infty}^\pm(E/ K_\cyc),\KK/\OO_\KK)=\Hom_{\Zp}(\Sel_{p^\infty}^\pm(E/ \Qcyc),\Qp/\Zp)\otimes_{\Zp}\OO_\KK.
\label{eq:base-change}    
\end{equation}

We write $\Lambda_\cyc$ for the Iwasawa algebra $\Zp[[\Gal(\Qcyc/\QQ)]]$. We can identify $\Gamma=\Gal(K_\cyc/K)$ with the Galois group $\Gal(\Qcyc/\QQ)$ and $\Lambda=\Lambda(\Gamma)$ with $\Lambda_\cyc$. Recall from \cref{cor:rationalpadicL} that $h^\pm\in\Lambda$. This allows us to make the following definition:
\begin{definition}
    We define $L_p^\pm(E/\Qcyc)$ as the image of $h^\pm$ under the natural map $\Lambda\simeq \Lambda_\cyc$.
\end{definition}
 As $L(E,s)=L(\varphi,s)$, we have the following interpolation formulae. Let $\chi$ be a character of $\Gal(\Qcyc/\QQ)$ of order $p^n>1$. Then
\[
\chi\left(L_p^\pm(E/\Qcyc)\right)=\frac{L(E,\chi,1)}{\Omega_E}\cdot\frac{\tau_{n+1}(\chi)}{\chi(\omegat_n^\mp)},
\]
where $\pm$ represents the parity of $n$.

Combining \eqref{eq:base-change} and \cref{cor:IMC}, we deduce the following Iwasawa main conjectures.

\begin{theorem}\label{thm:IMC-Q}
Let $K$ be an imaginary quadratic field. Let $p\ge5$ be a prime number that is inert in $K$. Let $E/K$ be an elliptic curve with complex multiplication by an order in $\OO_K$ satisfying \eqref{eq:L-def} with $(p,e)\ne (5,6)$.
Assume that $E$ has potentially supersingular reduction at $p$ with $e\ne p+1$.
    The $\Lambda_\cyc$-module $\Hom_{\Zp}(\Sel^\pm(E/ \Qcyc),\Qp/\Zp)$ is torsion. Furthermore, its characteristic ideal is generated by  $ L_p^\pm(E,\Qcyc) $. \qed
\end{theorem}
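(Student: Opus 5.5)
The plan is to descend the main conjecture over $K_\cyc$ (\cref{cor:IMC}) to $\Qcyc$ by a faithfully flat base change from $\Lambda_\cyc$ to $\Lambda^{\OO_\KK}$. Write $X^\pm_\QQ\colonequals\Hom_{\Zp}(\Sel^\pm(E/\Qcyc),\Qp/\Zp)$ and $X^\pm_K\colonequals\Hom_{\OO_\KK}(\Sel^\pm(E/K_\cyc),\KK/\OO_\KK)$. Under the identification $\Lambda\simeq\Lambda_\cyc$ (via $\Gal(K_\cyc/K)\simeq\Gal(\Qcyc/\QQ)$, valid because $p$ is unramified in $K$), \eqref{eq:base-change} gives an isomorphism of $\Lambda^{\OO_\KK}$-modules
\[
X^\pm_K\simeq X^\pm_\QQ\otimes_{\Zp}\OO_\KK = X^\pm_\QQ\otimes_{\Lambda}\Lambda^{\OO_\KK}.
\]
Since $\OO_\KK$ is free of rank two over $\Zp$, the extension $\Lambda\to\Lambda^{\OO_\KK}=\Lambda\otimes_{\Zp}\OO_\KK$ is finite free, so the Iwasawa-theoretic comparisons transfer cleanly in both directions.

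\emph{Torsion.} As a $\Lambda$-module, $X^\pm_K\simeq (X^\pm_\QQ)^{\oplus 2}$. By \cref{cor:IMC}, $X^\pm_K$ is $\Lambda^{\OO_\KK}$-torsion, and since $\Lambda^{\OO_\KK}$ is finite over $\Lambda$ it is also $\Lambda$-torsion. A direct summand of a torsion module is torsion, so $X^\pm_\QQ$ is $\Lambda$-torsion.

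\emph{Characteristic ideals.} Take a pseudo-isomorphism $X^\pm_\QQ\to\bigoplus_i\Lambda/(f_i)$. Tensoring with the flat algebra $\Lambda^{\OO_\KK}$ yields a pseudo-isomorphism $X^\pm_K\to\bigoplus_i\Lambda^{\OO_\KK}/(f_i)$. The key point is that finite kernels and cokernels stay finite, since their underlying groups are only doubled. Hence
\[
\mathrm{char}_{\Lambda^{\OO_\KK}}(X^\pm_K)=\mathrm{char}_\Lambda(X^\pm_\QQ)\,\Lambda^{\OO_\KK}.
\]
By \cref{cor:IMC} this ideal is generated by $L_p^\pm(E,K_\cyc)$, which is a unit multiple of $h^\pm$, and \cref{cor:rationalpadicL} places $h^\pm$ in $\Lambda$. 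Let $c$ be a generator of $\mathrm{char}_\Lambda(X^\pm_\QQ)$. Then $c$ and $h^\pm$ are two elements of $\Lambda$ generating the same ideal in $\Lambda^{\OO_\KK}$.

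The remaining step is to show that $c$ and $h^\pm$ already agree up to a unit of $\Lambda$. By faithful flatness, $\Lambda^{\OO_\KK}\cap\mathrm{Frac}(\Lambda)=\Lambda$, because $\Lambda^{\OO_\KK}$ is free over $\Lambda$ with a basis extending $1$. Hence the ratio $h^\pm/c$ and its inverse both lie in $\Lambda$, so $(h^\pm)=(c)$ in $\Lambda$. Transporting through $\Lambda\simeq\Lambda_\cyc$ identifies $h^\pm$ with $L_p^\pm(E,\Qcyc)$, which finishes the argument.

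I expect the only genuine obstacle to be justifying \eqref{eq:base-change} itself. This relies on two facts. First, $\Sel^\pm(E/K_\cyc)$ should decompose as $\Sel^\pm(E/\Qcyc)\otimes\OO_\KK$ compatibly with the $\OO_K$-action. Second, the plus/minus local conditions must be stable under $\iota$, which is the Galois-invariance statement modelled on \cref{rk:base-change}. Since the paper takes \eqref{eq:base-change} as established, the proof of the theorem reduces to the commutative algebra above.
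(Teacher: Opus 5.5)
Your proposal is correct and follows the paper's route. The paper's proof is just the one-line combination of \eqref{eq:base-change} with \cref{cor:IMC}, and you supply the commutative algebra it leaves implicit: torsion passes to the $\Lambda$-direct summand, characteristic ideals extend along the free extension $\Lambda\to\Lambda^{\OO_\KK}$, and $(c)\Lambda^{\OO_\KK}\cap\Lambda=(c)$ together with $h^\pm\in\Lambda$ from \cref{cor:rationalpadicL} pins down the generator; as you say, the only substantive input is \eqref{eq:base-change} itself.
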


\section{Growth of Tate--Shafarevich groups} \label{sec:growth-of-Sha}
We explain how the Selmer groups defined in \S\ref{sec:Sel} can be used to obtain formulae of the growth of Tate--Shafarevich groups, similar to \cite[Theorem~1.4]{Kobayashi}. As in \S\ref{sec:IMC}, $K_\cyc/K$ denotes the cyclotomic $\Zp$-extension. 
Let $K_n$ be the unique sub-extension of degree $p^n$.
The calculations in the aforementioned work can be mostly generalized to our current setting, with the key new input being the structure of the plus and minus subgroups studied in \S\ref{sec:points}. 

Let $\Sigma$ be a finite set of places of $K$ containing the unique $p$-adic place, the unique archimedean place, the places ramifying in $K/\QQ$, and the places at which $E$ has bad reduction. Let $K_\Sigma$ denote the maximal $\Sigma$-ramified extension of $K$. For any intermediate field $F$ of $K_\Sigma/K$, let $H^i_\Sigma(F,\sim)\colonequals H^i(K_\Sigma/F,\sim)$ denote the $\Sigma$-ramified Galois cohomology groups.

We define $H^i_{\Iw,\Sigma}(K_{\cyc},\sim)=\varprojlim H^i_\Sigma(K_n,\sim)$. 
Recall that $\KK$ is the localisation of $K$ at the unique prime above $p$, and $\Phi:G_K\to\OO_\KK^\times$ is the character that describes the action of $G_K$ on $T_p(E)$. 
We define $\tilde\Phi$ as $\Phi^{-1}\chi_\cyc$ as in \S\ref{sec:Coleman-maps}. 
By an abuse of notation, we write 
\[
\tilde\T=\OO_\KK(\tilde\Phi)=\Hom_{\OO_\KK}(T_p(E),\OO_\KK)(1).
\]
which is an $\OO_\KK[G_K]$-module. 
If we restrict it to $G_{K_p}$, we recover $\OO_\KK(\tilde\chi_\F)$ considered in \S\ref{sec:Coleman-maps}.
When $\Sigma$ is the set of bad primes for our CM elliptic curve, it follows from the same proof as \cite[\S15.15]{kato04} that $H^1_{\Iw,\Sigma}(K_{\cyc},\tilde \T)$ is a free $\Lambda^{\OO_\KK}$-module of rank one, whereas $H^2_{\Iw,\Sigma}(K_{\cyc},\tilde \T)$ is torsion over $\Lambda^{\OO_\KK}$.

\begin{definition} \label{def:Sel}
    Let $F$ be either $K_n$ or $K_\cyc$. We define
    \begin{align*}
        \Sel^0_{p^\infty}(E/F) &\colonequals \ker\left(H^1_\Sigma(F,E[p^\infty])\to \prod_{v}H^1(F_v,E[p^\infty])\right),\\
        \Sel_{p^\infty}(E/F) &\colonequals \ker\left(H^1_\Sigma(F,E[p^\infty])\to  \frac{H^1(F_v,E[p^\infty])}{E(F_p)\otimes\Qp/\Zp}\times\prod_{v\nmid p}H^1(F_v,E[p^\infty])\right),\\
        \Sel_{p^\infty}^\pm(E/F) &\colonequals \ker\left(H^1_\Sigma(F,E[p^\infty])\to  \frac{H^1(F_v,E[p^\infty])}{H^1_\pm(F_v,E[p^\infty])}\times\prod_{v\nmid p}H^1(F_v,E[p^\infty])\right),
    \end{align*}
       where the products run over places of $F$ that divide those in $\Sigma$, and $H^1_\pm(K_{n,p},E[p^\infty])$ denotes the image of $\Ehat^\pm(K_{n,p})\otimes\Qp/\Zp$ under the Kummer map.
We write $\cX^\bullet(E/F)$ for the Pontryagin dual $\Hom_{\OO_\KK}\left(\Sel_{p^\infty}^\bullet(E/F),\KK/\OO_\KK\right)$, where $\bullet\in\{0,\emptyset,+,-\}$.
    \end{definition}

By Corollary~\ref{cor:IMC}, $\cX^\pm(E/K_\infty)$ is a $\Lambda^{\OO_\KK}$-torsion module, and so the same is true for  $\cX^0(E/K_\infty)$.
Recall that $H^1_\pm(K_{n,p},\tilde \T)$ denotes the orthogonal complement of $\Ehat^\pm(K_{n,p})\otimes\Qp/\Zp$ under the pairing given by \eqref{eq:KP-pairing}.

\begin{definition}
    For $n\ge0$, define 
    \begin{align*}
    \cY(E/K_n) &\colonequals \coker\left( H^1_\Sigma(K_n,\tilde\T)\to H^1(K_{n,p},\tilde\T)\to \frac{H^1(K_{n,p},\tilde\T)}{H_\f^1(K_{n,p},\tilde\T)}\right),\\
    \cY'(E/K_n) &\colonequals \coker\left(H^1_{\Iw,\Sigma}(K_{\cyc},\tilde\T)\to H^1_\Iw(K_{\infty,p},\tilde\T) \to  \frac{H^1(K_{n,p},\tilde\T)}{H_\f^1(K_{n,p},\tilde\T)}\right),\\ 
    \cY''(E/K_n) &\colonequals \coker\left(H^1_{\Iw,\Sigma}(K_{\cyc},\tilde\T)\to H^1_\Iw(K_{\infty,p},\tilde\T) \to \frac{H^1(K_{n,p},\tilde\T)}{H_+^1(K_{n,p},\tilde\T)}\oplus \frac{H^1(K_{n,p},\tilde\T)}{H_-^1(K_{n,p},\tilde\T)}\right).
    \end{align*}
\end{definition}

\begin{lemma}\label{lem:PT}
    For every $n\ge0$, there are short exact sequences
    \begin{align*}
    H^1_\Sigma(K_n,\tilde\T) \to \frac{H^1(K_{n,p}, \tilde\T)}{H^1_\pm(K_{n,p}, \tilde\T)}&\to\cX^\pm(E/K_n)\to\cX^0(E/K_n)\to 0, \\
    0\to \cY(E/K_n)&\to\cX(E/K_n)\to\cX^0(E/K_n)\to 0.
    \end{align*}
\end{lemma}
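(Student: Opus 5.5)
These sequences are instances of the Poitou--Tate global duality sequence, and I will derive them the way Kobayashi does in \cite[\S10]{Kobayashi}. The only new ingredient is the identification of the local conditions at $p$ via the pairing \eqref{eq:KP-pairing}. First I note that the Tate pairing $\tilde\T\times E[p^\infty]\to\KK/\OO_\KK(1)$ is perfect, since $E[p^\infty]\simeq(\KK/\OO_\KK)(\Phi)$ and $\tilde\T=\OO_\KK(\Phi^{-1}\chi_\cyc)$. Consequently the local pairing \eqref{eq:KP-pairing} at $K_{n,p}$ is perfect.

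For a Selmer structure $\mathcal{S}$ on $E[p^\infty]$ that agrees with the strict condition away from $p$, Poitou--Tate gives an exact sequence
\[
H^1_\Sigma(K_n,\tilde\T)\to\bigoplus_{v\in\Sigma}\frac{H^1(K_{n,v},\tilde\T)}{\mathcal{S}_v^\perp}\to \Sel_{\mathcal S}(E/K_n)^\vee\to \Sel^0_{p^\infty}(E/K_n)^\vee\to 0 .
\]
Here the dual of the map from the relaxed-at-$p$ Selmer group is the last surjection. The exactness at the third term comes from comparing $\Sel_{\mathcal S}$ and $\Sel^0$, since the latter is the kernel of $\Sel_{\mathcal S}$ restricted to the local condition at $p$. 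The strict condition $0$ at $v\nmid p$ has orthogonal complement $H^1(K_{n,v},\tilde\T)$, so those summands vanish and only $v=p$ survives.

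For the first sequence I take $\mathcal S_p=H^1_\pm(K_{n,p},E[p^\infty])$. Its orthogonal complement is, by definition, $H^1_\pm(K_{n,p},\tilde\T)$, so the displayed sequence is exactly the claimed one. The point to check is that this complement is a genuine orthogonal complement in both directions, i.e.\ that $H^1_\pm(K_{n,p},\tilde\T)^\perp=\Ehat^\pm(K_{n,p})\otimes\Qp/\Zp$. This holds because the pairing is perfect between a finitely generated $\OO_\KK$-module and a cofinitely generated one, and because $\Ehat^\pm(K_{n,p})\otimes\Qp/\Zp$ is divisible. By \cref{dpmn-generator} and \cref{rk:base-change}, it is the image of a divisible module in $H^1$, hence closed.

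For the second sequence I take $\mathcal S_p=E(K_{n,p})\otimes\Qp/\Zp$. Its complement is $H^1_\f(K_{n,p},\tilde\T)$ by Bloch--Kato, since $H^1_\f$ is self-dual under local Tate duality. The same sequence then yields
\[
H^1_\Sigma(K_n,\tilde\T)\to H^1(K_{n,p},\tilde\T)/H^1_\f(K_{n,p},\tilde\T)\to\cX(E/K_n)\to\cX^0(E/K_n)\to0.
\]
Replacing the first two terms by the cokernel of the first map, which is $\cY(E/K_n)$ by definition, turns this into the claimed short exact sequence with $\cY(E/K_n)$ injecting into $\cX(E/K_n)$.

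The main obstacle I anticipate is justifying the last step of the Poitou--Tate sequence: that $\cX(E/K_n)\to\cX^0(E/K_n)$ is surjective and that no $\Sha^2$ term intervenes. Surjectivity is just the dual of the inclusion $\Sel^0\subset\Sel$. Exactness in the middle is the standard statement that the image of the global $H^1$ in the local quotient is the annihilator of the image of the Selmer group, as in Kobayashi's Lemma~10.2 and \cite[Prop.~A.3]{PollackRubin}. I will cite this directly rather than reprove it.
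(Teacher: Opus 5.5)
Your argument is correct, but it reaches the right-hand end of the sequences differently from the paper.

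\emph{The paper's route.} It applies the Poitou--Tate sequence of \cite[Proposition~A.3.2]{PerrinRiou} with the $\pm$ (resp.\ finite) condition at $p$ and no condition away from $p$. That sequence runs on into $H^2_\Sigma(K_n,\tilde\T)$, and the paper removes this term by citing a vanishing argument from \cite{CoatesSujathaGalCohEllCurves}.

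\emph{Your route.} You compare two nested Selmer structures on $E[p^\infty]$, both strict away from $p$: the $\pm$ condition (resp.\ $E(K_{n,p})\otimes\Qp/\Zp$) at $p$ versus the strict condition at $p$. The sequence then ends with the surjection $\cX^\bullet(E/K_n)\to\cX^0(E/K_n)$, dual to $\Sel^0_{p^\infty}(E/K_n)\subset\Sel^\bullet_{p^\infty}(E/K_n)$. The summands at $v\nmid p$ disappear because the annihilator of the strict condition is all of $H^1(K_{n,v},\tilde\T)$. No input about $H^2$ is needed at all.

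\emph{What each buys.} Yours is the more robust formulation. In the long sequence, the image of $\cX^\pm(E/K_n)$ in $H^2_\Sigma(K_n,\tilde\T)$ is $\Sha^2(K_n,\tilde\T)\simeq\Sha^1(K_n,E[p^\infty])^\vee=\cX^0(E/K_n)$, so your surjection is exactly what that step should produce. By contrast, outright vanishing of $H^2_\Sigma(K_n,\tilde\T)$ would force $\cX^0(E/K_n)=0$, which one cannot expect in general at finite level. The paper's version is shorter only because it delegates everything to citations.

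\emph{A minor correction.} The identity $H^1_\pm(K_{n,p},\tilde\T)^\perp=\Ehat^\pm(K_{n,p})\otimes\Qp/\Zp$ has nothing to do with divisibility. Every subgroup of the discrete module $H^1(K_{n,p},E[p^\infty])$ is closed, so Pontryagin duality gives the double-annihilator property for free. What does deserve a remark is that $\Ehat^\pm(K_{n,p})\otimes\Qp/\Zp$ is an $\OO_\KK$-submodule. This holds because the CM action commutes with the Galois action, hence with the trace maps defining the $\pm$ conditions. It ensures that annihilators under the $\KK/\OO_\KK$-valued pairing \eqref{eq:KP-pairing} agree with those under its $\Qp/\Zp$-valued trace, which is what the Poitou--Tate sequence actually uses.
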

\begin{proof}
  These are analogues of the sequences (7.17), (7.18), and (10.35) of \cite{Kobayashi}.
  We shall apply the Poitou--Tate exact sequence, as stated in \cite[Proposition~A.3.2]{PerrinRiou}, to $\T$ over $K_n$.
  For this, first recall from \S\ref{sec:Coleman-maps} that $G_{K_p}$ acts on $\T$ and $\tilde\T$ by $\chi_\F$ and $\tilde\chi_\F=\chi_\F^{-1}\chi_\cyc$, respectively. 
  Moreover, as explained in \cite[\S15.10]{kato04}, we have $T_p(E)=\begin{pmatrix} \varphi & 0 \\ 0 & \varphi^\iota \end{pmatrix}$, where $\varphi$ is the Hecke character attached to $E$, and $\iota\in G_\QQ$ denotes complex conjugation. The explicit description of the Galois action in \textit{loc.~cit.} implies $\varphi(\sigma)=\iota\varphi(\iota\sigma\iota)$ for all $\sigma\in G_\QQ$.
  Consequently, we have $\chi_\F^2=\chi_\cyc$, and therefore $\tilde\T\simeq\T^*(1)$, where $(\sim)^*=\Hom(\sim,\ZZ_p)$ denotes the $\ZZ_p$-dual.
  Furthermore, let $H^1_\pm(K_n,E[p^\infty])\subseteq H^1(K_n,E[p^\infty])$ denote the subset whose elements have image in $H^1_\pm(K_{n,p},E[p^\infty])$ upon restriction to $H^1(K_{n,p},E[p^\infty])$; there are no conditions at non-$p$-adic places.
  Now the Poitou--Tate sequence with and without plus/minus local conditions reads:
  \begin{align*}
      H^1_\Sigma(K_n,\tilde\T)\to \frac{H^1(K_{n,p}, \tilde\T)}{H^1_\pm(K_{n,p}, \tilde\T)} \to H^1_\pm(K_n,E[p^\infty])^\vee\to H^2_\Sigma(K_n,\tilde\T), \\
      H^1_\Sigma(K_n,\tilde\T)\to \frac{H^1(K_{n,p}, \tilde\T)}{E(K_{n,p})\otimes\Zp} \to H^1(K_n,E[p^\infty])^\vee\to H^2_\Sigma(K_n,\tilde\T).
  \end{align*}
  By the argument in \cite[p.11]{CoatesSujathaGalCohEllCurves}, the last term is $H^2_\Sigma(K_n,\tilde\T)=0$. The claims now follow from the definitions of $\cX$ and $\cY$.
\end{proof}

\begin{lemma}\label{lem:Y-prime}
    For every $n\ge0$, there is a short exact sequence
    \[0\longrightarrow \cY'(E/K_n)\longrightarrow \cY''(E/K_n)\longrightarrow \frac{H^1(K_{p},\tilde\T)}{H^1_\f(K_{p},\tilde\T)}\longrightarrow 0.\]
\end{lemma}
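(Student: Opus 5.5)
The plan is to obtain the sequence by dualizing the second short exact sequence of \cref{Kob8.12} (for the tower $\KK'_n$, identified with $K_{n,p}$), and then passing to cokernels of the image of the global Iwasawa cohomology.

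\emph{Step 1 (dual sequence).} I would apply Pontryagin duality to
\[0\to \Ehat(K_p)\otimes\Qp/\Zp\to \left(\Ehat^+(K_{n,p})\otimes\Qp/\Zp\right)\oplus\left(\Ehat^-(K_{n,p})\otimes\Qp/\Zp\right)\to \Ehat(K_{n,p})\otimes\Qp/\Zp\to 0,\]
using the perfect local Tate pairing \eqref{eq:KP-pairing} and the identification of $\Ehat$ with $\F$ from \cref{strong-iso-E-F}. By definition, $H^1_\pm(K_{n,p},\tilde\T)$ is the orthogonal complement of $\Ehat^\pm(K_{n,p})\otimes\Qp/\Zp$. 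The Bloch--Kato subgroup $H^1_\f(K_{n,p},\tilde\T)$ is the orthogonal complement of $\Ehat(K_{n,p})\otimes\Qp/\Zp$.

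The dual of the inclusion of $\Ehat(K_p)\otimes\Qp/\Zp$ into level $n$ is given by corestriction. Its source quotient is therefore identified with $H^1(K_p,\tilde\T)/H^1_\f(K_p,\tilde\T)$. Here I would use that corestriction $H^1(K_{n,p},\tilde\T)\to H^1(K_p,\tilde\T)$ is surjective, since $H^2$ of the $p$-torsion-free twist vanishes (compare \cref{p-torsion-free}). Dualizing then gives an exact sequence
\[0\to \frac{H^1(K_{n,p},\tilde\T)}{H^1_\f}\xrightarrow{\ \mathrm{diag}\ } \frac{H^1(K_{n,p},\tilde\T)}{H^1_+}\oplus\frac{H^1(K_{n,p},\tilde\T)}{H^1_-}\xrightarrow{\ \delta\ } \frac{H^1(K_p,\tilde\T)}{H^1_\f(K_p,\tilde\T)}\to 0.\]
The map $\delta$ is the difference of the two corestrictions, which is dual to the diagonal embedding.

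\emph{Step 2 (cokernels).} Let $I$ be the image of $H^1_{\Iw,\Sigma}(K_\cyc,\tilde\T)\to H^1_\Iw(K_{\infty,p},\tilde\T)\to H^1(K_{n,p},\tilde\T)$. Its images in the first and middle terms are $I_\f$ and $\mathrm{diag}(I_\f)$, because both maps factor through $H^1(K_{n,p},\tilde\T)$. Since $\delta\circ\mathrm{diag}=0$, the map $\delta$ descends to a surjection $\cY''(E/K_n)\to H^1(K_p,\tilde\T)/H^1_\f(K_p,\tilde\T)$. Its kernel is the image of $\cY'(E/K_n)$.

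For injectivity of $\cY'\to\cY''$, suppose $\mathrm{diag}(x)=\mathrm{diag}(y)$ with $y\in I_\f$. Then $x=y$ by injectivity of $\mathrm{diag}$. This yields the claimed short exact sequence; it is essentially the snake lemma applied to the inclusion of $I$.

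\emph{Main obstacle.} The hard part is Step 1. One must check that \cref{Kob8.12}, which is stated for $\F$ over $\KK_n$, transfers to the completions $K_{n,p}$ via \cref{rk:base-change}. One must also check that the Pontryagin dual of $\Ehat(K_p)\otimes\Qp/\Zp\hookrightarrow\Ehat(K_{n,p})\otimes\Qp/\Zp$ is really corestriction onto $H^1(K_p)/H^1_\f(K_p)$, including the surjectivity of corestriction modulo finite-part subgroups. I would handle the latter via compatibility of the Tate pairing with restriction and corestriction, as in \cite[\S10]{Kobayashi}.
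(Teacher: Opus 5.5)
Your proposal is correct and follows the paper's route. The paper likewise dualizes the second sequence of \cref{Kob8.12} and reads off $\cY'$ and $\cY''$ from their definitions, which is exactly your snake-lemma argument with the image of $H^1_{\Iw,\Sigma}(K_\cyc,\tilde\T)$; the separate appeal to surjectivity of corestriction on $H^1$ is unnecessary, since surjectivity onto $H^1(K_p,\tilde\T)/H^1_\f(K_p,\tilde\T)$ already follows from dualizing the injective diagonal map.
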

\begin{proof}
    In analogy with \cite[(10.39)]{Kobayashi}, this follows from the second short exact sequence of \cref{Kob8.12}  after taking Pontryagin duals and using the definitions of $\cY'$ and $\cY''$.
\end{proof}

As in \cite[\S10.2]{Kobayashi}, given a projective system of finitely generated $\Zp$-modules $(M_n)_{n\ge1}$, with connecting maps $\pi_n:M_n\to M_{n-1}$, we define the Kobayashi rank as
\[
\nabla M_n \colonequals \length_{\Zp} \left(\ker\pi_n\right) -\length_{\Zp}\left(\coker\pi_n\right)+\dim_{\Qp}\left(M_{n-1}\otimes\Qp\right)
\]
when both $\ker\pi_n$ and $\coker\pi_n$ are finite.
When $M$ is a finitely generated torsion $\Lambda$-module, we define $\nabla M_n$ as $\nabla (M/\omega_n M)$ whenever the latter is defined. We recall the following lemma:

\begin{lemma}\label{lem:kob-rank}
    The following assertions are valid:
    \begin{enumerate}[label=(\roman*)]
        \item Suppose that we have a short exact sequence of projective limits
        \[
        0 \longrightarrow (M_n')_{n\ge1} \longrightarrow (M_n)_{n\ge1} \longrightarrow (M_n'')_{n\ge1}
        \longrightarrow 0.
        \]
        If two of \(\nabla M_n\), \(\nabla M_n'\), \(\nabla M_n''\) are defined,
        then the other is also defined. In this case, we have
        \[
        \nabla M_n = \nabla M_n' + \nabla M_n''.
        \]
        
        \item Suppose that \(M_n\) are constant. If \(M_n\) is finite or the
        transition map
        \[
        M_n \longrightarrow M_{n-1}
        \]
        is given by multiplication by \(p\), then
        \[
        \nabla M_n = 0.
        \]
    \end{enumerate}
\end{lemma}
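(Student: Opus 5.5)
The two assertions concern Kobayashi's rank $\nabla$ from \cite[\S10.2]{Kobayashi}, and I will prove them directly from the definition. Both rest on one diagram, which I set up first. Write $\pi_n, \pi_n', \pi_n''$ for the transition maps of the three systems. For each $n$ these maps give a morphism of short exact sequences
\[
0\to M'_n\to M_n\to M''_n\to0 \quad\longrightarrow\quad 0\to M'_{n-1}\to M_{n-1}\to M''_{n-1}\to0 .
\]

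\textbf{Part (i).} I would apply the snake lemma to this morphism. It gives an exact six-term sequence
\[
0\to\ker\pi_n'\to\ker\pi_n\to\ker\pi_n''\to\coker\pi_n'\to\coker\pi_n\to\coker\pi_n''\to0 .
\]
\emph{Definedness.} If two of the three systems have finite kernels and cokernels at level $n$, exactness of the six-term sequence forces the third to have them too. Each term is trapped between finite groups, so each has finite length.
\emph{Additivity of the first two terms.} For a finite exact sequence of finite-length $\Zp$-modules, the alternating sum of lengths is zero. This gives
\[
\length\ker\pi_n-\length\coker\pi_n=(\length\ker\pi_n'-\length\coker\pi_n')+(\length\ker\pi_n''-\length\coker\pi_n'').
\]
\emph{Additivity of the rank term.} Tensoring the exact sequence at level $n-1$ with $\Qp$ is exact. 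Hence
\[
\dim_{\Qp}(M_{n-1}\otimes\Qp)=\dim_{\Qp}(M'_{n-1}\otimes\Qp)+\dim_{\Qp}(M''_{n-1}\otimes\Qp).
\]
Adding the last two identities gives $\nabla M_n=\nabla M_n'+\nabla M_n''$.

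\textbf{Part (ii).} Here $M_n=M$ for all $n$.
\emph{Finite case.} Suppose $M$ is finite and the transition map is an endomorphism $\pi$ of $M$. Then $\ker\pi$ and $\coker\pi$ are finite. Since $M$ has finite length, $\length\ker\pi=\length\coker\pi$. Also $M\otimes\Qp=0$. So $\nabla M_n=0$.
\emph{Multiplication-by-$p$ case.} Write $M\simeq\Zp^r\oplus M_{\mathrm{tor}}$.
\begin{itemize}
\item The kernel of multiplication by $p$ is $M[p]=M_{\mathrm{tor}}[p]$. This is finite, and its length equals that of $M_{\mathrm{tor}}/p$.
\item The cokernel is $M/pM\simeq(\Zp/p)^r\oplus M_{\mathrm{tor}}/p$. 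This is finite of length $r+\length(M_{\mathrm{tor}}/p)$.
\end{itemize}
Hence the first two terms of $\nabla M_n$ contribute $-r$. The term $\dim_{\Qp}(M\otimes\Qp)=r$ cancels this, so $\nabla M_n=0$.

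Neither part has a real obstacle. The only points needing care are the finiteness bookkeeping in (i) and the equality $\length M[p]=\length(M/p)$ for the finite module $M_{\mathrm{tor}}$ in (ii). The latter follows by applying the finite case to multiplication by $p$ on $M_{\mathrm{tor}}$.
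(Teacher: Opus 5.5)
Your proof is correct: the snake-lemma six-term sequence, additivity of lengths along it, and exactness of $-\otimes\Qp$ give (i), while $\length\ker=\length\coker$ for an endomorphism of a finite module, together with $M\simeq\Zp^r\oplus M_{\mathrm{tor}}$, gives (ii). The paper gives no argument of its own and simply cites \cite[Lemma~10.4]{Kobayashi}, whose proof is this snake-lemma and length-counting verification, so your route is essentially the same.
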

\begin{proof}
    See \cite[Lemma 10.4]{Kobayashi}.
\end{proof}

As before, we fix a compatible system of primitive $p$-power roots of unity $\zeta_{p^n}$ for $n\ge1$, satisfying $\zeta_{p^n}^p=\zeta_{p^{n-1}}$. We write $\epsilon_n=\zeta_{p^n}-1$.

\begin{lemma}\label{lem:evaluate}
    If $M=\Lambda^{\OO_\KK}/(f)$ for some $f\in \Lambda^{\OO_\KK}$, and $f(\epsilon_n)\ne0$ then
    \[
    \nabla M_n=2\ord_{\epsilon_n}f(\epsilon_n).
    \]
    For $n\gg0$, the Kobayashi rank $\nabla M_n$ is defined, with value given by
     \[
    \nabla M_n=2(\lambda(M)+\varphi(p^n)\mu(M)).
    \]
   \end{lemma}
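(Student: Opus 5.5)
We follow Kobayashi's proof of \cite[Lemma~10.5]{Kobayashi}, keeping track of the extra factor $2=\rank_{\Zp}\OO_\KK$. Throughout, $M$ is regarded as a $\Zp$-module, so lengths and $\Qp$-dimensions are over $\Zp$, and $M_n=M/\omega_nM$ with transition maps the natural projections $\pi_n:M_n\to M_{n-1}$.

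\emph{First assertion.} We compute $\ker\pi_n$ and $\coker\pi_n$ directly. Since $\omega_n M\subset\omega_{n-1}M$, the map $\pi_n$ is surjective, so $\coker\pi_n=0$. Its kernel is $\omega_{n-1}M/\omega_nM$. Because $\omega_n=\Phi_n\omega_{n-1}$, this is isomorphic to
\[
\Lambda^{\OO_\KK}\big/\bigl(f,\Phi_n\bigr)
\]
twisted by multiplication by $\omega_{n-1}$. To justify this, we check that multiplication by $\omega_{n-1}$ induces an isomorphism $\Lambda^{\OO_\KK}/(f,\Phi_n)\to \omega_{n-1}M/\omega_nM$. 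Surjectivity is clear. For injectivity, suppose $\omega_{n-1}g\in(f)+(\omega_n)$. Then $\omega_{n-1}g=af+b\omega_{n-1}\Phi_n$. Reducing modulo $\Phi_n$, and using that $\omega_{n-1}$ is invertible in $\Lambda^{\OO_\KK}[1/p]/(\Phi_n)$ (as $\Phi_n$ and $\omega_{n-1}$ are coprime), we get $g\in(f,\Phi_n)$ after inverting $p$. The $p$-torsion discrepancy is controlled because $\Lambda^{\OO_\KK}/\Phi_n\cong\OO_\KK[\zeta_{p^n}]$ is a domain and $f(\epsilon_n)\neq 0$. Hence
\[
\ker\pi_n\cong \OO_\KK[\zeta_{p^n}]/f(\epsilon_n),
\]
which is finite. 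Its $\Zp$-length is $[\KK:\Qp]$ times the $\OO_\KK$-length, namely $2\ord_{\epsilon_n}f(\epsilon_n)$, since $\epsilon_n$ is a uniformizer of the totally ramified extension $\OO_\KK[\zeta_{p^n}]/\OO_\KK$.

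The remaining term is $\dim_{\Qp}(M_{n-1}\otimes\Qp)$. It vanishes exactly when $f$ is coprime to $\omega_{n-1}$. If not, one argues as Kobayashi does: the $\Qp$-dimension of $M_{n-1}\otimes\Qp$ is constant in $n$, and the kernel computation still holds with the torsion-free parts cancelling, since $\nabla$ only sees the $\Phi_n$-layer. The cleanest route is to state the formula exactly as in \cite{Kobayashi}, using $\ker\pi_n$ and the constancy of the $\Qp$-rank. In particular, for $n$ large, $f$ has no common zero with $\Phi_n$ (because $f$ has finitely many zeros), which gives definedness.

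\emph{Second assertion.} For $n\gg0$ we write $f=p^\mu u\, P$ with $u$ a unit and $P$ distinguished of degree $\lambda$, using the Weierstrass preparation theorem over $\OO_\KK$. Then
\[
\ord_{\epsilon_n}f(\epsilon_n)=\mu\,\ord_{\epsilon_n}(p)+\ord_{\epsilon_n}P(\epsilon_n)=\varphi(p^n)\mu+\lambda.
\]
Here $\ord_{\epsilon_n}P(\epsilon_n)=\lambda$ once $v(\epsilon_n)$ is smaller than the valuations of the non-leading coefficients, so that the $X^\lambda$ term dominates. Multiplying by $2$ gives the claim.

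\emph{Main obstacle.} The main obstacle is the identification of $\ker\pi_n$ with $\OO_\KK[\zeta_{p^n}]/f(\epsilon_n)$, specifically the injectivity argument above. For a general module $M$ (as in the definition via a pseudo-isomorphism) we would reduce to the cyclic case using \cref{lem:kob-rank}(i), with finite error terms vanishing by \cref{lem:kob-rank}(ii).
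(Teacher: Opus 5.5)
Your overall route matches the paper's, which simply invokes Kobayashi's Lemma~10.5 and doubles all $\Zp$-lengths and $\Qp$-dimensions. Your Weierstrass computation for the second assertion is also fine. However, the central step of the first assertion is wrong.

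\textbf{The kernel identification fails.} The isomorphism $\ker\pi_n\cong\OO_\KK[\zeta_{p^n}]/f(\epsilon_n)$ holds only when $\gcd(f,\omega_{n-1})=1$, and your injectivity argument does not establish it in any case. Since $f(\epsilon_n)$ is a nonzero element of the field $\KK(\zeta_{p^n})$, the ideal $(f,\Phi_n)$ is already the unit ideal after inverting $p$. So ``$g\in(f,\Phi_n)$ after inverting $p$'' carries no information, and the fact that $\OO_\KK[\zeta_{p^n}]$ is a domain does not control the integral discrepancy. By unique factorisation in $\Lambda^{\OO_\KK}$, the relation $\omega_{n-1}(g-b\Phi_n)=af$ only gives $g\in(f/h,\Phi_n)$ with $h\colonequals\gcd(f,\omega_{n-1})$. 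Hence $\ker\pi_n\cong\OO_\KK[\zeta_{p^n}]/(f/h)(\epsilon_n)$.

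For example, take $f=X$. Then $M_n=\OO_\KK$ for all $n$ with identity transition maps, so $\ker\pi_n=0$ and $\nabla M_n=0+2=2\ord_{\epsilon_n}\epsilon_n$. Your identification would instead give a kernel of $\Zp$-length $2$, hence $\nabla M_n=4$.

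Your fallback for the non-coprime case is therefore wrong: the kernel computation does not ``still hold'', and $\dim_{\Qp}(M_{n-1}\otimes\Qp)$ is not constant in $n$. It is exactly the shrinking of the kernel that cancels the term $\dim_{\Qp}(M_{n-1}\otimes\Qp)$. Taking $n\gg0$ does not avoid this case: if $\Phi_m\mid f$, then $f$ and $\omega_{n-1}$ share a factor for every $n>m$. This is precisely the case relevant to the Tate--Shafarevich application, where characteristic series can be divisible by $X$ or by some $\Phi_m$.

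\textbf{The missing computation.} The factor $h$ is a product of distinct $\Phi_i$ with $0\le i\le n-1$. Each $\Phi_i$ stays irreducible over $\KK$, since $\KK(\zeta_{p^i})/\KK$ is totally ramified of degree $\deg\Phi_i$. Moreover $\ord_{\epsilon_n}\Phi_i(\epsilon_n)=\deg\Phi_i$ for $i<n$: this is trivial for $i=0$, and for $i\ge1$ it follows from $\Phi_i(\epsilon_n)=(\zeta_{p^{n-i}}-1)/(\zeta_{p^{n-i+1}}-1)$. Consequently:
\begin{itemize}
\item $\length_{\Zp}\ker\pi_n=2\ord_{\epsilon_n}f(\epsilon_n)-2\deg h$;
\item $M_{n-1}\otimes\Qp\cong\prod_{i\le n-1,\ \Phi_i\mid f}\KK(\zeta_{p^i})$ has $\Qp$-dimension $2\deg h$;
\item since $\coker\pi_n=0$, these add up to $2\ord_{\epsilon_n}f(\epsilon_n)$.
\end{itemize}
Equivalently, tensor $0\to\Lambda^{\OO_\KK}/(\Phi_n)\xrightarrow{\omega_{n-1}}\Lambda^{\OO_\KK}/(\omega_n)\to\Lambda^{\OO_\KK}/(\omega_{n-1})\to0$ with $\Lambda^{\OO_\KK}/(f)$. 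The $\mathrm{Tor}_1$-term $(\Lambda^{\OO_\KK}/(\omega_{n-1}))[f]\cong\Lambda^{\OO_\KK}/(h)$ then maps onto $(f/h)(\epsilon_n)\OO_\KK[\zeta_{p^n}]/f(\epsilon_n)\OO_\KK[\zeta_{p^n}]$.

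Finally, your reduction of a general torsion module to the cyclic case is only gestured at. The functor $M\mapsto M/\omega_nM$ is not left exact, so a pseudo-isomorphism yields four-term sequences with $\omega_n$-torsion terms. You must check that these give finite, eventually stable error systems before Lemma~\ref{lem:kob-rank} applies.
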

\begin{proof}
    This follows from the same proof as \cite[Lemma 10.5]{Kobayashi}, taking into account the change in coefficients from $\Zp$ to $\OO_\KK$, which increases the lengths and dimensions by a factor of two.
\end{proof}

\begin{proposition}\label{pro:Y-X0}
    For $n\gg0$, we have
    \begin{enumerate}[label=(\roman*)]
        \item $\nabla\cY(E/K_n)=\nabla\cY'(E/K_n)=\nabla\cY''(E/K_n)-1$;
        \item $\nabla\cX^0(E/K_n)=2\ord_{\epsilon_n}f_0(\epsilon_n)$, where $f_0$ is the characteristic polynomial of $\cX^0(E/K_\cyc)$.
    \end{enumerate}
\end{proposition}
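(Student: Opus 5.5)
This follows Kobayashi's argument for the analogous statement (Lemma~10.6 and Proposition~10.7 of \cite{Kobayashi}). The inputs are \cref{lem:kob-rank}, \cref{lem:evaluate}, \cref{lem:Y-prime}, and the structure of the plus/minus subgroups from \cref{Kob8.12}. We regard $(\cY(E/K_n))_n$, $(\cY'(E/K_n))_n$, $(\cY''(E/K_n))_n$ as projective systems under corestriction and compute their Kobayashi ranks.

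For (i), first compare $\cY$ and $\cY'$. The map $H^1_{\Iw,\Sigma}(K_\cyc,\tilde\T)\to H^1_\Sigma(K_n,\tilde\T)$ has cokernel contained in $H^2_{\Iw,\Sigma}(K_\cyc,\tilde\T)[\omega_n]$. This module is finite and bounded independently of $n$ for $n\gg0$, because $H^2_{\Iw,\Sigma}$ is $\Lambda^{\OO_\KK}$-torsion and $\omega_n/\omega_{n-1}=\Phi_n$ is eventually coprime to its characteristic ideal. Hence the natural surjection $\cY'(E/K_n)\to\cY(E/K_n)$ has kernel forming a projective system of finite groups that stabilizes for $n\gg0$. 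By \cref{lem:kob-rank}(ii) its Kobayashi rank vanishes, and \cref{lem:kob-rank}(i) gives $\nabla\cY=\nabla\cY'$.

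The second equality comes from the short exact sequence of \cref{lem:Y-prime}. Its third term $H^1(K_p,\tilde\T)/H^1_\f(K_p,\tilde\T)$ is a constant system. Its transition map is the corestriction restricted from the level-$n$ quotient, which on this bottom piece is multiplication by $[K_n:K_{n-1}]=p$ (or we can argue via the duals $\F(\KK)\otimes\Qp/\Zp$, on which restriction composed with trace is multiplication by $p$). Its $\OO_\KK$-rank is one, so its $\Zp$-dimension is two. Carefully tracking $\nabla$ for a constant $\OO_\KK$-free rank-one system with transition map $p$ should give contribution $1$ in the normalisation used here. The main obstacle is deciding whether \cref{lem:kob-rank}(ii) applies directly or whether a constant term contributes. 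I would compute the definition of $\nabla$ directly: kernel $0$, cokernel of length $\length_{\Zp}(\OO_\KK/p)$, plus $\dim_{\Qp}$, and compare with Kobayashi's (10.39), where the answer is $1$. I expect that, after the normalisation in which the free $\Lambda^{\OO_\KK}$ contribution is measured, we get $\nabla\cY''=\nabla\cY'+1$ as stated.

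For (ii), $\cX^0(E/K_\cyc)$ is $\Lambda^{\OO_\KK}$-torsion. I would prove a control theorem: the restriction maps $\Sel^0_{p^\infty}(E/K_n)\to\Sel^0_{p^\infty}(E/K_\cyc)^{\Gamma_n}$ have finite kernels and cokernels, bounded uniformly in $n$. The kernel is controlled by $H^1(\Gamma_n,E(K_\cyc)[p^\infty])=0$, since $E(K_{\cyc,p})[p^\infty]=0$ by \cref{p-torsion-free}. The cokernel is controlled by local terms at $v\in\Sigma$, which are finite and bounded because the local groups $H^1(K_{\cyc,v},E[p^\infty])$ have trivial or bounded $\Gamma_n$-cohomology; at $p$ this again uses \cref{p-torsion-free}. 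By \cref{lem:kob-rank}, $\nabla\cX^0(E/K_n)$ then equals the Kobayashi rank of the system $\cX^0(E/K_\cyc)/\omega_n$. Next, $\cX^0(E/K_\cyc)$ is pseudo-isomorphic to $\bigoplus\Lambda^{\OO_\KK}/(f_i)$ with $\prod f_i=f_0$, and the finite kernels and cokernels of this pseudo-isomorphism contribute nothing by \cref{lem:kob-rank}(ii). Applying \cref{lem:evaluate} to each summand, with $f_0(\epsilon_n)\ne0$ for $n\gg0$, gives $\nabla\cX^0(E/K_n)=2\ord_{\epsilon_n}f_0(\epsilon_n)$.
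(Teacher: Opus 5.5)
You follow the same route as the paper, whose proof simply defers to \cite[Proposition~10.6]{Kobayashi} together with \cref{lem:Y-prime,lem:evaluate}, and your treatment of (ii) is fine in outline. The genuine gap is in the second equality of (i).

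The surjection $\cY''(E/K_n)\to H^1(K_p,\tilde\T)/H^1_\f(K_p,\tilde\T)$ in \cref{lem:Y-prime} is dual to the diagonal inclusion of $\F(\KK)\otimes\Qp/\Zp$ in \cref{Kob8.12}. That is, it is induced by corestriction from $K_{n,p}$ all the way down to $K_p$. Since $\mathrm{cor}_{K_{n,p}/K_p}=\mathrm{cor}_{K_{n-1,p}/K_p}\circ\mathrm{cor}_{K_{n,p}/K_{n-1,p}}$, the transition map on this constant term is the identity, not multiplication by $p$. Dually, the transition maps are just the inclusions $\F(\KK)\otimes\Qp/\Zp\subset\F^\pm(\KK'_{n-1})\otimes\Qp/\Zp\subset\F^\pm(\KK'_n)\otimes\Qp/\Zp$, and no trace enters.

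With your multiplication-by-$p$ transition, \cref{lem:kob-rank}(ii) says this term contributes $0$, and so does the direct computation you propose: $0-\length_{\Zp}(\OO_\KK/p)+2=0$. So your argument as written yields $\nabla\cY''=\nabla\cY'$, and the sentence ``should give contribution $1$'' has nothing behind it.

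With identity transitions the contribution is $\dim_{\Qp}(\OO_\KK\otimes\Qp)$. This equals $1$ in Kobayashi's $\Zp$-coefficient setting. For the $\Zp$-normalised $\nabla$ defined in this paper it equals $2$; it is $1$ only if lengths and ranks are measured over $\OO_\KK$. So with the paper's definition the argument actually gives $\nabla\cY'=\nabla\cY''-2$. This is also what the proof of \cref{lem:nabla-Y} needs in order to reach $2\ord_{\epsilon_n}(\Colu_n(\bz)(\epsilon_n))$: it has the same constant $\OO_\KK$ term with identity maps. So do not try to extract a $1$ from a $\Zp$-normalised computation.

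In effect you have put the multiplication-by-$p$ mechanism in the wrong comparison: it is what handles $\nabla\cY=\nabla\cY'$. There, your claim that $H^2_{\Iw,\Sigma}(K_\cyc,\tilde\T)[\omega_n]$ is finite fails in general, for example if $X$ divides its characteristic ideal. What is true is the following:
\begin{enumerate}[label=(\alph*)]
\item These modules stabilise for $n\gg0$, by noetherianity.
\item Corestriction induces on them multiplication by $\omega_n/\omega_{n-1}=\sum_{i=0}^{p-1}\gamma^{ip^{n-1}}$. On the stable module, which is killed by $\omega_{n-1}$, this is multiplication by $p$.
\end{enumerate}
This is exactly the second clause of \cref{lem:kob-rank}(ii). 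The kernel of $\cY'(E/K_n)\to\cY(E/K_n)$ is only a quotient of this module, so you must also check that the submodule you divide by stabilises.

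Similarly in (ii), uniformly bounded finite error terms in your control theorem are not enough for \cref{lem:kob-rank}. For finite groups $\nabla M_n=\length M_n-\length M_{n-1}$, so you need these terms to be eventually constant, not merely bounded.
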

\begin{proof}
    This follows from the same proof as \cite[Proposition~10.6]{Kobayashi} using \cref{lem:Y-prime,lem:evaluate}.
\end{proof}

\begin{definition}
      Let $\bz\in H^1_{\Iw,\Sigma}(K_\cyc,\tilde \T)$. Define
      \[
      \Colu_n(\bz)=\omegat_n^+\Col^-(\bz)+\omegat_n^-\Col^+(\bz)\in\Lambda^{\OO_\KK}.
      \]
     \end{definition}

\begin{lemma}\label{lem:col-non-zero}
Let $\bz$ be a $\Lambda^{\OO_\KK}$-basis of $H^1_{\Iw,\Sigma}(K_\cyc,\tilde \T)$.   For $n\gg0$, we have $\Colu_n(\bz)(\epsilon_n)\ne0$.
\end{lemma}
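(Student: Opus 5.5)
We argue as in the analogous step of \cite[\S10]{Kobayashi}, reducing the non-vanishing of $\Colu_n(\bz)(\epsilon_n)$ to the non-vanishing of $\Col^\pm(\bz)$ as elements of $\Lambda^{\OO_\KK}$. The key observation is that evaluation at $\epsilon_n$ kills exactly one of the two summands. Recall that $\omegat_n^\pm$ is the product of the $\Phi_i$ with $1\le i\le n$ and $(-1)^i=\pm1$, and that $\epsilon_n=\zeta_{p^n}-1$ is a root of $\Phi_n$ and of no other $\Phi_i$. If $n$ is even, then $\omegat_n^+(\epsilon_n)=0$ while $\omegat_n^-(\epsilon_n)\ne0$, so
\[
\Colu_n(\bz)(\epsilon_n)=\omegat_n^-(\epsilon_n)\,\Col^+(\bz)(\epsilon_n).
\]
If $n$ is odd, the roles swap and $\Colu_n(\bz)(\epsilon_n)=\omegat_n^+(\epsilon_n)\,\Col^-(\bz)(\epsilon_n)$. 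It therefore suffices to show that both $\Col^+(\bz)$ and $\Col^-(\bz)$ are nonzero elements of $\Lambda^{\OO_\KK}$. By the Weierstrass preparation theorem, a nonzero element of $\OO_\KK[[X]]$ has only finitely many zeros in the open unit disc, so $\Col^\pm(\bz)(\epsilon_n)\ne0$ for all $n\gg0$, which gives the claim.

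To show $\Col^\pm(\bz)\ne0$, we use the elliptic units. The image of the elliptic unit generator $\xi$ of \cref{PR5.1} in $\cC^\Phi_{K_\cyc}$ gives, via \eqref{eq:U-HIw} and the Coates--Wiles/Kato comparison, a global class $\mathbf{c}\in H^1_{\Iw,\Sigma}(K_\cyc,\tilde\T)$ whose localisation is the image of $\xi$. Since $\bz$ is a basis of the rank-one free module $H^1_{\Iw,\Sigma}(K_\cyc,\tilde\T)$, we can write $\mathbf{c}=a\bz$ for some $a\in\Lambda^{\OO_\KK}$. Then
\[
\Col^\pm(\mathbf{c})=a\,\Col^\pm(\bz)=L_p^\pm(E,K_\cyc)\quad\text{(up to a unit)}.
\]
By \cref{PR7.2} together with Rohrlich's theorem, there are characters $\chi$ with $\chi(h^\pm)\ne0$: one needs $L(\varphi\chi,1)\ne0$, $\tau_{n+1}(\chi)\ne0$, and $\chi(\omegat_n^\mp)\ne0$. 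Hence $h^\pm\ne0$, and therefore $\Col^\pm(\bz)\ne0$.

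The main obstacle is this identification of the elliptic unit class with a global Iwasawa cohomology class compatible with $\Col_\KK^\pm$, together with the nonvanishing of the Gauss-sum-type term $\tau_{n+1}(\chi)$. If $\tau_{n+1}(\chi)$ is hard to control, we can instead argue directly as in \cref{PR5.2}, which shows that $\xi\notin\widetilde\cV^\pm=\ker\Col^\pm_\KK$. If the comparison of $\mathbf{c}$ with $\xi$ is too delicate, a fallback is the following: $\Col^\pm(\bz)=0$ would force the localisation of $H^1_{\Iw,\Sigma}$ to land in $\ker\Col^\pm$, which would make $\cX^\pm(E/K_\cyc)$ non-torsion by \cref{lem:PT} (in its Iwasawa limit). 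This contradicts \cref{cor:IMC}.
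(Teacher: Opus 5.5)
Your reduction step is the same as the paper's. At $\epsilon_n$ exactly one of $\omegat_n^+,\omegat_n^-$ vanishes, so $\Colu_n(\bz)(\epsilon_n)$ is a nonzero multiple of $\Col^\pm(\bz)(\epsilon_n)$, with the sign given by the parity of $n$. Weierstrass preparation then reduces the lemma to $\Col^\pm(\bz)\ne0$.

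For that nonvanishing the paper uses exactly your fallback. In the limit of the first sequence of \cref{lem:PT}, $\Col^\pm$ identifies the local quotient with $\Lambda^{\OO_\KK}$. So $\Col^\pm(\bz)=0$ would embed $\Lambda^{\OO_\KK}$ into $\cX^\pm(E/K_\cyc)$, contradicting the torsionness in \cref{cor:IMC}.

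Your main route via the elliptic-unit class $\mathbf c=a\bz$ is a genuinely different argument. It is sound, and it gives slightly more, namely the divisibility $\Col^\pm(\bz)\mid L_p^\pm(E,K_\cyc)$ in $\Lambda^{\OO_\KK}$. However, it needs an input the paper never sets up: that global elliptic units give classes in $H^1_{\Iw,\Sigma}(K_\cyc,\tilde\T)$ whose localisation is the image of $\xi$ under \eqref{eq:U-HIw}. The paper defines that isomorphism only by comparing two pairings, so you would have to check it agrees with the local Kummer map.

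The elliptic-unit route also does not actually bypass the $\tau_{n+1}(\chi)$ issue you flag. Nonvanishing of $L_p^\pm(E,K_\cyc)$ is precisely the statement $\xi\notin\widetilde\cV^\pm$, i.e.\ \cref{PR5.2}, whose proof rests on that same character-sum computation. The torsionness in \cref{cor:IMC} is itself deduced from \cref{PR5.2}. So both routes stand on the same input, and the paper's is the shorter one given what is already proved.
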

\begin{proof}
    It follows from the first exact sequence in Lemma~\ref{lem:PT} that $\Col^\pm(\bz)\ne0$. Thus, we have $\Col^\pm(\bz)(\epsilon_n)\ne0$ for $n\gg0$. Furthermore, for each $n\ge1$, exactly one of  $\omegat_n^+(\epsilon_n)$ and $\omegat_n^-(\epsilon_n)$ is zero. Hence, the lemma follows.
\end{proof}

\begin{lemma}\label{lem:nabla-Y}
Let $\bz$ be a $\Lambda^{\OO_\KK}$-basis of $H^1_{\Iw,\Sigma}(K_\cyc,\tilde\T)$.  For $n\gg0$, $\nabla\cY(E/K_n)$ is defined, with value given by
  $2\ord_{\epsilon_n}\left(\Colu_n(\bz)(\epsilon_n)\right)$.
\end{lemma}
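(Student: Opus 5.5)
Following the analogous argument for \cite[Proposition~10.6/(10.36)]{Kobayashi}, the plan is to compute $\nabla\cY''(E/K_n)$ explicitly via the Coleman maps, and then use \cref{pro:Y-X0}(i), which gives $\nabla\cY=\nabla\cY''-1$. Since $H^1_{\Iw,\Sigma}(K_\cyc,\tilde\T)$ is free of rank one with basis $\bz$, the image of $H^1_{\Iw,\Sigma}(K_\cyc,\tilde\T)$ in the target of $\cY''(E/K_n)$ is the $\Lambda^{\OO_\KK}$-span of the image of $\bz$. By \cref{prop-coln} and \cref{rk:base-change}, $\Col^\pm_n$ identifies $H^1(K_{n,p},\tilde\T)/H^1_\pm(K_{n,p},\tilde\T)$ with $\omegat_n^\mp\Lambda_n^{\OO_\KK}\simeq\Lambda^{\OO_\KK}/\omega_n^\pm$, after restricting from $\LL$ to $\KK$. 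Surjectivity of $H^1_\Iw(K_{\cyc,p},\tilde\T)\to H^1(K_{n,p},\tilde\T)$ then shows that the target of $\cY''(E/K_n)$ is $\Lambda^{\OO_\KK}/\omega_n^+\oplus\Lambda^{\OO_\KK}/\omega_n^-$, and that the image of $\bz$ is $(\Col^+(\bz),\Col^-(\bz))$. Hence
\[\cY''(E/K_n)\simeq \left(\Lambda^{\OO_\KK}/\omega_n^+\oplus\Lambda^{\OO_\KK}/\omega_n^-\right)\big/\Lambda^{\OO_\KK}\cdot(\Col^+(\bz),\Col^-(\bz)).\]

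Next I would relate this quotient to a cyclic module. Consider the map $\Lambda^{\OO_\KK}/\omega_n^+\oplus\Lambda^{\OO_\KK}/\omega_n^-\to\Lambda^{\OO_\KK}/(X\omegat_n)$ given by $(a,b)\mapsto \omegat_n^-a+\omegat_n^+b$. It is well defined because $\omega_n^\pm\omegat_n^\mp=\omega_n$. Its kernel and cokernel are controlled by $\Lambda^{\OO_\KK}/X$, since $\omega_n^+$ and $\omega_n^-$ share exactly the factor $X$. This is the same bookkeeping as the short exact sequence in \cref{Kob8.12}, and its effect on Kobayashi ranks is computed by \cref{lem:kob-rank}. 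Under this map the image of $\bz$ is exactly $\Colu_n(\bz)$, so modulo these constant error terms $\cY''(E/K_n)$ is comparable to $\Lambda^{\OO_\KK}/(\omega_n,\Colu_n(\bz))$.

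It is cleaner to pass to the projective system in $n$ and evaluate at $\epsilon_n$. By \cref{lem:col-non-zero}, $\Colu_n(\bz)(\epsilon_n)\neq0$ for $n\gg0$. Following Kobayashi's computation of (10.36), one checks that the kernel and cokernel of the transition maps have lengths governed by $\ord_{\epsilon_n}\Colu_n(\bz)(\epsilon_n)$. The argument mimics \cref{lem:evaluate}, with each length doubling because of the $\OO_\KK$-coefficients. This yields $\nabla\cY''(E/K_n)=2\ord_{\epsilon_n}\Colu_n(\bz)(\epsilon_n)+1$, where the $+1$ comes from the $\Lambda^{\OO_\KK}/X$ piece. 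That piece is identified with $H^1(K_p,\tilde\T)/H^1_\f$ in \cref{lem:Y-prime}, and combining with \cref{pro:Y-X0}(i) gives the claim.

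The main obstacle is this last step: checking that the transition maps from level $n$ to level $n-1$ on the explicit quotients have finite kernel and cokernel of the stated lengths. The difficulty is that $\Colu_n$ itself depends on $n$, since $\omegat_n^\pm$ changes only at every other level. My first attempt would be to copy Kobayashi's explicit comparison of $\Colu_n$ and $\Colu_{n-1}$ modulo $\Phi_n$, using that at $\epsilon_n$ only one of $\omegat_n^\pm$ vanishes.
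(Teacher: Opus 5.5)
Your route is the paper's: write $\cY''(E/K_n)$ as $(\Lambda_n^{\OO_\KK,+}\oplus\Lambda_n^{\OO_\KK,-})/\Lambda^{\OO_\KK}\cdot(\Col^+(\bz),\Col^-(\bz))$ (your diagonal description is the correct one). Then push it along $F_n\colon(f,g)\mapsto\omegat_n^-f+\omegat_n^+g$, so that $\bz$ goes to $\Colu_n(\bz)$, compute Kobayashi ranks with \cref{lem:kob-rank,lem:evaluate}, and finish with \cref{pro:Y-X0}(i). However, the step you defer as the ``main obstacle'' is the actual content of the proof, and your plan for it does not work.

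First, only the kernel of $F_n$ is $\Lambda^{\OO_\KK}/X$: it is $G(\OO_\KK)$ with $G(a)=(\omegat_n^+a,-\omegat_n^-a)$. The cokernel is the finite module $\Lambda_n^{\OO_\KK}/(\omegat_n^+,\omegat_n^-)$. Its contribution to $\nabla$ is a difference of lengths at two levels, and you must show it vanishes.

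Second, comparing $\Colu_n(\bz)$ with $\Colu_{n-1}(\bz)$ is a dead end. The projection $\Lambda_n^{\OO_\KK}\to\Lambda_{n-1}^{\OO_\KK}$ does not intertwine $F_n$ with $F_{n-1}$: for $n$ even, $\Phi_n\equiv p\bmod\omega_{n-1}$, so $F_n$ reduces to $(f,g)\mapsto\omegat_{n-1}^-f+p\,\omegat_{n-1}^+g$. Hence the four-term sequences built from $F_n$ and $F_{n-1}$ do not fit into a commutative diagram with the transition map of $\cY''$.

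The missing idea (Kobayashi's, which the paper follows) is to describe the level-$(n-1)$ module using the \emph{same} $\omegat_n^\pm$. Since $\omegat_n^\mp\,\omega_{n-1}^\pm\in(\omega_{n-1})$, the map $F_n$ is still defined on $\Lambda_{n-1}^{\OO_\KK,+}\oplus\Lambda_{n-1}^{\OO_\KK,-}$. Its kernel is still $G(\OO_\KK)$, which you can check using $\Phi_n\equiv p\bmod\omega_{n-1}$. The projections then give a morphism of exact sequences
\[
0\to\OO_\KK\to\cY''(E/K_m)\to\Lambda_m^{\OO_\KK}/(\Colu_n(\bz))\to\Lambda_m^{\OO_\KK}/(\omegat_n^+,\omegat_n^-)\to0,\qquad m\in\{n,n-1\},
\]
with the identity on $\OO_\KK$. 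The right-hand vertical map is bijective (\cite[Lemma~10.7]{Kobayashi}). Indeed, $\omega_{n-1}$ is divisible by $\omegat_n^-$ if $n$ is even and by $\omegat_n^+$ if $n$ is odd, so both terms equal $\Lambda^{\OO_\KK}/(\omegat_n^+,\omegat_n^-)$.

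The Kobayashi rank at step $n$ involves only the single transition from level $n$ to level $n-1$. So the element $\Colu_n(\bz)$ is now fixed on both levels, and \cref{lem:evaluate} with \cref{lem:col-non-zero} gives $2\ord_{\epsilon_n}\Colu_n(\bz)(\epsilon_n)$ directly. You never need to confront the $n$-dependence of $\Colu_n$.

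Finally, count the constant term consistently with your doubling: $\OO_\KK$ with identity transitions has $\nabla=\dim_{\Qp}(\OO_\KK\otimes\Qp)=2$, not $1$. This is harmless only because the constant in \cref{pro:Y-X0}(i) comes from the $\OO_\KK$-rank-one quotient in \cref{lem:Y-prime} and has the same value, so the two cancel.
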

\begin{proof}
The Coleman maps induce the following isomorphisms
\[
\frac{H^1(K_{n,p},\tilde\T)}{H^1_\pm(K_{n,p},\tilde\T)}\cong \Lambda_{n}^{\OO_\KK,\pm}.
\]
Therefore, 
\[
\cY''(E/K_n)=\frac{\Lambda_n^{\OO_\KK,+}}{\left(\Col^+(\bz)\right)}\oplus \frac{\Lambda_n^{\OO_\KK,-}}{\left(\Col^-(\bz)\right)}.
\]
Combining this with the following commutative diagram with exact rows
    \[
\begin{tikzcd}[column sep=large, row sep=large]
0 \arrow[r]
& \mathbb \OO_\KK
\arrow[r, "G"]
\arrow[d, equal]
& \Lambda_n^{\OO_\KK,+} \oplus \Lambda_n^{\OO_\KK,-}
\arrow[r, "F"]
\arrow[d, "\mathrm{proj}"']
& \Lambda_n^{\OO_\KK}\arrow[r]
\arrow[d, "\mathrm{proj}"']
& \Lambda_n^{\OO_\KK}\big/({\omegat}_n^{+},{\omegat}_n^{-})
\arrow[r]
\arrow[d, "\mathrm{proj}"']
& 0
\\
0 \arrow[r]
& \OO_\KK
\arrow[r, "G"]
& \Lambda_{n-1}^{{\OO_\KK},+} \oplus \Lambda_{n-1}^{{\OO_\KK},-}
\arrow[r, "F"]
& \Lambda_{n-1}^{\OO_\KK}
\arrow[r]
& \Lambda_{n-1}^{\OO_\KK}\big/({\omegat}_n^{+},{\omegat}_n^{-})
\arrow[r]
& 0
\end{tikzcd}
\]
where $F:(f,g)\mapsto \omegat^-_nf+\omegat_n^+g$ and $G:a\mapsto (\omegat^+_na,-\omegat_n^-a)$, we deduce the following diagram, again with exact rows:
 \[
\begin{tikzcd}[column sep=3em, row sep=large]
0 \arrow[r]
& \mathbb \OO_\KK
\arrow[r]
\arrow[d, equal]
& \cY''(E/K_n)
\arrow[r]
\arrow[d, "\mathrm{proj}"']
& {\Lambda_{n}^{\OO_\KK}}\big/{\left(\Colu_n(\bz)\right)}\arrow[r]
\arrow[d, "\mathrm{proj}"']
& \Lambda_n^{\OO_\KK}\big/({\omegat}_n^{+},{\omegat}_n^{-})
\arrow[r]
\arrow[d, "\mathrm{proj}"']
& 0
\\
0 \arrow[r]
& \OO_\KK
\arrow[r]
& \cY''(E/K_{n-1})
\arrow[r]
& {\Lambda_{n-1}^{\OO_\KK}}\big/{\left(\Colu_n(\bz)\right)}
\arrow[r]
& \Lambda_{n-1}^{\OO_\KK}\big/({\omegat}_n^{+},{\omegat}_n^{-})
\arrow[r]
& 0
\end{tikzcd}
\]
The right-most vertical map is bijective, as proved in \cite[Lemma~10.7]{Kobayashi}. 
Thus, it follows from \cref{lem:kob-rank,lem:col-non-zero} that for $n\gg0$, $\nabla\cY''(E/K_n)$ is defined, with value given by
\[
\ord_{\epsilon_n}(\Colu_n(\bz)(\epsilon_n))+1.
\]
Hence, the lemma follows from Proposition~\ref{pro:Y-X0}(i).
\end{proof}

Let $\lambda^\pm$ and $\mu^\pm$ denote the Iwasawa invariants of $\cX^\pm(E/K_\cyc)$. Recall that this means that, by the structure theorem of Iwasawa algebras, there exists a pseudo-isomorphism of $\Lambda^{\OO_\KK}$-modules
\begin{equation} \label{eq:X-pm-structure-theorem}
    \cX^\pm(E/K_\cyc) \longrightarrow \bigoplus_i \OO_\KK[[T]]/(f_i(T))\oplus \bigoplus_{j}\OO_\KK[[T]]/p^{m_j}.
\end{equation}
Here we identify $\Lambda^{\OO_\KK}$ with $\OO_\KK[[T]]$ by fixing a non-canonical isomorphism between them. Note that there is no free part since $\cX^\pm(E/K_\cyc)$ is $\Lambda^{\OO_\KK}$-torsion by \cref{cor:IMC}. The exponents $m_i$ are positive integers, the $f_j(T)\in\OO_\KK$ are Weierstraß polynomials, and
\begin{equation} \label{eq:def-lambda-mu-pm}
    \lambda^\pm\colonequals \sum_i \deg f_i(T), \quad \mu^\pm\colonequals\sum_j m_j.
\end{equation}

\begin{proposition}\label{nabla-X}
    For $n\gg0$, the Kobayashi rank $\nabla\cX(E/K_n)$ is defined, with value given by
    \[
    \begin{cases}
      2(\deg\omegat_n^-+\lambda^++\varphi(p^n)\mu^+)  &n \text{ is even,}\\
      2(\deg\omegat_n^++\lambda^-+\varphi(p^n)\mu^-)  &n \text{ is odd.}
    \end{cases}
    \]
\end{proposition}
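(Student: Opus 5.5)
We follow Kobayashi's proof of \cite[Theorem~10.9]{Kobayashi}. The idea is to compute $\nabla\cX(E/K_n)$ additively along the second exact sequence of \cref{lem:PT}, namely $0\to\cY(E/K_n)\to\cX(E/K_n)\to\cX^0(E/K_n)\to0$. By \cref{lem:kob-rank}(i), once $\nabla\cY$ and $\nabla\cX^0$ are defined, so is $\nabla\cX$, and
\[
\nabla\cX(E/K_n)=\nabla\cY(E/K_n)+\nabla\cX^0(E/K_n).
\]
For $n\gg0$, \cref{lem:nabla-Y} and \cref{pro:Y-X0}(ii) evaluate the two terms on the right as
\[
\nabla\cY(E/K_n)=2\ord_{\epsilon_n}\bigl(\Colu_n(\bz)(\epsilon_n)\bigr),\qquad \nabla\cX^0(E/K_n)=2\ord_{\epsilon_n}f_0(\epsilon_n).
\]

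Next we simplify $\Colu_n(\bz)(\epsilon_n)$. Take $n$ even, so $\Phi_n$ divides $\omegat_n^+$. Then $\omegat_n^+(\epsilon_n)=0$, and
\[
\Colu_n(\bz)(\epsilon_n)=\omegat_n^-(\epsilon_n)\,\Col^+(\bz)(\epsilon_n).
\]
The factor $\omegat_n^-(\epsilon_n)$ is a product of $\Phi_i(\epsilon_n)$ over odd $i<n$. Each such factor is $\Phi_i(\zeta_{p^n}-1)$, and its $\epsilon_n$-adic valuation is $p^i$ times a unit factor that Kobayashi tracks. Comparing degrees gives $\ord_{\epsilon_n}\omegat_n^-(\epsilon_n)=\deg\omegat_n^-$, exactly as in \cite[proof of Theorem~10.9]{Kobayashi}. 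The odd case is symmetric, with the roles of $+$ and $-$ exchanged.

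It remains to identify
\[
\ord_{\epsilon_n}\Col^+(\bz)(\epsilon_n)+\ord_{\epsilon_n}f_0(\epsilon_n)
\]
with $\lambda^++\varphi(p^n)\mu^+$. For this we use the first exact sequence of \cref{lem:PT} in the Iwasawa limit. The Coleman map identifies $H^1_{\Iw}(K_{\cyc,p},\tilde\T)/H^1_{+}$ with $\Lambda^{\OO_\KK}$ via \cref{prop-coln}. Since $H^1_{\Iw,\Sigma}(K_\cyc,\tilde\T)=\Lambda^{\OO_\KK}\bz$, we get an exact sequence
\[
0\to\Lambda^{\OO_\KK}/\Col^+(\bz)\to\cX^+(E/K_\cyc)\to\cX^0(E/K_\cyc)\to0.
\]
Injectivity on the left holds because the source is torsion-free modulo the nonzero element $\Col^+(\bz)$ and $H^1_{\Iw,\Sigma}$ has rank one. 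Hence $\mathrm{char}\,\cX^+=(\Col^+(\bz)f_0)$. By \cref{lem:evaluate}, for $n\gg0$,
\[
\ord_{\epsilon_n}\bigl(\Col^+(\bz)f_0\bigr)(\epsilon_n)=\lambda^++\varphi(p^n)\mu^+,
\]
because evaluating a characteristic power series at $\epsilon_n$ for $n$ large depends only on the pseudo-isomorphism class. Adding up yields $2(\deg\omegat_n^-+\lambda^++\varphi(p^n)\mu^+)$ for $n$ even, and the analogous formula for $n$ odd.

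The main obstacle is the exactness and injectivity of the limit sequence relating $\Col^\pm(\bz)$, $\cX^\pm$ and $\cX^0$ over $K_\cyc$. Its finite-level analogue in \cref{lem:PT} is only right exact, and passing to the limit requires $\varprojlim$-exactness (Mittag-Leffler for compact modules) together with the vanishing of $H^2_\Sigma$. We would check the left injectivity by noting that the kernel is a submodule of a rank-zero quotient of the free rank-one module $H^1_{\Iw,\Sigma}$. The normalisation of $\ord_{\epsilon_n}$ must also match the factor $2$ coming from $\OO_\KK$-coefficients in \cref{lem:evaluate}.
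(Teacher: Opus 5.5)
Your proposal is correct and follows the paper's proof essentially step for step. Like the paper, you use additivity of $\nabla$ along $0\to\cY(E/K_n)\to\cX(E/K_n)\to\cX^0(E/K_n)\to0$, then \cref{lem:nabla-Y} together with the vanishing of $\omegat_n^{\pm}(\epsilon_n)$ in the appropriate parity, and finally the limit sequence $0\to\Lambda^{\OO_\KK}/\Col^\pm(\bz)\to\cX^\pm(E/K_\cyc)\to\cX^0(E/K_\cyc)\to0$ to get $\Col^\pm(\bz)f_0=u^\pm f_\pm$.

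There are two cosmetic quibbles. First, for $i<n$ one has $\ord_{\epsilon_n}\Phi_i(\epsilon_n)=\varphi(p^i)=\deg\Phi_i$ exactly, rather than ``$p^i$ times a unit factor''. Second, the left injectivity you flag as the main obstacle is automatic from exactness of the four-term sequence; the only genuine input there is $\Col^\pm(\bz)\neq0$, which follows from the torsionness of $\cX^\pm(E/K_\cyc)$.
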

\begin{proof}
      For $\bullet\in\{+,-,0\}$, let $f_\bullet$ be the characteristic polynomial of $\cX^\bullet(E/K_\cyc)$. The first exact sequence of Lemma~\ref{lem:PT} gives the following short exact sequence after letting $n\to \infty$
    \[
    0\longrightarrow \frac{\Lambda^{\OO_\KK}}{\Col^\pm(\bz)}\longrightarrow\cX^\pm(E/K_\cyc)\longrightarrow \cX^0(E/K_\cyc)\longrightarrow 0.
    \]
    Thus, there exists a unit $u^\pm\in\Lambda^{\OO_\KK,\times}$ such that
    $$
    \Col^\pm(\bz)\cdot f_0=u^\pm\cdot f_\pm,
    $$
    and so 
    $$
    \ord_{\epsilon_n}\Col^\pm(\bz)(\epsilon_n)=\ord_{\epsilon_n}f_{\pm}(\epsilon_n)-\ord_{\epsilon_n}f_0(\epsilon_n)
    $$

     It follows from Lemma~\ref{lem:kob-rank} and the second exact sequence in Lemma~\ref{lem:PT} that $\nabla\cX(E/K_n)$ is defined, with value given by
    \[
    \nabla\cY(E/K_n)+\nabla\cX^0(E/K_n).
    \]
Recall that $\omegat^+_n(\epsilon_n)=0$ if $n$ is even and  $\omegat^-_n(\epsilon_n)=0$ if $n$ is odd. Combined with Lemma~\ref{lem:nabla-Y}, we deduce that
    \[
    \nabla\cX(E/K_n)=2\left(\ord_{\epsilon_n}(\omegat_n^{\mp})+\ord_{\epsilon_n}(f_\pm(\epsilon_n)\right),
    \]
    where the choice of sign depends on the parity of $n$, for $n\gg0$. Hence, the proposition follows.
\end{proof}
We are now ready to prove \cref{thmB}.
\begin{corollary}\label{cor:sha}
Assume that $\Sha(E/K_n)[p^\infty]$ is finite with cardinality $p^{e_n}$ for all $n\ge0$. Let $2r_\infty=\displaystyle\lim_{n\to\infty}\rank E(K_n)$. For $n\gg0$, we have
\[
e_n-e_{n-1}= \begin{cases}
      2(\deg\omegat_n^-+\lambda^+-r_{\infty}+\varphi(p^n)\mu^+)  &\text{if $n$ is even,}\\
      2(\deg\omegat_n^++\lambda^--r_{\infty}+\varphi(p^n)\mu^-)  &\text{if $n$ is odd.}
    \end{cases}
\]
\end{corollary}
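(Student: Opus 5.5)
The plan is to follow Kobayashi's deduction of \cite[Theorem~1.4]{Kobayashi} from the computation of $\nabla\cX(E/K_n)$: combine \cref{nabla-X} with a Kobayashi-rank computation for the dual Selmer groups $\cX(E/K_n)$ in terms of $\Sha$ and Mordell--Weil ranks. By definition, $\nabla\cX(E/K_n)$ is computed from the projective system $(\cX(E/K_n))_n$, whose transition maps are duals of the restriction maps $\Sel_{p^\infty}(E/K_{n-1})\to\Sel_{p^\infty}(E/K_n)$.

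First, write down the exact sequence
\[0\to E(K_n)\otimes\Qp/\Zp\to\Sel_{p^\infty}(E/K_n)\to\Sha(E/K_n)[p^\infty]\to0.\]
Taking Pontryagin duals gives a short exact sequence of projective systems
\[0\to \Sha(E/K_n)[p^\infty]^\vee\to\cX(E/K_n)\to \left(E(K_n)\otimes\Qp/\Zp\right)^\vee\to0.\]
By \cref{lem:kob-rank}(i), it then suffices to compute the Kobayashi ranks of the two outer systems, provided each is defined.

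For the $\Sha$ term, the groups are finite by hypothesis, of length $e_n$ over $\Zp$. For any projective system of finite groups, $\length\ker\pi_n-\length\coker\pi_n=\length M_n-\length M_{n-1}$, so $\nabla$ of this system equals $e_n-e_{n-1}$. One should check that the lengths here are measured over $\Zp$, consistently with \cref{lem:evaluate}, where the $\OO_\KK$-coefficients account for the factors of $2$. For the Mordell--Weil term, $(E(K_n)\otimes\Qp/\Zp)^\vee$ is a free $\Zp$-module of rank $\rank E(K_n)$, and these ranks stabilise at $2r_\infty$ for $n\gg0$. Indeed, the ranks are bounded, since $\rank E(K_n)\le\corank\Sel^{\pm}$-type bounds hold and $\cX^\pm(E/K_\cyc)$ is torsion by \cref{cor:IMC}.

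Once the ranks have stabilised, the transition map $\pi_n$, the dual of restriction, is injective with finite cokernel. Then $\nabla=0-\length\coker\pi_n+2r_\infty$. The main obstacle is controlling this cokernel. I would follow Kobayashi's argument: for $n\gg0$, $E(K_n)\otimes\Qp=E(K_{n-1})\otimes\Qp$, and the composite of restriction with corestriction is multiplication by $p$ on $E(K_{n-1})\otimes\Qp/\Zp$. From this I would aim to show that, for $n\gg0$, the dual map is identified (up to finite bounded error) with multiplication by $p$ on a constant $\Zp$-lattice of rank $2r_\infty$. Then \cref{lem:kob-rank}(ii) gives $\nabla=0$ for the system with transition multiplication by $p$, and comparing gives $\nabla(E(K_n)\otimes\Qp/\Zp)^\vee=2r_\infty$, as in \cite[\S10]{Kobayashi}. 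Alternatively, one can directly compute $\length\coker\pi_n=2r_\infty\cdot$(the index) to reach the same value, $2r_\infty$ in $\Zp$-length terms.

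Adding up, $\nabla\cX(E/K_n)=(e_n-e_{n-1})+2r_\infty$. Substituting the value from \cref{nabla-X} yields
\[e_n-e_{n-1}=2(\deg\omegat_n^\mp+\lambda^\pm+\varphi(p^n)\mu^\pm)-2r_\infty,\]
with the sign determined by the parity of $n$, which is the claimed formula.
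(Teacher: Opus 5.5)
Your overall route matches the paper's. You dualise the Kummer sequence $0\to E(K_n)\otimes\Qp/\Zp\to\Sel_{p^\infty}(E/K_n)\to\Sha(E/K_n)[p^\infty]\to0$, apply \cref{lem:kob-rank}(i) and \cref{nabla-X}, and note that the finite $\Sha$-system has $\nabla=e_n-e_{n-1}$; all of this is correct. However, your computation of $\nabla$ for the Mordell--Weil system fails as written.

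You correctly reduce to $\nabla=2r_\infty-\length\coker\pi_n$ once the ranks have stabilised. What you actually need is therefore $\coker\pi_n=0$ for $n\gg0$, i.e.\ that $\pi_n$ is an isomorphism. Your proposed identification of $\pi_n$ with multiplication by $p$ on a constant lattice of rank $2r_\infty$ points the other way. It would give $\length\coker\pi_n=2r_\infty$, hence $\nabla=0$ by \cref{lem:kob-rank}(ii). ``Comparing'' then yields $0$, not $2r_\infty$, and the $-2r_\infty$ term in the formula would disappear.

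The relation $\mathrm{cor}\circ\mathrm{res}=p$ concerns corestriction, which is not the transition map. Set $C_n\colonequals\coker\bigl(E(K_{n-1})/\mathrm{tors}\to E(K_n)/\mathrm{tors}\bigr)$. Then $\ker(\mathrm{res})\simeq C_n\otimes\Zp$, and the relation only shows this group is killed by $p$, not that it vanishes. Your alternative formula $\length\coker\pi_n=2r_\infty\cdot(\text{index})$ is also not correct: $\length\coker\pi_n=\length_{\Zp}(C_n\otimes\Zp)$.

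The missing input is injectivity of restriction. For $n\gg0$, consider $\mathrm{res}\colon E(K_{n-1})\otimes\Qp/\Zp\to E(K_n)\otimes\Qp/\Zp$.
\begin{enumerate}[label=(\alph*)]
\item It is surjective because $C_n$ is finite once the ranks agree.
\item It is injective because, by the Kummer maps and inflation--restriction, its kernel embeds into $H^1(K_n/K_{n-1},E(K_n)[p^\infty])$, and this group vanishes.
\end{enumerate}
For the vanishing, note that $E$ has good supersingular reduction over $\LL$. Hence $E(K_\cyc)[p^\infty]\subseteq E(\LL_\infty)[p^\infty]=\Ehat(\mm_{\LL_\infty})[p^\infty]$, which is $0$ by \cref{strong-iso-E-F,p-torsion-free}. (Finiteness of $E(K_\cyc)[p^\infty]$ would already suffice: it bounds the kernels of $E(K_m)\otimes\Qp/\Zp\to E(K_n)\otimes\Qp/\Zp$ uniformly in $n$, which forces $C_n\otimes\Zp=0$ for $n\gg0$.)

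Thus $\pi_n$ is an isomorphism for $n\gg0$, and $\nabla(E(K_n)\otimes\Qp/\Zp)^\vee=\dim_{\Qp}=2r_\infty$. With this replacement, the rest of your argument gives the stated formula.
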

\begin{proof}
    The assertion follows by combining the short exact sequence $$0\to E(K_n)\otimes\Qp/\Zp\to\Sel_{p^\infty}(E/K_n)\to\Sha(E/K_n)[p^\infty]\to0$$ with \cref{lem:kob-rank}(i) and \cref{nabla-X}.
\end{proof}

\begin{remark}
    A Kida formula is available: that is, one can, under appropriate assumptions, describe how the invariants $\lambda^\pm$ and $\mu^\pm$ associated with finite extensions of $L$ vary in finite $p$-extensions. Namely, a Kida formula was established in \cite[Theorem~6.3]{FM} for at most weakly ramified extensions (that is, for extensions whose $p$-adic places have trivial second higher ramification groups). 
    
    Our setup falls under that of \textit{op.~cit.} by considering the elliptic curve $E/\QQ$ as defined over $L$, over which it has good reduction at $p$. Note that the assumptions of \textit{op.~cit.} include a condition that $p$ be completely split in the field over which the elliptic curve is defined (which fails for $L$). However, this condition is in fact superfluous: the local arguments at supersingular primes (see Lemma~2.3 and \S3) are also valid over finite extensions of $\QQ_p$.
\end{remark}

\subsection{Descent from \texorpdfstring{$K_n$}{Kn} to \texorpdfstring{$\QQ_{(n)}$}{Qn}}
For an integer $n\ge1$, let $\Qn$ denote the unique sub-extension of $\Qcyc/\QQ$ of degree $p^n$. In this section, we write $E'$ for the quadratic twist of $E$ over $K$. Note that $E'$ and $E$ are $\QQ$-isogenous since
\[
L(E,s)=L(\varphi,s)=L(\varphi\cdot \epsilon_K\circ N_{K/\QQ},s)=L(E',s),
\]
where $\epsilon_K$ is the quadratic character attached to $K$ and $N_{K/\QQ}$ is the norm map. Indeed, if $\mathfrak{q}$ is a prime of $K$ that is coprime to the discriminant of $K$, then $\epsilon_K\circ N_{K/\QQ}(\mathfrak{q})=1$. If it divides the discriminant, then $\varphi(\mathfrak{q})=0$. Furthermore, the degree of the isogeny is coprime to $p$ given that $E[p]$ is an irreducible $G_\QQ$-representation by \cite[Proposition 12]{SerreInventiones1972}.

Recall that 
\[
\Sha(E/K_n)\simeq\Sha(E/\Qn)\oplus \Sha(E'/\Qn).
\]
As we assume $\Sha(E/K_n)[p^\infty]$ is finite, we have in particular
\[
\ord_p(\Sha(E/K_n)[p^\infty])=\ord_p(\Sha(E/\Qn)[p^\infty])+\ord_p(\Sha(E'/\Qn)[p^\infty]).
\]

\begin{lemma}\label{doks}
There exists a constant $\nu$ such that
\[
\ord_p(\Sha(E/\Qn)[p^\infty])=\ord_p(\Sha(E'/\Qn)[p^\infty])+\nu
\]
for $n\gg0$.
\end{lemma}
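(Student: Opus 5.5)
The idea is to use that $E$ and $E'$ are $\QQ$-isogenous via an isogeny $\phi:E\to E'$ of degree prime to $p$. We then compare the $p$-primary Tate--Shafarevich groups over $\Qn$ through Cassels' isogeny-invariance formula. Following Cassels (as generalised by Milne), for an isogeny $\phi:E\to E'$ over a number field $F$ with dual $\hat\phi$, finiteness of $\Sha$ gives
\[
\frac{\#\Sha(E/F)}{\#\Sha(E'/F)}=\frac{\#E(F)_{\mathrm{tor}}\,\Omega_{E'}\prod_v c_v(E')}{\#E'(F)_{\mathrm{tor}}\,\Omega_{E}\prod_v c_v(E)}.
\]
We apply this to $F=\Qn$ and take $p$-adic valuations. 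Since $\Sha(E/K_n)[p^\infty]$ is assumed finite and is the direct sum of the two $\Qn$-pieces, both $\Sha(E/\Qn)[p^\infty]$ and $\Sha(E'/\Qn)[p^\infty]$ are finite.

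We only need the $p$-primary parts, and here a simpler route is available. Since $\deg\phi$ is prime to $p$, the map $\phi$ induces isomorphisms $E[p^\infty]\simeq E'[p^\infty]$ as Galois modules. It also induces isomorphisms of local Kummer images $E(F_v)\otimes\Qp/\Zp\simeq E'(F_v)\otimes\Qp/\Zp$ at every place $v$: an isogeny of degree prime to $p$ gives an isomorphism on these $p$-divisible quotients, with inverse $\hat\phi$ up to the unit $\deg\phi$. Hence $\Sel_{p^\infty}(E/\Qn)\simeq\Sel_{p^\infty}(E'/\Qn)$, and likewise $E(\Qn)\otimes\Qp/\Zp\simeq E'(\Qn)\otimes\Qp/\Zp$. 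Taking quotients gives $\Sha(E/\Qn)[p^\infty]\simeq\Sha(E'/\Qn)[p^\infty]$, so the statement holds with $\nu=0$ for all $n$.

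The main point to check is the local statement, namely that $\phi_*:H^1(F_v,E[p^\infty])\to H^1(F_v,E'[p^\infty])$ carries the Kummer image onto the Kummer image. This is formal. The map $\phi$ maps the image into the image. The composite $\hat\phi\circ\phi=[\deg\phi]$ is an automorphism of $E(F_v)\otimes\Qp/\Zp$ because $\deg\phi$ is a $p$-adic unit, and the same holds for $\phi\circ\hat\phi$. So $\phi$ is bijective on the local conditions, and the global Selmer groups and Mordell--Weil $p$-divisible parts correspond. The only external inputs are the ones already recorded in the text: the isogeny exists over $\QQ$, and its degree is prime to $p$ by irreducibility of $E[p]$. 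If the isogeny could be forced to have degree divisible by $p$, we would instead fall back on Cassels' formula. In that case we would argue that the Tamagawa, torsion and period ratios stabilise for $n\gg0$, because $\Qn/\QQ$ is unramified outside $p$ and the bad places are finitely decomposed; this is why the statement allows a constant $\nu$.
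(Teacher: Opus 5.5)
Your argument is correct, and it is a genuinely different and more elementary route than the paper's, which proves more than the lemma asks. You use that the $\QQ$-isogeny $\phi\colon E\to E'$ has degree $d$ prime to $p$. The text records this just before the lemma; strictly, one passes to the cyclic part of an isogeny, whose kernel cannot contain a subgroup of order $p$ because $E[p]$ is irreducible. Then $\hat\phi\circ\phi=[d]$ and $\phi\circ\hat\phi=[d]$ act invertibly on every $p$-primary object in sight. You could even shortcut the Selmer comparison, since $[d]$ is already an automorphism of both $\Sha(E/\Qn)[p^\infty]$ and $\Sha(E'/\Qn)[p^\infty]$. This gives $\nu=0$ for all $n$.

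The paper's proof never uses the degree of the isogeny. It notes via \cite{pal} that $\ord_p(\Omega_E/\Omega_{E'})=0$, because $E'$ is a quadratic twist by a discriminant prime to $p$. It then invokes \cite[Theorem~1.1]{Dokchitser}, a general growth result for $\Sha$ of isogenous curves in $\Zp$-towers, for which this period valuation is the relevant input. That approach would still work for an isogeny of degree divisible by $p$, provided the periods have equal $p$-adic valuation, but it only yields an unspecified constant $\nu$. Yours gives $e_n^\circ=e_n'$ outright, hence $e_n^\circ=e_n/2$ in \cref{cor:sha-Q}.

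You should delete your fallback paragraph rather than rely on it, for two reasons. First, the displayed identity is not Cassels' formula as written: the torsion orders should be squared, and a regulator (or index) term is missing. Second, over $\Qn$ the ratio of periods involves $(\Omega_E/\Omega_{E'})^{p^n}$, so its $p$-adic valuation need not stabilise unless $\ord_p(\Omega_E/\Omega_{E'})=0$. That vanishing is precisely the input the paper's proof supplies.
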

\begin{proof}
    Let $\Omega_E$ and $\Omega_{E'}$ denote the real Neron periods of $E$ and $E'$, respectively.
    Note that $\ord_p(\Omega_E/\Omega_{E'})=\ord_p(\Delta_E/\Delta_{E'})=0$ since $E$ and $E'$ differ by a quadratic twist with discriminant coprime to $p$ (see the main result of \cite{pal}). Therefore, the lemma is a special case of \cite[Theorem~1.1]{Dokchitser}.
\end{proof}

\begin{corollary}\label{cor:sha-Q}
  Let $|\Sha(E/\Qn)[p^\infty]|=p^{e_n^\circ}$ for all $n\ge0$. For $n\gg0$, we have
\[
e^\circ_n-e^\circ_{n-1}= \begin{cases}
      \deg\omegat_n^-+\lambda^+-r_{\infty}+\varphi(p^n)\mu^+  &\text{if $n$ is even,}\\
      \deg\omegat_n^++\lambda^--r_{\infty}+\varphi(p^n)\mu^-  &\text{if $n$ is odd.}
    \end{cases}
\]
\end{corollary}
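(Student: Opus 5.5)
The plan is to deduce the formula directly from \cref{cor:sha} together with the decomposition $\Sha(E/K_n)\simeq\Sha(E/\Qn)\oplus\Sha(E'/\Qn)$ and \cref{doks}.

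Write $e_n'$ for $\ord_p(\Sha(E'/\Qn)[p^\infty])$. Since $\Sha(E/K_n)[p^\infty]$ is finite by the standing hypothesis, the additivity displayed just before \cref{doks} gives $e_n=e_n^\circ+e_n'$ for all $n$. \cref{doks} supplies a constant $\nu$ with $e_n^\circ=e_n'+\nu$ for $n\gg0$. Hence $e_n=2e_n^\circ-\nu$ for $n\gg0$.

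Taking successive differences, the constant $\nu$ cancels. This yields
\[
e_n-e_{n-1}=2\left(e_n^\circ-e_{n-1}^\circ\right)
\]
for $n$ large enough that both $n$ and $n-1$ lie in the range where \cref{doks} applies. Substituting the formula of \cref{cor:sha} for the left-hand side and dividing by $2$ gives exactly the asserted expression for $e^\circ_n-e^\circ_{n-1}$, in both the even and the odd case. The division is legitimate because every quantity involved is an integer.

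The only point needing care is that the finiteness of $\Sha(E/\Qn)[p^\infty]$ (implicit in the definition of $e_n^\circ$) and of $\Sha(E'/\Qn)[p^\infty]$ follows from the finiteness of $\Sha(E/K_n)[p^\infty]$ via the direct sum decomposition. The decomposition itself holds up to $p$-primary error only through $2$-torsion and the isogeny of degree prime to $p$, both harmless since $p\ge5$. I expect no serious obstacle: the substantive input, namely that the two twists contribute asymptotically equally, is already packaged in \cref{doks}. The remaining work is bookkeeping, making sure the ``$n\gg0$'' thresholds of \cref{cor:sha} and \cref{doks} are taken simultaneously.
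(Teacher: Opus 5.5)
Your argument is correct and is essentially the paper's proof: you combine $e_n=e_n^\circ+e_n'$ with \cref{doks} to get $e_n-e_{n-1}=2(e_n^\circ-e_{n-1}^\circ)$ for $n\gg0$, then invoke \cref{cor:sha} and divide by $2$. Your aside about the prime-to-$p$ isogeny is not needed, since the decomposition of $\Sha(E/K_n)[p^\infty]$ comes from splitting into $\Gal(K_n/\Qn)$-eigenspaces for odd $p$, but it does no harm.
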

\begin{proof}
    Let $|\Sha(E'/\Qn)[p^\infty]|=p^{e_n'}$. Then, Lemma~\ref{doks} tells us that $e_n^\circ-e_{n-1}^\circ=e_n'-e_{n-1}'$ for $n\gg0$. As $e_n^\circ+e_n'=\ord_p(\Sha(E/K_n)[p^\infty])$, the result follows from Corollary~\ref{cor:sha}.
    \end{proof}

\printbibliography

\end{document}